\DeclareSymbolFont{CMlargesymbols}{OMX}{cmex}{m}{n} 
\DeclareMathDelimiter{(}{\mathopen} {operators}{"28}{CMlargesymbols}{"00}
\DeclareMathDelimiter{)}{\mathclose}{operators}{"29}{CMlargesymbols}{"01}
\DeclareMathAlphabet\mathcal{OMS}{cmsy}{m}{n} 
\SetMathAlphabet\mathcal{bold}{OMS}{cmsy}{b}{n} 
\numberwithin{figure}{section}
\numberwithin{table}{section}
\newcommand{\ignore}[1]{}
\newtheorem{theorem}{Theorem}[section]
\newtheorem{lemma}[theorem]{Lemma}
\newtheorem{proposition}[theorem]{Proposition}
\newtheorem{corollary}[theorem]{Corollary}
\newtheorem{example}[theorem]{Example}
\newtheorem{remark}[theorem]{Remark}
\newtheorem{definition}[theorem]{Definition}
\newcommand*\widefbox[1]{\fbox{\hspace{2em}#1\hspace{2em}}}
\newtheorem{assumptions}[theorem]{Assumptions}
\newcommand*\patchAmsMathEnvironmentForLineno[1]{%
  \expandafter\let\csname old#1\expandafter\endcsname\csname #1\endcsname
  \expandafter\let\csname oldend#1\expandafter\endcsname\csname end#1\endcsname
  \renewenvironment{#1}%
     {\linenomath\csname old#1\endcsname}%
     {\csname oldend#1\endcsname\endlinenomath}}%
\newcommand*\patchBothAmsMathEnvironmentsForLineno[1]{%
  \patchAmsMathEnvironmentForLineno{#1}%
  \patchAmsMathEnvironmentForLineno{#1*}}%
\definecolor{darkgreen}{rgb}{0.1, 0.14, 0.13}
\renewcommand{\nu}{\mathfrak{\pi}}
\newcommand{\ppi}{\pi}
\newcommand{\rv}{\mathsf{rv}}
\newcommand{\hs}{\mathsf{h}}
\newcommand{\Hs}{\mathsf{H}}
\newcommand{\Hess}{\mathsf{S}}
\newcommand{\Gammas}{\mathsf{\Gamma}}
\newcommand{\Sigmas}{\mathsf{\Sigma}}
\newcommand{\ssS}{\Omega}
\newcommand{\varpsi}{\psi}
\newcommand{\law}{\textit{Law}\,} 
\newcommand{\tnu}{\widetilde{\nu}}
\newcommand{\tu}{\tilde{u}}
\newcommand{\tPsi}{\tilde{\Psi}}
\newcommand{\trho}{\tilde{\rho}}
\newcommand{\tK}{\widetilde{K}}
\newcommand{\phhi}{\psi}
\newcommand{\CC}{\mathcal{C}}
\newcommand{\mP}{{\mathfrak P}}
\newcommand{\Tp}{T_\rho\mP_+}
\newcommand{\mE}{\mathcal{E}}
\newcommand{\sM}{\mathcal{M}}
\newcommand{\op}{\mathcal}
\newcommand{\mop}{m_{\rm post}}
\newcommand{\Cp}{C_{\rm post}}
\newcommand{\normZ}{\mathscr{Z}}
\newcommand{\cL}{\mathcal{L}}
\newcommand{\mmu}{\rho}
\newcommand{\sJ}{\mathsf{J}}
\newcommand{\sI}{\mathsf{I}}
\newcommand{\hmu}{\widehat{\mu}}
\newcommand{\hh}{\widehat{h}}
\newcommand{\hw}{\widehat{w}}
\newcommand{\tT}{\widetilde{T}^{S}}
\newcommand{\tTD}{\widetilde{T}^{D}}
\newcommand{\hmug}{\widehat{\mu}^G}
\newcommand{\mean}{m}
\newcommand{\pmean}{\widehat{m}}
\newcommand{\ho}{\widehat{o}}
\newcommand{\hoo}{\widehat{\mathrm{o}}}
\newcommand{\mh}{\pmean}
\newcommand{\tC}{\widetilde{C}}
\newcommand{\ttC}{\check{C}}
\newcommand{\Cov}{C}
\newcommand{\Csov}{\Sigmas}
\newcommand{\pCov}{\widehat{C}}
\newcommand{\hC}{\pCov}
\newcommand{\hCvy}{\pCov^{vy}}
\newcommand{\hCyy}{\pCov^{yy}}
\newcommand{\hCvh}{\pCov^{vh}}
\newcommand{\hChh}{\pCov^{hh}}
\newcommand{\CuG}{\Cov^{uG}}
\newcommand{\CGG}{\Cov^{GG}}
\newcommand{\hCuG}{\widehat{\Cov}^{uG}}
\newcommand{\hCGG}{\widehat{\Cov}^{GG}}
\newcommand{\eps}{\epsilon}
\newcommand{\gP}{\mathfrak{P}}
\newcommand{\mb}{m^{\scriptscriptstyle \rm{BL}}}
\newcommand{\Cblue}{\Cov^{\scriptscriptstyle \rm{BL}}}
\newcommand{\mug}{\mu^G}
\newcommand{\nug}{\nu^G}
\newcommand{\mumf}{\mu^{MF}}
\newcommand{\Td}{T^{D}}
\newcommand{\Ts}{T^{S}}
\newcommand{\TdJ}{T^{D,J}}
\newcommand{\TsJ}{T^{S,J}}
\newcommand{\cB}{\mathcal{B}}
\newcommand{\ccB}{\mathcal{S}}
\newcommand{\hv}{\widehat{v}}
\newcommand{\hy}{\widehat{y}}
\newcommand{\hz}{\widehat{z}}
\newcommand{\hzj}{\widehat{z}^{(j)}}
\newcommand{\hu}{\widehat{u}}
\newcommand{\vd}{{v}^\dag}
\newcommand{\yd}{{y}^\dag}
\newcommand{\wwd}{{w}^\dag}
\newcommand{\ud}{{u}^\dag}
\newcommand{\gammad}{{\gamma}^\dag}
\newcommand{\xid}{{\xi}^\dag}
\newcommand{\etad}{{\eta}^\dag}
\newcommand{\zd}{{z}^\dag}
\newcommand{\Zd}{{Z}^\dag}
\newcommand{\Yd}{{Y}^\dag}
\newcommand{\zdd}{{z}^{\dag,\delta}}
\newcommand{\Wd}{{W}^\dag}
\newcommand{\Bd}{{B}^\dag}
\newcommand{\Wj}{{W}^{(j)}}
\newcommand{\Bj}{{B}^{(j)}}
\newcommand{\hvj}{\widehat{v}^{(j)}}
\newcommand{\hyj}{\widehat{y}^{(j)}}
\newcommand{\hhj}{\widehat{h}^{(j)}}
\newcommand{\vj}{{v}^{(j)}}
\newcommand{\xij}{{\xi}^{(j)}}
\newcommand{\etj}{{\eta}^{(j)}}
\newcommand{\Covpost}{C}
\newcommand{\bbE}{\mathbb{E}}
\newcommand{\E}{\bbE}
\newcommand{\bbR}{\mathbb{R}}
\newcommand{\R}{\bbR}
\newcommand{\bbP}{\mathbb{P}}
\newcommand{\Z}{\mathbb{Z}}
\newcommand{\N}{\mathbb{N}}
\newcommand{\Ng}{\mathsf{N}}
\newcommand{\dt}{\Delta t}
\newcommand{\gG}{\mathfrak{G}}
\newcommand{\dd}{\mathrm{d}}
\definecolor{darkred}{rgb}{.7,0,0}
\definecolor{darkgreen}{rgb}{.3,.7,0}
\title[Ensemble Kalman Methods: A Mean Field Perspective]{Ensemble Kalman Methods:\\ A Mean Field Perspective}
\author[E. Calvello, S. Reich, A. M. Stuart]{
Edoardo Calvello \\ California Institute of Technology,\\ Pasadena, CA 91125, USA \\ E-mail: e.calvello@caltech.edu 
\and
Sebastian Reich \\ Institut f\"ur Mathematik, Universit\"at Potsdam,\\ D-14476 Potsdam, Germany \\ E-mail: sebastian.reich@uni-potsdam.de 
\and
Andrew M.~Stuart \\ California Institute of Technology, \\Pasadena, CA 91125, USA \\ E-mail: astuart@caltech.edu}
\begin{document}

\label{firstpage}
\maketitle 

\vspace{0.35in}
\begin{abstract}

Ensemble Kalman methods are widely used for state estimation 
in the geophysical sciences. Their success stems from the fact
that they take an underlying (possibly noisy) dynamical system as a black box to provide a systematic, derivative-free methodology for
incorporating noisy, partial and possibly indirect observations to update estimates of the state; furthermore the ensemble approach allows for sensitivities and uncertainties to be calculated.
The methodology was introduced in 1994 in the context of ocean state estimation. Soon thereafter it was adopted by the numerical weather prediction community and is now a key component of the best weather prediction systems worldwide. Furthermore the methodology is starting to be widely adopted for numerous problems in the geophysical sciences and is being developed as the basis for general purpose derivative-free inversion methods that show great promise.
Despite this empirical success, analysis of the accuracy of
ensemble Kalman methods, in terms of their capabilities as both state
estimators and quantifiers of uncertainty, is lagging. The purpose
of this paper is to provide a unifying mean field based framework 
for the derivation and analysis of ensemble Kalman methods. Both 
state estimation and parameter estimation problems (inverse problems)
are considered, and formulations in both discrete and continuous 
time are employed. For state estimation problems, both the control 
and filtering approaches are considered; analogously for parameter 
estimation problems, the optimization and Bayesian perspectives 
are both studied. The mean field perspective provides an elegant framework, suitable for analysis; furthermore, a variety of methods used in practice can be derived from mean field systems by using interacting particle system approximations.
The approach taken also unifies a wide-ranging literature 
in the field and suggests open problems.

\end{abstract}

\tableofcontents



\section{Introduction}
\label{sec:I}


The ensemble Kalman methodology comprises an innovative and flexible
set of tools which can be used for both state estimation in
dynamical systems and parameter estimation for generic inverse
problems. It has primarily been developed by practitioners in
the geophysical sciences, with notable impact on the fields of 
oceanography, oil reservoir simulation and weather forecasting.
Despite its widespread adoption in the geosciences over several decades, firm theoretical foundations are only recently starting to emerge; the methodology is hard to analyze. The purpose of this article is twofold: a) to introduce a mathematical framework for the analysis of
ensemble Kalman methods, describing what is known and highlighting
the many open mathematical challenges in the field; b) to 
provide a literature survey which bridges
the domain-specific development of the methodology with emerging
mathematical analyses. In so doing we will also highlight the
flexibility of the methodology for use in widespread applications, 
beyond its historical development in the geosciences. 

The material is organized around the two separate themes of
state estimation and inverse problems; within each, both discrete time
and continuous time approaches are explained. The novel perspective
which underlies all of this material is the derivation of ensemble
Kalman methods as particle approximations of carefully designed
mean field models. The relationship of these mean field models to
exact transport models, for Gaussian problems, serves to motivate
their form.

In Subsection \ref{ssec:HC} we overview the history of 
ensemble Kalman methods. Subsection \ref{ssec:O} describes
the organization of the paper. In Subsection \ref{ssec:pc}
we make brief remarks about the pseudo-code
that we make available as a supplementary resource for the paper. \
The introduction concludes, in Subsection \ref{ssec:N}, with a summary of the notation
that we adopt throughout.

%
\subsection{Historical Context}
\label{ssec:HC}
%
The Kalman filter (KF) is arguably the first setting in which the
systematic integration of observational data with a dynamical system
was considered, leading to both discrete time
\citep{kalman1960new} and continuous time \citep{kalman1961new}
formulations; see \citet{welch1995introduction} for an overview. The 
Kalman filter applies only in the setting of linear Gaussian dynamics and
observations. In this setting it computes the distribution of the
state of the dynamical system, given observations, exactly; this Bayesian
perspective on the filter was highlighted in \citet{ho1964bayesian} subsequent 
to the original derivation in \citet{kalman1960new} which proceeded by
computing the best linear predictor of the state, given data. The extended 
Kalman filter (historically denoted EKF; however the acronym
ExKF is also used and is useful) was introduced in order to extend Kalman's ideas
to nonlinear problems; see the texts \citet{jazwinski2007stochastic}
and \citet{anderson2012optimal} for overviews. The extended Kalman
approach is based on a linearization approximation; it hence fails to
exactly compute the distribution of the
state of the dynamical system, given observations, in general. Furthermore
it requires propagation of covariance matrices which can be very large
for applications arising in the geosciences \citep{ghil1981applications}.

The ensemble Kalman filter (EnKF) was introduced in the celebrated paper
\citet{evensen1994sequential} which made the consequential observation that,
if an \emph{ensemble} of state estimators is employed then it can also be
used to make an approximation of the covariance. In geosciences applications
this circumvents the computation of large covariances, replacing them
instead with low rank approximations, with rank determined by the number
of ensemble members. The original paper developed the
idea in the context of ocean models, but was rapidly and concurrently
developed in a variety of geoscience application domains
\citep{van1996data,burger1998enkf, houtekamer1998enkf};
the paper \citet{SR-vanLeeuwen2020} provides a historical overview.
These methods are sometimes referred as the \emph{stochastic EnKF}: 
they require simulation of random variables to implement. 
A different class of ensemble methods, known collectively 
as \emph{ensemble square root filters}, was subsequently developed
\citep{anderson2001ensemble,
SR-whitaker2002enkf,
SR-bishop2001,
SR-hunt2007,
SR-TIPPETT03,
SR-sakov2008}; 
these methods are a form of \emph{deterministic EnKF}: 
they do not require simulation of random variables to implement.

Central to our mathematical presentation of Kalman-based methods is the
adoption of mean field and transport perspectives on the subject.
The incorporation of data within filtering constitutes (possibly approximate) application of Bayes' theorem; the papers \citet{daumetal2010}, \citet{reich2011dynamical}, \citet{SR-marzouk2012}, and the survey \citet{SR-CotterReich2013} introduce novel approaches to Bayesian inversion, rooted in transport and mean field models. While \citet{SR-marzouk2012}, \citet{spantini2019coupling} propose a direct numerical approximation of the underlying optimal transport problem, \citet{daumetal2010} and \citet{reich2011dynamical} pursue a homotopy approach. We note that the homotopy approach is closely related to iterative implementations of the EnKF, as first considered by
\citet{li2007iterative}, \citet{gu2007iterative}, and \citet{sakov2012};
these homotopy approaches lead to continuous time formulations of the EnKF in the limit of infinitely many iterations, as first considered by \citet{bergemann2010localization} and \citet{bergemann2010mollified}. Directly starting from the continuous time filtering perspective, mean field models have been introduced independently in \citet{SR-CX10,SR-meyn13}.

The key connection between mean field models and ensemble methods
is that the latter can be derived as particle approximations of
the mean field limit; this viewpoint will play a guiding role
in our presentation of the subject of ensemble methods in this paper.
In this context it is notable that the field of optimization, which is linked to Bayesian sampling through MAP estimation \citep{kaipio2006statistical}, has also seen recent development using mean field models -- see \citet{carrillo2018analytical} for an overview and unifying 
mathematical framework.

The methods covered in this survey  provide only approximate solutions to the underlying filtering, inference and/or optimization problem, in the mean field limit. The approximations
invoked are based on assuming linear Gaussian structure, where the mean field models are exact, but applying
the resulting methodology outside this regime. In the context of the optimization and Bayesian
approaches to inversion, affine invariant algorithms (introduced in \citet{goodman2010ensemble}) play an
important conceptual role in understanding the power of ensemble Kalman methods: affine
invariance can be used to show universal convergence rates  for linear Gaussian problems. 
Empirically the affine invariance confers advantages when 
ensemble Kalman methods are applied beyond the linear Gaussian setting. In practice this benefit must be
weighed against the error resulting from using ensemble Kalman methods outside the linear Gaussian setting.

Alternative methods, such as sequential Monte Carlo, can be designed to be consistent with the underlying nonlinear filtering problem, and do not rely on being exact only for linear Gaussian
problems. The books \citet{doucet2001introduction} and \citet{chopin:20} overview use of sequential Monte Carlo methods for general discrete time filtering and inference problems; the papers \citet{del1997nonlinear} and \citet{del2001stability} prove convergence of sequential Monte Carlo methods, including in some specific cases over long time horizons. 
However, sequential Monte Carlo methods suffer from a curse of dimensionality and are presently not directly applicable to high dimensional problems as arising, for example, from geophysical applications. This issue with the curse of dimensionality provides an important motivation for the ensemble methods covered in this survey. See \citet{snyder2008obstacles}, \citet{bickel2008sharp}, \citet{rebeschini2015can} and \citet{agapiou2017importance} for detailed discussion of these issues. Related issues also arise for Monte Carlo Markov chain (MCMC) when studying Bayesian inverse problems  \citep{kaipio2006statistical}. See \citet{hairer2014spectral} for an analysis of the degeneration of performance of standard MCMC methods in high dimensions, as well as analysis of special MCMC methods tailored to infinite
dimensional problems; the subject is overviewed in \citet{cotter2013mcmc}.

This completes our chronological overview of the historical context for the development of ensemble Kalman methods, and the specific mathematical context which will be our focus. Each of the four subsequent sections concludes with a bibliographic subsection in which a deeper literature review is given.

%
\subsection{Overview}
\label{ssec:O}
%

Section \ref{sec:SE} is devoted to the problem of
state estimation for discrete time (possibly
stochastic) dynamical systems, given noisy observations.
We formulate the problem from the perspectives of both control
theory and probability, and we provide a unifying approach to
algorithms for these problems; the approach rests on transport of measures and mean field stochastic dynamical systems. Ensemble Kalman methods are 
then derived as particle approximations of the mean field models. 
Section \ref{sec:CT} adopts a perspective that parallels the
previous section, but in the continuous time setting.
Ordinary differential equations (ODEs) and stochastic differential equations (SDEs) are used to describe
the state and its observation process, and mean field SDEs and ODEs, 
and related (stochastic) partial differential equations ((S)PDEs), are
used to provide the underpinnings of algorithms; 
particle approximations of the mean field systems give rise to
interacting systems of SDEs which describe ensemble Kalman
methods. The formulation in continuous time is useful
both because in some applications state estimation problems are most naturally formulated this way, and because they provide insight into discrete time algorithms, giving rise to cleaner
analysis of phenomena present in both discrete and continuous time.

Sections \ref{sec:IPDT} and \ref{sec:CTI} are devoted to the use of ensemble Kalman
methods for inverse problems, including parameter estimation, demonstrating how a useful change
of perspective opens up the use of state estimation in this broader setting.   
Sections \ref{sec:IPDT} and \ref{sec:CTI} consider discrete and continuous time respectively,
and each parallels the ideas developed for state estimation in Sections \ref{sec:SE} and \ref{sec:CT}
respectively.
\begin{figure}[h!]
    \centering
    \includegraphics[width=.8\linewidth]{./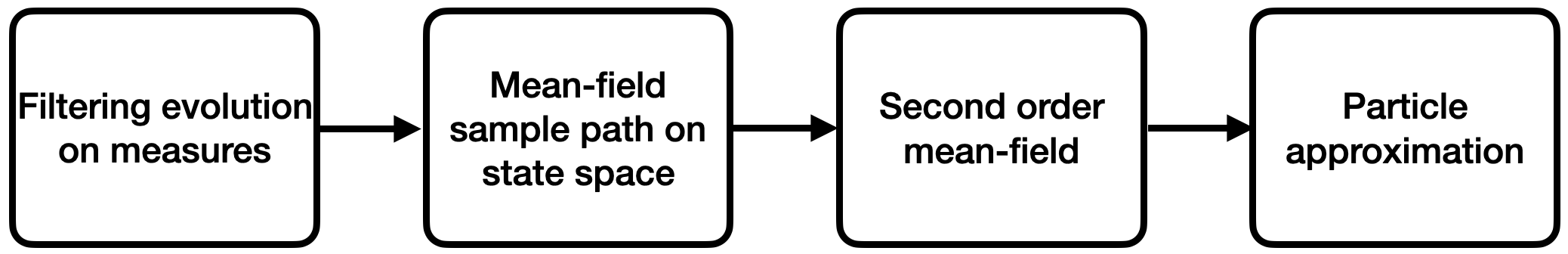}
    \caption{Organizational flow employed in each of Sections \ref{sec:SE} (discrete time) and \ref{sec:CT} (conntinuous time), concerning state estimation. Sections \ref{sec:IPDT} and \ref{sec:CTI} apply this methodology, in discrete and continuous time respectively, to inverse problems, by formulating them as state estimation problems.
    \label{fig:flow}}
\end{figure}
Sections \ref{sec:SE}, \ref{sec:CT}, \ref{sec:IPDT} and  \ref{sec:CTI} are all organized according to the flow of ideas displayed in Figure \ref{fig:flow}. 
Indeed, starting from a probabilistic perspective, that describes the evolution of the filtering distribution, it is possible to formulate mean field maps whose sample paths on state space are equal in law to the filtering evolution. Since it is 
typically not tractable to identify these maps explicitly, it is of interest to determine mean field maps whose sample paths are only \emph{approximately} equal in law to the filtering evolution. This leads to the idea of second order mean field models whose sample paths have law with first and second order moments which match those of a Gaussian approximation
of the Bayesian data incorporation step. Particle approximations of these second order mean field maps are then used to derive implementable numerical algorithms which are the ensemble Kalman methods implemented in practice.

Section \ref{sec:C} concludes, and in particular highlights
open problems in the area, of potential
interest to the mathematical community.
The appendix Section \ref{sec:AA} is devoted to pseudo-code for the 
algorithms introduced in this paper.
The appendix Section \ref{sec:AB} contains background on the underlying 
Lorenz '96 model problems that
we use throughout the paper in a number of illustrative numerical experiments.
The appendices in Sections
\ref{appendix:C} and \ref{sec:AC} provide underpinning material
on mean field maps and on stochastic integration. Section \ref{sec:AGF} contains
some observations linking different flows, in the manifold of Gaussian probability measures, that arise in
the main body of the text.

%
\subsection{Pseudo-Code}
\label{ssec:pc}
%
Pseudo-code describing several of the algorithms that we present
and deploy in this paper is given in Appendix \ref{sec:AA} \footnote{The code is available at \href{https://github.com/EdoardoCalvello/EnsembleKalmanMethods/}{https://github.com/EdoardoCalvello/EnsembleKalmanMethods/}.}. The reader is encouraged to consult Algorithms \ref{alg:3DVAR} and \ref{alg:EnKF}, 3DVAR and the ensemble Kalman filter (EnKF) respectively, in the context of the problem of state estimation for discrete time dynamical systems presented in Section \ref{sec:SE}. The scheme 3DVAR is employed in Examples \ref{ex:3dvar}, \ref{ex:s3dvar} and \ref{ex:enkf}. The ensemble Kalman filter is applied in Example \ref{ex:enkf}. Ensemble Kalman methods for inversion, as shown in Algorithms \ref{alg:EKTI}, \ref{alg:EKOI} \ref{alg:EKI_post}, are presented in Section \ref{sec:IPDT} and applied in Examples \ref{ex:EKI_1D} and \ref{ex:EKI}.

%
\subsection{Notation}
\label{ssec:N}
%

Throughout we denote the positive integers and non-negative integers
respectively by $\N=\{1,2,\cdots\}$ and $\Z^+=\{0,1,2,\cdots\},$
and the notation $\R=(-\infty,\infty)$ and $\R^+=[0,\infty)$ for the
reals and the non-negative reals. We let
$\langle \cdot, \cdot \rangle, |\cdot|$ denote the
Euclidean inner-product and norm, noting that
$|v|^2=\langle v,v \rangle.$ 
We also use $|\cdot|$ to denote the resulting induced norm on matrices. We use $:$ to denote the Frobenius inner-product between
matrices, and $|\cdot|_{\rm F}$ the induced norm on matrices.
For any function $g:\R^{d_1} \mapsto \R^{d_2}$ we denote by $Dg(v) \in \mathbb{R}^{d_2\times d_1}$ the Jacobian matrix of
first derivatives at $v\in \mathbb{R}^{d_1}$, and by $D^2g(v)[\cdot,\cdot]$ the symmetric bilinear form induced by second derivatives. We also use $\nabla$, $\nabla \cdot$ 
and $\Delta$ to denote the gradient, divergence
and Laplacian operations respectively. 

Throughout the article we will distinguish operators acting on 
infinite-dimensional spaces by using a calligraphic font.  For
example $\mathcal{G}$ will denote the operator acting on probability 
measures effecting a projection onto the nearest Gaussian; in
contrast $G:\R^{d_u}\to\R^{d_w}$ will denote the forward model 
in a generic inverse problem. We will use the $\mathsf{mathsf}$ font to distinguish
matrices and functions between Euclidean spaces that arise in
continuous time from their discrete time counterparts. For example
$\Sigmas$ will denote a covariance arising in continuous time, whilst $\Sigma$
will denote a covariance arising in discrete time; and $\hs$ will denote a
function defining the observation process in continuous time,
whilst $h$ will be the analogous function in discrete time.

We use $\bbE$ and $\bbP$ to denote expectation and probability under
the prevailing probability measure; if we wish to make clear
that measure $\pi$ is the prevailing probability measure then we
write $\bbE^\pi$ and $\bbP^\pi$. For a measure $\pi$ we denote by $T^{\sharp}\pi$ the pushforward measure induced by the map $T$. We use $\law(\rv)$ to denote the
law of random variable $\rv$. We let $\gP=\gP(\R^d)$ denote the set of probability measures on $\R^d$. Under the assumptions made in this article
we are mostly able to work with measures that have density with respect
to Lebesgue measure, and so will blur the distinction between measures and
their densities. However use of Dirac masses will occasionally be useful.

We let $\delta_{u}$ denote the Dirac mass on $\R^d$, centered
at point $u \in \R^d.$
The notation $\Ng(m,C)$ denotes the distribution of a
Gaussian random variable with mean $m$ and covariance $C$. 
We let $\gG=\mathfrak{G}(\R^d)$ denote the set of Gaussian
probability measures on $\R^d$ (including indefinite covariances
and hence all Dirac masses). 

In the following let $A,B,C$ be symmetric matrices.
We write $A \succ B$ when $A-B$ is positive definite and
$A \succeq B$ when $A-B$ is positive semi-definite.
We will also write $A \prec B$ when $B - A \succ 0$
and $A \preceq B$ when $B - A \succeq 0.$
For $C \succ 0$ (and therefore
a covariance matrix) we define 
$\langle \cdot, \cdot \rangle_{C}$ and $ |\cdot|_{C}$, the 
covariance-weighted Euclidean inner-product and norm, by
$\langle u,v \rangle_C= \left\langle u,C^{-1}v \right\rangle$
and $|v|_C^2=\langle v,v \rangle_C.$



%
%
%
%
%
%
%
%
%
%
%
%
%
%
%
%
%
%
%
%
%
%

%
\section{State Estimation: Discrete Time}
\label{sec:SE}
%

In Subsection \ref{ssec:SUSE} we provide the set-up for the
problem of state estimation in discrete time.
Subsection \ref{ssec:CT} introduces
an algorithm for this problem based on a control theoretic
perspective. Subsection \ref{ssec:PP} describes the Bayesian
probabilistic perspective; in this subsection
algorithms are not presented but 
foundations for the creation of algorithms are laid through
the decomposition of the filtering cycle into an iteration
which alternates prediction and the assimilation of data.
In Subsection \ref{ssec:GPFD} we introduce the important idea of
Gaussian projection, and the resulting Gaussian projected filter.
Subsection \ref{ssec:MFM} introduces various mean field dynamical systems 
which approximate the filtering cycle; a unifying transport map perspective is
adopted. This leads,
in Subsection \ref{ssec:EKM}, to
definitions of the ensemble
Kalman filter, the ensemble adjustment filter
and the ensemble transform filter, all derived 
as particle approximations of specific mean field models.
We conclude with bibliographic notes in Subsection \ref{ssec:BSE}
in which we review relevant literature, and include
discussion of a variety of other
algorithms for state estimation, 
and relate them to the perspective we adopt here.

We emphasize that all the mean field methods we derive in this section for the solution of filtering problems, and their continuous time counterparts in Section \ref{sec:CT}, are exact only in the linear Gaussian setting.
They may be implemented beyond this setting, however, and are empirically found to work well for many problems. Theory to justify this observation is very much needed. Our exposition provides a framework for such a theory.

%
\subsection{Set-Up}
\label{ssec:SUSE}
%

The objective of {\em sequential data assimilation} 
is to iteratively update the state of a (possibly stochastic) dynamical system based on observations and knowledge of the dynamical and observational
processes; we refer to this as {\em state estimation}. The typical setting
is one in which the initial condition is uncertain, but this uncertainty is
compensated for by (typically noisy) observations of a (possibly
nonlinear and indirect) function of the state. These observations often live in a 
space of lower dimension than the dimension of the state space meaning
that the goal of state estimation goes beyond denoising, and into the
realm of control theoretic considerations such as observability.

A useful starting point is to consider a stochastic
dynamical system in which the evolution of the state, 
and the relationship between the observations 
(which we also refer to as data) and the state,
are defined, respectively, by the equations
\begin{subequations}
\label{eq:sd}
\begin{empheq}[box=\widefbox]{align}
v_{n+1} &= \Psi(v_n) + \xi_n, \\
y_{n+1} &= h(v_{n+1}) + \eta_{n+1},
\end{empheq}
\end{subequations}
taken to hold for all $n \in \Z^+.$
We assume that, for each fixed $n \in \Z^+$, the \emph{state} $v_n \in \R^{d_v}$ and the
\emph{observations} $y_n \in \R^{d_y}.$
The maps $\Psi(\cdot)$ and $h(\cdot)$ describe the systematic,
deterministic components of the dynamics and observation processes, and are assumed to be known measurable functions (with respect to the Borel algebra), bounded on compact sets. The initial condition for $v_0$ is assumed random and the systematic components of the model
are subjected to mean zero noise, $\xi_n$ and $\eta_{n+1}$. 
To be concrete we assume  $v_0,  \{\xi_n\}_{n \in \Z^+}$
and $\{\eta_n\}_{n \in \N}$ are mutually independent
Gaussians defined by
\begin{equation}
\label{eq:ga}
v_0 \sim \Ng(m_0, C_0), \quad 
\xi_n \sim \Ng(0, \Sigma) \,\, \text{i.i.d.}, \quad
\eta_n \sim \Ng(0, \Gamma) \,\, \text{i.i.d.}\,\,.
\end{equation}

In practice we will have available to us the observation
coordinates of a specific true realization
of the random dynamical system \eqref{eq:sd}, 
from which we wish to recover
the specific true 
realization of the state that gave rise to these observations.
We denote the true realizations of the state of the system by sequence $\{\vd_n\}_{n \in \Z^+}$,
and the observed data by $\{\yd_n\}_{n \in \N}.$ These
are generated by $\vd_0, \{\xid_n\}_{n \in \Z^+}$ and
$\{\etad_n\}_{n \in \N}$, specific realizations of the
initial condition and state and observational noise
from the distribution defined by \eqref{eq:ga}.

We let $\Yd_n=\{\yd_\ell\}_{1 \le \ell \le n}.$ 
Our objective is to estimate 
the state $\vd_n$ at time $n$ from data $\Yd_n$. 
Specifically, it is natural to think about this
design objective in two different ways:

\begin{itemize}
\item Objective 1: design an algorithm producing output
$v_n$ from $\Yd_n$ so that
$\{v_n\}_{n \in \Z^+}$ estimates
$\{\vd_n\}_{n \in \Z^+}$, the true state generated 
by (\ref{eq:sd}a).  
\item Objective 2: design an algorithm which estimates
the distribution of random variable $v_n|\Yd_n$.
\end{itemize}

In both cases we are interested in \emph{Markovian}
formulations which update the estimate $v_n$, or the
distribution $v_n|\Yd_n$, sequentially as the data is
acquired. In the next two subsections we describe control theoretic 
and probabilistic approaches to this problem which, respectively,
provide the basis for algorithms addressing Objectives 1 and 2.
We note that fulfilling Objective 2 immediately implies resolution
of Objective 1, for example by taking the mean of $v_n|\Yd_n$ as state estimator; 
but the reverse is not typically true. However, Objective 1 is 
easier to address and is especially relevant when noise levels are
small; furthermore, its failure modes serve to motivate approaches which are used to address Objective 2.

%
\subsection{Control Theory Perspective}
\label{ssec:CT}
%

A very natural idea from control theory underlies ensemble Kalman filtering
and is encapsulated in the following algorithmic approach. This 
way of attacking state estimation is most appropriate when $|C_0|,|\Gamma|$
and $|\Sigma|$ are small so that the state and observations
are close to deterministic. To understand this setting we will first study algorithms
in which the covariances $\Gamma$ and $\Sigma$ are set to zero, and then return to the inclusion of noise later.

The algorithmic idea works as follows:
from current state estimate $v_n$, given $\Yd_n$, {\em predict} the
outcome of the model and data, denoted by $(\hv_{n+1},\hh_{n+1})$, 
using the update equations \eqref{eq:sd}, but ignoring the noise; 
then {\em correct} the state estimate by nudging the prediction using
the mismatch between observed and predicted data $(\yd_{n+1},\hh_{n+1})$. This
results in a deterministic map $v_n \mapsto v_{n+1}$,
assumed to hold for all $n \in \Z^+$, of the following form:
\begin{subequations}
\label{eq:sd2}
\begin{empheq}[box=\widefbox]{align}
\hv_{n+1} &= \Psi(v_n),  \\
\hh_{n+1} &= h(\hv_{n+1}), \\
v_{n+1} &= \hv_{n+1}+K(\yd_{n+1}-\hh_{n+1}). \label{eq:sd2c}
\end{empheq}
\end{subequations}
Equations (\ref{eq:sd2}a, \ref{eq:sd2}b) create predicted state and data from current estimate $v_n$ of the state. The
difference between the predicted data $\hh_{n+1}$ and the true data $\yd_{n+1}$, the latter found from a fixed realization of \eqref{eq:sd}, is then used to correct the predicted state resulting in (\ref{eq:sd2}c). We can write the algorithm compactly in the form
\begin{empheq}[box=\widefbox]{equation}
\label{eq:sd2_add}
v_{n+1} = \Psi(v_n)+K\Bigl(\yd_{n+1}-h\bigr(\Psi(v_n)\bigr)\Bigr). 
\end{empheq}
Choice of the {\em gain matrix} $K$ 
completes definition of an algorithm which we refer to
as 3DVAR.

\begin{remark}
\label{rem:3dvar}
The nomenclature ``3DVAR'' was coined in the geophysics community, and stands for three dimensional variational data assimilation. This
is natural for algorithms which incorporate spatially distributed data
sequentially in time, in the context where
the state variable $v$ varies across three spatial coordinates. In our
setting the state variable $v$ is not required to have any spatial structure, but the control formulation (\ref{eq:sd2_add}) reproduces the 3DVAR algorithm from the geophysics community when it does. Hence we still refer to it as 3DVAR. We also note that the method is perhaps more properly termed Cycled 3DVAR -- ``3DVAR'' refers to the assimilation of data at each observation time, and ``Cycled'' to doing this sequentially in time as successive observations are acquired. 
Pseudo-code for 3DVAR may be found as 
Algorithm \ref{alg:3DVAR} in Appendix \ref{sec:AA}. 
The variant on 3DVAR which uses data distributed over several times
steps is known as 4DVAR; see bibliography Subsection \ref{ssec:BSE}.

We note that the difference between the observed value $\yd_{n+1}$ and its estimator $\hh_{n+1}=h\bigr(\Psi(v_n)\bigr)$ is often referred to as the {\em innovation}, and for this we introduce the notation
\begin{equation}
\label{eq:innovation_c}
\mathfrak{I}_n = \yd_{n+1}-\hh_{n+1}.
\end{equation}
$\blacksquare$
\end{remark}

To illustrate 3DVAR we consider the linear setting:

\begin{example}
\label{ex:control}
Assume that, for matrices $M,H$ of appropriate dimensions,
\begin{equation}
\label{eq:linearsd}
\Psi(\cdot):=M\cdot, \quad h(\cdot)=H\cdot
\end{equation}
and consider the setting in which there is no noise present.
The 3DVAR algorithm \eqref{eq:sd2_add} is appropriate in
this setting and reduces to 
\begin{align}
\label{eq:one}
v_{n+1} &= Mv_n+K(\yd_{n+1}-HMv_{n}). 
\end{align}
Since there is no noise present it follows that
$\yd_{n+1}=H\vd_{n+1}=HM\vd_n$ and hence that
\begin{align}
\label{eq:two}
\vd_{n+1} &= M\vd_n+K(\yd_{n+1}-HM\vd_{n}). 
\end{align}
Subtracting \eqref{eq:two} from \eqref{eq:one} and defining
$e_n=v_n-\vd_n$ we find that
\begin{align*}
e_{n+1} &= (I-KH)Me_n. 
\end{align*}
Since the goal of data assimilation is to recover the true state from partial
observations we wish to drive $e_n$ to zero as $n \to \infty.$ Thus a key
question, for given forward dynamical model $M$ and observation operator $H$,
is whether it is possible to design $K$ to ensure that the spectrum of $(I-KH)M$
is inside the unit circle. Such questions are at the heart of the theory of linear
control. Thus the subject of control theory is fundamentally aligned with
Objective 1.
$\blacksquare$
\end{example}

\vspace{0.1in}

In the following example we illustrate similar ideas to those from the
preceding, linear, example but within the nonlinear context. In subsequent 
subsections we will show how the ideas can be generalized to an adaptive
gain matrix $K_n$, leading us to the ensemble Kalman methodology and
to addressing  both Objectives 1 and 2.

\begin{example}
\label{ex:3dvar}
To illustrate the 3DVAR algorithm \eqref{eq:sd2_add} 
for state estimation we consider the Lorenz '96 (singlescale) 
model from Appendix \ref{sec:AB}.
The unknown
$v \in C(\R^+,\R^L)$ satisfies the equations 
\begin{subequations}
  \label{eq:l96}
\begin{align}
\dot{v}_\ell &=  -v_{\ell-1}( v_{\ell-2} - v_{\ell+1}) - v_{\ell} + F + h_v m\bigl(v_\ell\bigr), \quad
\ell=1 \dots L\,,\\
v_{\ell + L} &= v_\ell, \quad \ell=1 \dots L\,. 
\end{align}
\end{subequations}
Here we set $L = 9, h_v=-0.8$ and $F=10.$ 
Function $m$ is  shown in Figure \ref{fig:multiscale_m}. 
\footnote{We note that setting $h_v = 0$ in (\ref{eq:l96}) leads to the standard singlescale Lorenz'96 model. We have $h_v \ne 0$ leading to a nonstandard
version of the model. However the specific choice of function $m(\cdot)$ 
does not make any material difference to what is presented in this example; 
any function $m(\cdot)$ for which the equation is well-posed will lead to 
similar conclusions.  However the specific choice of $m(\cdot)$ shown in
Figure \ref{fig:multiscale_m} is relevant within Example \ref{ex:3dvar_ms}.} 

\begin{figure}[h!]
\centering
\includegraphics[width=\linewidth]{./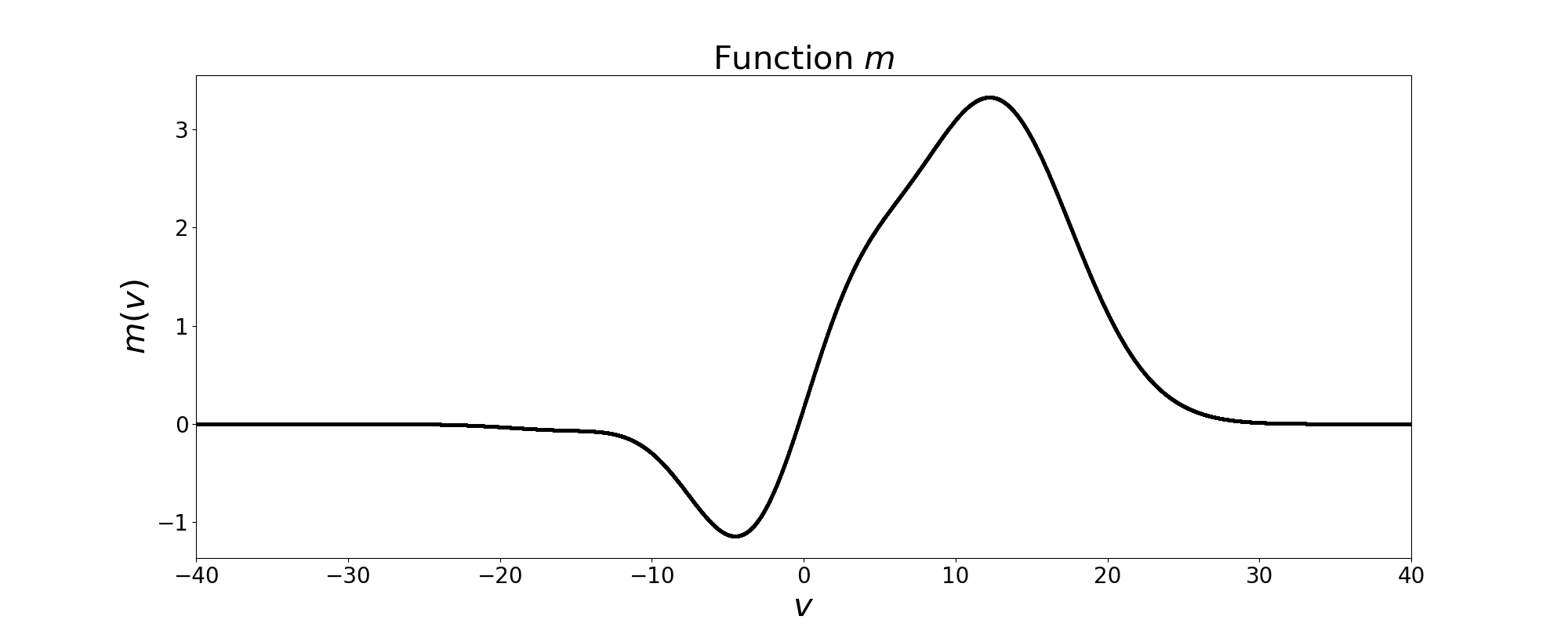}
\caption{Graph of function $m$ appearing in Lorenz '96 model \eqref{eq:l96}.}
\label{fig:multiscale_m}
\end{figure}

We let $\Psi$ denote the solution operator for \eqref{eq:l96} 
over the observation time interval $\tau$, suppresing the
explicit dependence on $\tau$ for notational convenience.
We emphasize that,
at the parameter values we have chosen, the solution to \eqref{eq:l96} is chaotic and exhibits sensitivity to perturbations.
Prediction is thus challenging. But we will show that use of data
enables accurate prediction.

We consider observations $\{\yd_n\}_{n \in \Z^+}$ arising from the model 
\begin{align*}
\label{eq:singlescale_experiment_dynamics}
\vd_{n+1} &= \Psi(\vd_n) + \xid_n, \\
\yd_{n+1} &= h(\vd_{n+1}) + \etad_{n+1},
\end{align*}
where $\{\xid_n\}_{n \in \Z^+}$, $\{\etad_n\}_{n \in \N}$ 
are mutually independent Gaussian sequences defined by
\begin{equation*}
\label{eq:ga_1}
\xid_n \sim \Ng(0, \sigma^2 I) \,\, \text{i.i.d.}, \quad
\etad_n \sim \Ng(0, \gamma^2 I) \,\, \text{i.i.d.}\,\,.
\end{equation*}
Because of the chaotic nature of the dynamical system defined
by iteration of $\Psi$, a key question concerning the problem of determining
the state $\vd_n$ from $\Yd_n$ is whether the observations compensate for
the sensitive dependence of the state evolution, enabling accurate recovery
of the state; and whether there is then a choice of $K$ in 3DVAR which enables
this data to be used to accurately recover the state.

We assume that the observation function is linear: $h(v)=Hv$ for 
matrix $H:\R^9 \to \R^6$ defined by 
\begin{equation}
\label{eq:L96H}
Hv=(v_1,v_2,v_4,v_5,v_7,v_8)^\top.
\end{equation}
The 3DVAR algorithm \eqref{eq:sd2} reduces, 
in the setting of this example, 
to the mapping
\begin{equation}
\label{eq:3DVARL96}
v_{n+1}=(I-KH)\Psi(v_n)+K\yd_{n+1}.
\end{equation}
We define the filter by choosing gain $K:\R^6 \to \R^9$ to be
\begin{equation}
\label{eq:L96K}
Kw=(w_1,w_2,0,w_3,w_4,0,w_5,w_6,0)^\top.
\end{equation}

To interpret the algorithm, and motivate the choice of $K$, given $H$,
notice that
\begin{subequations}
\label{eq:L96KH}
\begin{align}
KHv&=(v_1,v_2,0,v_4,v_5,0,v_7,v_8,0)^\top,\\
(I-KH)v&=(0,0,v_3,0,0,v_6,0,0,v_9)^\top,\\
HK&=I.
\end{align}
\end{subequations}
Applying the observation map $H$ to the recursion \eqref{eq:3DVARL96} 
and using (\ref{eq:L96KH}c) we find that
\begin{equation*}
    \label{eq:test}
Hv_{n+1}= \yd_{n+1}= H\bigl(\Psi(\vd_n)+ \xid_n\bigr)+\etad_{n+1}.
\end{equation*}
Assuming that $\sigma^2$ and $\gamma^2$ are small and neglecting
the noise contributions shows that
\begin{equation*}
    \label{eq:test1}
\yd_{n+1} \approx H\Psi(\vd_n) \approx H \vd_{n+1}.
\end{equation*}
Thus, ignoring small noise perturbations, $\yd_{n+1} \approx H \vd_{n+1}.$  
Then, using (\ref{eq:L96KH}a) and (\ref{eq:L96KH}b) we see that
the algorithm \eqref{eq:3DVARL96} has the very natural (approximate) 
interpretation of iterating using the model $\Psi$ to update
the unobserved components and using the observed true state to update the observed components. This explains why the specific choice of $K$ is reasonable.

Because of this interpretation it is natural to study the 3DVAR algorithm, 
for this example, by displaying the output of 3DVAR on one of the unobserved components, and comparing with the truth; the key question is whether the
observed components induce synchronization of 3DVAR with the truth in the
unobserved components. Thus, in the following numerical experiments,
we display component $v_3.$

\begin{figure}[h!]
\centering
\begin{subfigure}{\textwidth}
  \centering
  \includegraphics[width=1\linewidth]{./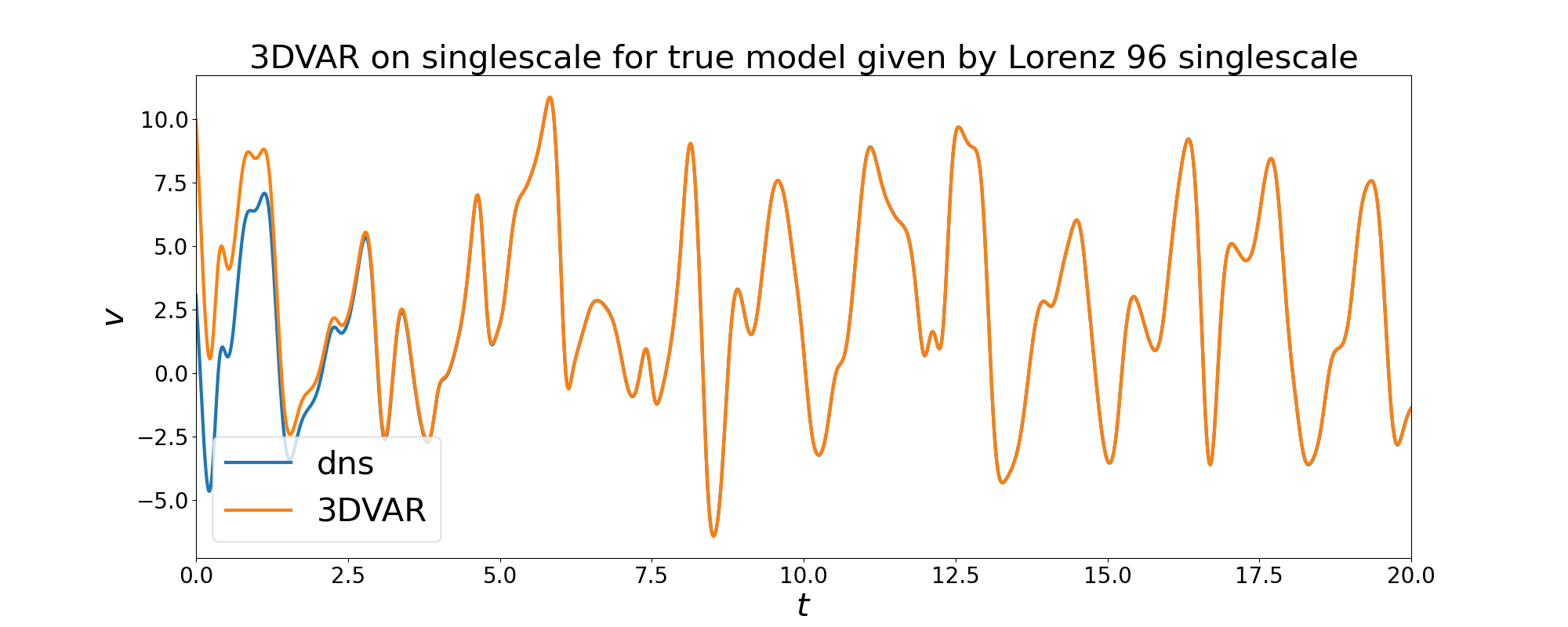}
  \caption{}
  \label{fig:single_dt_s}
\end{subfigure}
\begin{subfigure}{\textwidth}
  \centering
  \includegraphics[width=1\linewidth]{./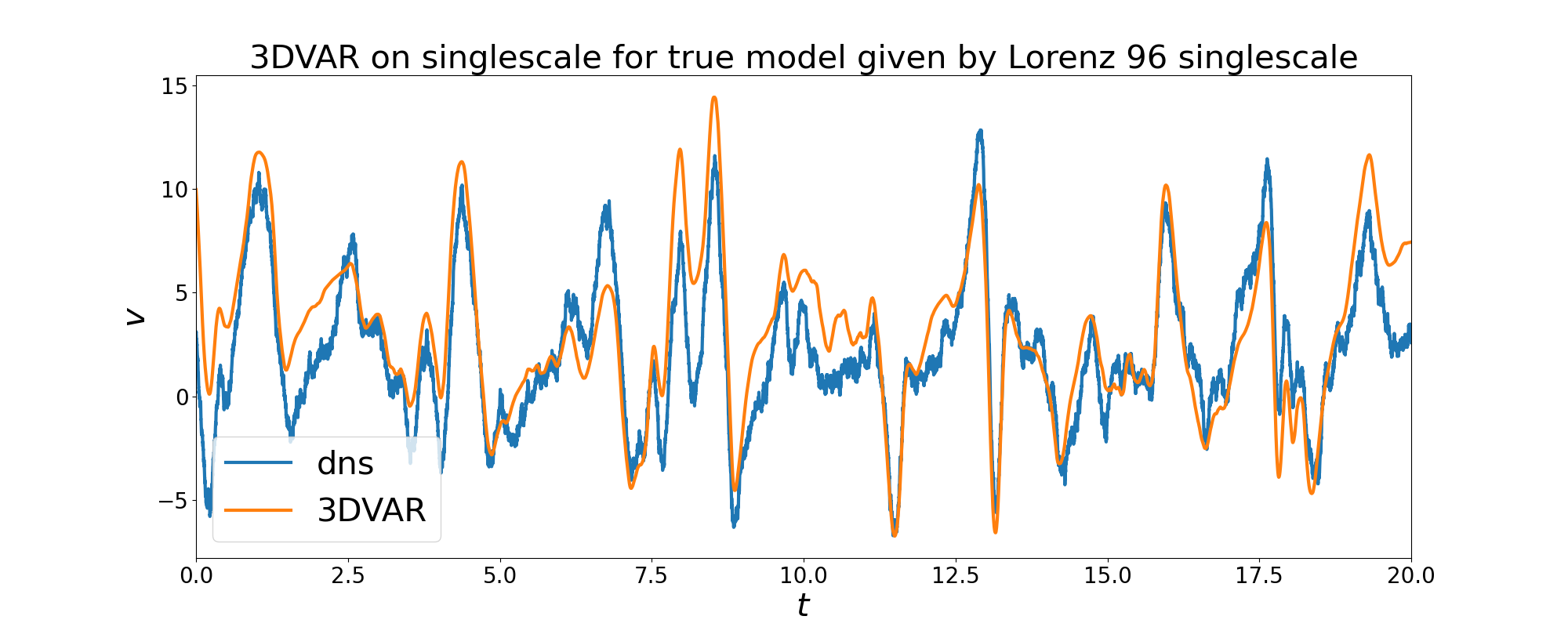}
  \caption{}
  \label{fig:single_dt_b}
\end{subfigure}
\caption{In both (a) and (b) the estimates of $v_3$ in time produced by 3DVAR using observation time interval $\tau =10^{-3}$ are displayed and compared with dns. 
In (a) $\sigma $ and $ \gamma $ are set to $10^{-3}$, while in (b) to $10^{-1}$. The acronym ``dns'' refers to 
direct numerical simulation of a true trajectory of the 
chaotic dynamical system. 
In both cases, it is noteworthy that 3DVAR is able to synchronize with the dns even though it is initialized far from the true initial condition. This is an example of data assimilation overcoming sensitive dependence in a chaotic system.}
\label{fig:single_dt}
\end{figure}

\begin{figure}[h!]
\centering
\begin{subfigure}{\textwidth}
  \centering
  \includegraphics[width=1\linewidth]{./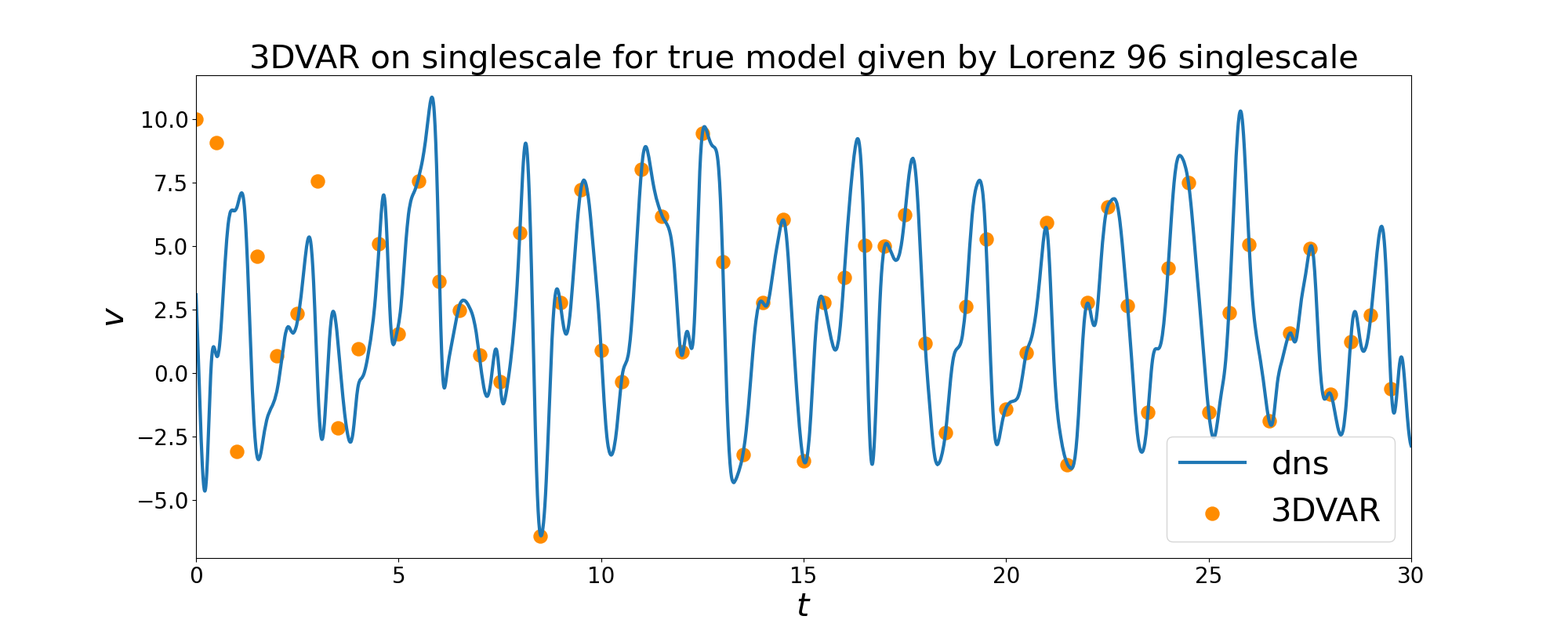}
  \caption{}
  \label{fig:single_dt_smallnoise}
\end{subfigure}
\begin{subfigure}{\textwidth}
  \centering
  \includegraphics[width=1\linewidth]{./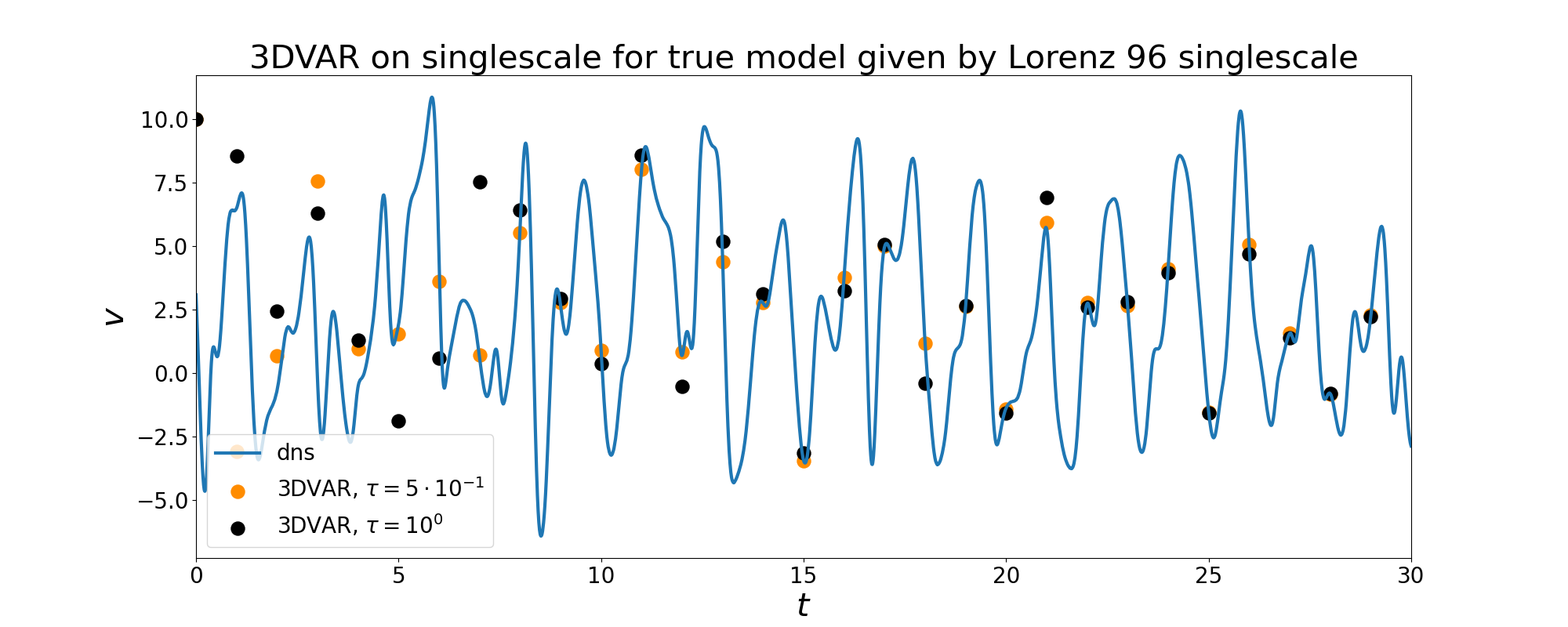}
  \caption{}
  \label{fig:single_dt_bignoise}\end{subfigure}
\caption{In both (a) and (b) the noise standard deviations $\sigma $ and $ \gamma $ are set to $10^{-3}$. In (a) we display the estimates of $v_3$ in time produced by 3DVAR using observation time interval $\tau =5 \cdot 10^{-1}$, compared with dns; (b) displays the estimates obtained at unit time using assimilation at $\tau =5 \cdot 10^{-1}$ and $\tau =10^{0}$. Again the acronym ``dns'' refers to 
direct numerical simulation of the true chaotic dynamics.
 3DVAR successfully synchronizes with the dns 
at the smaller value of $\tau$ but fails to do as well when the observation time interval $\tau$ is larger.}
\label{fig:single_tau}
\end{figure}

Figure \ref{fig:single_dt} illustrates the foregoing intuition about the
behavior of 3DVAR in experiments conducted with the choice $\tau =10^{-3}$. For small noise with standard deviations of size $10^{-3}$ we observe in
Figure \ref{fig:single_dt_s} the phenomenon of
near perfect synchronization of the 3DVAR output with the truth. Although not
shown here, this synchronization occurs in all components of the solution, observed
and unobserved, and thus the entire state of 3DVAR synchronizes with the truth, up to a small error on the scale of the noise. The algorithm thus produces an accurate estimate of the true state. In Figure \ref{fig:single_dt_b}
larger state and observational noise, of standard deviation  $10^{-1}$, is considered.
In this scenario 3DVAR still captures the correct trend of the true dynamics, 
but there are clear overshoots and undershoots in the estimates; this occurs because the
noise is larger than in Figure \ref{fig:single_dt_s} and because noise is not accounted for in the 3DVAR algorithm.

It is intuitive that the synchronization phenomena studied above will depend
not only on the size of the noise, but also on the observation time intervals $\tau.$ Figure \ref{fig:single_tau} illustrates the effect of varying $\tau$, in the setting where the noise standard deviation is $10^{-3}.$ 
The simulations show that, for the larger value of $\tau$, 3DVAR does not estimate the true state as accurately as for the smaller value.
$\blacksquare$ 
\end{example}

\vspace{0.1in}

\begin{remark}
\label{rem:what}
A common source of error in application of data assimilation
algorithms in practice arises from the fact that the data is not produced by the mathematical model used for assimilation. This is known as 
\emph{model misspecification}. For the majority of this paper we will
make the \emph{perfect model} assumption, avoiding the model misspecification
issue. However we do illustrate model misspecification 
in Example \ref{ex:3dvar_ms}. In that example we assimilate data produced
from the Lorenz '96 multiscale model, but we use the Lorenz '96 singlescale 
model \eqref{eq:l96} as the basis for 3DVAR; thus the model used for
assimilation differs from (but is close to) the model generating the data. 
The relationship between the multiscale and singlescale models is detailed in 
Appendix \ref{sec:AB}, where Example \ref{ex:3dvar_ms} may also be found.
$\blacksquare$
\end{remark}

\vspace{0.1in}

The control theoretic approach of 3DVAR addresses
Objective 1. However, when $|C_0|,|\Gamma|$
and $|\Sigma|$ are no longer necessarily small, so that the state and observations are subject to noise, it is natural to try and
generalize the approach to address Objective 2, and include non-zero
covariances; Figure \ref{fig:single_dt_b} from Example \ref{ex:3dvar} 
shows that this may indeed be needed
when the noise is larger. A natural
stochastic generalization of the observer approach is as follows:
from current state estimate $v_n$, given $\Yd_n$, predict the
outcome of the model and data from the update equations \eqref{eq:sd},
which we denote by $(\hv_{n+1},\hy_{n+1})$; 
then correct the state estimate by nudging the mean of the
prediction using
the mismatch between observed and predicted data $(\yd_{n+1},\hy_{n+1})$. Given $v_n$ computed from $\Yd_n$ this
results in state estimate $v_{n+1}$ from $\Yd_{n+1}$ defined 
through the following stochastic dynamical system, assumed to
hold for all $n \in \Z^+:$
\begin{subequations}
\label{eq:sd2n}
\begin{empheq}[box=\widefbox]{align}
\hv_{n+1} &= \Psi(v_n) + \xi_n, \\
\hy_{n+1} &= h(\hv_{n+1}) + \eta_{n+1}, \\
v_{n+1} &= \hv_{n+1}+K_n (\yd_{n+1}-\hy_{n+1}); \label{eq:sd2cn}
\end{empheq}
\end{subequations}
here $v_0, \xi_n$ and $\eta_{n+1}$ are random variables given by the known distributions in \eqref{eq:ga}. Note that the
innovation $\mathfrak{I}_n$ is now modified 
from \eqref{eq:innovation_c} to read 
\begin{equation}
\label{eq:innovation_p}
\mathfrak{I}_n = \yd_{n+1}-\hy_{n+1}.
\end{equation}
The data $\{\yd_{n+1}\}_{n \in \Z^+}$ is a fixed realization
of \eqref{eq:sd}. Given this data,
equations \eqref{eq:sd2n} then define a random map $v_n \mapsto v_{n+1}$
which uses knowledge of the model and the observed data to
update our state estimate. The predicted state and data $(\hv_{n+1},\hy_{n+1})$
are also referred to as the {\em simulated state and data}.
Choice of the {\em gain matrix} $K_n$ 
will complete definition of an algorithm. The key question
we proceed to study in subsequent sections 
concerns the choice of $\{K_n\}_{n \in \Z^+}$ when addressing Objective 2. Before
doing so we build on Example \ref{ex:3dvar}, studying the effect of 
noise on the 3DVAR algorithm, which is aimed at addressing Objective 1, and for which $K_n=K$ is constant. The resulting Example
\ref{ex:s3dvar} demonstrates the need for an adaptive choice of $K_n$ 
when noise is significant; it serves to motivate algorithms which address Objective 2.

\begin{example}
 \label{ex:s3dvar}
We return to the setting of Example \ref{ex:3dvar} and consider the effect of including noise in 3DVAR as in \eqref{eq:sd2n}. 
We again study the Lorenz '96 singlescale model \eqref{eq:l96} for unknown
$v \in C(\R^+,\R^L)$, with $L = 9, h_v=-0.8$ and $F=10.$ 
We let $\Psi$ denote the solution operator for \eqref{eq:l96} over the observation time interval $\tau$ and consider observations $\{\yd_n\}_{n \in \Z^+}$ defined by
\begin{align*}
\label{eq:singlescale_experiment_dynamics_noisy1}
\vd_{n+1} &= \Psi(\vd_n) + \xid_n, \\
\yd_{n+1} &= h(\vd_{n+1}) + \etad_{n+1},
\end{align*}
where $\{\xid_n\}_{n \in \Z^+}$, $\{\etad_n\}_{n \in \N}$ are mutually independent Gaussian sequences
\begin{equation*}
\label{eq:ga_33}
\xid_n \sim \Ng(0, \sigma^2 I) \,\, \text{i.i.d.}, \quad
\etad_n \sim \Ng(0, \gamma^2 I) \,\, \text{i.i.d.}\,\,,
\end{equation*}
with $\sigma = 0.1$ and $\gamma = 0.1$. We again assume that the observation function is linear: $h(v)=Hv$ for 
matrix $H:\R^9 \to \R^6$ defined by \eqref{eq:L96H}. As in Example \ref{ex:3dvar} we choose fixed gain $K_n \equiv K$ with $K:\R^6 \to \R^9$  defined by \eqref{eq:L96K}. 

Figure \ref{fig:noisy_3DVAR} illustrates that this
version of noisy 3DVAR produces trajectories that resemble the true signal better
than noise-free 3DVAR \eqref{eq:3DVARL96}. 
However the noisy 3DVAR does not perform better than the noise-free 
3DVAR, in this setting where true state and true 
observation noise levels are high, in a quantitative sense. To demonstrate this, 
we compute the mean squared error between the estimates arising from 
the 3DVAR algorithm \eqref{eq:3DVARL96} and the true states, and the 
mean squared error between the estimates obtained using noisy 3DVAR algorithm
\eqref{eq:sd2n} and the true states. 
Given either method the error $e$ is computed using the following formula, 
in which, recall, $\{\vd_n\}$ is the truth and
$\{v_n\}$ is the output of the 3DVAR or noisy 3DVAR algorithm:
\begin{equation}
\label{eq::MSE_error}
    e = \frac{1}{N\cdot d_v}\sum_{n=1}^N \big|\vd_{n^*+n}-v_{n^*+n}\big|^2.
\end{equation}
Time $t^*=n^* \tau$ is chosen to remove
error from the incorrect initialization, and focus on quantifying
error in statistical steady state, after synchronization.
Here $N$ is such that $(n^*+N)\tau=T$ 
and $T-t^*$ is chosen large enough to allow time-averaging over a long enough window 
to capture the statistical steady state. In this case, we compute 
this average for the estimates obtained after time $t^*=3$, up to $T=20$, 
and find errors $e_{\text{3DVAR}} = 1.65\cdot 10^0$ and 
$e_{\text{noisy3DVAR}} = 3.30\cdot 10^{0}$. 
Clearly use of noisy 3DVAR does not improve the error, 
in comparison with noise-free 3DVAR. 
$\blacksquare$ 
\end{example}

\begin{figure}[h!]
\centering
\includegraphics[width=\linewidth]{./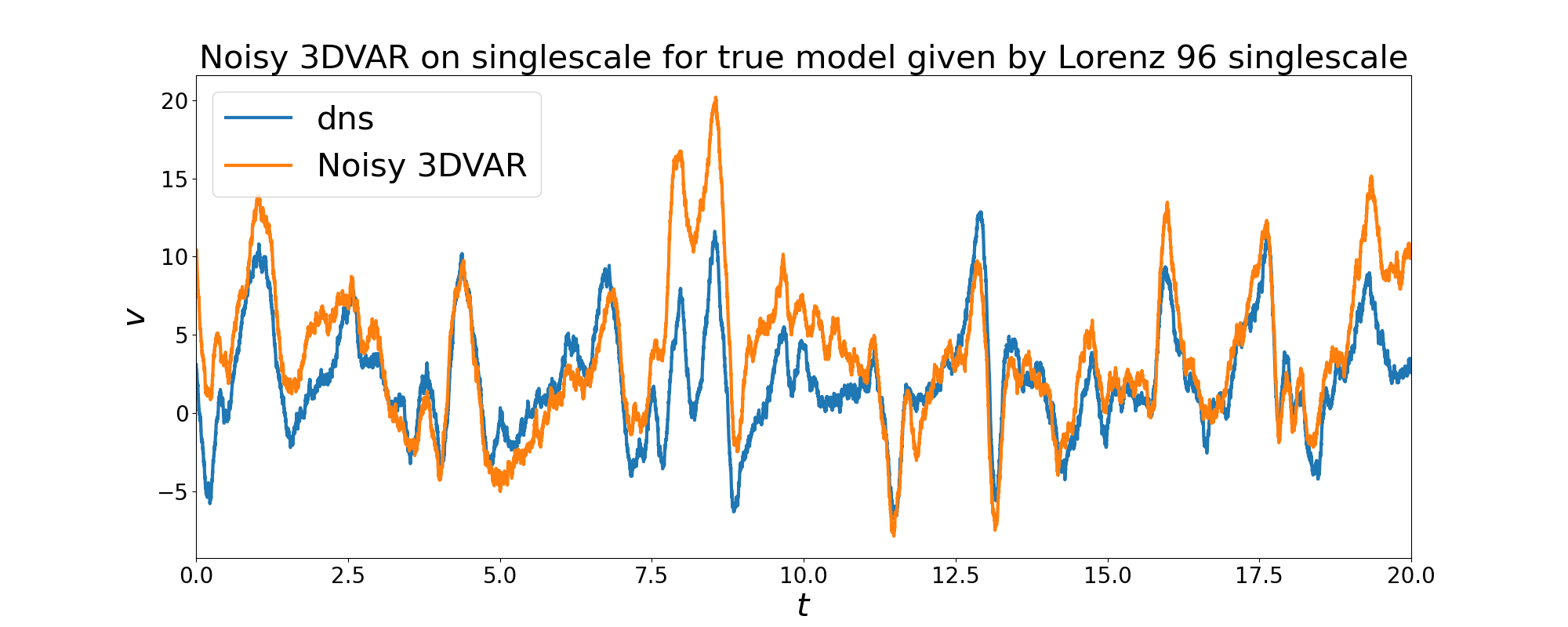}
\caption{In this experiment we set the noise levels $\sigma = 10^{-1}, \gamma = 10^{-1}$. Again the acronym ``dns'' refers to direct numerical simulation. We display the estimates of $v_3$ in time produced by noisy 3DVAR against the true dynamics using observation time interval $\tau = 10^{-3}$. This should be compared with Figure \ref{fig:single_dt}b in which the noise-free 3DVAR is deployed to solve the same problem. Notice that adding noise to 3DVAR has not improved the recovery of the true trajectory. However qualitatively the output of 3DVAR now resembles the true signal more closely.}
\label{fig:noisy_3DVAR}
\end{figure}

\vspace{0.1in}

Example \ref{ex:s3dvar}  highlights the need to quantify uncertainty and pass to a probabilistic interpretation (Objective 2) of the filtering problem;
and, in particular, to make an informed choice of adaptive gain
matrices $K_n.$ We turn to the
probabilistic interpretation in the next subsection; in subsequent
subsections we derive algorithms, leading in particular to a specific choice of adaptive gain matrices. 

%
\subsection{Probabilistic Perspective}
\label{ssec:PP}
%

We have shown that the 3DVAR methodology can recover the state of a
(possibly chaotic) dynamical system, even though the initial condition is not known, by exploiting the observations. However 3DVAR does not quantify uncertainty in the state estimate; it is derived from a purely control theoretic perspective. We now introduce
a probabilistic perspective which enables us to address the issue
of uncertainty quantification. Subsection \ref{sssec:ucd} discusses
the unconditioned dynamics from the perspective of evolution of probability densities.
In Subsection \ref{sssec:tfd} we define the filtering distribution and describe this
from the perspective of evolution of probability densities.
Subsection \ref{sssec:SPP} introduces the \emph{sample path perspective}
on algorithms for filtering, a central idea
in this paper. In Subsection \ref{sssec:IN} we establish some  notation, used throughout the
sequel, that is important for the reader to internalize.

\subsubsection{Unconditioned Dynamics}
\label{sssec:ucd}

To open our development of the probabilistic perspective we first consider
the unconditioned dynamics on state $\{v_n\}_{n \in \Z^+}$ defined by (\ref{eq:sd}a).
We let $r_n$ denote the probability density of random variable $v_n$
and derive an evolution equation for $r_n$. Irrespective of whether 
the evolution of the state $\{v_n\}_{n \in \Z^+}$
defined by (\ref{eq:sd}a) is linear, the evolution of $\{r_n\}_{n \in \Z^+}$ is linear. Furthermore the evolution of the state and its probability density are uncoupled from one another. The
evolution of the probability density implied by  (\ref{eq:sd}a) is given by
\begin{subequations}
\label{eq:ops0}
\begin{align}
r_{n+1}&=\op Pr_n,\\
(\op Pr)(\dd v)&=\Bigl(\int_{u \in \R^{d_v}} p(u,v)r(\dd u)\Bigr)\dd v,\\
p(u,v)&=\frac{1}{(2\pi)^{d_v/2} \sqrt{{\rm det}(\Sigma)}} \exp \Bigl(-\frac12|v-\Psi(u)|_{\Sigma}^2\Bigr).
\end{align}
\end{subequations}
Thus, in particular, $r_n$ evolves in time $n$ through application of a linear integral
operator. We refer to $\{v_n\}_{n \in \Z^+}$ defined by (\ref{eq:sd}a) as a \emph{Markov process.}
The situation when we condition the state on observations is different, leading
to nonlinear evolution of densities.

\subsubsection{The Filtering Distribution}
\label{sssec:tfd}

Here we introduce the {\em filtering distribution} with density $\mu_n$: 
the distribution of the conditioned random variable $v_n|\Yd_n$. This 
captures the knowledge of the state of the system, 
and uncertainties in the state, given the observations. 
To understand how uncertainty in estimates of the state evolves
it is thus important to understand how $\mu_n$ evolves with $n$.
Unlike the unconditioned dynamics, this conditioned dynamics has a \emph{nonlinear} structure.
Nonlinear evolution equations arise in filtering through the incorporation of data. This
nonlinearity renders filtering a challenging mathematical and computational problem.
To define this evolution we first define the linear operator
\begin{subequations}
\label{eq:ops2}
\begin{align}
(\op Q\mu)(\dd v,\dd y)&=q(v,y)\mu(\dd v)\dd y,\\
q(v,y)&=\frac{1}{(2\pi)^{d_y/2} \sqrt{{\rm det}(\Gamma)} }
\exp \Bigl(-\frac12|y-h(v)|_{\Gamma}^2\Bigr).
\end{align}
\end{subequations}
Then we define the $n-$dependent family of two nonlinear  operators
\begin{align}
\label{eq:ops}
(\op B_n \nu)(\dd v) &=\int_{y \in \R^{d_y}} \delta_{\yd_{n+1}}(y)\nu(\dd v,\dd y)\Big/\Bigl(\int_{(v,y) \in \R^{d_v} \times \R^{d_y}} \delta_{\yd_{n+1}}(y)\nu(\dd v,\dd y)\Bigr),\\
\op L_n(\mu)(\dd v)&=q(v,\yd_{n+1})\mu(\dd v)\Big/\Bigl(\int_{v \in \R^{d_v}}q(v,\yd_{n+1})\mu(\dd v)\Bigr).
\end{align}

The nonlinear map $\mu_n \mapsto \mu_{n+1}$ is most easily described by first introducing 
$\hmu_{n+1}$, the distribution of $v_{n+1}|\Yd_n$ and 
$\nu_{n+1}$, the distribution of $(v_{n+1},y_{n+1})|\Yd_n.$ 
The map from $\mu_n$ to $\hmu_{n+1}$ is determined by equation 
(\ref{eq:sd}a)  and is {\em linear} as a map from
the space of probability measures defined on $\R^{d_v}$ into 
itself as discussed in the preceding subsection; the map from $\hmu_{n+1}$ to $\nu_{n+1}$
is defined by (\ref{eq:sd}b), and is also {\em linear}, now as a map from
the space of probability measures defined on $\R^{d_v}$ into 
the space of probability measures defined on $\R^{d_v+d_y}$;
the map from $\nu_{n+1}$ to $\mu_{n+1}$ is defined by conditioning
$\nu_{n+1}$ on $\yd_{n+1}$ and is {\em nonlinear} as a map
from the space of probability measures defined on $\R^{d_v+d_y}$ 
into the space of probability measures defined on $\R^{d_v}.$ 
Using the preceding definitions of linear and nonlinear operators we have
\begin{subequations} 
\label{eq:DA}
\begin{empheq}[box=\widefbox]{align}
\hmu_{n+1}&=\op  P\mu_n,\\
\nu_{n+1}&=\op Q\hmu_{n+1},\\
\mu_{n+1}&=\op B_n(\nu_{n+1}). \label{eq:DAc}
\end{empheq}
\end{subequations}
Concatenating we find that
\begin{equation}
\label{eq:fm}
\mu_{n+1}=\op B_n(\op Q\op P\mu_n),\quad \mu_0=\Ng(m_0,C_0).
\end{equation}
This map defines an inhomogeneous nonlinear map on the space of
probability measures on $\R^{d_v}.$ The map
$\op B_n(\op Q\op P\cdot)$ may be decomposed into two maps:
(i) the {\em prediction} $\op P$, which represents application
of the dynamical model (\ref{eq:sd}a); and (ii) application
of {\em Bayes Theorem} \footnote{Often referred to as the {\em analysis} 
step in the geophysical data assimilation community. Bayes Theorem is discussed
in more detail in Section \ref{sec:IPDT}.}
through operator $\op L_n\cdot :=\op B_n(\op Q\cdot)$, which corresponds to use of likelihood defined by the observation model (\ref{eq:sd}b).
With this notation we thus obtain
\begin{subequations} 
\label{eq:DAsimple}
\begin{empheq}[box=\widefbox]{align}
\hmu_{n+1}&=\op P\mu_n,\\
\mu_{n+1}&=\op L_n(\hmu_{n+1}).
\end{empheq}
\end{subequations}
We refer to iteration of \eqref{eq:DAsimple}
as the {\em filtering cycle}. 
The cycle involves iterative interleaving of prediction, using the
dynamical model, a linear operation on measures,
and Bayes Theorem, using the observation model, a nonlinear
operation on measures.

It is important to appreciate that there is, in general,
no closed form expression for $\mu_n$ defined by the 
iteration \eqref{eq:DAsimple}; thus \eqref{eq:DAsimple} 
does not constitute an algorithm.
However if $\Psi$ and $h$ are linear then, 
since $\mu_0=\Ng(m_0,C_0)$ is Gaussian,
it follows that $\hmu_{n+1},\nu_{n+1},\mu_{n+1}$ are all Gaussian for all $n \in \Z^+$,
and closed form expressions, based on dynamical updates of means and
covariances, are available. This linear Gaussian setting is discussed
in the following example and linear Gaussian examples will be used
throughout the paper. However the main thrust of the paper concerns
nonlinear and non-Gaussian problems; for these problems further ideas,
which we will explain in subsequent subsections, are required to make actionable algorithms from the 
iteration \eqref{eq:DAsimple}.

\begin{example}
\label{ex:sssec:2}
To define the {\em Kalman filter}
we consider the setting \eqref{eq:linearsd}, where $\Psi(\cdot)$ and $h(\cdot)$  
are both linear. Then \eqref{eq:sd} becomes
\begin{subequations}
\label{eq:sdl}
\begin{align}
v_{n+1} &= Mv_n + \xi_n, \\
y_{n+1} &= Hv_{n+1} + \eta_{n+1}.
\end{align}
\end{subequations}
For this problem the mapping \eqref{eq:DAsimple} may be solved explicitly.
In fact $\mu_n$ and $\hmu_n$ are both Gaussian and we write their mean-covariance
pairs as $(m_n,C_n)$ and $(\pmean_n,\pCov_n)$ respectively. 
Then $\hmu_{n+1}$ is determined from $\mu_n$ by the formulae
\begin{subequations}
\label{eq:KF_pred_mean2b}
\begin{align}
    \pmean_{n+1} &= M \mean_{n},\\
    \pCov_{n+1} &= M \Cov_{n}M^\top+\Sigma,
\end{align}
\end{subequations}
the prediction step. Measure $\mu_{n+1}$ is determined from $\hmu_n$ by the
application of a Bayesian update, solving the
inverse problem defined by (\ref{eq:sdl}b); this inverse problem is
for $v_{n+1}$ given fixed realization of data $y_{n+1}=\yd_{n+1}$ generated by \eqref{eq:sdl}.
Since the prior and posterior are Gaussian we may
complete the square to solve the Bayesian inverse problem to give
the following update formulae for the mean and precision:
\begin{subequations}
\label{eq:KF_pred_mean2b2}
\begin{align}
     \Cov_{n+1}^{-1} \mean_{n+1} &= \pCov_{n+1}^{-1} \pmean_{n}+H^\top\Gamma^{-1}\yd_{n+1},\\
    \Cov_{n+1}^{-1} &= \pCov_{n+1}^{-1}+H^\top\Gamma^{-1}H. 
\end{align}
\end{subequations}
By use of the Woodbury matrix identity
we obtain the following formulae expressed in terms of covariances
rather than precisions:
\begin{subequations}
\label{eq:KF_analysisL}
\begin{align}
        \mean_{n+1} &= \pmean_{n+1} + \pCov_{n+1}H^\top(H\pCov_{n+1}H^\top+\Gamma)^{-1} (\yd_{n+1} - H\pmean_{n+1}),\\
         \Cov_{n+1} &= \pCov_{n+1} - \pCov_{n+1}H^\top(H\pCov_{n+1}H^\top+\Gamma)^{-1}H \pCov_{n+1}.
    \end{align}
\end{subequations}
The update equations \eqref{eq:KF_pred_mean2b} simply represent propagation of
a Gaussian under the linear dynamics defined by (\ref{eq:sdl}a);
\eqref{eq:KF_analysisL} corresponds to application of Bayes theorem, and
in this particular linear setting to conditioning a Gaussian, using the
observations defined by (\ref{eq:sdl}b) with $y_{n+1}=\yd_{n+1}.$
The Kalman filter update equations \eqref{eq:KF_pred_mean2b}, \eqref{eq:KF_analysisL} 
are well-defined if $\Gamma \succ 0.$
$\blacksquare$
\end{example}

The Gaussian setting of the preceding example is very special.
But the idea of making a \emph{Gaussian approximation}
will play a central role in ensemble Kalman methods and as a consequence
the explicit calculations in the example will be generally useful. 
The perspective of invoking Gaussian approximations is introduced in Subsection \ref{ssec:GPFD}; it is subsequently developed to form the backbone of the methodology
highlighted in this paper.

%
\subsubsection{The Sample Path Perspective}
\label{sssec:SPP}
%

In Subsection \ref{sssec:ucd} we demonstrated that the (typically
nonlinear) state space evolution of
$\{v_n\}_{n \in \Z^+}$ defined by (\ref{eq:sd}a) may be alternatively viewed in terms of 
the linear evolution of probability density functions $r_n$ 
defined by \eqref{eq:ops0}. In Subsection \ref{sssec:tfd} we showed that, when
conditioned on observations, the evolution of the probability density function
becomes nonlinear and is defined by \eqref{eq:DA} or \eqref{eq:DAsimple}.
In this section we address the question of finding an evolution in state space
that is consistent with this nonlinear evolution of densities
defined by \eqref{eq:DA} or \eqref{eq:DAsimple}. Our aim is to
generalize the control theoretic data assimilation algorithm given by \eqref{eq:sd2n} in order to find such an evolution.

Throughout this subsection let $v_n$ be a random variable distributed according to
$\law(v_n)$, let $\hv_{n+1}$ be random variable with
$\law(\hv_{n+1})=\op P\,\law(v_n)$
and let $(\hv_{n+1},\hy_{n+1})$ be random variable with law
$\op Q\,\law(\hv_{n+1}).$ Using $\law$ avoids a proliferation of
notation for the different measures arising from the use of 
various different algorithms to approximate the filtering cycle.

All of the algorithms that we will introduce in what follows are based on a prediction of state, and possibly observation,
from $\law(v_n)$. To this end recall \eqref{eq:sd2n} and for all $n \in \Z^+$,
\begin{subequations}
\label{eq:recall}
\begin{align}
\hv_{n+1} &= \Psi(v_n) + \xi_n, \\
\hy_{n+1} &= h(\hv_{n+1}) + \eta_{n+1}.
\end{align}
\end{subequations}
The distributions of $\hv_{n+1}$ and $(\hv_{n+1},\hy_{n+1})$ given by these equations define $\op P\,\law(v_n)$ and $\op Q\,\law(\hv_{n+1})$, respectively. A common theme in this paper is to augment these equations
for the predicted state and data with a final map, to generalize (\ref{eq:sd2n}c), of the
form
\begin{align}
\label{eq:0recall}
(\hv_{n+1},\hy_{n+1}) \mapsto v_{n+1},
\end{align}
or
\begin{align}
\label{eq:0recall2}
\hv_{n+1} \mapsto v_{n+1}.
\end{align}
These maps are chosen, respectively, to mimic \eqref{eq:DA} or \eqref{eq:DAsimple}.
Then, if $v_n \sim \mu_n$ the maps are designed so that $v_{n+1} \sim \mu_{n+1}$ where $\mu_n$ and $\mu_{n+1}$ are related by the filtering update \eqref{eq:DA} or, equivalently, \eqref{eq:DAsimple}. A key observation is that these maps will necessarily depend on the distributions of $\hv_{n+1}$ and $\hy_{n+1}$ rendering the associated Markov processes of mean field type. Thus, in contrast to the unconditioned dynamics,
the state space evolution does not decouple from the evolution of the associated probability
density function; rather it depends on it. The resulting state space evolution is said to define
a \emph{nonlinear Markov process.}

Note that, once \eqref{eq:recall} is augmented with either \eqref{eq:0recall} or \eqref{eq:0recall2}, we have a state space
evolution that describes, through the probability distribution of $v_n$, the filtering process; the state space
evolution can then be used as the basis for algorithms. Thus
the key question for such a program is the identification of
either \eqref{eq:0recall} or \eqref{eq:0recall2}.
The existence of \emph{transport maps} or, more generally, \emph{couplings}, which define the 
steps \eqref{eq:0recall} or \eqref{eq:0recall2}, follows under quite general conditions. But
finding them explicitly is generally difficult. Furthermore, the maps \eqref{eq:0recall} and \eqref{eq:0recall2} 
are not uniquely defined, in general. As a consequence of the difficulty in identifying mean field transport maps, the algorithms we study will be based on identifying maps \eqref{eq:0recall} or \eqref{eq:0recall2} which only \emph{approximately} achieve the filtering update \eqref{eq:DA}. However, all formulations considered in this survey will be of mean field type.

In summary, we refer to equations (\ref{eq:recall},\ref{eq:0recall}) or 
(\ref{eq:recall}a,\ref{eq:0recall2}) as providing a {\em sample path perspective}. The resulting algorithms update
state $v_n \mapsto v_{n+1}$ and are designed to exactly (in theory), and approximately (in practice), reproduce the {\em probabilistic perspective} encapsulated in the three steps of \eqref{eq:DA},
or the two steps of \eqref{eq:DAsimple}. These algorithms
result in a sample path $\{v_\ell\}_{\ell=0}^n$ with property that (possibly only approximately) $\law(v_\ell) = \mu_\ell.$ This idea is central to the algorithmic developments in the paper. 

%
\subsubsection{Important Repeatedly Used Notation}
\label{sssec:IN}
%

The approximations we develop will be based on matching first and second
order moments. In service of designing these approximations,
it is useful to define various first and second order statistics
computed under the law of $(\hv_{n+1},\hy_{n+1})$. First define mean and
covariance of $\hv_{n+1}:$
\begin{subequations}
\label{eq:KF_pred_mean}
\begin{empheq}[box=\widefbox]{align}
    \pmean_{n+1} &= \E \hv_{n+1},\\
    \pCov_{n+1} &= \E \Bigl(\bigl(\hv_{n+1}-\pmean_{n+1})\otimes
\bigl(\hv_{n+1}-\pmean_{n+1}\bigr)\Bigr). 
\end{empheq}
\end{subequations}
Then define mean of the predicted data, 
cross-covariance from predicted data to state and covariance of the data:
\begin{subequations}
\label{eq:KF_pred_mean2}
\begin{empheq}[box=\widefbox]{align}
\ho_{n+1} &=\E \hy_{n+1},\\
\pCov_{n+1}^{vy}      &=  \E\Bigl(\bigl(\hv_{n+1}- \pmean_{n+1}\bigr)\otimes
\bigl(\hy_{n+1}- \ho_{n+1}\bigr)\Bigr),\\
    \pCov_{n+1}^{yy} &=  \E\Bigl(\bigl(\hy_{n+1}- \ho_{n+1}\bigr)\otimes
\bigl(\hy_{n+1}-\ho_{n+1}\bigr)\Bigr).
\end{empheq}
\end{subequations}
From these covariances we define the matrix
\begin{align} \label{eq:Kalman_gain}
K_{n} = \hCvy_{n+1}(\hCyy_{n+1})^{-1}. 
\end{align}
This particular choice of $K_n$, known as the \emph{Kalman gain},
plays a central role in the mean field maps
which underpin ensemble Kalman methods through their particle approximations.
Note that, if $\hCyy_{n+1}$ is not invertible, then its action
may still be defined through a pseudo-inverse.

It is sometimes useful to express the Kalman gain $K_n$ in terms of the variable $\hh_{n+1}=h(\hv_{n+1})$ and without
reference to predicted data $\hy_{n+1}.$
For this purpose we define the following correlation matrices,
computed under the law of $\hv_{n+1}$:
\begin{subequations}
\label{eq:KF_joint2b}
\begin{empheq}[box=\widefbox]{align}
\ho_{n+1} &=\E h(\hv_{n+1}),\\
     \pCov_{n+1}^{vh} &=       \E\Bigl(\bigl(\hv_{n+1}- \pmean_{n+1}\bigr)\otimes
\bigl(\hh_{n+1}-\ho_{n+1}\bigr)\Bigr),\\
    \pCov_{n+1}^{hh} &=   \E\Bigl(\bigl(\hh_{n+1}- \ho_{n+1}\bigr)\otimes
\bigl(\hh_{n+1}-\ho_{n+1}\bigr)\Bigr),
\end{empheq}
\end{subequations}
Then, in place of 
\eqref{eq:KF_pred_mean2} and \eqref{eq:Kalman_gain}, we have
\begin{subequations}
\label{eq:KF_joint2c}
\begin{empheq}[box=\widefbox]{align}
\pCov_{n+1}^{vy} &= \pCov_{n+1}^{vh},\quad
\pCov_{n+1}^{yy} = \pCov_{n+1}^{hh} + \Gamma,\\
K_n &= \pCov_{n+1}^{vh}\left( \pCov_{n+1}^{hh} + \Gamma\right)^{-1}.
\end{empheq}
\end{subequations}
Note that, if $\Gamma \succ 0$,  $K_n$ is well-defined without
recourse to the use of pseudo-inverse.

\begin{example}
In the setting of the linear and Gaussian Example \ref{ex:sssec:2} we have
\begin{align*}
\hCvy_{n+1}&=\pCov_{n+1}H^\top,\\
\hCyy_{n+1}&=H\pCov_{n+1}H^\top+\Gamma.
\end{align*} 
and the mean update (\ref{eq:KF_analysisL}a) may be written
$$\mean_{n+1} = \pmean_{n+1} + K_n(\yd_{n+1} - H\pmean_{n+1}).$$
In particular only the single covariance $\pCov_{n+1}$ needs to be computed.
Note the similarity of the resulting algorithm with the 3DVAR algorithm \eqref{eq:two}, in
the linear Gaussian setting,
since in this linear case $\pmean_{n+1}=M\mean_n.$ It differs only through having a 
time-varying gain matrix $K_n.$
$\blacksquare$
\end{example}

This linear Gaussian setting provides some motivation for the
Kalman gain.  The origin of this key concept in the more general
nonlinear and non-Gaussian setting  will be described in the next subsection.

%
\subsection{Gaussian Projected Filtering Distribution}
\label{ssec:GPFD}
%

The \emph{Gaussian projected filter} gives a Gaussian
approximation of the true filtering distribution.
It is defined by the following three steps: 
\begin{itemize}
\item (i) taking input Gaussian at time $n$ as $\law(v_n)$ and pushing 
this measure forward under \eqref{eq:recall} to find (typically non-Gaussian)
measure $\law(\hv_{n+1},\hy_{n+1})$; 
\item (ii) projecting this joint measure onto the nearest
Gaussian (in a sense that we will make precise); 
\item (iii)
conditioning this Gaussian on the data $\yd_{n+1}$ to find output
Gaussian at time $n+1.$ 
\end{itemize}
We note that conditioning a Gaussian random
variable on linear functionals of the random variable returns another
Gaussian. Thus the algorithm maps Gaussians to Gaussians.
The resulting approximation of the filtering distribution
plays an important role in motivating the mean field maps,
introduced in Subection \ref{ssec:MFM}, that underly ensemble Kalman methods. It is also of interest as a method in its own right. 

In what follows in this subsection we introduce  
the Gaussian projected approximation to the evolution \eqref{eq:fm}. In the case where $\Psi(\cdot)$ and $h(\cdot)$ are linear the resulting formulae deliver exact solutions of
the filtering cycle \eqref{eq:fm}, leading to the Kalman filter
as given in Example \ref{ex:sssec:2}. The Gaussian projected filter also
leads to a derivation of the Kalman gain \eqref{eq:Kalman_gain}, beyond
the linear Gaussian setting; an alternative derivation, using 
the minimum variance approach, may be found in Appendix C, Subsection \ref{ssec:TM_MVA}.

To describe the Gaussian projected approximation to the
filtering distribution, we define the map $\op G$,
definition of which uses $\gP=\gP(\R^d)$ and
$\gG=\mathfrak{G}(\R^d)$ defined in Subsection \ref{ssec:N}. 
\begin{definition}
Define $\op G: \gP \mapsto \gG$ by
\begin{equation*} \label{eq:Gaus_projection}
\op G\mu=\Ng(m^\mu,C^\mu), \qquad m^\mu=\E^\mu u,\quad C^\mu=\E^\mu \bigl((u-\E^\mu u)\otimes(u-\E^\mu u)\bigr),
\end{equation*}
where $u \sim \mu$.
$\blacksquare$ \end{definition}
Thus the map $\op G$ applied to measure $\mu$ simply computes 
the Gaussian with mean and covariance calculated with respect to the, typically non-Gaussian, measure $\mu.$
Notice that $\op G$ is the identity on Gaussians. Furthermore $\op G \circ \op G=\op G.$
We refer to this as a {\em projection} onto Gaussians because it
corresponds to finding the closest point to given measure $\mu$,
with respect to a Kullback-Leibler divergence\footnote{For details see the bibliography Subsection \ref{ssec:BSE}.}:
\begin{equation}
\label{eq:KLG}
\op G\mu={\rm argmin}_{\pi \in \mathfrak{G}} d_{\rm {KL}} (\mu||\pi).
\end{equation}

We now use mapping $\op G$ to find an approximation to the evolution \eqref{eq:DA} 
which generates measures remaining Gaussian; intuitively this will
be a good approximation whilst the measures $\{\mu_n\}$ evolving under \eqref{eq:fm} remain close to Gaussian. To this end we consider random variable 
$v_n \sim \mug_n$ where the
probability measure $\mug_n$ evolves according to 
\begin{equation}
\label{eq:fmg}
\mug_{n+1}=\op B_n(\op G\op Q\op P\mug_n), \quad \mug_0=\Ng(m_0,C_0).
\end{equation}
This may be decomposed as follows:
\begin{subequations}
\label{eq:gprop}
\begin{empheq}[box=\widefbox]{align}
\hmug_{n+1}&=\op P\mug_n,\\
\nug_{n+1}&=\op Q\hmug_{n+1},\\
\mug_{n+1}&=\op B_n(\op G\nug_{n+1}).
\end{empheq}
\end{subequations}
Map \eqref{eq:fmg} defines a nonlinear Markov process,
similarly to \eqref{eq:fm}. It also maps Gaussians into Gaussians.
This fact follows from the fact that the nonlinear map $\op B_n(\cdot)$,
which represents conditioning, maps Gaussians into Gaussians.
The map $\mug_n \mapsto \mug_{n+1}$ hence defines a deterministic 
mapping from the mean $m_n$ and covariance $C_n$ of $\mug_n$ 
into the mean $m_{n+1}$ and covariance $C_{n+1}$ of $\mug_{n+1}.$
We now identify this map explicitly.

For $v_n \sim \mug_n$
we introduce the random variables $\hv_{n+1}, \hy_{n+1}$ defined by
\eqref{eq:recall}.
It then follows that $\hv_{n+1} \sim \hmug_{n+1}=\op P\mug_n$ and that
$(\hv_{n+1}, \hy_{n+1}) \sim \nug_{n+1}=\op Q\op P\mug_n.$ 
Note also that $\hmug_{n+1}$ and $\nug_{n+1}$ are not Gaussian,  
but are defined by the  
Gaussian $\mug_n$ and hence completely determined by $m_n$ and $C_n.$
Thus the mean and covariance under $\hmug_{n+1}$ are 
$(\pmean_{n+1},\pCov_{n+1})$ given by \eqref{eq:KF_pred_mean}.
Furthermore, $\op G\nug_{n+1}$ is defined by
\begin{equation}
\label{eq:KF_jointy}
    \op G\nug_{n+1}=\Ng\Bigl(
    \begin{bmatrix}
    \pmean_{n+1}\\
    \ho_{n+1}
    \end{bmatrix}, 
    \begin{bmatrix}
   \pCov_{n+1} & \pCov_{n+1}^{vy}\\
    \bigl(\pCov_{n+1}^{vy}\bigr)^\top & \pCov_{n+1}^{yy}
    \end{bmatrix}
    \Bigr),
\end{equation}
where all relevant quantities are defined in Subsection \ref{sssec:IN}. 

We now condition the Gaussian $\op G\nug_{n+1}$,
on the second component of the vector
taking value $\yd_{n+1}$. From this we find the
Gaussian measure $\mug_{n+1}=\op B_n(\op G\nug_{n+1})$ characterized by
mean $\mean_{n+1}$ and covariance $\Cov_{n+1}$ given by the following lemma.

\begin{lemma}
\label{lemma:KF_analysis}
Assume that $\Gamma \succ 0.$ Let $m_n$ and $C_n$ denote the
mean and covariance under the Gaussian projected filter. 
Consider equations \eqref{eq:recall} initialized at $v_n \sim \Ng(m_n,C_n)$, and then  $(\mh_{n+1}, \pCov_{n+1})$ defined by \eqref{eq:KF_pred_mean}; furthermore, define the mean of the observed data and covariances given by \eqref{eq:KF_pred_mean2}. Then $\pCov_{n+1}^{yy} \succ 0$ for all $n \in \Z^+$ and
\begin{subequations}
\label{eq:KF_analysis}
\begin{empheq}[box=\widefbox]{align}
        \mean_{n+1} &= \pmean_{n+1} + \pCov_{n+1}^{vy} (\pCov_{n+1}^{yy})^{-1} \bigl(\yd_{n+1} -  \ho_{n+1}\bigr),\\
         \Cov_{n+1} &= \pCov_{n+1} - \pCov_{n+1}^{vy}(\pCov_{n+1}^{yy})^{-1} \bigl(\pCov_{n+1}^{vy}\bigr)^\top,
    \end{empheq}
\end{subequations}
where $\{\yd_{n}\}$ arises from a fixed realization of \eqref{eq:sd}.
$\Diamond$ \end{lemma}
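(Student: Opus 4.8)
The plan is to establish the two assertions of the lemma in turn. For the strict positivity $\pCov_{n+1}^{yy} \succ 0$, I would start from the identity $\pCov_{n+1}^{yy} = \pCov_{n+1}^{hh} + \Gamma$ recorded in \eqref{eq:KF_joint2c}. That identity follows by writing $\hy_{n+1} = \hh_{n+1} + \eta_{n+1}$ as in \eqref{eq:recall}, using that $\eta_{n+1}$ is mean zero and independent of $\hv_{n+1}$ (hence of $\hh_{n+1} = h(\hv_{n+1})$) with $\E(\eta_{n+1}\otimes\eta_{n+1}) = \Gamma$, so that the cross terms vanish and $\ho_{n+1} = \E\hh_{n+1}$. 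Since $\pCov_{n+1}^{hh}$ is, by its definition in \eqref{eq:KF_joint2b}, the covariance matrix of the random vector $\hh_{n+1}$, it is positive semi-definite; adding $\Gamma \succ 0$ yields $\pCov_{n+1}^{yy} \succeq \Gamma \succ 0$, hence invertibility. This argument needs no induction and holds for every $n$, given only that $\hv_{n+1}$ has finite second moments --- which is tacitly assumed throughout, as it is already needed to make sense of the covariances in the statement, and holds because $v_n$ is Gaussian and $\Psi, h$ are suitably integrable.

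For the update formulae, recall from the discussion preceding the lemma that $G\nug_{n+1}$ is precisely the Gaussian with mean $(\pmean_{n+1},\ho_{n+1})$ and block covariance with diagonal blocks $\pCov_{n+1}$, $\pCov_{n+1}^{yy}$ and off-diagonal block $\pCov_{n+1}^{vy}$, and that $\mug_{n+1} = B_n(G\nug_{n+1})$ is the law of the first component conditioned on the second taking value $\yd_{n+1}$. It then suffices to invoke the standard fact that if $(X,Y)$ is jointly Gaussian with such a block structure and the $Y$-block invertible, then $X \mid Y = y$ is Gaussian with mean $\pmean_{n+1} + \pCov_{n+1}^{vy}(\pCov_{n+1}^{yy})^{-1}(y - \ho_{n+1})$ and covariance $\pCov_{n+1} - \pCov_{n+1}^{vy}(\pCov_{n+1}^{yy})^{-1}(\pCov_{n+1}^{vy})^\top$; taking $y = \yd_{n+1}$ gives \eqref{eq:KF_analysis}. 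If a self-contained derivation is preferred to a citation, I would set $Z := \hv_{n+1} - K_n(\hy_{n+1} - \ho_{n+1})$ with $K_n = \pCov_{n+1}^{vy}(\pCov_{n+1}^{yy})^{-1}$ as in \eqref{eq:Kalman_gain}, check that $\mathrm{Cov}(Z,\hy_{n+1}) = \pCov_{n+1}^{vy} - \pCov_{n+1}^{vy}(\pCov_{n+1}^{yy})^{-1}\pCov_{n+1}^{yy} = 0$ so that $Z$ is independent of $\hy_{n+1}$ by joint Gaussianity, compute $\E Z = \pmean_{n+1}$ and $\mathrm{Cov}(Z) = \pCov_{n+1} - \pCov_{n+1}^{vy}(\pCov_{n+1}^{yy})^{-1}(\pCov_{n+1}^{vy})^\top$, and then read off the conditional law of $\hv_{n+1} = Z + K_n(\hy_{n+1} - \ho_{n+1})$ given $\hy_{n+1} = \yd_{n+1}$.

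I anticipate no genuine obstacle, as the computation is routine. The two points that warrant a little care are: that $B_n$ applied to a Gaussian really does return the conditional Gaussian just described --- this is the remark, made just before the lemma, that conditioning a Gaussian on linear functionals returns a Gaussian, here made quantitative by the block formula; and that the $yy$-block is invertible, so the formulae are well-defined, which is precisely why the positivity claim is proved first. I would close by noting that the realization $\{\yd_n\}$ of \eqref{eq:sd} enters only affinely, through $\yd_{n+1}$, in the mean update, and not at all in the covariance update.
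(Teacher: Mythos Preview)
Your proposal is correct and follows essentially the same approach as the paper: establish $\pCov_{n+1}^{yy}\succ 0$ via the identity $\pCov_{n+1}^{yy}=\pCov_{n+1}^{hh}+\Gamma$ from \eqref{eq:KF_joint2c} together with $\pCov_{n+1}^{hh}\succeq 0$ and $\Gamma\succ 0$, and then invoke the standard conditional Gaussian formulae applied to $G\nug_{n+1}$. Your write-up is in fact more detailed than the paper's own proof, which simply cites the standard conditioning formulae without the auxiliary $Z$-variable argument you supply.
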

\begin{proof}
We first note that $\pCov_{n+1}^{yy} \succ 0$. Indeed, since by assumption $\Gamma \succ 0$ and by definition $\pCov_{n+1}^{hh}\succeq 0$, then $\pCov_{n+1}^{hh} + \Gamma \succ 0$. Hence by (\ref{eq:KF_joint2c}) we have that $\pCov_{n+1}^{yy} \succ 0$. Now consider the distribution of the Gaussian $\op G\nug_{n+1}$ 
given by \eqref{eq:KF_jointy}. Conditioning the resulting joint random variable on
$(v,y) \in \R^{d_v} \times \R^{d_y}$ on $y=\yd_{n+1}$, it is possible to conclude that from standard formulae for conditioned Gaussians that
$m_{n+1}$ and $C_{n+1}$ are given by the expressions in \eqref{eq:KF_analysis}. \end{proof}

Equations \eqref{eq:recall}, \eqref{eq:KF_pred_mean}, \eqref{eq:KF_pred_mean2} and \eqref{eq:KF_analysis}
define a mapping from $\mug_n$, characterized by $(m_n,\Cov_n)$, into
$\mug_{n+1}$, characterized by $(m_{n+1},\Cov_{n+1})$. They comprise an
explicit set of formulae for the mapping \eqref{eq:fmg}:
since Gaussians are determined by mean and covariance,  the map
on measures is completely determined by the map from $(m_n,C_{n})$ to $(m_{n+1},C_{n+1})$.

We note that \eqref{eq:KF_analysis} can also be rewritten as
\begin{subequations}
\label{eq:KF_analysis_add}
\begin{empheq}[box=\widefbox]{align}
        \mean_{n+1} &= \pmean_{n+1} + \pCov_{n+1}^{vh} 
(\pCov_{n+1}^{hh}+\Gamma)^{-1} 
\bigl(\yd_{n+1} - \ho_{n+1}\bigr),\\
\Cov_{n+1} &= \pCov_{n+1} - \pCov_{n+1}^{vh}(\pCov_{n+1}^{hh}+\Gamma)^{-1} \bigl(\pCov_{n+1}^{vh}\bigr)^\top.
\end{empheq}
\end{subequations} 
Equations \eqref{eq:recall}, \eqref{eq:KF_pred_mean}, \eqref{eq:KF_joint2b} 
and \eqref{eq:KF_analysis_add} then also
define the mapping from $\mug_n$ into
$\mug_{n+1}$ and also comprise an
explicit set of formulae for the updates of the mean and
covariance which characterize mapping \eqref{eq:fmg}.

Finally we note that, using the definition \eqref{eq:Kalman_gain} of Kalman gain, we can rewrite \eqref{eq:KF_analysis} as
\begin{align*}
\label{eq:KF_analysis99}
        \mean_{n+1} &= \pmean_{n+1} + K_n \bigl(\yd_{n+1} -  \ho_{n+1}\bigr),\\
         \Cov_{n+1} &= \pCov_{n+1} - K_n \bigl(\pCov_{n+1}^{vy}\bigr)^\top.
\end{align*}
Thus we see how the Kalman gain arises naturally within the context of this Gaussian
projected filter.

\begin{remark}
\label{rem:rem1}
The preceding explicit formulae  for \eqref{eq:fmg}
involve the computation of expectations under the non-Gaussian measure $\law(\hv_{n+1},\hy_{n+1})$. For this reason they do not constitute an algorithm. A possible approach to algorithmic implementations involves quadrature to approximate the expectations, leading, for example, to the \emph{unscented
Kalman filter} approach; details may be found in the bibliography
Subsection \ref{ssec:BSE}. However the formulation of explicit maps on Gaussians plays another, important, role in this paper: we use it as
a way of explaining the sense in which the distribution of our mean field models approximate evolution of measures under the true filtering distribution; see Subsection \ref{sssec:sots}. The explicit map on means and covariances 
can be used to derive the Kalman filter, which applies in the linear Gaussian setting, and is presented in Example \ref{ex:sssec:2}.
$\blacksquare$
\end{remark}

%
\subsection{Mean Field Maps}
\label{ssec:MFM}
%

In the previous section we did not adopt the sample path perspective, but rather chose to represent the evolution of the filtering distribution, approximately, as the evolution of Gaussians. In this subsection we introduce our first instance of the sample path perspective, finding an evolution in state space which approximately
captures the evolution of the filtering distribution. Like the Gaussian projected filter it uses a Gaussian ansatz, but in a different way, leading to a state space evolution which is not Gaussian.

Note that elements of $\gP(\R^d)$ are \emph{infinite dimensional} objects, for any $d$. Thus the filtering
distribution defines an evolution in an infinite dimensional space. This fact goes to the
heart of the computational challenges faced when solving the filtering problem. These computational
challenges are further exacerbated when $d \gg 1.$
The manifold of Gaussians $\gG(\R^d)$ is finite dimensional, because it is parameterized
by the mean and covariance and hence has dimension $\frac12 d(d+3).$
The preceding subsection provides explicit {\em finite dimensional maps} for the mean
and covariance which characterize the Gaussian projected filter $\mug_n \mapsto \mug_{n+1},$ an
approximation which is (intuitively) accurate when the true filter is close to Gaussian.
Nonetheless if $d \gg 1$ this method can still be prohibitive because 
the algorithm acts on a space of dimension
that grows quadratically in $d$.

To address the issue that the Gaussian projected filter may not be efficient
if $d \gg 1$ we introduce, in this section, a more ambitious aim: 
to find maps on finite dimensional spaces of dimension
$d$ with the property that (possibly only approximately) their output is equal in law to the map on measures $\mu_n \mapsto \mu_{n+1}$ given by
the filtering cycle. We achieve this by studying transport maps that achieve \eqref{eq:0recall} or \eqref{eq:0recall2}; we then weaken this requirement
and ask only for transport maps that approximately achieve \eqref{eq:0recall} 
or \eqref{eq:0recall2} in a manner that we will make precise.
When combined with \eqref{eq:recall}, either \eqref{eq:0recall} 
or \eqref{eq:0recall2} gives a sample path evolution which can be used
as the basis of algorithms to solve the filtering problem.
This transport 
map viewpoint leads us to the subject of mean field maps, namely random maps 
that depend on the law of the state being mapped. When approximated by 
particle methods these maps lead to methods that scale linearly with $d$, 
in contrast to the Gaussian projected filter which scales quadratically.

This section is organized as follows. We start in Subsection
 \ref{ssec:TM} with an introduction to transport maps. Subsection 
\ref{sssec:pt} describes two distinct transport approaches which effect exact filtering: one based on the conditioning component of the overall Bayesian inference step, a transport between probability measures on different spaces; and the other based on the prior to posterior map that constitutes the Bayesian inference step of filtering, a transport between probability measures on the same space. We refer to these maps which effect exact filtering as {\em perfect transport.}\footnote{Perfect transport should not be confused with \emph{optimal transport} which identifies among all (perfect) transport maps the one minimizing a certain cost functional such as that leading to the Wasserstein distance. See the bibliography Subsection \ref{ssec:BSE}, and Theorem \ref{t:OTT}, for more details. We use \emph{perfect} here to distinguish from the \emph{approximate} transport maps, based only on matching first and second moment; these approximate transport maps underpin ensemble Kalman methods.} Subsections \ref{sssec:ats} and \ref{sssec:atd} are concerned with approximations of these two perfect transports, and are motivated
in Subsection \ref{sssec:soti} with an explicit example. In these approximations the pushforward under the transport map is designed to match only the first and second order moments of the target measure. Hence these approximations are closely related to, but they are different from, the previously defined Gaussian projected filter; we elaborate on this connection in the summary Subsection \ref{sssec:sots}. That subsection also includes Example \ref{ex:mfk} in which we identify mean field formulations of the Kalman filter; recall that this filter
applies only to linear Gaussian systems, and is defined 
in Example \ref{ex:sssec:2}.

\subsubsection{Transport Maps}
\label{ssec:TM}

We start by setting-up notation used throughout. Consider probability measures $\nu$ and $\nu'$ on 
$\R^{d}$ and $\R^{d'}$ respectively,
and recall the definition of \emph{pushforward} of a measure under
under $T: \R^{d} \to\R^{d'}$: the statement $\nu'= T^\sharp \nu$ is a succinct way of stating that, if $\law(v)=\nu$ and $v'=T(v)$, then $\law(v')=\nu'.$
Given probability measures $\pi$ and $\pi'$ on $\R^{d}$ and $\R^{d'}$
respectively, a {\em transport} $T: \R^{d} \to \R^{d'}$ from $\pi$ to $\pi'$  is a map with
property that the pushforward of probability measure $\pi$ under $T$, 
$T^\sharp \pi$, is equal to probability measure $\pi'.$
In the following we refer to $\pi$ as the {\em source measure} and 
$\pi'$ as the {\em target measure} defining the transport.
In our setting $\pi'$ will be uniquely determined by $\pi$ and
an observed piece of finite dimensional data. Thus $T$ depends on $\pi$ and 
so we may view $T$ as a mapping $\R^{d} \times \gP \to \R^{d'},$
suppressing, for the moment, explicit dependence on the observed data. We can compute the pushforward under $T$ on any measure in $\gP$; but when we compute the pushforward on $\pi$ we obtain $\pi'.$ 
We emphasize that transport maps are not uniquely defined by their source and target measures and require certain conditions for their existence, which we assume here to be satisfied. The underlying mathematical concept is that of \emph{coupling of measures.} See  bibliography Subsection \ref{ssec:BSE} for discussion of transport, optimal transport and coupling.

We will also consider classes of \emph{approximate transport maps} which do
not achieve transport from $\pi$ to $\pi'$, but instead match first and second order moment information (we will be precise below). Such maps will also depend on $\pi.$ 

We now clarify an important notational issue. The dependence of (possibly approximate)
transport $T$ on a measure in $\gP$ does not affect the definition of pushforward; we employ the following general definition of pushforward for measure-dependent maps, taken to hold for all $\pi_1, \pi_2$
regardless of any assumed relationship between them:
\begin{equation}
\label{eq:overload}
T(\cdot;\pi_1)^\sharp \pi_2 =   T(\cdot;\widetilde{\pi})^\sharp \pi_2 \Bigr|_{\widetilde{\pi}=\pi_1};\\ 
\end{equation}
in particular,
\begin{subequations}
\label{eq:overload2}
\begin{align}
T(\cdot;\pi)^\sharp \pi &=   T(\cdot;\widetilde{\pi})^\sharp \pi \Bigr|_{\widetilde{\pi}=\pi},\\
T(\cdot;\pi)^\sharp (\op G\pi) &=   T(\cdot;\widetilde{\pi})^\sharp (\op G\pi) \Bigr|_{\widetilde{\pi}=\pi}.
\end{align}
\end{subequations}
In the preceding, pushforward under $\widetilde{\pi}-$dependent map $T(\cdot,\widetilde{\pi})$ denotes
regular pushforward with no relationship assumed between $\widetilde{\pi}$
and measure being pushed forward.
Note that we may define $\op T: \gP(\R^d) \to \gP(\R^d)$ by
$\op T(\pi)=T(\cdot;\pi)^\sharp \pi.$ 
The (approximate) transport maps just identified can be recast as
mean field maps when used in the context where source $\pi$ is the
distribution of the input to $T$. 

%
\subsubsection{Perfect Transport}
\label{sssec:pt}
%

Consider the idea of finding a transport map that acts
on the joint space of state and data, to effect conditioning with
respect to the observed data. To this end we consider the dynamical
system, assumed to hold for all $n \in \Z^+$: 
\begin{subequations}
\label{eq:sd2nn}
\begin{empheq}[box=\widefbox]{align}
\hv_{n+1} &= \Psi(v_n) + \xi_n, \\
\hy_{n+1} &= h(\hv_{n+1}) + \eta_{n+1}, \\
v_{n+1} &= \Ts(\hv_{n+1},\hy_{n+1};\nu_{n+1},\yd_{n+1}),
\end{empheq}
\end{subequations}
where $\{\yd_{n}\}$ arises from a fixed realization of \eqref{eq:sd}. Recall that
$\mu_n=\law(v_n)$, $\hmu_{n+1}=\law(\hv_{n+1})$ and $\nu_{n+1}=\law(\hv_{n+1},\hy_{n+1}).$ 

This is an example of the sample path perspective, and
(\ref{eq:recall}, \ref{eq:0recall}) in particular. The first two equations, which coincide with \eqref{eq:recall},
effect the mappings from $\mu_n$ to $\hmu_{n+1}$ and from
$\hmu_{n+1}$ to $\nu_{n+1}.$
Map\footnote{It is convenient to use both the notation $\Ts(\cdot,\cdot;\nu_{n+1},\yd_{n+1})$, to be explicit about important dependencies in $\Ts$, and to use the notation $\Ts_n$, for succinct statement of certain formulae when dropping explicit dependence of $\Ts$ on $\pi_{n+1}$ and $\yd_{n+1}$. We will use analogous notation for other mean field maps in what follows.}
$\Ts_n(\cdot,\cdot):=\Ts(\cdot,\cdot;\nu_{n+1},\yd_{n+1})$
is then an explicit example of \eqref{eq:0recall} defined to effect the desired conditioning of $\nu_{n+1}$ on $\yd_{n+1}$ in order to obtain $\mu_{n+1}.$
The letter $S$ in $\Ts$ connotes the dependence of the map on the {\em stochastic} data $\hy_{n+1}$. Thus equations \eqref{eq:sd2nn} define a mean field stochastic dynamical system mapping $v_n$ to $v_{n+1}$: 
stochastic because of the noise in (\ref{eq:sd2nn}a, \ref{eq:sd2nn}b), mean field because the map $\Ts$ in (\ref{eq:sd2nn}c) depends on the law $\nu_{n+1}$ of $(\hv_{n+1},\hy_{n+1})$, and hence on $\mu_n$.
The three update steps in this mean field stochastic dynamical system
lead to the following maps on measures:
\begin{subequations} 
\label{eq:DAmf}
\begin{empheq}[box=\widefbox]{align}
\hmu_{n+1}&=\op P\mu_n,\\
\nu_{n+1}&=\op Q\hmu_{n+1},\\
\mu_{n+1}&=(\Ts_n)^\sharp\nu_{n+1}. 
\end{empheq}
\end{subequations}
This is simply a restatement of \eqref{eq:DA}, noting
that $(\Ts_n)^\sharp$ has been chosen so that pushforward corresponds
to conditioning $\nu_{n+1}$ on data $\yd_{n+1}$ to obtain $\mu_{n+1}.$
In particular $\op T_n^S(\nu_{n+1}):=(\Ts_n)^\sharp\nu_{n+1}$
has property $\op T_n^S(\nu_{n+1})=\op B_n(\nu_{n+1}).$
Note that the implied map from $\mu_n$ to $\mu_{n+1}$ is a nonlinear Markov process, because of the dependence of $\Ts_n$ on $\nu_{n+1}$ and hence on $\mu_n.$ Furthermore, we have that  $\law(v_\ell)=\mu_\ell$ for all $\ell \in \Z^+$. The important
takeaway is that \eqref{eq:sd2nn} defines a sample-path picture of the evolution
of the filtering distribution: it provides a map in state space with law governed by
the filter. We reemphasize that such a sample-path representation is not uniquely defined.

Consider now a different approach to transport for filtering: we seek a transport map that acts on the state space only to effect Bayes Theorem, i.e.~mapping prior $\hmu_{n+1}$ to posterior $\mu_{n+1}$. To this end we consider the dynamical system
\begin{subequations}
\label{eq:mf0}
\begin{empheq}[box=\widefbox]{align}
\hv_{n+1} &= \Psi(v_n) + \xi_n, \\
v_{n+1}&=\Td(\hv_{n+1};\hmu_{n+1},\yd_{n+1}), 
\end{empheq}
\end{subequations}
again assumed to hold for all $n \in \Z^+$. The first equation maps $v_n \sim \mu_n$ to $\hv_{n+1} \sim \hmu_{n+1}$, thus giving a sample path realization
of (\ref{eq:DAsimple}a). In the second equation, the map $\Td_n(\cdot):=\Td(\cdot;\hmu_{n+1},\yd_{n+1})$ is chosen so that, if
$\hv_{n+1} \sim \hmu_{n+1}$ then
$v_{n+1} \sim \mu_{n+1}$, thus giving a sample path realization
of (\ref{eq:DAsimple}b). 
Thus we have another instance of the sample path perspective, and
(\ref{eq:recall}, \ref{eq:0recall2}) in particular.

Equation \eqref{eq:mf0} constitutes another mean field stochastic dynamical system: stochastic because of the noise in (\ref{eq:mf0}a); mean field because the map $\Td$ in (\ref{eq:mf0}b) depends on the law of $\hv_{n+1}$ itself,
and hence on $\mu_n.$ The symbol $D$ distinguishes map $\Td$ from map $\Ts$: map $\Td$ is {\em deterministic} in the sense that it does not require stochastic data $\hy_{n+1}$, in contrast to $\Ts.$ Therefore, we again have that  $\law(v_\ell)=\mu_\ell$ for all $\ell \in \Z^+$, where the
probability measure $\mu_n$ evolves according to 
\begin{subequations}
\label{eq:pmfd}
\begin{empheq}[box=\widefbox]{align}
\hmu_{n+1}&=\op P\mu_n,\\
\mu_{n+1}&=(\Td_n)^\sharp\hmu_{n+1}.
\end{empheq}
\end{subequations}
This is simply a restatement of \eqref{eq:DAsimple}, noting
that $(\Td_n)^\sharp$ has been chosen so that the pushforward corresponds
to the application of Bayes Theorem to incorporate data
$\yd_{n+1}.$ 
In particular $\op T_n^D(\hmu_{n+1}):=(\Td_n)^\sharp\hmu_{n+1}$
has property $\op T_n^D(\hmu_{n+1})=\op L_n(\hmu_{n+1}).$
The evolution \eqref{eq:pmfd} is another nonlinear Markov process, now because of the dependence of $\Td_n$ on $\hmu_{n+1}.$
Again, the underlying sample-path representation \eqref{eq:mf0} is not uniquely defined.

The two transport maps $\Ts$ and $\Td$ introduce an important conceptual
approach to algorithms for filtering, but determining the maps can be
as hard, or harder, than solving the filtering problem itself. Thus,
in the next two subsections, we turn to relaxations of the perfect
transport effected by $\Ts$ and $\Td$. We instead seek  
mean field maps which match only first and second order moment information; this relaxation allows for approximate transport maps with simple affine (in 
senses to be made precise) forms. The perspective of matching first and second order moments naturally suggests working with Gaussians and hence we also relate the approximate transport to the Gaussian projected filter.

%
\subsubsection{Second Order Transport -- Motivation}
\label{sssec:soti}
%

To motivate the more general ideas behind second order transport,
we first study an explicit example. It is well known how to transform samples from a unit centered Gaussian random variable on $\R$ into samples from a Gaussian random variable with mean $m\not=0$ and variance $\sigma \not=1$ by a simple scaling and shifting operation. An appropriate generalization of such a procedure suggests consideration of the map
\begin{subequations}
\label{eq:sd2nn_add_again}
\begin{empheq}[box=\widefbox]{align}
\hv_{n+1} &= \Psi(v_n) + \xi_n, \\
\hy_{n+1} &= h(\hv_{n+1}) + \eta_{n+1}, \\
v_{n+1} &= m_{n+1}+C_{n+1}^{\frac12}\pCov_{n+1}^{-\frac12}(\hv_{n+1}-\E \hv_{n+1}),
\end{empheq}
\end{subequations}
where $m_{n+1}$, $\pCov_{n+1}$, and $C_{n+1}$ are determined by
\eqref{eq:KF_pred_mean}, \eqref{eq:KF_pred_mean2} and
\eqref{eq:KF_analysis}, using (\ref{eq:sd2nn_add_again}a) and
(\ref{eq:sd2nn_add_again}b). Here $\{\yd_{n}\}$ arises again from a fixed realization of \eqref{eq:sd}.
This is a specific instance of the sample path perspective, and
(\ref{eq:recall}, \ref{eq:0recall2}) in particular. In this case the
sample path evolution of $v_n$ has law which
only approximates the true filtering law.

The map $v_n \mapsto v_{n+1}$ defined by \eqref{eq:sd2nn_add_again}
is a mean field map because of the dependence of $m_{n+1}, C_{n+1}$ and
$\pCov_{n+1}$ on $\op Q\,\law(v_n)$. It may be viewed as an approximation to
\eqref{eq:sd2nn} which is exact when $\op Q\,\law(v_n)$ is Gaussian. To demonstrate
exactness on Gaussians it suffices to show that we obtain the
desired mean and covariance after application of the map. It is clear from (\ref{eq:sd2nn_add_again}c) 
that $\E v_{n+1}=m_{n+1}$ and also that
\begin{align*}
&\E\bigl( (v_{n+1}-m_{n+1}) \otimes (v_{n+1}-m_{n+1}) \bigr)\\
&\quad\quad\quad\quad\quad =C_{n+1}^{\frac12}\pCov_{n+1}^{-\frac12}
\E\bigl( (v_{n+1}-m_{n+1}) \otimes (v_{n+1}-m_{n+1})\bigr)\pCov_{n+1}^{-\frac12} C_{n+1}^{\frac12}\\
&\quad\quad\quad\quad\quad =C_{n+1}^{\frac12}\pCov_{n+1}^{-\frac12}\pCov_{n+1}\pCov_{n+1}^{-\frac12} C_{n+1}^{\frac12}\\
&\quad\quad\quad\quad\quad =C_{n+1}.
\end{align*}
It is important to recognize that, in general, $v_{n+1}$ defined by (\ref{eq:sd2nn_add_again}c) will not be Gaussian distributed since $\hv_{n+1}$, defined by (\ref{eq:sd2nn_add_again}a), will not be Gaussian either. However, although \eqref{eq:sd2nn_add_again} does not provide a closed iteration on Gaussians, the map from $\hv_{n+1}$ to $v_{n+1}$ agrees with the same step in the Gaussian projected filter, at the level of first and second order moments. But, because it is not
a closed iteration on $\mathfrak{G}(\R^{d_v}),$ it is clearly not the same as the Gaussian projected filter.

Whilst the mean field map from \eqref{eq:sd2nn_add_again} is a relatively transparent way to achieve the goal of matching first and second order moments in the transport
step there is an uncountable set of ways of achieving this objective; the next two subsections demonstrate this, identifying all mean field maps effecting approximate transport from within two specific classes of affine transformations. We will then  highlight a small subset that have been used in practice, each of which is useful in certain specific contexts.

%
\subsubsection{Second Order Transport -- Stochastic Case}
\label{sssec:ats}
%

The first class of approximate filters determined by mean field maps
have the sample path form
\begin{subequations}
\label{eq:sd2nn-seek}
\begin{empheq}[box=\widefbox]{align}
\hv_{n+1} &= \Psi(v_n) + \xi_n, \\
\hy_{n+1} &= h(\hv_{n+1}) + \eta_{n+1}, \\
v_{n+1} &= \tT(\hv_{n+1},\hy_{n+1};\nu_{n+1},\yd_{n+1}),
\end{empheq}
\end{subequations}
where $\{\yd_{n}\}$ arises from a fixed realization of \eqref{eq:sd}. We identify
$\tT_n: \R^{d_v} \times \R^{d_y} \to \R^{d_v}$, where
$\tT_n(\cdot) := \tT(\cdot\,;\nu_{n+1},\yd_{n+1})$ approximates
an exact transport map $\Ts_n(\cdot)=\Ts(\cdot\,;\nu_{n+1},\yd_{n+1})$, defined
previously, by matching the first and second order moments. 

We next introduce\footnote{We temporarily drop explicit notational dependence on $n+1$ in $\nu$ and in $\yd$. This should not cause confusion as the approximate map we derive is concerned simply with finding a push forward which approximates conditioning of $\law(\hv_{n+1},\hy_{n+1})$  on $\hy_{n+1}=\yd$.} $\nu=\law(\hv_{n+1},\hy_{n+1})$ 
and assume that the exact and approximate transport maps satisfy, respectively,
\begin{subequations}
\label{eq:compare}
\begin{align}
(\Ts_n)^\sharp \nu&=\op B_n(\nu),\\
\op G\bigl((\tT_n)^\sharp \nu\bigr)&=\op B_n(\op G\nu),
\end{align}
\end{subequations}
for all measures $\nu$ on the product space $\R^{d_v}\times \R^{d_y}.$ Of course $\Ts_n$ and $\tT_n$ will depend on $\nu$ and then pushforward is to be interpreted as in \eqref{eq:overload}, \eqref{eq:overload2}. It is intuitive that (\ref{eq:compare}b) enforces map $\tT_n$ to satisfy (\ref{eq:compare}a) when $\nu$ is Gaussian; we prove this in Lemma \ref{lem:mot1} below.

Perfect transport corresponds to asking that, for all measures $\nu$ on the space $\R^{d_v} \times \R^{d_y}$, (\ref{eq:compare}a) holds; second order transport relaxes this and asks only that
(\ref{eq:compare}b) holds. Whilst achieving (\ref{eq:compare}a) may be harder than solving the filtering problem directly, we will show that achieving (\ref{eq:compare}b) is straightforward and leads to computationally tractable methods. This gain in tractability comes at the price of only achieving (\ref{eq:compare}b) in place of (\ref{eq:compare}a);
however it is intuitive that this price will not be high for settings
in which the filtering distribution, and the predictive
distribution on state and data, is not too far from Gaussian; we flesh out this idea in Subsection \ref{sssec:sots} below. 

Working to satisfy (\ref{eq:compare}b) allows us to find tractable approximate second order transport maps by seeking $\tT$ in the form
\begin{equation}
\tT (\hv_{n+1},\hy_{n+1};\nu,\yd)   :=A\hv_{n+1}+B\hy_{n+1}+a. 
\label{eq:sd2cnadd2}
\end{equation}
We allow the matrices/vectors $A,B,a$ to depend on $(\nu,\yd)$; however, they are assumed to be independent of $(\hv_{n+1},\hy_{n+1})$. Making this assumption ensures that the transport map is affine with respect to the realization of $(\hv_{n+1},\hy_{n+1})$ (but not their law). This in turn leads to tractable computations to determine $A,B,a$ on the basis of matching second order moments of perfect transport. In addition to computational tractability, the affine form of the transport map $\tT$ is motivated by the following which shows that the approximate transport is perfect when applied to Gaussian source:

\begin{lemma}
\label{lem:mot1}
Consider approximate transport map $\tT_n=\tT (\hv_{n+1},\hy_{n+1};\nu,\yd)$ with the form \eqref{eq:sd2cnadd2},
assumed to satisfy (\ref{eq:compare}b). Then $\tT$ depends on $\nu$ only through $\op G\nu$. Furthermore, 
$$(\tT_n)^\sharp (\op G\nu)=\op B_n(\op G\nu);$$ 
thus, if  $\nu$ is Gaussian, equation (\ref{eq:compare}b) 
implies (\ref{eq:compare}a).
$\Diamond$ \end{lemma}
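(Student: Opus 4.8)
The plan is to exploit the affine structure of $\tT_n$ together with the defining relation (\ref{eq:compare}b). First I would observe that since $\tT_n(\hv_{n+1},\hy_{n+1}) = A\hv_{n+1}+B\hy_{n+1}+a$ with $A,B,a$ depending on $\nu$ only through moments of $\nu$ of order at most two, the pushforward $(\tT_n)^\sharp\nu$ has a mean and covariance that depend on $\nu$ only through $G\nu$: indeed, $\E[A\hv_{n+1}+B\hy_{n+1}+a]$ and the covariance of $A\hv_{n+1}+B\hy_{n+1}+a$ are determined by the first two moments of $(\hv_{n+1},\hy_{n+1})$, which are precisely the data encoded in $G\nu$. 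Hence $G\bigl((\tT_n)^\sharp\nu\bigr) = G\bigl((\tT_n)^\sharp(G\nu)\bigr)$, using that for an affine map the image of a Gaussian with the matching moments has the same first two moments. Combined with (\ref{eq:compare}b) this shows that $B_n(G\nu)$, which is Gaussian, depends on $\nu$ only through $G\nu$; and since $B_n(G\nu)$ is Gaussian it is determined by its own first two moments, so $A,B,a$ (chosen to realize those moments via the affine ansatz) can be taken to depend on $\nu$ only through $G\nu$. This gives the first assertion.

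Next I would prove $(\tT_n)^\sharp(G\nu)=B_n(G\nu)$. Apply (\ref{eq:compare}b) with the measure $G\nu$ in place of $\nu$: since $G\circ G = G$ (stated in the excerpt), we get $G\bigl((\tT_n)^\sharp(G\nu)\bigr) = B_n(G(G\nu)) = B_n(G\nu)$, where here one must be careful that $\tT_n$ on the left is built from the source measure $G\nu$; but by the first part $\tT_n$ built from $G\nu$ equals $\tT_n$ built from $\nu$, so there is no ambiguity, and the pushforward convention \eqref{eq:overload2}(b) is exactly what licenses writing $(\tT_n)^\sharp(G\nu)$ with $\tT_n = \tT(\cdot;\nu,\yd)$. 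Now $G\nu$ is Gaussian and $\tT_n$ is affine, so $(\tT_n)^\sharp(G\nu)$ is Gaussian (here one uses that an affine pushforward of a Gaussian is Gaussian, allowing degenerate covariances as in the definition of $\gG$). A Gaussian measure equals its own Gaussian projection, so $(\tT_n)^\sharp(G\nu) = G\bigl((\tT_n)^\sharp(G\nu)\bigr) = B_n(G\nu)$, which is the desired identity.

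Finally, for the last clause: if $\nu$ is itself Gaussian then $G\nu = \nu$, so the identity just proved reads $(\tT_n)^\sharp\nu = B_n(\nu)$, which is precisely (\ref{eq:compare}a). Thus (\ref{eq:compare}b) implies (\ref{eq:compare}a) on Gaussian source measures.

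The main obstacle, and the point needing the most care, is the bookkeeping around the measure-dependence convention \eqref{eq:overload}--\eqref{eq:overload2}: one must consistently distinguish the map $\tT_n$ as a fixed affine map (with coefficients frozen using a particular ``label'' measure) from the measure being pushed forward, and verify that freezing coefficients at $\nu$ versus at $G\nu$ yields the same map. This is exactly what the first assertion establishes, so the logical order above — prove dependence through $G\nu$ first, then the pushforward identity, then specialize — is essential; attempting the pushforward identity before pinning down the coefficient-dependence would be circular. Everything else is a short computation with means and covariances of affine images, which I would not spell out in detail.
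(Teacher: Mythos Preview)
Your proof is correct and follows essentially the same route as the paper: both exploit that an affine map sends measures with matching first and second moments to images with matching first and second moments, and that affine images of Gaussians are Gaussian, combining these with $G\circ G=G$ and (\ref{eq:compare}b). One minor difference worth noting: you argue that establishing the $G\nu$-dependence of the coefficients first is ``essential'' before proving $(\tT_n)^\sharp(G\nu)=B_n(G\nu)$, but the paper obtains the pushforward identity directly via $(\tT_n)^\sharp(G\nu)=G\bigl((\tT_n)^\sharp\nu\bigr)=B_n(G\nu)$ with coefficients frozen at $\nu$ throughout (using convention \eqref{eq:overload2}b), so the first assertion is not actually a prerequisite for the second.
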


\begin{proof}
We first note that (\ref{eq:compare}b) is equivalent to insisting that
\begin{equation}
\label{eq:Gthis}
\op G\bigl((\tT_n)^\sharp \op G\nu\bigr)=\op B_n(\op G\nu)
\end{equation}
for all measures $\nu;$ this follows because, noting the
definition \eqref{eq:overload} and consequence \eqref{eq:overload2},
the first and second moments of $(\tT_n)^\sharp \op G\nu$ and $(\tT_n)^\sharp \nu$
agree, because of the affine form \eqref{eq:sd2cnadd2}
assumed for $\tT_n$. Recall that $(\tT_n)$ depends on $(\pi,\yd)=\bigl(\law(\hv_{n+1},\hy_{n+1}),\yd_{n+1}\bigr)$.
From the identity \eqref{eq:Gthis},
it is clear that $\tT_n$ only depends on $\nu$
through $\op G\nu$ because changing $\nu \to \op G\nu$ leaves the identity invariant, 
as $\op G \circ \op G=\op G.$ 
Now note that, because Gaussians are preserved under affine transformations,
$$\bigl(\tT (\hv_{n+1},\hy_{n+1};\nu,\yd)\bigr)^\sharp (\op G\nu)=
\op G\Bigl(\bigl(\tT (\hv_{n+1},\hy_{n+1};\nu,\yd)\bigr)^\sharp \nu\Bigr),$$
or, in compact notational form,
\begin{equation}
    \label{eq:cnf}
    (\tT_n)^\sharp (\op G\nu)=\op G\bigl((\tT_n)^\sharp \nu\bigr).
\end{equation}
The desired display in the lemma is then immediate from (\ref{eq:compare}b).
\end{proof}

An affine transport map of the form (\ref{eq:sd2cnadd2}), when combined with particle approximations, leads to practical implementable algorithms and achieves (\ref{eq:compare}b) by ensuring that $(\tT_n)^\sharp \nu$ has first and second moments which agree with those of the Gaussian projected filter; these are given by equations \eqref{eq:KF_pred_mean}, \eqref{eq:KF_pred_mean2} and \eqref{eq:KF_analysis} when $\nu$ is the law of $(\hv_{n+1},\hy_{n+1})$.

In Appendix C, Subsection \ref{ssec:mfmsd}, we identify the (uncountable) set of all possible $A,B,a$ which achieve the desired matching of first and
second order moments. Here we focus on the two specific choices given in
Example \ref{ex:twoe} from that Appendix. The first that we highlight corresponds to the choice
\begin{equation*}
\tT (\hv_{n+1},\hy_{n+1};\tnu_{n+1},\yd_{n+1})   := \hv_{n+1}+K_n(\yd_{n+1}-\hy_{n+1}), \label{eq:sd2cnadd}
\end{equation*}
with $K_n= K(\nu_{n+1})$ given by \eqref{eq:Kalman_gain}.
Thus we obtain the following mean field dynamical system,
which corresponds to \eqref{eq:sd2n} in the setting where the Kalman gain $K_n$ 
is defined by \eqref{eq:Kalman_gain}:
\begin{subequations}
\label{eq:sd2nn_add}
\begin{empheq}[box=\widefbox]{align}
\hv_{n+1} &= \Psi(v_n) + \xi_n, \\
\hy_{n+1} &= h(\hv_{n+1}) + \eta_{n+1}, \\
v_{n+1} &= \hv_{n+1}+\hCvy_{n+1}(\hCyy_{n+1})^{-1}(\yd_{n+1}-\hy_{n+1}), 
\end{empheq}
\end{subequations}
where $\{\yd_{n}\}$ arises from a fixed realization of \eqref{eq:sd}
and equations \eqref{eq:KF_pred_mean}, \eqref{eq:KF_pred_mean2} define
the Kalman gain $K_n = \hCvy_{n+1}(\hCyy_{n+1})^{-1}$. We refer to this as {\em Kalman transport}, noting that it serves as a derivation of the Kalman gain, beyond the linear Gaussian setting.
This is a specific instance of the sample path perspective, and
(\ref{eq:recall}, \ref{eq:0recall}) in particular. Again, this 
is a case in which the
sample path evolution for $v_n$ has law which
only approximates the true filtering law.

The second transport map from Example \ref{ex:twoe} corresponds to the choice
\begin{equation*}
\tT (\hv_{n+1},\hy_{n+1};\nu_{n+1},\yd_{n+1})   := m_{n+1}+C_{n+1}^{\frac12}\pCov_{n+1}^{-\frac12}(\hv_{n+1}-\E \hv_{n+1}), \label{eq:sd2cnadd_again}
\end{equation*}
leading to the mean field map \eqref{eq:sd2nn_add_again}, recalling
that $m_{n+1}$, $\pCov_{n+1}$, and $C_{n+1}$ are determined by
\eqref{eq:KF_pred_mean}, \eqref{eq:KF_pred_mean2} and
\eqref{eq:KF_analysis}, using (\ref{eq:sd2nn_add_again}a) and
(\ref{eq:sd2nn_add_again}b).

\begin{remark}
One important difference between the mean field 
models \eqref{eq:sd2nn_add} and \eqref{eq:sd2nn_add_again} 
is that the former involves inversion of matrices
in data space, and the latter in state space. The relative dimensions of the two spaces plays a role in determining which mean field model is more appropriate as the basis of algorithms. A second noteable difference is that the mean field model 
\eqref{eq:sd2nn_add_again} does not require generation 
of the stochastic data $\hy_{n+1}$. This is because 
we may employ the identity $\E \hy_{n+1}= \E h(\hv_{n+1})$ and
use \eqref{eq:KF_joint2b}, \eqref{eq:KF_joint2c} to compute 
$m_{n+1}, C_{n+1}$ and $\pCov_{n+1}$. 
Motivated by this observation, the next subsection studies a wide class 
of approximate transport maps with the property that they do not require
generation of stochastic data.
$\blacksquare$
\end{remark}

%
\subsubsection{Second Order Transport -- Deterministic Case}
\label{sssec:atd}
%

We now turn our attention to approximate filters defined by deterministic mean field maps. We seek to approximate the exact transport \eqref{eq:mf0} by  mean field maps with the sample path form
\begin{subequations}
\label{eq:mf0-seek}
\begin{empheq}[box=\widefbox]{align}
\hv_{n+1} &= \Psi(v_n) + \xi_n, \\
v_{n+1}&=\tTD(\hv_{n+1};\hmu_{n+1},\yd_{n+1}), 
\end{empheq}
\end{subequations}
where $\{\yd_{n}\}$ arises from a fixed realization of \eqref{eq:sd}.
As in the previous subsection we drop 
explicit $n$-dependence on the measure $\hmu_{n+1}$ and on the data $\yd_{n+1}$
when no confusion arises from doing so. To this end
we define, for $\hv_{n+1}$ given by (\ref{eq:recall}b), 
$\hmu=\law(\hv_{n+1})$ and $\yd=\yd_{n+1}.$ 
In the following
$\Td_n: \R^{d_v}  \to \R^{d_v}$ is defined by $\Td_n(\cdot)=\Td(\cdot;\hmu,\yd)$
and $\tTD_n: \R^{d_v}  \to \R^{d_v}$ is defined by $\tTD_n(\cdot)=\tTD(\cdot;\hmu,\yd)$; this is
a useful notational convention for the reasons explained in the
stochastic transport setting.

Analogously to the identities \eqref{eq:compare} in the previous subsection, we seek an approximation $\tTD$
which, in comparison with the true transport map $\Td$, satisfies
\begin{subequations}
\label{eq:compare2}
\begin{align}
(\Td_n)^\sharp \hmu&=\op B_n(\op Q\hmu),\\
\op G\bigl((\tTD_n)^\sharp \hmu\bigr)&=\op B_n(\op G\op Q\hmu),
\end{align}
\end{subequations}
for all measures $\hmu$ on the state space $\R^{d_v}.$
As in the previous subsection, where we studied approximate stochastic 
transport, we again seek maps with a specific affine form. Concretely, the maps are assumed to be affine in the pair $\bigl(\hv_{n+1},\hh_{n+1}\bigr)$, with $\hh_{n+1} = h(\hv_{n+1})$, leading to the assumed form
$\tTD_n(\cdot) = \tTD(\cdot;\mu,\yd)$ with
\begin{equation}
\tTD (\hv_{n+1},\hh_{n+1};\mu,\yd)   :=R\hv_{n+1}+S\hh_{n+1}+r, \label{eq:sd2cnadd2d}
\end{equation}
for $(\hmu,\yd)-$dependent matrices/vectors $R,S,r$ of appropriate dimensions. Note, however, that $R,S,r$ are assumed to be independent of the realization
$(\hv_{n+1},\hh_{n+1}$), depending only on its law, so that the transport map is affine in $(\hv_{n+1},\hh_{n+1})$. With this restriction, which will lead to practical implementable algorithms, 
we simply ask that (\ref{eq:compare2}b) holds:
the first and second moments of the output map agree with those of the
Gaussian projected filter, given by equations
\eqref{eq:KF_pred_mean}, \eqref{eq:KF_joint2b} and \eqref{eq:KF_analysis_add}. 

As in the previous subsection, there are uncountably many choices of $R,S,r$ which we identify in Appendix C, Subsection \ref{ssec:mfmnsd}; Example \ref{ex:twoee} highlights two important cases. 
The first coincides with \eqref{eq:sd2nn_add_again} since $S=0$,
but the second leads to a new mean field map. To formulate this
new map we first define
$\tK_{n}=\tK(\hmu)$ by
\begin{equation}
    \label{eq:tidleK}
    \tK_{n}= \pCov^{vh}_{n+1}
    \Bigl( (\pCov^{hh}_{n+1}+\Gamma) + \Gamma^{1/2} (\pCov^{hh}_{n+1}+\Gamma)^{1/2}
    \Bigr)^{-1}.
\end{equation}
We then make the choice
\begin{equation*}
\label{eq:sd2cnadd_again2}
\tTD (\hv_{n+1},\hh_{n+1};\hmu,\yd)   := \hv_{n+1}-\tK_{n}(\hh_{n+1}-\bbE \hh_{n+1})+K_n(\yd-\bbE \hh_{n+1}), 
\end{equation*}
with $K_n$ given by \eqref{eq:KF_joint2c} and repeated here for convenience:
$$K_n = \pCov_{n+1}^{vh}\left( \pCov_{n+1}^{hh} + \Gamma\right)^{-1}.$$
The second mean field map identified in Example \ref{ex:twoee} is 
\begin{subequations}
\label{eq:sd2nn_add_again_add}
\begin{empheq}[box=\widefbox]{align}
\hv_{n+1} &= \Psi(v_n) + \xi_n, \\
\hh_{n+1} &= h(\hv_{n+1}), \\
v_{n+1} &= \hv_{n+1}-\tK_{n}(\hh_{n+1}-\bbE \hh_{n+1})+K_n(\yd_{n+1}-\bbE \hh_{n+1}),
\end{empheq}
\end{subequations}
where $K_n$ and $\tK_n$ are computed under $\law(\hv_{n+1})$.
This is another instance of the sample path perspective, and
(\ref{eq:recall}, \ref{eq:0recall2}) in particular. Again this
sample path evolution for $v_n$ has law which
only approximates the true filtering law.

\begin{remark}
\label{rem:denkf}
If the ensemble spread is such that
the size of $\pCov_{n+1}^{hh}$ is much smaller
than the size of the observational covariance
$\Gamma$ then we may invoke the approximation 
$\pCov_{n+1}^{hh}+ \Gamma \approx \Gamma$. 
With this approximation it follows that  $\widetilde K_{n} \approx \frac{1}{2} K_n$ in
(\ref{eq:tidleK}). Some
deterministic ensemble Kalman filters are derived from
mean field dynamics which exploit this approximation by setting $\widetilde K_n = \frac{1}{2}K_n$ in (\ref{eq:sd2nn_add_again_add}). 
We then replace (\ref{eq:sd2nn_add_again_add}c) by the compact update step
\begin{align} \label{eq:DEnKF}
    v_{n+1} = \hv_{n+1} + K_{n}\left(\yd_{n+1}-\frac{1}{2} \bigl(\mathbb{E}\hh_{n+1} + \hh_{n+1})\bigr)\right).
\end{align}
Such a formulation corresponds to the control-theoretic 
perspective of (\ref{eq:sd2n}), with 
$K_n$ given by \eqref{eq:KF_joint2c} and the innovation (\ref{eq:innovation_p}) replaced by 
\begin{equation}
\label{eq:innovation_d}
\mathfrak{I}_n = \yd_{n+1}-\frac{1}{2}\bigl(\mathbb{E}\hh_{n+1} + \hh_{n+1}\bigr).
\end{equation}
Filters based on this mean field dynamics thus invoke an additional approximation
of perfect transport, over and above that stemming
from matching only first and second moments: they assume 
further that the observational noise dominates ensemble variation.
However we will see that, in the continuous time limit 
described in Section \ref{sec:CT},
this form of the innovation arises naturally and does not
constitute an additional approximation. $\hfill \blacksquare$ 
\end{remark}

%
\subsubsection{Second Order Transport -- Summary}
\label{sssec:sots}
%

It is helpful at this point to take stock of two approximations
to filtering that we have introduced, Gaussian projected filtering and
approximate transport, and discuss their inter-relations. For simplicity
we do this in the context of mean field stochastic maps, but similar considerations extend to mean field deterministic maps. In this subsection we also include Example \ref{ex:mfk} demonstrating the existence of mean field maps for the 
Kalman filter which, recall, applies only in the linear Gaussian setting. 

Recall that 
\begin{subequations}
\label{eq:stock1}
\begin{align}
\mu_{n+1}&=\op B_n(\op Q \op P\mu_n),\quad \mu_0=\Ng(m_0,C_0),\\
\mug_{n+1}&=\op B_n(\op G \op Q \op P\mug_n), \quad \mug_0=\Ng(m_0,C_0),
\end{align}
\end{subequations}
With the goal of discussing the inter-relations between
Gaussian projected filtering and approximate transport methods,
we let $\mumf$ denote the measure associated with using the mean field 
map $\tT_n$ to approximate the conditioning step in \eqref{eq:DA}.
Using (\ref{eq:compare}b) to rewrite the Gaussian projected filter, and using
the construction of the stochastic mean field model \eqref{eq:sd2nn-seek},
we obtain
\begin{subequations}
\label{eq:stock2}
\begin{align}
\mug_{n+1}&=\op G\bigl((\tT_n)^\sharp (\op Q\op P\mug_n)\bigr), \quad \mug_0=\Ng(m_0,C_0),\\
\mumf_{n+1}&=(\tT_n)^\sharp (\op Q\op P\mumf_n), \quad \mumf_0=\Ng(m_0,C_0).
\end{align}
\end{subequations}

\begin{remark}
\label{rem:show}
Equations \eqref{eq:stock2}
show that $\{\mumf_n\}$ is close to $\{\mug_n\}$, if the Gaussian projection in
(\ref{eq:stock2}a) is close to the identity where it acts on the output
of onestep. 
Equations \eqref{eq:stock1} show that $\{\mug_n\}$ is close
to $\{\mu_n\}$ if the Gaussian projection in
(\ref{eq:stock1}b) is close to the identity where it acts on the joint
space of state and observation. Together these two facts suggest that
$\{\mumf_n\}$, $\{\mug_n\}$  and  $\{\mu_n\}$ are all close
to one another if the two Gaussian projections can be viewed as being close
to the identity map, where they appear in \eqref{eq:stock1} and in 
\eqref{eq:stock2}. This provides a potential path for analysis of the mean 
field model, away from the linear setting where it is exact. Note also that (\ref{eq:stock2}a) shows that the Gaussian
projected filter evolves within the manifold of Gaussian probability measures; the mean field model (\ref{eq:stock2}b) does not.
\end{remark}

\begin{example}
\label{ex:mfk}
Assume that $v_0 \sim \Ng(m_0,C_0),$  that $\Gamma \succ 0$ and
consider the Kalman filter setting of  Example \ref{ex:sssec:2}; in particular \eqref{eq:linearsd} prevails rendering $\Psi$ and $h$ linear. The mean field stochastic
dynamical system \eqref{eq:sd2nn_add} then takes the form
\begin{subequations}
\label{eq:sd2nn_sub}
\begin{align}
\hv_{n+1} &= Mv_n + \xi_n, \\
\hy_{n+1} &= H\hv_{n+1} + \eta_{n+1}, \\
v_{n+1} &= \hv_{n+1}+\pCov_{n+1}H^\top(H\pCov_{n+1}H^\top+\Gamma)^{-1}(\yd_{n+1}-\hy_{n+1}),
\end{align}
\end{subequations}
where $\{\yd_{n}\}$ arises from a fixed realization of \eqref{eq:sdl} and
where $C_n$ is the covariance of $v_n$ and $\pCov_{n+1} = M \Cov_{n}M^\top+\Sigma$ is the covariance of $\hv_{n+1}.$
The resulting dynamics give a sample path representation of the Kalman filter in that $v_n \sim \Ng(m_n,C_n)$ where $m_n, C_n$ are as given in Example \ref{ex:sssec:2}.
This follows because the map defined by \eqref{eq:sd2nn_sub} is well-defined,
since $\Gamma \succ 0.$ Lemma \ref{lem:mot1} shows that the approximate 
transport is exact in this Gaussian setting.

Similar ideas can be applied to \eqref{eq:sd2nn_add_again}
and \eqref{eq:sd2nn_add_again_add} to determine other mean field models with law equal to that of the Kalman filter. Furthermore we observe that the formulation  based on \eqref{eq:sd2nn_add_again} can be symmetrized to obtain, in the linear Gaussian setting
of \eqref{eq:linearsd},
\begin{subequations}
\label{eq:ESRFcite}
\begin{align}
\hv_{n+1} &= Mv_n + \xi_n, \: n \in \Z^+,  \\
v_{n+1} &= m_{n+1} + A_n(\hv_{n+1}-\pmean_{n+1}),\\
    A_{n} &= (\Cov_{n+1})^{1/2}\left[(\Cov_{n+1})^{1/2} \pCov_{n+1}
    (\Cov_{n+1})^{1/2}\right]^{-1/2}(\Cov_{n+1})^{1/2},
\end{align}
\end{subequations}
with 
$(\pmean_{n+1},\pCov_{n+1}, \pCov_{n+1}^{vh}, \pCov_{n+1}^{hh})$ 
defined by \eqref{eq:KF_pred_mean}, \eqref{eq:KF_joint2b} 
and $(m_{n+1},\Cov_{n+1})$ defined by \eqref{eq:KF_analysis}.
We note that the second component of the map may be written in gradient
form and corresponds to an optimal transport from $\hmu_{n+1}$ into $\mu_{n+1}$ in the sense of the Euclidean Wasserstein distance of optimal transportation (see Subsection \ref{ssec:BSE} for details); indeed this is true of for an entire family of
weighted Wasserstein distances -- see Example \ref{ex:cite}.
To recognize the gradient structure define
$$\Phi_n(v):= \langle m_{n+1}, v \rangle+\frac12 \langle A_n(v-\pmean_{n+1}), v-\pmean_{n+1}\rangle$$
and note that then
\begin{subequations}
\label{eq:needt}
\begin{align}
\hv_{n+1} &= Mv_n + \xi_n, \: n \in \Z^+,  \\
v_{n+1}&=\nabla \Phi_n(\hv_{n+1}).
\end{align}
\end{subequations}
$\blacksquare$
\end{example}

%
\subsection{Ensemble Kalman Methods}
\label{ssec:EKM}
%

The mean field formulations from Subsection \ref{ssec:MFM} provide clear insights into many of the design choices and mechanisms that underlie ensemble Kalman methods.
In this subsection we take the mean field models 
and use particle approximations to derive implementable numerical algorithms. When approximated by interacting particle systems, the mean field formulations of ensemble Kalman methods
lead to actionable algorithms.

We start, in Subsection \ref{sssec:ppf}, with the setting in which transport
is perfect; these algorithms are not, in general, implementable since determining perfect transport is itself a difficult computational task and the subject of ongoing research; see the bibliography Subsection \ref{ssec:BSE}. Thus we turn to particle approximations of the transports designed to match first and second order statistics. This leads to the (stochastic) {\em ensemble Kalman filter} in Subsection \ref{sssec:senkf}, and to (deterministic) {\em ensemble square root filters} in
Subsection \ref{sssec:esrf}. The methods derived in this subsection involve approximating the Kalman gain by computing covariances under the empirical measure defined by the ensemble of particles. To avoid overloading notation, in the rest of this subsection $C_{n+1}$, $\pCov_{n+1}$, $\pCov_{n+1}^{vy}$ and $\pCov_{n+1}^{yy}$ will denote covariances computed with expectation under the empirical measure. With this notation in place, in the rest of this specific subsection, $K_n$ will directly refer to the particle approximation of the Kalman gain, computed using the covariances with respect to the empirical measure, without further specification needed. Throughout this section $\{\yd_{n}\}$ arises from a fixed realization of \eqref{eq:sd}.

%
\subsubsection{Perfect Particle Filters}
\label{sssec:ppf}
%

The mean field equations  \eqref{eq:sd2nn} could, in principle, be
approximated through a particle approximation of the mean field
leading to the following conceptual (because map $\Ts_n$ is not known explicitly) algorithm: let $\sJ=\{1,\cdots, J\}$ and consider,  
for $(n,j) \in \Z^+ \times \sJ$, the 
interacting particle dynamical system
\begin{subequations}
\label{eq:mf0nz}
\begin{empheq}[box=\widefbox]{align}
\hv_{n+1}^{(j)} &= \Psi(v_n^{(j}) + \xi_n^{(j)}, \\
\hy_{n+1}^{(j)} &= h(\hv_{n+1}^{(j}) + \eta_{n+1}^{(j)}, \\
v_{n+1}^{(j)}&=\Ts(\hv_{n+1}^{(j)},\hy_{n+1}^{(j)};\nu_{n+1}^\sJ,\yd_{n+1}),\\
\nu_{n+1}^\sJ& =\frac{1}{J}\sum_{j=1}^J \delta_{(\hv_{n+1}^{(j)},\hy_{n+1}^{(j)})}.
\end{empheq}
\end{subequations}
This evolves the particles $\{\vj_{n}\}_{j \in \sJ}$ into 
$\{\vj_{n+1}\}_{j \in \sJ}$.  Here the $\{\xij_n\}$  are, for each $j$,
random variables given by the known distribution  of $\xi_n$
specified in \eqref{eq:ga} and, furthermore, are drawn independently 
with respect to each $(n,j) \in \Z^+ \times \sJ$. 
Similar considerations apply to the $\{\etj_n\}$ which, additionally,
are independent of the $\{\xij_n\}$.
It is intuitive that the large $J$ limit of this
system recovers the mean field dynamics \eqref{eq:sd2nn} and, in particular, 
 \begin{equation}
    \label{eq:mua}
\mu_{n}^\sJ =\frac{1}{J}\sum_{j=1}^J \delta_{v_{n}^{(j)}} \approx \mu_n.
\end{equation}

Applying a similar idea to
\eqref{eq:mf0} leads to the following conceptual (because map $\Td_n$ is not known explicitly) algorithm. 
Consider, for $(n,j) \in \Z^+ \times \sJ$,
the interacting particle dynamical system
\begin{subequations}
\label{eq:mf0n}
\begin{empheq}[box=\widefbox]{align}
\hv_{n+1}^{(j)} &= \Psi(v_n^{(j}) + \xi_n^{(j)}, \\
v_{n+1}^{(j)}&=\Td(\hv_{n+1}^{(j)};\hmu_{n+1}^\sJ,\yd_{n+1}),\\
\hmu_{n+1}^\sJ& =\frac{1}{J}\sum_{j=1}^J \delta_{\hv_{n+1}^{(j)}}.
\end{empheq}
\end{subequations}
This evolves the particles $\{\vj_{n}\}_{j \in \sJ}$ into 
$\{\vj_{n+1}\}_{j \in \sJ}$. 
The same assumptions are made about the $\{\xij_n\}$  
as for the preceding interacting particle dynamical system.
It is again intuitive that the large $J$ limit of this
system recovers the mean field dynamics \eqref{eq:mf0}, and the evolution
\eqref{eq:pmfd}. In particular it is intuitive that
\eqref{eq:mua} holds for this particle approximation too. 
We reiterate that in practice these algorithms are, in general, not easy 
to use. More specifically, finding particle based approximations $\TsJ$ and $\TdJ$ to the desired transport maps such that
\begin{equation*} \label{eq:mean field limit}
    \lim_{J \to \infty} \TsJ = \Ts, \qquad 
    \lim_{J \to \infty} \TdJ = \Td
\end{equation*}
in an appropriate sense is a computationally challenging task and the subject of ongoing research. This leads to the next two subsections in
which we replace $\Ts$ and $\Td$, in the interacting particles systems \eqref{eq:mf0nz} and \eqref{eq:mf0n}, by the previously introduced affine approximate transports $\tT$ and $\tTD,$ respectively. 

%
\subsubsection{Stochastic Ensemble Kalman Filters}
\label{sssec:senkf}
%

Particle approximation of the mean field dynamical system \eqref{eq:sd2nn_add},
effecting Kalman transport, bring us to the stochastic EnKF
({\em ensemble Kalman filter}). This method may be
derived by writing down a particle approximation of
the mean field stochastic dynamics defined by 
\eqref{eq:sd2nn_add}.  
We evolve the particles $\{\vj_{n}\}_{j \in \sJ}$ into 
$\{\vj_{n+1}\}_{j \in \sJ}$ according to the following
stochastic interacting particle system, holding 
for $(n,j) \in \Z^+ \times \sJ$:
\begin{subequations}
\label{eq:sd4e}
\begin{empheq}[box=\widefbox]{align}
\hvj_{n+1} &= \Psi(\vj_n) + \xij_n, \: n \in \Z^+,  \\
\hyj_{n+1} &= h(\hvj_{n+1}) + \etj_{n+1} , \: n \in \Z^+,\\
\vj_{n+1} &= \hvj_{n+1}+K_n\bigl(\yd_{n+1}-\hyj_{n+1}\bigr),\\
\nu_{n+1}^\sJ& =\frac{1}{J}\sum_{j=1}^J \delta_{(\hv_{n+1}^{(j)},\hy_{n+1}^{(j)})}.
\end{empheq}
\end{subequations}
Here the Kalman gain from \eqref{eq:Kalman_gain} is approximated using the empirical measure $\nu_{n+1}^\sJ$, but is still denoted by $K_n$ to avoid proliferation of notation; 
details follow below. The same assumptions regarding 
$\{\xij_n\}$ and $\{\etj_{n+1}\}$ 
are made as for \eqref{eq:mf0nz}. We let $\E^\sJ_n$ denote expectation under $\nu_n^\sJ.$
For the basic implementation of EnKF \eqref{eq:sd4e} the desired covariance matrices, and
Kalman gain \eqref{eq:Kalman_gain}, are then approximated by expectation under 
$\nu_{n+1}^\sJ$, so that
\footnote{The empirical covariance computations are often
modified to accommodate the widely adopted convention
of scaling by $1/(J-1)$, instead of $1/J$, in view of the matrix being computed from $J-1$
independent increments about the mean.}
\begin{align*}
     \pCov_{n+1}^{vy} &= \, \E^\sJ_{n+1}\Bigl(\bigl(\hv_{n+1}-\E^\sJ_{n+1} \hv_{n+1}\bigr)\otimes
\bigl(\hy_{n+1}-\E^\sJ_{n+1} \hy_{n+1}\bigr)\Bigr),\\
    \pCov_{n+1}^{yy} &= \, \E^\sJ_{n+1}\Bigl(\bigl(\hy_{n+1}-\E^\sJ_{n+1} \hy_{n+1}\bigr)\otimes
\bigl(\hy_{n+1}-\E^\sJ_{n+1} \hy_{n+1}\bigr)\Bigr),\\
K_n &= \pCov_{n+1}^{vy} \bigl(\pCov_{n+1}^{yy}\bigr)^{-1}.
\end{align*}
Note that a pseudo-inverse may be required to define $K_n$. An alternative, avoiding pseudo-inverse, is to use a particle approximation in formula
(\ref{eq:KF_joint2c}b), leading to the use of
\begin{subequations}
  \label{eq:KF_joint33c}  
\begin{align}
     \pCov_{n+1}^{vh} &=    \E_{n+1}^\sJ\Bigl(\bigl(\hv_{n+1}-\E^\sJ_{n+1} \hv_{n+1}\bigr)\otimes
\bigl(h(\hv_{n+1})-\E_{n+1}^\sJ h(\hv_{n+1})\bigr)\Bigr),\\
    \pCov_{n+1}^{hh} &=  \E_{n+1}^\sJ\Bigl(\bigl(h(\hv_{n+1})-\E^\sJ_{n+1} h(\hv_{n+1})\bigr)\otimes
\bigl(h(\hv_{n+1})-\E_{n+1}^\sJ h(\hv_{n+1})\bigr)\Bigr),\\
K_n &= \pCov_{n+1}^{vh} \bigl(\pCov_{n+1}^{hh}+\Gamma\bigr)^{-1}.
\end{align}
\end{subequations}
to compute the gain $K_n.$
The advantage of this latter formulation is that it ensures positivity, and hence invertibility, of the covariance in data space, if $\Gamma$ is assumed positive-definite. It is hence typically preferred.

Pseudo-code for the stochastic EnKF may be found as 
Algorithm \ref{alg:EnKF} in Appendix \ref{sec:AA}.

\begin{example}
\label{ex:enkf}
We return to the set-up of Example \ref{ex:3dvar}, and now
demonstrate performance of the stochastic EnKF on the same Lorenz '96 model. Indeed, we again study the Lorenz '96 (singlescale) model  for unknown
$v \in C(\R^+,\R^L)$ satisfying the equations \eqref{eq:l96} with $L = 9, h_v=-0.8$ and $F=10$
and function $m$ as shown in Figure \ref{fig:multiscale_m}. We consider observations $\{\yd_n\}_{n \in \Z^+}$ arising from the model 
\begin{align*}
\label{eq:singlescale_experiment_dynamics_noisy}
\vd_{n+1} &= \Psi
(\vd_n) + \xid_n, \\
\yd_{n+1} &= h(\vd_{n+1}) + \etad_{n+1},
\end{align*}
where $\Psi$ is the solution operator for \eqref{eq:l96} over the observation time interval $\tau$, and $\{\xid_n\}_{n \in \Z^+}$, $\{\etad_n\}_{n \in \N}$ are mutually independent Gaussian sequences defined by 
\begin{equation*}
\label{eq:ga_3}
\xid_n \sim \Ng(0, \sigma^2 I) \,\, \text{i.i.d.}\,, \quad
\etad_n \sim \Ng(0, \gamma^2 I) \,\, \text{i.i.d.}\,,
\end{equation*}
with $\sigma = 0.1$ and $\gamma = 0.1$. We again assume that the observation function is linear: $h(v)=Hv$ for matrix $H:\R^9 \to \R^6$ defined by \eqref{eq:L96H}. 

Figures \ref{fig:enkf_100} and \ref{fig:enkf_1000} demonstrate the performance of stochastic EnKF in this experimental setting with $\tau = 10^{-3}$ and using $J=10^2$ and $J=5\cdot 10^2$, respectively, against the performance of 3DVAR with no noise; note that the EnKF uses a time-varying estimate of the gain $K_n$, whilst 3DVAR uses the fixed $K$ given in Example \ref{ex:3dvar}. These experiments illustrate that using sufficiently large ensembles, the ensemble Kalman filter outperforms 3DVAR on such a nonlinear filtering problem where the true state and observational noise levels are high. 
Here \emph{outperforms} refers to mean square error in recovery
of the state. To quantitatively demonstrate this improvement, we compute: (a) the mean squared error between the estimates yielded by 3DVAR and the true states; and (b) the mean squared error between the ensemble mean of stochastic EnKF and the true states. In particular we report  time-averaged  mean squared errors obtained from both 3DVAR and stochastic EnKF given by use of formula \eqref{eq::MSE_error} from Example \ref{ex:s3dvar} using $t^*=3$ and $T=10$. An ensemble size of $J=10^2$ yields $e_{\text{EnKF}} = 1.05\cdot 10^{0}$, while for $J=5\cdot 10^2$ we obtain $e_{\text{EnKF}} = 5.24\cdot 10^{-1}$. For comparison, the error obtained using 3DVAR is $e_{\text{3DVAR}} = 1.85\cdot 10^{0}$.
$\blacksquare$
\end{example}

\begin{figure}[h!]
\centering
\begin{subfigure}{\textwidth}
  \centering
  \includegraphics[width=1\linewidth]{./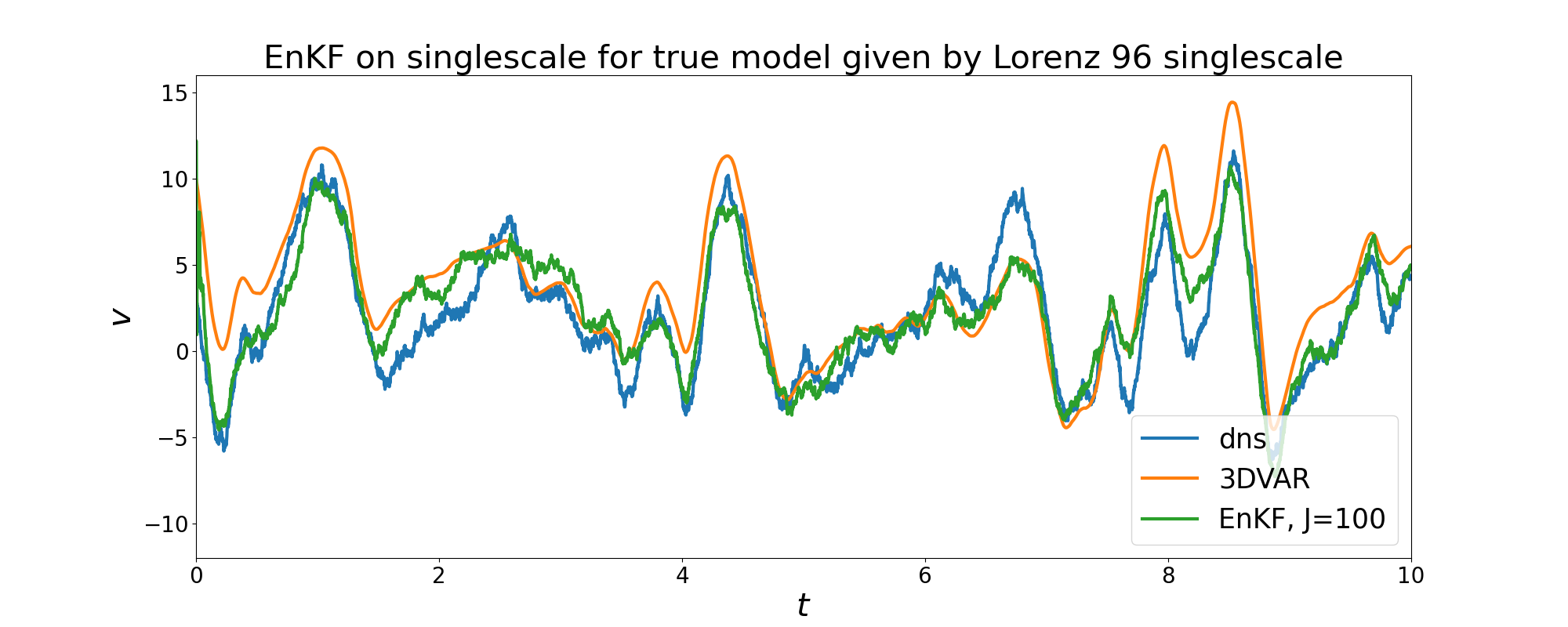}
  \caption{EnKF with ensemble size $J=10^2$.}
  \label{fig:enkf_100}
\end{subfigure}
\begin{subfigure}{\textwidth}
  \centering
  \includegraphics[width=1\linewidth]{./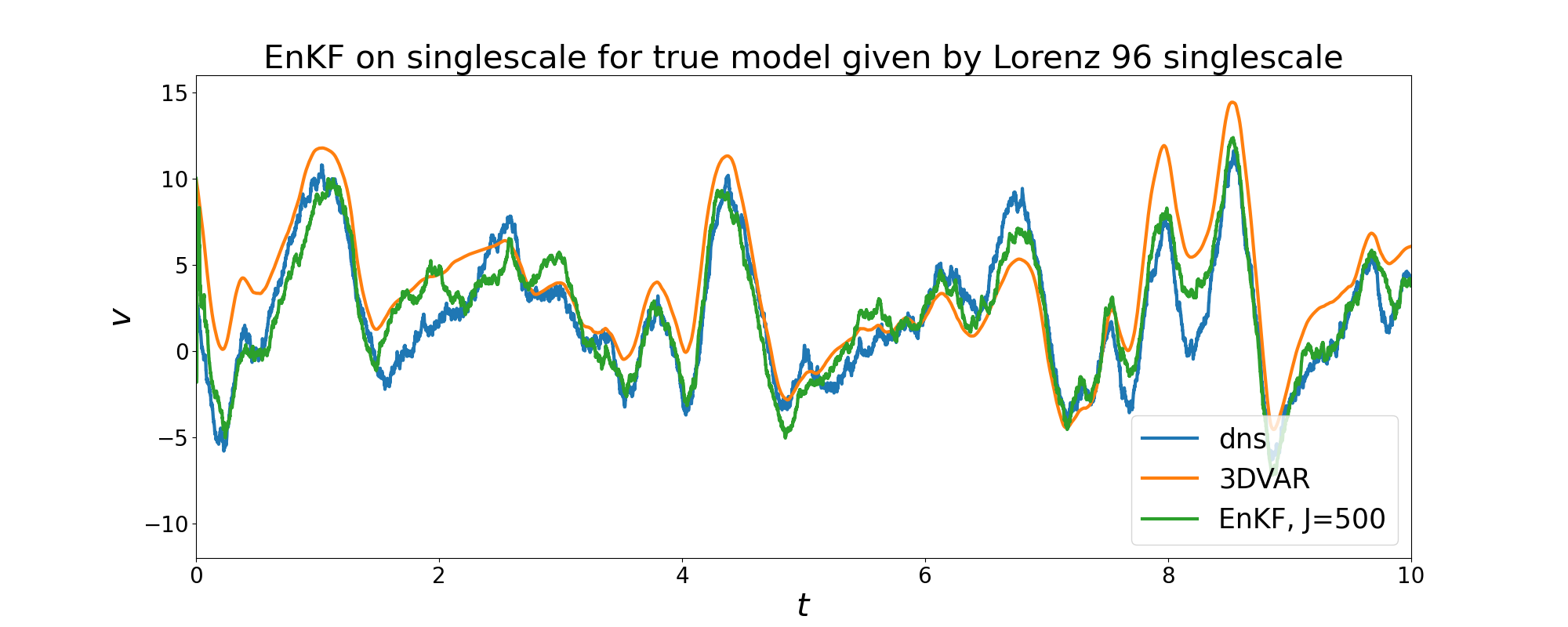}
  \caption{EnKF with ensemble size $J=5\cdot 10^2$.}
  \label{fig:enkf_1000}
\end{subfigure}
\caption{In this experiment we set the noise levels $\sigma = 10^{-1}, \gamma = 10^{-1}$. We display the estimates of $v_3$ in time produced by EnKF (using ensemble average) and 3DVAR against the true dynamics using observation time interval $\tau = 10^{-3}$. Again ``dns'' refers to direct numerical simulation. The results show that the EnKF provides a more accurate estimate of the trajectory, albeit at higher cost in terms of number of model evaluations.}
\label{fig:enkf}
\end{figure}

%
\subsubsection{Ensemble Square Root Filters}
\label{sssec:esrf}
%

The variants on the EnKF described in this subsection
are known as ensemble square root filters; they are based on
mean field maps \eqref{eq:sd2nn_add_again} and \eqref{eq:sd2nn_add_again_add},
approximated by interacting particle systems.

\begin{remark}
\label{rem:imp}
Square root filters are sometimes referred to as deterministic ensemble Kalman filters, to distinguish them from the ensemble Kalman filters described in the preceding subsection (see discussion of this,
and bibliographic information, in Subsection \ref{ssec:HC}.)
However we have already used the terminology
``stochastic'' and ``deterministic'' to distinguish between
different variants on the mean field models that we
describe in Subsections \ref{sssec:ats} and \ref{sssec:atd} 
respectively. With the exception of the discussion in Subsection \ref{ssec:HC}, we simply refer to square root filters
for the methods introduced in this subsection. We note however that
they are deterministic in the sense that they do not require generation of random variables.

We introduce two families of square root filters: of adjustment and transform type.
We emphasize again that the choice of which method to use in practice
is determined by implementation details such as the number of particles $J$, the dimension of state space $d_v$, and the dimension of the observation space $d_y$. These implementation details; although important, are not the focus of this paper.
$\blacksquare$
\end{remark}

\vspace{0.1in}
\noindent\paragraph{Ensemble Adjustment Kalman Filters}
\vspace{0.1in}

We introduce two different particle-based approximations
of mean field models. Both methods are examples of a general class of algorithms known as {\em ensemble adjustment Kalman filters}: EAKF.
The starting point for the first of these EAKF methods is the mean field map
(\ref{eq:sd2nn_add_again}a),  (\ref{eq:sd2nn_add_again}c), 
repeated
here for convenience:
\begin{subequations}
\label{eq:sd2nn_add_again2}
\begin{empheq}[box=\widefbox]{align}
\hv_{n+1} &= \Psi(v_n) + \xi_n, \\
v_{n+1} &= m_{n+1}+C_{n+1}^{\frac12}\pCov_{n+1}^{-\frac12}(\hv_{n+1}-\E \hv_{n+1}),
\end{empheq}
\end{subequations}
where $m_{n+1}$, $\pCov_{n+1}$, and $C_{n+1}$ are determined by
\eqref{eq:KF_pred_mean}, \eqref{eq:KF_joint2b}, \eqref{eq:KF_joint2c} and \eqref{eq:KF_analysis}.\footnote{Although the identity
\eqref{eq:KF_analysis} is derived in a subsection concerning the
Gaussian projected filter, it is contained in Lemma \ref{lemma:KF_analysis} which simply concerns conditioning of Gaussians.}

As in the previous subsection the methods evolve
particle ensemble $\{\vj_{n}\}_{j \in \sJ}$ into 
$\{\vj_{n+1}\}_{j \in \sJ}$, via the predictive ensemble
$\{\hvj_{n+1}\}_{j \in \sJ}.$ However they do not employ simulated data
$\{\hyj_{n+1}\}_{j \in \sJ}$, rather they make use of
$\{\hhj_{n+1}\}_{j \in \sJ}$, where $\hhj_n=h(\hvj_n).$
To define the methods it helps to introduce new
notation. Slightly modifying the notation in the preceding subsection,
we now let $\E^\sJ_n$ denote expectation with respect to the empirical measure
\begin{equation*}
\label{eq:empm}
\widehat \mu_{n}^\sJ = \frac{1}{J}\sum_{j=1}^J \delta_{\hvj_{n}}.
\end{equation*}
We let $\hv_{n+1}$ denote the random variable with this distribution and,
as before, $\hh_{n+1}=h(\hv_{n+1}).$

Next we define matrix $\widehat V_{n}$ comprising 
scaled ensemble deviations in state space:
\begin{equation*}
\widehat V_{n} = \frac{1}{\sqrt{J}}\left( \hv_n^{(1)}-\E^\sJ_n \hv_n,
\hv_{n}^{(2)}-\E^\sJ_n \hv_{n}, \cdots,
\hv_{n}^{(J)}-\E^\sJ_n \hv_{n} \right) \in \mathbb{R}^{d_v \times J},
\end{equation*}
we then define the analogous matrix $\widehat H_{n}$ in observation space
\begin{equation*}
\widehat H_{n} = \frac{1}{\sqrt{J}}\left( \hh_n^{(1)}-\E^\sJ_n \hh_n,
\hh_{n}^{(2)}-\E^\sJ_n \hh_{n}, \cdots,
\hh_{n}^{(J)}-\E^\sJ_n \hh_{n} \right) \in \mathbb{R}^{d_y \times J}.
\end{equation*}

With these notations in hand,
we have, with expectations computed under $\bbE^\sJ_n$,
\begin{subequations}
\label{eq:similar}
\begin{align}
\pCov^{vh}_n &= \widehat V_n \widehat H_n^\top,\\
\pCov^{hh}_n &= \widehat H_n \widehat H_n^\top,
\end{align}
\end{subequations}
and the ensemble-based approximation of the Kalman gain matrix is
\footnote{Here too, the empirical covariance computations are often
modified to accommodate the widely adopted convention
of scaling by $1/(J-1)$, instead of $1/J$.} 
\begin{equation} \label{eq:KJ}
K_n = \widehat V_{n+1} \widehat H_{n+1}^\top(\widehat H_{n+1} \widehat H_{n+1}^\top + \Gamma)^{-1}.
\end{equation}
This is a linear algebraic reformulation of the Kalman gain approximation resulting from
\eqref{eq:KF_joint33c}. 
By making a particle approximation of \eqref{eq:sd2nn_add_again2},
using \eqref{eq:KF_pred_mean}, \eqref{eq:KF_joint2b}, \eqref{eq:KF_joint2c} and \eqref{eq:KF_analysis}, we obtain
\begin{subequations}
\label{eq:left_transform}
\begin{empheq}[box=\widefbox]{align}
\hvj_{n+1} &= \Psi(\vj_n) + \xij_n, \: n \in \Z^+,  \\
\widehat m_{n+1} &= \E^\sJ_{n+1} \hv_{n+1},\\
m_{n+1} &=  \widehat m_{n+1}+ K_n (\yd_{n+1}-\E^\sJ_{n+1} \hh_{n+1}),\\
v_{n+1}^{(j)} &= m_{n+1} + C_{n+1}^{\frac12}\pCov_{n+1}^{-\frac12}\left(\hv_{n+1}^{(j)}- \widehat m_{n+1}\right),
\end{empheq}
\end{subequations}
where $K_n$ is given by \eqref{eq:KJ}; furthermore,
$\pCov_{n+1}$ and $C_{n+1}$ are computed empirically using
\begin{equation}
\label{eq:emprical_post_covariance}   
\pCov_{n+1} = \widehat V_{n+1} \widehat V_{n+1}^\top, \quad \Cov_{n+1}=\widehat V_{n+1} \left( I + \widehat H_{n+1}^\top \Gamma^{-1} \widehat H_{n+1} \right)^{-1} \widehat V_{n+1}^\top.
\end{equation}
The first of these two formulae follows similarly to \eqref{eq:similar}; 
the second of these two formulae is derived as follows:
\footnote{Using, in the last line, the identity $I-W^\top(WW^\top+I)^{-1}W=(I+W^\top W)^{-1}$ which holds for all (not necessarily square) matrices $W$.}
\begin{subequations}
\label{eq:left_transform2}
\begin{align}
\Cov_{n+1} &= \pCov_{n+1}-\pCov^{vh}_{n+1}(\pCov^{hh}_{n+1}+\Gamma)^{-1}(\pCov^{vh}_{n+1})^\top\\
&=\widehat V_{n+1} \widehat V_{n+1}^\top - \widehat V_{n+1} \widehat H_{n+1}^\top(\widehat H_{n+1} \widehat H_{n+1}^\top + \Gamma )^{-1} \widehat H_{n+1} \widehat V_{n+1}^\top\\
&= \widehat V_{n+1} \left( I - \widehat H_{n+1}^\top(\widehat H_{n+1} \widehat H_{n+1}^\top + \Gamma )^{-1} \widehat H_{n+1} \right) \widehat V_{n+1}^\top\\
&= \widehat V_{n+1} \left( I + \widehat H_{n+1}^\top \Gamma^{-1} \widehat H_{n+1} \right)^{-1} \widehat V_{n+1}^\top.
\end{align}
\end{subequations}

The  EAKF \eqref{eq:left_transform} takes as starting point \eqref{eq:sd2nn_add_again}. If instead we apply a particle approximation to the mean field dynamical
system \eqref{eq:sd2nn_add_again_add} we obtain a second version of the EAKF:
\begin{subequations}
\label{eq:left_transform_add}
\begin{empheq}[box=\widefbox]{align}
\hvj_{n+1} &= \Psi(\vj_n) + \xij_n, \: n \in \Z^+,  \\
\hh_{n+1}^{(j)} &= h(\hv_{n+1}^{(j)}),\\
\widehat m_{n+1} &= \E^\sJ_{n+1} \hv_{n+1},\\
m_{n+1} &=  \widehat m_{n+1}+ K_n (\yd_{n+1}-\E^\sJ_{n+1} \hh_{n+1}),\\
v_{n+1}^{(j)} &= m_{n+1} + (\hv_{n+1}^{(j)}-\widehat m_{n+1}) -
\widetilde K_n (\hh_{n+1}^{(j)}-\E^\sJ_{n+1} \hh_{n+1}).
\end{empheq}
\end{subequations}
By making an empirical approximation of the formula \eqref{eq:tidleK}
the matrix $\widetilde K_n$ is defined using the identification
\begin{equation*}
\widetilde K_n = \widehat V_{n+1} \widehat H_{n+1}^\top \left[
(\widehat H_{n+1} \widehat H_{n+1}^\top + \Gamma) + (\widehat H_{n+1} \widehat H_{n+1}^\top + \Gamma )^{1/2} \Gamma^{1/2} \right]^{-1}.
\end{equation*}

\begin{remark}
The key difference between \eqref{eq:left_transform} and \eqref{eq:left_transform_add}  is that the former involves inversion in state space, and the latter in data space. The relative size of the two dimensions dictates which is preferable. $\blacksquare$
\end{remark}

\vspace{0.1in}
\noindent\paragraph{Ensemble Transform Kalman Filters}
\vspace{0.1in}

The two EAKFs just defined both involve application, and inversion, 
of matrices which are applied on the left and act on state space. 
A different class of
algorithms, known as {\em ensemble transform Kalman filters} (ETKF), involve matrix multiplication
from the right and consequently inversions take place in the ensemble space of dimension $J$. In many applications this is far smaller than the dimension of the state or data spaces, and then use of this version of the methodology is preferred. The aim is to determine matrix $Z_n \in \mathbb{R}^{J\times J}$, and to derive Kalman gain $K_n$ from $Z_n$, so that the following interacting particle system produces an ensemble of particles $\{\vj_{n+1}\}_{j \in \sJ}$ with empirical covariance
$C_{n+1}$ defined by the second item in display \eqref{eq:emprical_post_covariance}:
\begin{subequations}
\label{eq:right_transform}
\begin{empheq}[box=\widefbox]{align}
\hvj_{n+1} &= \Psi(\vj_n) + \xij_n, \: n \in \Z^+,  \\
\widehat m_{n+1} &= \E^\sJ_{n+1} \hv_{n+1},\\
m_{n+1} &=  \widehat m_{n+1}+ K_n (\yd_{n+1}-\E^\sJ_{n+1} \hh_{n+1}),\\
v_{n+1}^{(j)} &= m_{n+1} + \sum_{i=1}^J \left(\hv_{n+1}^{(i)}-\widehat m_{n+1}\right) (Z_{n})_{ij}.
\end{empheq}
\end{subequations}

To this end we define the matrix of ensemble deviations
\begin{equation*}
V_{n} = \frac{1}{\sqrt{J}}\left( v_n^{(1)}-\E_{*,n}^\sJ v_n,
v_{n}^{(2)}-\E_{*,n}^\sJ v_{n}, \cdots,
v_{n}^{(J)}-\E_{*,n}^\sJ v_{n} \right) \in \mathbb{R}^{d_v \times J},
\end{equation*}
where, here, expectation $\E_{*,n}^\sJ$ is with respect to the empirical measure (\ref{eq:mua})
and $v_n$ is a random variable with this distribution. It then follows from (\ref{eq:right_transform}d) that
\begin{equation} \label{eq:transform ensemble deviations}
V_{n+1} = \widehat{V}_{n+1} Z_n.
\end{equation}
If we define
\begin{equation} \label{eq:ETKF_matrix}
Z_n = \left( I + \widehat H_{n+1}^\top \Gamma^{-1} \widehat H_{n+1} \right)^{-1/2} \in \bbR^{J \times J}.
\end{equation}
then, as desired, $V_{n+1}V_{n+1}^\top = \Cov_{n+1}$ as defined by (\ref{eq:emprical_post_covariance}), by virtue of \eqref{eq:left_transform2}.
The calculations in \eqref{eq:left_transform2} can also be utilized to verify that
the empirical Kalman gain matrix defined by (\ref{eq:KJ}) satisfies
\begin{equation*}
K_n=\widehat V_{n+1} Z_n^2 \widehat H_{n+1}^\top \Gamma^{-1}.
\end{equation*}
Using this formula for $K_n$ in \eqref{eq:left_transform} leads to an algorithm which matches first and second order statistics of the Gaussian projected filter, at $n+1$, requiring only matrix inversions in space of dimension defined by the number of particles $J$.

We finally note that (\ref{eq:right_transform}c) and
(\ref{eq:right_transform}d) can be combined into a single transformation step of the form
\begin{equation} \label{eq:linear transform filter}
    v_{n+1}^{(j)} = \sum_{i=1}^J \hv_{n+1}^{(i)} (S_n)_{ij}\,,
\end{equation}
where $S_n \in \mathbb{R}^{J\times J}$ replaces the matrix $Z_n$
in (\ref{eq:right_transform}d) such that
\begin{equation*}
    m_{n+1} = \sum_{j=1}^J v_{n+1}^{(j)} = 
    \sum_{i,j=1}^J \hv_{n+1}^{(i)} (S_n)_{ij}
\end{equation*}
holds in addition to (\ref{eq:transform ensemble deviations}) with $Z_n$ replaced by $S_n$. 

\begin{remark}
Formulation (\ref{eq:linear transform filter}) has a number of attractive features. First, it clearly reveals that the analysis $\{\vj_{n+1}\}_{j\in \sJ}$ lies in the span of the space spanned by the predictions $\{\hvj_{n+1}\}_{j\in \sJ}$, which is relevant whenever $J < d_v$. Second, all particle implementations of the ensemble Kalman filter and many of its extensions can be put into the framework (\ref{eq:linear transform filter}) with the (possibly random) 
matrix $S_n$ chosen appropriately. Third, it encodes a coupling between the prediction $\{\hvj_{n+1}\}_{j\in \sJ}$ and the analysis
$\{\vj_{n+1}\}_{j\in \sJ}$ at the level of their associated empirical measures $\widehat \mu_n^\sJ$ and $\mu_n^\sJ$, respectively.
See the following bibliographic Section \ref{ssec:BSE} for more details.
$\blacksquare$ \end{remark}

%
\subsection{Bibliographical Notes}
\label{ssec:BSE}
%

Ideas from feedback control underlie the material in
Subsection \ref{ssec:CT}, addressing Objective 1. 
Control theory is an enormous subject in its
own right and we cannot do justice to it in this paper. For study of linear control theory, as illustrated in Example \ref{ex:control}, see \citet{aastrom2021feedback,sontag2013mathematical} for engineering and mathematical treatments respectively. 
For the control theoretic approach to the state estimation problem see \citet{luenberger1964observing} and \citet{luenberger1971introduction}.

Our study of control-theoretic methods has focused on
3DVAR. Recent analysis of the 3DVAR method rests heavily on ideas 
arising from determining modes for dissipative evolution equations, an idea with roots in the paper \citet{foias1967comportement} and
unified in the book \citet{temam2012infinite}. 
The use of these ideas in data
assimilation was introduced in \citet{olson2003determining} 
and developed further in \citet{hayden2011discrete} and \citet{foias2016discrete}; the papers \citet{law2012evaluating}, \citet{law2012analysis}, \citet{law2016filter} and \citet{sanz2015long}
essentially establish the stability of these deterministic results to small noise perturbations; see also \cite{moodey2013nonlinear} for related analysis. In the context of using observations to control the instability of chaotic systems, all of this work may be seen as building on the study of synchronization \citep{pecora1990synchronization}, reviewed in \citet{ashwin2003synchronization}.

In Subsection \ref{ssec:PP} we introduce the probabilistic approach to filtering, addressing Objective 2. In low dimensional
systems particle filters provide a flexible and efficient tool
for attacking probabilistic filtering; see \cite{doucet2001introduction}. However in this paper our focus
is on high dimensional problems and Kalman-based methods
specifically. The books
\citet{reich2015probabilistic},
\citet{asch2016data},
\citet{law2015data},
\citet{harlim2010filtering}, 
\citet{abarbanel2013predicting}, and
\citet{Evensenetal2022} provide overviews of a variety of
filtering methods, and ensemble Kalman methods 
from Subsection \ref{ssec:EKM} in particular.

The Kalman filter \citep{kalman1960new} from Example \ref{ex:sssec:2} led to arguably the first systematic analysis of an algorithm for incorporation of discrete time data into estimation of a discrete time stochastic dynamical system; it applies
only to linear Gaussian systems. 
The monograph \citet{jazwinski2007stochastic} provides an introduction to nonlinear filtering both in discrete and continuous time; in particular it discusses the extended Kalman filter, found by applying the Kalman filter to a linearization of the state and data dynamics. However this method does not work well in high dimensions
\cite{ghil1981applications}, motivating the use of mean field
maps, as introduced in Subsection \ref{ssec:MFM}, and the
ensemble-based methods from Subsection \ref{ssec:EKM} which 
approximate them. The approximation of
mean field maps by interacting particle systems is overviewed in \citet{sznitman1991topics}.

We refer the reader to the excellent monographs by \citet{asch2016data} and \citet{Evensenetal2022} for texts with emphasis on important implementation details, not covered in this paper, relating to these ensemble Kalman methods; these include techniques such as
inflation and localization that are central to the success of
these methods in high dimensions. We also refer to the paper by \citet{vetracarvalho2018stateoftheart} for a comprehensive review of the algorithmic details of ensemble Kalman methods. Here we point to two particular implementation details that are
of particular practical importance. The first concerns the use of ensemble square-root filters from Subsection \ref{sssec:esrf}. The matrix $Z_n$ in (\ref{eq:transform ensemble deviations}) is not uniquely defined by the requirement $V_{n+1} V_{n+1}^\top = C_{n+1}$. Formula \eqref{eq:ETKF_matrix} constitutes one possible choice, which leads to a symmetric $Z_n$. See \citet{sr:nichols08} for more details.
Second, finite particle implementations of the stochastic EnKF
from Subsection \ref{sssec:senkf}
entail that the random realizations $\hyj_{n+1}$ appear both in
the Kalman gain $K_n$ as well as in the innovation term in (\ref{eq:sd4e}c). As first observed by \citet{sr:houtekamer05}, this leads to a systematic underestimation of the ensemble spread, which vanishes in the $J\to \infty$ limit; but can affect the performance of the EnKF for small particle sizes. 
In Subsection \ref{ssec:IPBIBC} we will 
highlight the same effect when discussing finite particle 
implementations \citep{nusken2019note,garbuno2020affine} 
of the ensemble Kalman sampler for Bayesian inversion, 
based on the mean field model proposed in \citet{garbuno2020interacting}.

Particle-based extensions of the classical Kalman filter to nonlinear filtering problems include the unscented Kalman filter and the ensemble Kalman filter. While this paper focuses primarily on ensemble Kalman filter techniques, the unscented Kalman filter is an approach based on application of quadrature to the Gaussian projected filter from
Subsection \ref{ssec:GPFD}; see \citet{julier2000new} as well as \citet{sr:sarkka}. A discussion and evaluation in the context of ensemble square root filters 
may be found in \citet{SR-wang2004}. 

Much of the development of ensemble Kalman methods reflects
the historical roots of the subject in the geophysical sciences, the atmosphere-ocean sciences in particular, including Lagrangian data assimilation, and in
the modeling of subsurface flow
\citep{burger1998enkf,
houtekamer1998enkf,
anderson2001ensemble,
SR-bishop2001,
SR-whitaker2002enkf,
SR-TIPPETT03,
SR-hunt2007,
li2007iterative,
sakov2012,
bocquet2014,
evensen2019accounting,
bocquet2012combining,
bocquet2017degenerate,
gurumoorthy2017rank,
sampson2021ensemble,
kuznetsov2003method,salman2006method}. We also mention the randomized maximum likelihood (RML) approach to Bayesian inference which is closely related to the analysis step of a stochastic EnKF and has also been developed primarily through application in the geophysical sciences \citep{kitanidis:95,oliver:96d,oliver:08}. 

There is also a body of literature concerning the analysis and development of ensemble methods with an emphasis on applications in
complex and turbulent flows:
\citet{grooms2014ensemble},
\citet{grooms2015ensemble},
\citet{robinson2018improving},
\citet{lee2017preventing},
\citet{gottwald2013mechanism},
\citet{kelly2015concrete},
\citet{tong2016nonlinear},
\citet{tong2015nonlinear},
\citet{kelly2015concrete},
\citet{harlim2014ensemble},
\citet{majda2018performance},
\citet{fertig2007comparative},
\citet{harlim2007non},
\citet{harlim2007four}.
The conceptual fluid dynamics models of Lorenz \citep{lorenz1996predictability} (often referred to, collectively, as Lorenz '96 models) have been particularly influential in germinating this body of work and we use them exclusively in our illustrative Examples \ref{ex:3dvar}, \ref{ex:enkf}, \ref{ex:s3dvar}, \ref{ex:EKI} and \ref{ex:3dvar_ms}. Furthermore we will make use of the relationship between the multiscale and singlescale version of the model as developed in \citet{fatkullin2004computational}.

Recently ensemble Kalman methods have been developed for potential use in machine learning
\citep{haber2018never,
kovachki2019ensemble,
guth2020ensemble,
grooms2021analog,
gottwald2021supervised,yang2021machine,
SR-PR21};
see also \citet{bocquet2017degenerate}, \citet{chen2022autodifferentiable}
for research at the intersection of machine learning with ensemble Kalman methodology.

In this survey we have started from mean field equations, in
Subsection \ref{ssec:MFM}, and then discretized the mean field limit using $J$ particles in subsequent subsections. It is of interest to demonstrate that the discrete formulations which arise actually converge to the mean field equations in the $J\to \infty$ limit. This has indeed been established for the ensemble Kalman filter when applied in the
linear Gaussian setting in which the mean field limit exactly recovers the filtering distribution \citep{gland:11,mandel:11,mandel:15} and indeed some
results also apply in the nonlinear setting. 
In the continuous time-setting long-time error estimates, exploiting ergodicity of the Kalman-Bucy filter itself and propagation of chaos ideas to extend to the
ensemble Kalman approximations, are developed in \citet{delMoral18}; see
Subsection \ref{ssec:BCT} for further details concerning continuous time. The paper \citet{law2016deterministic} studies related work concerning the mean field limit of ensemble Kalman
methods in the context of non-Gaussian problems. The papers \citet{ding2020ensemble}, 
\citet{ding2021ensemble,ding2021ensembleb} study particle approximation of mean field
limits beyond the Gaussian setting, primarily in the context of the solution of inverse problems; see the discussion in Subsection \ref{ssec:IPBIBC}. The papers \citet{hoel2016multilevel,chernov2021multilevel} study the use of multilevel approximation of the mean field limit, coupling ensemble approximations at different levels of space or time discretization.

In Subsection \ref{sssec:soti} we introduce the idea of
second-order transport: approximations of the perfect transport maps
that effect filtering. 
The non-uniqueness of second order transport maps is studied
in continuous time, for linear Gaussian stochastic differential
equations, in \citet{taghvaei2020optimal}; this work is closely
related to our analysis in the first two subsections of Section \ref{appendix:C}. Non-Gaussian extensions are discussed in \citet{taghvaei2022optimal}.
We also  highlight that it is possible to construct second
order transport maps $\widetilde T$, which satisfy conditions different from
(\ref{eq:compare}b) and (\ref{eq:compare2}b), respectively. For example,
one could request that 
\begin{equation*}
\op G\bigl((\tT_n)^\sharp \nu\bigr)=\op G \op B_n(\nu).
\end{equation*}
This approximation has been utilized by \citet{SR-bickel11}. The analogous
deterministic approach has been put forward by \citet{SR-Toedter15}
and has been explored further, for example, in \citet{SR-AdWR17}. Alternatively, one can also replace the definition (\ref{eq:KLG}) of the Gaussian projection operator $G$. To this end, recall that the Kullback--Leibler divergence
between probability measures $\pi_1$ and $\pi_2$ on $\R^d$ is defined as
\begin{equation*} \label{eq:KL}
 d_{\rm {KL}} (\pi_1||\pi_2) = \int \pi_1(du) \log \frac{\dd\pi_1}{\dd\pi_2}(u);
\end{equation*}
in particular, it is not symmetric in its two arguments. Gaussian variational inference
\citep{bishop} is for example based of the definition
\begin{equation}
\label{eq:KLG2}
    \op G\mu={\rm argmin}_{\pi \in \mathfrak{G}} d_{\rm {KL}} (\pi||\mu)
\end{equation}
in place of \eqref{eq:KLG}; note, however, that the minimizer of 
\eqref{eq:KLG2} may not be unique, whilst the minimizer of \eqref{eq:KLG}
is always unique.

While we follow the moment matching perspective on the derivation of ensemble Kalman filter methods in this survey, we mention in passing that there is an alternative perspective based on linear minimum variance estimators. See \citet{SR-vanLeeuwen2020} in the context of the stochastic ensemble Kalman filter and the Appendix \ref{ssec:TM_MVA} as well as \citet{SR-bickel11} for nonlinear extensions. The Bayes linear methodology is also an alternative
approach of potential interest \cite{goldstein2007bayes}.

Even in the mean field limit $J\to \infty$, the ensemble Kalman filter provides approximations only to approximate transport based filters. They are only exact in the linear Gaussian setting \citep{gland:11}; recent works develop new tools of analysis
to extend this to nonlinear filtering problems that are close to
Gaussian \citep{carrillo2022ensemble,calvello2024accuracy}. As mentioned in Subsection \ref{ssec:HC}, sequential Monte Carlo methods can be designed to be consistent with the underlying nonlinear filtering problem as defined, for example, by perfect transport based filters. Foundational analysis of these particle methods is undertaken in \citet{crisan1998discrete} and \citet{delmoral:04}; but we reiterate
that, in contrast to ensemble Kalman based methods,
they do not scale well to high dimensions.
We also point to \citet{del2006sequential} for an application of the sequential Monte Carlo method to Bayesian inference problems; the approach therein is closely related to iterative implementations of the EnKF that were
subsequently developed in the papers \citet{li2007iterative}, \citet{gu2007iterative}, \citet{sakov2012}.

Despite only providing approximations to the exact filtering distribution, i.e.~from the perspective of Objective 2, accuracy and stability results for the ensemble Kalman filter, viewed as a state estimator and hence from the 
perspective of Objective 1, have been derived. See, for example, 
\citet{gonzalez2013ensemble}, \citet{kelly2014well},
\citet{tong2015nonlinear}, \citet{tong2016nonlinear}
and \citet{del2021theoretical}. Mechanisms for finite time filter divergence have also been identified \citep{gottwald2013mechanism,kelly2015concrete}.

Extending the ensemble Kalman filter to strongly nonlinear and high dimensional state estimation problems constitute an area of active ongoing research. The current state of the art has been summarized in \citet{SR-LKNPR19} in the context of high dimensional geophysical applications. Extensions of the transport framework (\ref{eq:mf0}), which build on approximating the perfect transport maps $\Td$ in (\ref{eq:mf0}b) in an asymptotically consistent manner, include the work by \citet{SR-R13}, \citet{SR-cheng14}, \citet{spantini2019coupling}, and \citet{zech:22}. In an alternative line of research there have been several proposals to construct hybrid methods, which aim to adaptively bridge between the ensemble Kalman and particle filters; including the work by \citet{SR-stordal11}, \citet{frei2013bridging}, \citet{SR-CRR15}, and \citet{nerger2022data}.

In this context, the transformation formula (\ref{eq:linear transform filter}) proves to be rather useful since most existing particle based methods can be covered by appropriate choices of $S_n$, where $S_n$ is typically the realization of a random matrix. See \citet{reich2015probabilistic} for more details. For example, a resampling step in a sequential Monte Carlo method gives rise to a matrix $S_n$ with a single non-zero entry equal to one in each of its columns. More generally it holds that, for all $j \in \sJ$,
\begin{equation*}
    \sum_{i \in \sJ} (S_n)_{ij} = 1.
\end{equation*}
In particular, the matrix $S_n$ can be chosen to correspond to an optimal coupling between two discrete random variables \citep{SR-R13}, which builds a link between filtering and optimal transport also explored in \citet{corenflos2021differentiable}. The subject of optimal transport is given a comprehensive treatment in \citet{villani2008optimal}; see also \citet{villani2021topics}. Computational aspects of the subject including entropy-regularized optimal transport are discussed in \citet{cuturi2013sinkhorn} and \citet{peyre2019computational}. 
Entropy-regularization is linked to the Schr\"odinger bridge problem, and connections with data assimilation are developed in \citet{reich2019data}. See also Subsection \ref{ssec:HC} for discussion of transport-based methodologies within the context of ensemble Kalman methods. See Example \ref{ex:cite} and \citet{reich2015probabilistic} for the connection between the map \eqref{eq:needt} and optimal transport. For a derivation of the standard formulae for mean and covariance of conditioned Gaussians used in the proof of Lemma \ref{lemma:KF_analysis} see \citet{eaton2007}.

Finally we observe that we do not discuss, in this paper, the \emph{smoothing} approach to state estimation from data in model \eqref{eq:sd}. This approach aims at finding
the entire sequence $\{v_\ell\}_{\ell=0}^n$ from the data $\Yd_n$. Thus state estimates depend on data in their future. To read about smoothing see \citet{Evensenetal2022,sanz2023inverse}. We note here that there is a smoothing counterpart
of the 3DVAR algorithm (see Remark \ref{rem:3dvar}) known as 4DVAR because, for physical systems, it uses data distributed
in the three space and one time dimensions.

%
%
%
%
%
%
%
%
%
%
%
%
%
%
%
%
%
%
%
%
%
%

%
\section{State Estimation: Continuous Time}
\label{sec:CT}
%

This section is devoted to deriving, and studying properties of, continuous
time analogs of concepts introduced in the previous Section
\ref{sec:SE}. We start in Subsection \ref{ssec:CTSE} 
defining the set-up.
Thereafter the subsections mirror those from the preceding
Section \ref{sec:SE}, describing the relevant continuous time analogs;
in particular we conclude with Subsection \ref{ssec:BCT} 
containing bibliographic notes.

All problems arising in practice are implemented as
algorithms in discrete time, so it is important to establish
motivation for the continuous time formulations.
There are two primary reasons for introducing them. 
The first is that continuous time limits of the
discrete algorithms provide a way to understand and interpret
the behavior of the discrete algorithms; results about
accuracy, stability and uncertainty quantification, which shed
light on the relative merits of different algorithmic approaches,
are often cleanest in the continuous time setting. The
second is that many problems arising in practice involve
physical processes which evolve in continuous time; the
data informing these models is typically discrete in time, but when
the observations take place at very high frequency, it is insightful to consider the idealization of continuous time data. Both of these motivations underlie the developments in this section.

\subsection{Set-Up}
\label{ssec:CTSE}

We start by deriving the continuous-time analog of the discrete-time set-up \eqref{eq:sd} for state-observation coevolution: for all $n \in \Z^+$ we have
\begin{align*}
\label{eq:sd3}
v_{n+1} &= \Psi(v_n) + \xi_n, \\
y_{n+1} &= h(v_{n+1}) + \eta_{n+1};
\end{align*}
recall that we assume that $v_0,  \{\xi_n\}_{n \in \Z^+}$
and $\{\eta_n\}_{n \in \N}$ are mutually independent
Gaussians defined by
\begin{equation*}
\label{eq:ga3}
v_0 \sim \Ng(m_0, C_0), \quad 
\xi_n \sim \Ng(0, \Sigma) \,\, \text{i.i.d.}, \quad
\eta_n \sim \Ng(0, \Gamma) \,\, \text{i.i.d.}\,\,.
\end{equation*}
We introduce a small increment in time, denoted by $\Delta t$.
From the map $\Psi(\cdot)$ defining the systematic component of the state dynamics, we now define an infinitesimal analog $f(\cdot)$;  we also introduce the rescaled observation operators $\hs(\cdot)$ from the original nonlinear  observation operator $h(\cdot)$; and we introduce state/observational covariances $(\Gammas,\Sigmas)$
by rescaling $(\Gamma,\Sigma)$:
\begin{subequations}
\label{eq:rescalings}
\begin{align}
\Psi(v) &= v +\dt f(v),\quad
h(v) = \dt \hs(v),\\
\Sigma &= \dt \Sigmas,\quad \Gamma = \dt \Gammas.
\end{align}
\end{subequations}
By virtue of our assumptions on $\Psi$ and $h$, functions
$f$ and $\hs$ are assumed to be known measurable functions (with respect to the Borel algebra), bounded on compact sets. 
In the linear setting $\Psi(\cdot)=M\cdot$, $ h(\cdot)=H\cdot$ we will also introduce 
an infinitesimal vector field $f(\cdot)=F\cdot$ for matrix $F$, and 
rescaled linear observation operator $\Hs$
\footnote{Note the difference, conceptual and notational, between the discrete time objects
$(h(\cdot),H,\Gamma,\Sigma)$ and the related continuous time objects $(\hs(\cdot),\Hs,\Gammas,\Sigmas)$.}
\begin{align}
\label{eq:rescalings2}
   M= \text{Id} + \dt F,   \quad H=\dt \Hs.
\end{align}
The observation $\{y_n\}$ is
best thought of, in the scalings we introduce, as capturing increments of a process
$\{z_n\}$. To capture this, and extend it to the specific realization of the data
appearing in the algorithms, and the artificial data used in some algorithms, 
we introduce the variables $z_n$, $\zd_n$, $\hz_n$ by assuming that 
\begin{subequations}
\label{eq:reparam}
\begin{align}
y_{n+1} &:= z_{n+1} - z_n = \Delta z_{n+1}, \\
\yd_{n+1} &:= \zd_{n+1} - \zd_n = \Delta \zd_{n+1}, \\
\hy_{n+1} &:= \hz_{n+1} - \hz_n = \Delta \hz_{n+1}.
\end{align}
\end{subequations}
Note that $z_n$, $\zd_n$, $\hz_n$ have dimension $d_z = d_y$.
We assume that $z_0=\zd_0=\hz_0=0$.  Then $z_n$, $\zd_n$, $\hz_n$ 
are uniquely defined from $y_n$, $\yd_n$, $\hy_n$, respectively.

In the following we define $t_n=n\Delta t.$ With the scalings above in hand, we may view the state $v_n$ and observation $y_n$ as
relating to approximations of continuous time processes $v(\cdot)$ and $z(\cdot)$:
$v_n \approx v(t_n)$, $z_n \approx z(t_n)$. 
We also introduce continuous time process $\hv(\cdot)$, which will be used
in the prediction steps of algorithms, and
$\bigl(\zd(\cdot), \hz(\cdot)\bigr)$, which denotes the continuous time observed
data which we are conditioning on and predicted data, respectively. We assume that $z(0)=\zd(0)=\hz(0)=0.$
Under the rescalings above, and in the limit $\Delta t \to 0$,
the data assimilation problem may be reformulated in
terms of SDEs. Furthermore, the related mappings on measures, and discrete-time algorithms that stem from them, may be reformulated in terms of SPDEs and SDEs respectively; we now go on to identify these continuous time stochastic processes.

Applying the rescalings in \eqref{eq:rescalings} and the reparametrization of $y_{n+1}$ in (\ref{eq:reparam}a), we obtain the system 
\begin{subequations}
\label{eq:sd3res}
\begin{align}
v_{n+1} &= v_n +\dt f(v_n) + \xi_n, \\
z_{n+1} &= z_n+ \dt\hs(v_{n+1}) + \eta_{n+1},
\end{align}
\end{subequations}
for all $n \in \Z^+$, where we assume  $v_0,  \{\xi_n\}_{n \in \Z^+}$
and $\{\eta_n\}_{n \in \N}$ are mutually independent
Gaussians defined by
\begin{equation*}
\label{eq:ga3res}
v_0 \sim \Ng(m_0, C_0), \quad 
\xi_n \sim \Ng(0, \Delta t\Sigmas) \,\, \text{i.i.d.}, \quad
\eta_n \sim \Ng(0, \Delta t\Gammas) \,\, \text{i.i.d.}\,\,.
\end{equation*}
Note that \eqref{eq:sd3res} is a variant on the Euler-Maruyama discretization of a vector-valued SDE. Indeed, by taking the $\dt \rightarrow 0$ limit it is clear that the natural continuous time analog of equations \eqref{eq:sd} is the SDE
\begin{subequations}
\label{eq:sdco}
\begin{empheq}[box=\widefbox]{align}
\dd v &= f(v)\dd t + \sqrt{\Sigmas}\dd W, \quad v_0 \sim \Ng(m_0, C_0), \\
\dd z &= \hs(v)\dd t + \sqrt{\Gammas}\dd B, \quad z(0)=0,
\end{empheq}
\end{subequations}
taken to hold for all $t \in \R^+.$
The vector fields $f(\cdot)$ and $\hs(\cdot)$ describe the systematic,
deterministic components of the dynamics and observation processes
and are assumed known. The systematic 
components of the model
are subjected to white noise defined through the 
independent unit Brownian motions $W$ and $B$, in $\R^{d_v}$
and $\R^{d_z}$ respectively, and correlated across the
state and data spaces via the covariances $\Sigmas,\Gammas.$ The initial condition for $v$
is Gaussian and independent of $W$ and $B$. Analogous to the discrete time setting, we assume that
\begin{equation}
C_0 \succeq 0, \quad  \Sigmas \succeq 0, \quad \Gammas \succ 0.
\label{eq:gaco}
\end{equation}
Note that, for each fixed $t \in \R^+$, the state $v(t) \in \R^{d_v}$ and the
observations $z(t) \in \R^{d_z}.$

Throughout we use $\dagger$ again
to denote a specific realization of a process,
as in the discrete time setting. We assume that we
have available to us a sample path $\{\zd(t)\}_{t \in \R^+}$
of the observation coordinates of a realization
of the SDE \eqref{eq:sdco}. From this sample path we wish to recover the true realization of the state  $\{\vd(t)\}_{t \in \R^+}$
which gave rise to it. These observation and state sample paths
are generated by $\vd_0, \{\Wd\}_{t \in \R^+}$ and
$\{\Bd\}_{t \in \R^+}$, specific realizations of the
initial condition and the Brownian motions
driving the state and observation components of \eqref{eq:sdco}.
We also introduce $Z^\dagger(t)=\{\zd(s)\}_{0 \le s \le t}.$ 

Analogously to the discrete time setting, it is natural to establish two distinct objectives, both related to recovery of the state from the observation:
\begin{itemize}
\item Objective 1: design an algorithm producing output
$v(t)$ from $Z^\dagger(t)$ so that
$\{v(t)\}_{t \in \R^+}$ estimates
$\{\vd(t)\}_{t \in \R^+}$, the true signal generated 
by (\ref{eq:sdco}a);  
\item Objective 2: design an algorithm which estimates
the distribution of random variable $v(t)|Z^\dagger(t)$.
\end{itemize}
As in discrete time we are interested in Markovian
formulations which update the estimate $v(t)$, or the
distribution $v(t)|Z^\dagger(t)$, sequentially as the data is
acquired. All of the algorithms we describe depend only on 
the increments of the process $\zd(t)$, hence the fixing of $\zd(0)=0$ is immaterial. In the next two subsections we describe control theoretic and probabilistic approaches to this problem which, respectively, provide the basis for algorithms addressing Objectives 1 and 2. Following the road-map
from the previous section in the discrete time setting, 
we then proceed to study exact transport leading to mean field equations 
related to the Objective 2; we then study second order approximations
of exact transport, and finally reach ensemble Kalman methods through particle approximations.

\subsection{Control Theory Perspective}
\label{ssec:CTCT}

As for the time-discrete problem, we again start with the control theoretical approach based on the small uncertainty  assumption. Specifically we assume that the three covariances appearing in \eqref{eq:gaco} 
are small so that the states and observations can be well approximated as deterministic. Thus
we initially set $\Sigmas$ and $\Gammas$ to zero. In this setting
we derive a continuous time analog of the 3DVAR methodology.

Applying the rescalings \eqref{eq:rescalings} to (\ref{eq:sd2}) we obtain, in the deterministic setting,
\begin{subequations}
\label{eq:sdc2}
\begin{align}
\hv_{n+1} &= v_n + \dt f(v_n),  \\
\hz_{n+1} &= \hz_n + \dt \hs(\hv_{n+1}), \\
v_{n+1} &= \hv_{n+1}+K(\Delta \zd_{n+1}-\Delta \hz_{n+1})
\end{align}
\end{subequations}
with the observed increments from (\ref{eq:reparam}b) so that
\begin{align*}
\Delta \zd_{n+1} &:= \zd(t_{n+1})-\zd(t_n),
\end{align*}
derived from a specific fixed realization of \eqref{eq:sdco} and $t_n = n\dt$.
For fixed observed increments $\Delta \zd_{n+1}$, equations \eqref{eq:sdc2} define a deterministic
map $v_n \mapsto v_{n+1}$. Taking the continuous time limit,
and eliminating $\hz$, we obtain the following estimator $v$ for $\vd$ given $Z^\dagger:$
\begin{empheq}[box=\widefbox]{equation}
\label{eq:3DVARc}
{\dd v} = f(v)\dd t +  K\bigl({\dd \zd} - \hs(v)\dd t\bigr),
\end{empheq}
where $\zd$  (the data) is obtained 
from a specific fixed realization of \eqref{eq:sdco}:
\begin{subequations}
\label{eq:data_ct}
\begin{align}
\dd\vd &= f(\vd)\dd t + \sqrt{\Sigmas}\dd W^\dagger, \quad \vd(0) \sim \Ng(m_0, C_0), \\
\dd\zd &= \hs(\vd)\dd t + \sqrt{\Gammas}\dd B^\dagger, \quad \zd(0) =0.
\end{align}
\end{subequations}
Equation \eqref{eq:3DVARc} defines a continuous time analog of the 3DVAR algorithm \eqref{eq:sd2_add}
and \emph{gain matrix} $K$ should be viewed as a parameter to be chosen. The equation
has the form of a controlled 
ordinary differential equation (ODE); typically it is initialized
with $v(0) \sim \Ng (m_0,C_0)$.

We now include the effect of uncertainty, allowing for non-zero covariances in \eqref{eq:gaco}.
Accounting for noise in the expressions for $\hv_{n+1}$ and $\hz_{n+1}$ in \eqref{eq:sdc2} we obtain the following
rescaling of (\ref{eq:sd2n}):
\begin{align*}
\label{eq:sdc2noise}
\hv_{n+1} &= v_n + \dt f(v_n) +\xi_n,  \\
\hz_{n+1} &= \hz_n + \dt \hs(\hv_{n+1}) +\eta_{n+1}, \\
v_{n+1} &= \hv_{n+1}+K_n(\Delta \zd_{n+1}-\Delta \hz_{n+1}),
\end{align*}
for all $n \in \Z^+$, where we assume  $v_0,  \{\xi_n\}_{n \in \Z^+}$
and $\{\eta_n\}_{n \in \N}$ are mutually independent
Gaussians defined by \eqref{eq:ga3res}.
We may now formally take the $\dt \to 0$ limit and obtain 
the following continuous time analog of
(\ref{eq:sd2n}), namely the controlled SDE formulation:
\begin{subequations}
\label{eq:sd2nc}
\begin{empheq}[box=\widefbox]{align}
\dd v &= f(v)\dd t + \sqrt{\Sigmas} \dd W  + K(\dd\zd - \dd\hz),\\
\dd\hz &= \hs(v)\dd t + \sqrt{\Gammas} \dd B.
\end{empheq}
\end{subequations}
where $\zd$ is given by \eqref{eq:data_ct}. The unit Brownian motions $W, W^\dagger, B$ and $B^\dagger$, in $\R^{d_v}$,  $\R^{d_v}$, $\R^{d_z}$ and $\R^{d_z}$ respectively, are all independent of one another.
As in the discrete time analog, encapsulated in (\ref{eq:sd2n}), 
the choice of a (now \emph{time-dependent}) gain matrix $K$ remains to be determined and is crucial for the success of such a methodology;
and as in discrete time, a time-evolving gain matrix is often
desirable. To determine $K$ we adopt a mean field perspective, as we did in discrete time. To this end we now discuss the evolution of probability measures describing the conditional distribution of $v(t)|Z^\dagger(t)$.

%
\subsection{Probabilistic Perspective}
\label{ssec:CTPP}
%

We start, in Subsection \ref{sssec:cucd}, by discussing the unconditioned dynamics
and introducing the Fokker-Planck equation associated with the state space evolution.
In Subsection \ref{sssec:ctfd} we take the formulation of 
the filtering iteration  in discrete time, from Subsection \ref{ssec:PP},  
and take a continuous time limit to derive the Kushner-Stratonovich
equation; we study the linear Gaussian setting, and the Kalman-Bucy
filter, as a special case. In Subsection \ref{sssec:CSPP} we
introduce the \emph{sample path perspective}, central to the algorithmic
perspective developed in this paper. Subsection \ref{ssec:irun} defines
notation that will be useful throughout the remainder of
this section on continuous time data assimilation.

The derivation of continuous time limits in the previous two subsections is relatively
straightforward. However, there is an important practical and theoretical issue which we need to address. As mentioned
at the start of Section \ref{sec:CT} continuous time observations $\zd(t)$ are typically an idealization of discrete time data collected at instances $\tau_k = k\delta$, $\delta >0$, $k \in \N$ only.\footnote{Non-equally spaced data is also of relevance in this context; but we do not consider it here.} In order to make use of continuous time
algorithms and theory it is then useful to construct a continuous time 
approximation $\zdd$; to be concrete we will use piecewise linear interpolation. With this set-up we need to deal with two small parameters; the time-step $\dt$ used in \eqref{eq:rescalings} to obtain
a continuous time limit, and the data sampling interval $\delta$. 
The following remark addresses choices that we make in these
notes about the manner in which we take the limit
$(\dt, \delta) \to 0$.

\begin{remark}
There are results for continuous time filtering which imply that the desired limiting equation can be found in either It\^o or Stratonovich forms by considering different orders of the limits $\dt \to 0$ and $\delta \to 0$; see the bibliographic remarks in Subsection \ref{ssec:BCT}. 
Many theoretical results are derived by first taking $\delta \to 0$ and then $\dt \to 0$. From a practical and theoretical perspective, however, it is sometimes more convenient to first consider the limit $\dt \to 0$ followed by the limit $\delta \to 0$. We will utilize the latter sequence of limits in the following subsection in order to derive a set of evolution equations for the conditional probability measure $\mu (v,t)$ solving Objective 2. These equations will in turn guide our choice of the gain matrix $K$ in (\ref{eq:sd2nc}a). We emphasize that the choice about the order in which to take the limits $(\dt,\delta) \to 0$ is problem dependent and should be
considered carefully whenever continuous time modeling is employed.

When the data $\zd$ arises itself from numerical simulations of a continuous problem, then it is most convenient to set $\delta = \dt$, and hence $\tau_k=t_k$; this is implicitly used in the derivation of the continuous time sample path equations in the two preceding subsections. 
$\blacksquare$
\end{remark}

%
\subsubsection{Unconditioned Dynamics}
\label{sssec:cucd}
%

First consider the continuous time limit of the evolution
associated with $\op P$ from (\ref{eq:ops0}b),  (\ref{eq:ops0}c) which, 
with the scaling adopted in this section, is defined by
\begin{subequations}
\label{eq:ops_c99}
\begin{align}
(\op P\mu)(\dd v)&=\Bigl(\int_{u \in \R^{d_v}} p(u,v)\mu(\dd u)\Bigr)\dd v,\\
p(u,v)&=\frac{1}{(2\pi \dt)^{d_v/2} \sqrt{{\rm det}(\Sigmas)}}\exp \Bigl(-\frac{1}{2\dt}|v-u-\dt f(u)|_{\Sigmas}^2\Bigr).
\end{align}
\end{subequations}
Now view $r_n$ given by (\ref{eq:ops0}a) as approximating $r(n\dt).$
Recall that the underlying continuous time limit of the
sample path evolution is given by the SDE (\ref{eq:sdco}a). Thus
the time evolution of the probability density $r(\cdot,t)$ is given by the 
Fokker--Planck equation
\footnote{Here, and in what follows, we use the standard 
notation from continuum mechanics for the divergence of vector
and second order tensor fields, and for the gradient of scalar and vector fields;
see the bibliography Subsection \ref{ssec:BCT} for references.}
\begin{align} \label{eq:FPE}
    \partial_t r = -\nabla \cdot ( r f)  + \frac{1}{2} \nabla \cdot (\nabla \cdot( r \Sigmas)).
\end{align}
Note that, as in discrete time, this evolution is linear and decoupled from the state
space evolution (\ref{eq:sdco}a). We refer to the latter as a \emph{continuous time Markov process.}

%
\subsubsection{The Filtering Distribution}
\label{sssec:ctfd}
%

As in discrete time, we now consider the evolution equation for the state conditioned on
observations. Our starting point here is the iteration on measures,
the filtering cycle, defined by \eqref{eq:DAsimple}, 
under the scalings \eqref{eq:rescalings}. 
We assume that $\delta$ is an integer multiple of $\dt$ so that the $\{\tau_k\}$ are a subset of
the $\{t_n\}.$ In what follows we will first fix $\delta$ and let $\dt \to 0$; in order to obtain the integer multiple property we thus consider $\dt \to 0$ along a subsequence. We replace the true observation path $\zd(t)$ by
its piecewise linear approximation $\zdd(t)$ based on linear interpolation of values
$\{\zd(\tau_k)\}$. To be precise we assume that the derivative is
\emph{cadlag}.\footnote{Continuous from the right, limits exist from
the left.} We then have that the implied observation increments $\Delta \zd_{n+1}$ are constant over the time intervals $[t_n,t_{n+1})$ and are given by
\begin{align}\label{eq:data_interpolated}
    \Delta \zd_{n+1} = \frac{\dd\zdd}{\dd t}(t_n) \dt.
\end{align}
In this setting, the operators $\op P$ and $\op L_n$, defined by (\ref{eq:DAsimple}a) and (\ref{eq:DAsimple}b), respectively, become
\begin{subequations}
\label{eq:ops_c}
\begin{align}
(\op P\mu)(\dd v)&=\Bigl(\int_{u \in \R^{d_v}} p(u,v)\mu(\dd u)\Bigr)\dd v,\\
\op L_n(\mu)(\dd v)&=q(v,\Delta \zd_{n+1})\mu(\dd v)\Big/\Bigl(\int_{\R^{d_v}}q(v,\Delta\zd_{n+1})\mu(\dd v)\Bigr),
\end{align}
\end{subequations}
where, from \eqref{eq:ops2} with the scalings \eqref{eq:rescalings},(\ref{eq:reparam}b),
\begin{subequations}
\label{eq:ops2_c}
\begin{align}
p(u,v)&=\frac{1}{(2\pi \dt)^{d_v/2} \sqrt{{\rm det}(\Sigmas)}}\exp \Bigl(-\frac{1}{2\dt}|v-u-\dt f(u)|_{\Sigmas}^2\Bigr),\\
q(v,\Delta z)&=\frac{1}{(2\pi\dt)^{d_y/2} \sqrt{{\rm det}(\Gammas)}} 
\exp \Bigl(-\frac{1}{2\dt}|\Delta z-\dt \hs(v)|_{\Gammas}^2\Bigr).
\end{align}
\end{subequations}
With these formulae in hand we may now derive the continuous time analog of \eqref{eq:DAsimple}, for $\mu(v,t)$. 

For ease of exposition we
assume that $\mu$ has density  $\rho$ and derive the
equation satisfied by $\rho.$ To do this we employ
the {\em split-step principle}: we find the 
continuous time evolution equation
associated with each of $\op P$ and $\op L_n$ (equations
(\ref{eq:ops_c}a) and (\ref{eq:ops_c}b) respectively)
separately, and then add the right-hand sides of the resulting 
evolution equations to obtain the desired continuous time limit
resulting from the composition of $\op L_n$ and $\op P$. We use $r$ as
a dummy variable to denote
the density being evolved, for both of the split-steps, and in both discrete
($r_n$) and continuous ($r(t)$) time, in what follows.

First recall that the continuous time limit of the evolution
associated with $\op P$, as defined by (\ref{eq:ops_c}a) and (\ref{eq:ops2_c}a).
is given by the Fokker--Planck equation \eqref{eq:FPE}.
Secondly, consider the second component of the split-step argument: we determine a
continuous time limit of the evolution
associated with $\op L_n$ as described by (\ref{eq:ops_c}b) and (\ref{eq:ops2_c}b). The following lemma presents an evolution equation for $r$ associated with $\op L_n$, describing how observation of the piecewise continuous interpolated data $\zdd(t)$ changes the density $r(t,v)$. 
\begin{lemma}
Assume that $\Gammas \succ 0.$ The continuous time limit of the evolution 
associated with $\op L_n$, as described by (\ref{eq:ops_c}b) and (\ref{eq:ops2_c}b), is given by 
\begin{align} \label{eq:KSD_S1}
    \partial_t r = \left\langle \hs- \E \hs, \frac{\dd\zdd}{\dd t} \right\rangle_\Gammas r  - \frac{1}{2} \left\{ \left| \hs \right|^2_\Gammas - \E \left| \hs \right|^2_\Gammas \right\}r.
\end{align}
$\Diamond$ \end{lemma}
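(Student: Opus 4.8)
The plan is to start from the defining formula for $L_n$ in (\ref{eq:ops_c}b)--(\ref{eq:ops2_c}b), insert the interpolated-data increment, and carry out a Taylor expansion in the small time-step $\dt$. Writing $\dot z := \frac{d\zdd}{dt}(t_n)$ we have $\Delta \zd_{n+1} = \dot z\,\dt$, hence $\Delta\zd_{n+1} - \dt\,\hs(v) = \dt\bigl(\dot z - \hs(v)\bigr)$ and therefore $|\Delta\zd_{n+1}-\dt\,\hs(v)|^2_{\Gammas} = \dt^2|\dot z - \hs(v)|^2_{\Gammas}$. Expanding the square, $|\dot z - \hs(v)|^2_{\Gammas} = |\dot z|^2_{\Gammas} - 2\langle\hs(v),\dot z\rangle_{\Gammas} + |\hs(v)|^2_{\Gammas}$, so that $q(v,\Delta\zd_{n+1}) = c(\dt)\,\exp\bigl(\dt\, g(v)\bigr)$ where $c(\dt)$ collects all factors independent of $v$ (the normalising constant and the term $-\tfrac{\dt}{2}|\dot z|^2_{\Gammas}$) and $g(v) := \langle\hs(v),\dot z\rangle_{\Gammas} - \tfrac12|\hs(v)|^2_{\Gammas}$. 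Since $c(\dt)$ cancels between numerator and denominator of $L_n$, the update $r_{n+1}=L_n(r_n)$ reduces to $r_{n+1}(v) = e^{\dt\, g(v)}\,r_n(v)\big/\!\int e^{\dt\, g(v')}\,r_n(v')\,dv'$.

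Next I would expand $e^{\dt\,g} = 1 + \dt\,g + O(\dt^2)$, use $\int r_n\,dv=1$ to obtain the denominator $= 1 + \dt\int g(v')r_n(v')\,dv' + O(\dt^2) = 1 + \dt\,\E g + O(\dt^2)$ (with $\E$ expectation under $r_n$), and divide, giving $r_{n+1}(v) = r_n(v) + \dt\bigl(g(v) - \E g\bigr)r_n(v) + O(\dt^2)$. Dividing by $\dt$ and passing to the limit $\dt\to0$ — during which $\dot z = \frac{d\zdd}{dt}(t_n)$ converges to $\frac{d\zdd}{dt}(t)$, since the interpolant is piecewise linear so its derivative is piecewise constant and locally bounded — yields $\partial_t r = (g - \E g)\,r$. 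It remains only to rewrite $g(v) - \E g = \langle\hs(v) - \E\hs,\tfrac{d\zdd}{dt}\rangle_{\Gammas} - \tfrac12\bigl(|\hs(v)|^2_{\Gammas} - \E|\hs|^2_{\Gammas}\bigr)$, which is exactly \eqref{eq:KSD_S1}; the hypothesis $\Gammas\succ0$ is used to guarantee that $q$ in (\ref{eq:ops2_c}b) is well defined.

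The main obstacle is not the algebra but the rigor of the limit: one must control the $O(\dt^2)$ remainder in the exponential expansion uniformly in $v$ on the support of $r_n$ (or under growth assumptions on $\hs$ together with exponential-moment assumptions on $r_n$), and justify interchanging the $\dt\to0$ limit with the integral in the denominator. For bounded $\hs$, or for densities with suitable exponential moments, this is routine; in general it requires an a priori regularity assumption, which I would state explicitly. It is also worth flagging why no It\^o-type second-order correction appears here, in contrast with the genuine Zakai/Kushner--Stratonovich equation: because the data is replaced by its piecewise-linear interpolant $\zdd$, the increment $\Delta\zd_{n+1}$ is of order $\dt$ rather than $\sqrt{\dt}$, so the quadratic contribution $\dt^2|\dot z-\hs(v)|^2_{\Gammas}$ produces, after division by $\dt$, only a $v$-independent term plus higher-order corrections, and the limiting equation is a random transport equation with the locally bounded coefficient $\frac{d\zdd}{dt}$.
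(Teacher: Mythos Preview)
Your proof is correct and follows essentially the same approach as the paper: Taylor-expand the likelihood $q(v,\Delta\zd_{n+1})$ to first order in $\dt$, expand the normalising integral likewise, and read off the $O(\dt)$ increment of the density. The only organisational difference is that you first factor out the $v$-independent part of the exponent and work with $g(v)=\langle\hs(v),\dot z\rangle_{\Gammas}-\tfrac12|\hs(v)|^2_{\Gammas}$, whereas the paper expands the full quadratic $|\Delta\zd_{n+1}/\dt-\hs(v)|^2_{\Gammas}$ directly and then clears the denominator by multiplying through by $1+\tfrac{\dt}{2}\E|\cdot|^2_{\Gammas}$; the algebra is identical and your version is marginally cleaner. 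Your closing remarks on uniform control of the remainder and on why no It\^o correction appears (the interpolated increment is $O(\dt)$, not $O(\sqrt{\dt})$) go beyond what the paper states and are helpful context.
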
 
\begin{proof}
Consider the discrete-time evolution 
\begin{align} \label{eq:r_update}
    r_{n+1} = \op L_n r_n, 
\end{align}
where $\op L_n$ is defined by (\ref{eq:ops_c}b) and (\ref{eq:ops2_c}b). By Taylor expansion we have
\begin{equation*}
\exp\left(-\frac{1}{2\dt}\left|\Delta\zd_{n+1}-\dt \hs(v)\right|_{\Gammas}^2 \right) =  1 - \frac{\dt}{2}\left|\frac{\Delta\zd_{n+1}}{\dt}-\hs(v)\right|^2_\Gammas + \mathcal{O}(\dt^2).
\end{equation*}
Then, using expressions (\ref{eq:ops_c}b) and (\ref{eq:ops2_c}b), we obtain
\begin{equation}\label{eq:Lnrn}
    (\op L_nr_n)(v) = \frac{1}{C(\dt)}\left(1 - \frac{\dt}{2}\left|\frac{\Delta\zd_{n+1}}{\dt}-\hs(v)\right|^2_\Gammas + \mathcal{O}(\dt^2)\right)r_n(v),
\end{equation}
where
\begin{equation*}
    C(\dt) = \int_{\mathbb{R}^{d_v}}\left(1 - \frac{\dt}{2}\left|\frac{\Delta\zd_{n+1}}{\dt}-\hs(v)\right|^2_\Gammas + \mathcal{O}(\dt^2) \right)r_n(v)\dd v.
\end{equation*}
By integrating and noting that $r_n$ is a density, it follows that 
\begin{equation}
C(\dt) = 1 - \frac{\dt}{2}\mathbb{E}\left| \frac{\Delta\zd_{n+1}}{\dt} - \hs(v) \right|^2_\Gammas + \mathcal{O}(\dt^2), 
\end{equation}
where expectation is with respect to $v$ distributed as random
variable with density $r_n$. Hence, combining \eqref{eq:Lnrn} and (\ref{eq:r_update}) we obtain, to leading order in $\dt$,
\begin{equation*}
r_{n+1} =  r_n \left\{1 + \dt \left\langle \hs-\E \hs,\frac{\dd\zdd}{\dd t}\right\rangle_\Gammas 
-\frac{\Delta t}{2} \left\{ \left| \hs \right|^2_\Gammas - \E \left| \hs \right|^2_\Gammas \right\} +\mathcal{O}(\dt^2) \right\}; 
\end{equation*}
here we have used that the data increments $\Delta \zd_{n+1}$ are given by (\ref{eq:data_interpolated}) for fixed $\delta$. Taking the time continuous limit $\dt \to 0$ with fixed observation interval $\delta>0$ leads to the evolution equation \eqref{eq:KSD_S1}.
\end{proof}

Now taking the $\delta \to 0$ limit in (\ref{eq:KSD_S1}), we obtain the
following nonlocal nonlinear stochastic evolution equation for density $r(v,t):$
\begin{align} \label{eq:KSD_S2}
    \dd r = \left\langle \hs- \E \hs, \circ \dd\zd \right\rangle_\Gammas r  - \frac{1}{2} \left\{ \left| \hs \right|^2_\Gammas - \E \left| \hs \right|^2_\Gammas\right\}r \dd t.
\end{align}
Here $\circ$ denotes Stratonovitch integration; this form of integration arises
in the limit $\delta \to 0$ because the equation
is derived by making a smooth approximation $\zdd$ of $\zd$ and passing to
the limit. Recall that $\zd$ is given by \eqref{eq:data_ct}.
The equation is nonlocal and nonlinear because $\E$ denotes
expectation at time $t$ with respect to density $r(\cdot,t).$

We now wish to invoke the split-step principle and combine the evolutions
\eqref{eq:FPE} and \eqref{eq:KSD_S2}. However before doing this
we proceed to convert the equation \eqref{eq:KSD_S2} into its more common 
It\^o representation. For this, the following lemma is crucial. 
In proving it we will use the concepts of quadratic variation
and covariation. For an introduction to these concepts consult the lecture notes by Eberle, which are referenced in the bibliography Subsection \ref{ssec:BCT}.
We note that quadratic variation and covariation are first defined between scalar-valued process and
can then be lifted to define: (i) the covariation of an 
inner-product $\langle x, y\rangle$ between vector processes $x$
and $y$, which is scalar-valued; (ii) the quadratic variation of vector process $x$, which is matrix-valued; and (iii) the covariation of vector process $x$ with scalar process $z$, which is vector-valued.

Use of quadratic variation and covariation leads to a succinct, streamlined proof. Furthermore, in Appendix \ref{sec:AC}, we provide explicit calculations 
for the reader who is interested in understanding the details of the conversion by means of the definitions of It\^o and Stratonovitch integrals as limits. In particular, the concepts underlying the quadratic variation and covariation calculations in the following are derived from first principles in Lemma \ref{lem:ISadd}.

\begin{lemma}
\label{lem:ISC}
Assume that $\Gammas \succ 0$ and that $\zd$ is given by \eqref{eq:data_ct}. The It\^o and Stratonovich interpretations of the stochastic forcing term in (\ref{eq:KSD_S2}) are related through
\begin{align*}
\dd r &= \left\langle \hs- \E \hs, \circ \dd\zd \right\rangle_\Gammas  r - \frac{1}{2} \left\{ \left| \hs \right|^2_\Gammas - \E \left| \hs \right|^2_\Gammas\right\} r \dd t\\
    &= \left\langle \hs - \E \hs, \dd\zd - \E \hs \dd t\right\rangle_\Gammas r.
\end{align*}
$\Diamond$ \end{lemma}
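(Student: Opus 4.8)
The plan is to pass from the Stratonovich to the It\^o interpretation of the stochastic forcing term in \eqref{eq:KSD_S2} using the elementary relation $\int X \circ dY = \int X\,dY + \tfrac12[X,Y]$ between a Stratonovich and an It\^o integral, where $[X,Y]$ is the quadratic covariation. Applied (pointwise in the spatial variable $v$) with $Y=\zd$ and with $X$ equal to the $\R^{d_z}$-valued coefficient $t\mapsto \bigl(\Gammas^{-1}(\hs-\E\hs)\bigr)r(v,t)$ that multiplies $d\zd$ in the term $\langle \hs-\E\hs,\circ d\zd\rangle_\Gammas\,r$, this yields
\begin{equation*}
\bigl\langle \hs-\E\hs,\circ d\zd\bigr\rangle_\Gammas\,r
= \bigl\langle \hs-\E\hs,d\zd\bigr\rangle_\Gammas\,r
+ \tfrac12 \sum_{j} d\Bigl[\,\bigl(\Gammas^{-1}(\hs-\E\hs)\bigr)_j\,r,\ \zd_j\,\Bigr].
\end{equation*}
So the whole lemma reduces to computing this correction term and checking that it is exactly $\tfrac12\{|\hs|^2_\Gammas-\E|\hs|^2_\Gammas\}r\,dt$ minus the drift $\langle\hs-\E\hs,\E\hs\rangle_\Gammas\,r\,dt$ needed to recombine $\langle\hs-\E\hs,d\zd\rangle_\Gammas\,r$ into $\langle\hs-\E\hs,d\zd-\E\hs\,dt\rangle_\Gammas\,r$.

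Next I would record the three covariation identities that feed into this. From \eqref{eq:data_ct} the martingale part of $\zd$ is $\sqrt{\Gammas}\,\Bd$, hence $d[\zd_i,\zd_j]=\Gammas_{ij}\,dt$. The diffusion coefficient of $r$ against $d\zd$ in \eqref{eq:KSD_S2} is $\Gammas^{-1}(\hs-\E\hs)r$, and diffusion coefficients are insensitive to the It\^o/Stratonovich convention, so $d[r,\zd_j]=\sum_{j'}\bigl(\Gammas^{-1}(\hs-\E\hs)\bigr)_{j'}r\,\Gammas_{j'j}\,dt=r\,(\hs_j-\E\hs_j)\,dt$, i.e. $d[r,\zd]=r(\hs-\E\hs)\,dt$. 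Since $\hs=\hs(v)$ is constant in $t$, integrating against $\hs_i(\cdot)$ gives $d[\E\hs_i,\zd_j]=\int \hs_i(v')\,d[r(v',t),\zd_j]\,dv'=\mathrm{Cov}(\hs_i,\hs_j)\,dt$, the covariance under $r(\cdot,t)$. Combining these through the It\^o product rule and bilinearity of covariation (all cross-terms with finite-variation factors drop) gives, for each $j$,
\begin{equation*}
d\Bigl[\,\bigl(\Gammas^{-1}(\hs-\E\hs)\bigr)_j\,r,\ \zd_j\,\Bigr]
= r\sum_i (\Gammas^{-1})_{ji}\Bigl[(\hs_i-\E\hs_i)(\hs_j-\E\hs_j)-\mathrm{Cov}(\hs_i,\hs_j)\Bigr]dt,
\end{equation*}
and summing over $j$ turns the first bracketed term into $|\hs-\E\hs|^2_\Gammas$ and the second into $\mathrm{tr}\bigl(\Gammas^{-1}\mathrm{Cov}(\hs)\bigr)=\E|\hs-\E\hs|^2_\Gammas$, so that the correction term equals $\tfrac12\bigl(|\hs-\E\hs|^2_\Gammas-\E|\hs-\E\hs|^2_\Gammas\bigr)r\,dt$.

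Finally I would substitute into \eqref{eq:KSD_S2} and do the short algebraic simplification of the remaining drift: expanding the covariance-weighted norms gives $\tfrac12\bigl(|\hs-\E\hs|^2_\Gammas-\E|\hs-\E\hs|^2_\Gammas\bigr)-\tfrac12\bigl(|\hs|^2_\Gammas-\E|\hs|^2_\Gammas\bigr)=-\langle\hs-\E\hs,\E\hs\rangle_\Gammas$, which is exactly the $dt$-part needed to close the identity. The step that really requires care — and where I would rely on the first-principles treatment in Appendix \ref{sec:AC}, in particular Lemma \ref{lem:ISadd} — is the justification that this Stratonovich--It\^o conversion, and the covariation calculations, are legitimate pointwise in $v$ for the measure/density-valued process $r(\cdot,t)$, and that the nonlocal functional $\E\hs$, being linear in $r(\cdot,t)$, inherits the martingale part of $r$ as used above; granting those measure-valued covariation identities, the remaining computation is routine. \hfill $\square$
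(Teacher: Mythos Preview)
Your proposal is correct and follows essentially the same route as the paper: both apply the Stratonovich--It\^o conversion $\int X\circ dY=\int X\,dY+\tfrac12[X,Y]$ with $Y=\zd$ and $X=r\,\Gammas^{-1}(\hs-\E\hs)$, compute $d[r,\zd]=r(\hs-\E\hs)\,dt$ and then $d[\E\hs,\zd]$ by integrating against $r$, arrive at the correction $\tfrac12\bigl(|\hs-\E\hs|^2_\Gammas-\E|\hs-\E\hs|^2_\Gammas\bigr)r\,dt$, and finish with the same algebraic rearrangement. Your acknowledgement that the covariation identities for the density-valued process need the first-principles treatment of Lemma~\ref{lem:ISadd} matches the paper's own caveat.
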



\begin{proof} 
Using the formula for the It\^o--Stratonovich conversion 
between two semimartingales, we  have
\begin{align}\label{eq:conversion_IS}
     \left\langle \hs-\E \hs, \circ \dd\zd\right\rangle_\Gammas r
    =   \left\langle \hs-\E \hs, \dd\zd \right\rangle_\Gammas r + \frac{1}{2}
     \dd \left[\left\langle (\hs-\E \hs)r,\zd \right\rangle_\Gammas \right] , 
\end{align}
where $[\langle \cdot,\cdot \rangle]$ denotes covariation of the
inner-product $\langle \cdot,\cdot \rangle$. 
Note that (\ref{eq:data_ct}b) implies that the quadratic variation of $\zd$ is given by the matrix identity 
$[\zd,\zd] = t\Gammas$. Furthermore, because of (\ref{eq:KSD_S2}), the conversion formula \eqref{eq:conversion_IS} and the fact that $\dd [\zd,\zd] = \Gammas \dd t$, it follows that the covariation between scalar $r$ and  vector $\zd$ satisfies the vector identity 
\begin{equation}
\label{eq:VI}
\dd[r,\zd] =  (\hs-\E \hs)r \dd t.
\end{equation}
Consider $f: \R \to \R^{d_y}$ so that derivative $f'(r)$ may be identified with an element in $\R^{d_y}.$ Then, for any such differentiable $f$,
\begin{equation}
\label{eq:citeinbib}
\dd[\langle f(r),\zd\rangle_\Gammas] = \langle f'(r), \dd[r,\zd] \rangle_\Gammas.
\end{equation}
We now apply this identity in the setting where $f(r) = (\hs-\E \hs)r$, noting that $r$ defines
the expectation in this definition. Thus
\begin{equation*}
    f'(r)\,\delta r = (\hs-\E \hs)\,\delta r - \Bigl( \int \hs \,\delta r \,\dd v\Bigr)r. 
\end{equation*}
Hence the covariation in (\ref{eq:conversion_IS}) satisfies, using \eqref{eq:citeinbib} for the first equality and \eqref{eq:VI} for the second equality,
\begin{align*}
  \dd [\langle (\hs-\E \hs)r,\zd \rangle_\Gammas ] &=
  \langle \hs-\E \hs ,\dd[r,\zd]\rangle_\Gammas - \Bigl(\int \langle \hs,\dd[r,\zd]\rangle_\Gammas  \dd v\Bigr) r\\
  &=  \left| \hs-\E \hs\right|^2_\Gammas r \dd t - \,\E \bigl(\langle \hs,\hs-\E \hs\rangle_\Gammas\bigr) r \dd t \\
  &= \left\{ \left| \hs-\E \hs\right|^2_\Gammas - \E \left| \hs-\E \hs\right|^2_\Gammas \right\}r  \dd t.
\end{align*}
Using this identity in (\ref{eq:conversion_IS}) and rearranging, we find that
\begin{align*} 
    \left\langle \hs- \E \hs, \circ \dd\zd \right\rangle_\Gammas r - \frac{1}{2} \left\{ \left| \hs \right|^2_\Gammas - \E \left| \hs \right|^2_\Gammas\right\}r \dd t =
    \left\langle \hs- \E \hs, \dd\zd - \E \hs \dd t\right\rangle_\Gammas r  ,
\end{align*}
which in turn leads to the following desired
It\^o representation of (\ref{eq:KSD_S2}):
\begin{align} \label{eq:KSD}
    \dd r =  \left\langle \hs - \E \hs, \dd \zd - \E \hs \dd t\right\rangle_\Gammas r .
\end{align}
\end{proof}

Using the It\^o form of the equation from the preceding lemma, the Fokker-Planck equation (\ref{eq:FPE})
and the split-step principle delivers the following:
\begin{theorem}
\label{thm:KS_equation}
Assume that $\Gammas \succ 0$ 
and that $\zd$ is given by \eqref{eq:data_ct}. The time evolution of the density $\mmu (\cdot,t)$ for the random variable $v(t)|Z^\dagger(t)$ is characterized by the It\^o SPDE
\begin{empheq}[box=\fbox]{equation}\label{eq:KSE}
\dd\mmu = -\nabla \cdot (\mmu f) \dd t + \frac{1}{2}
\nabla \cdot (\nabla \cdot (\mmu \Sigmas))  \dd t +  \left\langle \hs - \E \hs, \dd\zd - \E \hs \dd t\right\rangle_\Gammas \mmu.
\end{empheq}
$\Diamond$ \end{theorem}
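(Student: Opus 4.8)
The plan is to assemble the result from the three ingredients already in place: the Fokker--Planck equation \eqref{eq:FPE} governing the prediction half-step $P$, the evolution \eqref{eq:KSD_S1} governing the likelihood half-step $L_n$ at fixed data-sampling interval $\delta$, and the It\^o--Stratonovich conversion of Lemma \ref{lem:ISC}. The organizing device is the split-step principle announced in the text: over one time increment $[t_n,t_{n+1}]$ the discrete filtering cycle \eqref{eq:DAsimple} reads $r_{n+1}=L_n(P r_n)$, so it suffices to expand this two-stage update to first order in $\dt$, at fixed $\delta$, and check that the two contributions simply add.

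First, at fixed $\delta$, I would Taylor-expand the composition. The prediction step contributes $P r_n - r_n = \dt\bigl(-\nabla\cdot(r_n f)+\tfrac12\nabla\cdot(\nabla\cdot(r_n\Sigmas))\bigr)+o(\dt)$, by the Euler--Maruyama/Fokker--Planck correspondence underlying \eqref{eq:FPE}; the likelihood step contributes, by the computation in the proof of the lemma leading to \eqref{eq:KSD_S1}, an increment of size $\dt\bigl\langle \hs-\E\hs,\tfrac{d\zdd}{dt}\bigr\rangle_\Gammas r - \tfrac{\dt}{2}\{|\hs|_\Gammas^2-\E|\hs|_\Gammas^2\}r + o(\dt)$. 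Composing, $r_{n+1}=r_n + (\text{prediction increment}) + (\text{likelihood increment}) + (\text{cross terms})$, where the cross terms are products of the two increments, together with the difference of the likelihood-normalization functional evaluated at $Pr_n$ versus $r_n$; at fixed $\delta$ all of these are $\mathcal{O}(\dt^2)$, hence negligible after summation over the $\mathcal{O}(\dt^{-1})$ steps in a fixed time window. Letting $\dt\to0$ with $\delta$ fixed therefore yields the $\delta$-regularized equation obtained by adding the right-hand sides of \eqref{eq:FPE} and \eqref{eq:KSD_S1}.

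Next I would take $\delta\to0$. The Fokker--Planck terms carry no data dependence and so are untouched; the two likelihood terms become, exactly as in the passage from \eqref{eq:KSD_S1} to \eqref{eq:KSD_S2}, the Stratonovich forcing $\langle \hs-\E\hs,\circ\,d\zd\rangle_\Gammas \mmu - \tfrac12\{|\hs|_\Gammas^2-\E|\hs|_\Gammas^2\}\mmu\,dt$, with $\zd$ given by \eqref{eq:data_ct}. Finally, applying Lemma \ref{lem:ISC} converts this Stratonovich forcing into its It\^o representation $\langle \hs-\E\hs,\,d\zd-\E\hs\,dt\rangle_\Gammas\mmu$, and adding the (finite-variation) Fokker--Planck part gives precisely \eqref{eq:KSE}.

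The step I expect to require the most care is the justification of the split-step principle itself — that the generator of the composition $L_n\circ P$ is the sum of the generators of $L_n$ and of $P$, with no residual cross term surviving the double limit. The delicate points are: (i) at fixed $\delta$ both half-step increments are $\mathcal{O}(\dt)$ in the relevant norm, so their product is genuinely $\mathcal{O}(\dt^2)$ and summable to zero, which is the Lie--Trotter/Strang-splitting estimate in disguise; and (ii) in the subsequent $\delta\to0$ limit the prediction step contributes only a finite-variation ($dt$-order) term to the evolution of $r$, and hence leaves the covariation $d[r,\zd]=r(\hs-\E\hs)\,dt$ used in the proof of Lemma \ref{lem:ISC} unchanged, so the It\^o correction is the one already computed there. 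Granting these estimates, the remaining manipulations are the routine expansions indicated above.
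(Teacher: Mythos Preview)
Your proposal is correct and follows essentially the same route as the paper: derive the Fokker--Planck contribution \eqref{eq:FPE} for the prediction step, the likelihood contribution \eqref{eq:KSD_S1}/\eqref{eq:KSD_S2} for the conditioning step, convert the latter to It\^o form via Lemma \ref{lem:ISC}, and add the two right-hand sides by the split-step principle. The paper's proof is a one-line invocation of these ingredients; your additional care in justifying the split-step principle (that cross terms are $\mathcal{O}(\dt^2)$ at fixed $\delta$, and that the finite-variation Fokker--Planck part does not alter the covariation computation) goes beyond what the paper spells out but is entirely in keeping with its intent.
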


Equation \eqref{eq:KSE} is known as the Kushner--Stratonovich equation. The equation \eqref{eq:KSE} is to be interpreted 
in the It\^o sense with respect to the driving noise $\zd (t)$. The equation is nonlinear, unlike the unconditioned dynamics governed
by the Fokker-Planck equation \eqref{eq:FPE}.
The corresponding Stratonovich formulation follows immediately from (\ref{eq:KSD_S2}) in combination with (\ref{eq:FPE}):
\begin{subequations}\label{eq:KSEadded}
\begin{align}
\dd\mmu &= \Bigl(-\nabla \cdot (\mmu f)  + \frac{1}{2}
\nabla \cdot (\nabla \cdot (\mmu \Sigmas))
- \frac{1}{2} \left\{ \left| \hs \right|^2_\Gammas - \E \left| \hs \right|^2_\Gammas\right\}\rho\Bigr)\dd t\\
&\hspace{2.75in}+  \left\langle \hs - \E \hs, \circ \dd\zd \right\rangle_\Gammas \mmu.
\end{align}
\end{subequations}

\begin{example}
\label{ex:KBF}
We consider the setting where
\begin{equation}
\label{ex:linearc}
f(\cdot) = F \cdot, \quad \hs(\cdot) = \Hs\cdot.
\end{equation}
in equations \eqref{eq:sdco} so that
\begin{subequations}
\label{eq:sdcoz}
\begin{empheq}[box=\widefbox]{align}
\dd v &= Fv \dd t + \sqrt{\Sigmas}\dd W, \quad v(0) \sim \Ng(m_0, C_0), \\
\dd z &= \Hs v \dd t + \sqrt{\Gammas}\dd B, \quad z(0)=0.
\end{empheq}
\end{subequations}
Now consider data $\zd(t)$ generated by 
\begin{subequations}
\label{eq:data_ctz}
\begin{align}
\dd \vd &= F\vd \dd t + \sqrt{\Sigmas}\dd W^\dagger, \quad \vd(0) \sim \Ng(m_0, C_0), \\
\dd\zd &= \Hs \vd \dd t + \sqrt{\Gammas}\dd B^\dagger, \quad \zd(0) =0.
\end{align}
\end{subequations}
We are interested in he filtering distribution for $v(t)|\Zd(t)$.

Becase of the linearity and additive Gaussian noise, this
filtering distribution is Gaussian and is
given by the Kalman-Bucy filter, the continuous time 
analog of the Kalman filter described in Example \ref{ex:sssec:2}. 
Indeed if $\Gammas \succ 0$ in \eqref{eq:sdcoz} 
then the density $\mmu (\cdot,t)$ associated with the random variable 
$v(t)|Z^\dagger(t)$, with $\zd$ given by \eqref{eq:data_ctz},  
is Gaussian with mean $m(\cdot)$
and covariance $C(\cdot)$ given by 
\begin{subequations}
\label{eq:KB_cov_evol}
\begin{empheq}[box=\widefbox]{align}
\dd m &= Fm \dd t +\Cov \Hs^\top\Gammas^{-1}\left( \dd z^{\dag}-\Hs m \dd t\right),\quad m(0)=m_0,\\
\dd\Cov &= F\Cov \dd t +\Cov F^\top \dd t + \Sigmas \dd t - \Cov \Hs^\top\Gammas^{-1}\Hs\Cov \dd t, \Cov (0)=C_0.
\end{empheq}
\end{subequations}
These equations for the mean and covariance of
 the Kalman-Bucy filter may be obtained by taking the continuous time limit of the Kalman filter from Example \ref{ex:sssec:2} under the scalings (\ref{eq:rescalings}b), \eqref{eq:rescalings2} and (\ref{eq:reparam}b).

The Kalman-Bucy filter also yields an exact solution of the
Kushner-Stratonovich equation \eqref{eq:KSE} under \eqref{ex:linearc}.
Indeed, if $\rho(\cdot,0)$ is initialized at the Gaussian
with mean $m_0$ and covariance $C_0$ then $\rho(\cdot,t)$ solving the Kushner-Stratonovich equation \eqref{eq:KSE} has solution given by the Gaussian $\Ng\bigl(m(t),C(t)\bigr).$
$\blacksquare$
\end{example}

%
\subsubsection{The Sample Path Perspective}
\label{sssec:CSPP}
%

Similarly to the discrete time setting, a key idea in developing algorithms
for filtering in continuous time is the \emph{sample path perspective.}
We will seek to identify (for implementable algorithms only approximately) mean field SDEs
with the property that their solution $v(t)$, has $\law\bigl(v(t)\bigr)$ equal to that given by the density $r$ governed by the Kushner-Stratonovich equation \eqref{eq:KSE}. Analogously to discrete time the mean field SDEs will couple to the solution of the equation for evolution of the density; the resulting processes are termed \emph{nonlinear Markov processes.}
We will first introduce Gaussian projected filters and then discuss mean field models
derived through the sample path perspective. To these ends we now introduce some
important notational conventions.

%
\subsubsection{Important Repeatedly Used Notation}
\label{ssec:irun}
%

The notation we define here is used primarily to discuss
exact and approximate mean field models, here evolving in
continuous time, to (approximately) solve the filtering problem.
In discrete time expectations may be taken under the law of
the (possibly approximate) discretely evolving mean field model, 
or under the predictive distribution found from pushing 
this law forward under the model.
This distinction disappears in continuous time and we simply
need to compute expectations under the (possibly approximate) 
continuously evolving mean field models that we will introduce later. To this end we define, with
$\mean:=\mathbb{E}v$,
\begin{subequations}
\label{eq:KF_pred_mean2c}
\begin{empheq}[box=\widefbox]{align}
\Cov &:=\mathbb{E}\bigl((v-\mean)\otimes(v-\mean)\bigr),\\
\Cov^{vf} &:=\mathbb{E}\Bigl(\bigl(v-\mean\bigr)\otimes\bigl(f(v)-\mathbb{E}f(v)\bigr)\Bigr),\\
\Cov^{v\hs} &:=\mathbb{E}\Bigl(\bigl(v-\mean\bigr)\otimes\bigl(\hs(v)-\mathbb{E}\hs(v)\bigr)\Bigr).
\end{empheq}
\end{subequations}
All expectations are under the mean field model for $v$.
The covariances should be viewed as functions of time $t$. 
In deriving continuous time models
from discrete time models, we will also use discrete
time analogs, computed under the law of random variable $v_n$ and denoted
$\Cov_n, \Cov_n^{vf}$ and $\Cov_n^{v\hs}.$

%
\subsection{Gaussian Projected Filtering Distribution}
\label{ssec:GPFDc}
%

As in discrete time, the Gaussian projected filtering distribution
plays an important conceptual role in understanding later filtering
algorithms. The evolution equations for the mean $m$ and the covariance matrix $C$ follow naturally from a continuous time limit of the associated discrete time filtering formulations. We summarize the resulting equations in the following:
\begin{theorem}
\label{t:34}
Assume that $\Gammas \succ 0.$ and that $\zd$ is given by \eqref{eq:data_ct}. Consider the discrete time Gaussian projected filter, namely map from $(m_n,C_{n})$ to $(m_{n+1},C_{n+1})$ defined by choosing $v_n \sim \Ng(m_n,C_{n})$ and then using \eqref{eq:recall}, \eqref{eq:KF_pred_mean}, \eqref{eq:KF_joint2b} and
\eqref{eq:KF_analysis_add}. Under the rescalings \eqref{eq:rescalings}, and in the limit $\dt \to 0$, we obtain the following continuous-time limit of this map:
\begin{subequations}
\label{eq:meanGPFcont}
\begin{empheq}[box=\widefbox]{align}
\dd m &= \mathbb{E}f(v)\dd t + \Cov^{v\hs}\Gammas^{-1}\bigl(\dd\zd - \E \hs(v)\dd t\bigr),\\
\dd\Cov &= \Cov^{vf}\dd t + (\Cov^{vf})^\top \dd t +\Sigmas \dd t - \Cov^{v\hs}\Gammas^{-1}(\Cov^{v\hs})^\top \dd t,
\end{empheq}
\end{subequations}
where the expectations are computed under $\Ng\bigl(m(t),C(t)\bigr)$, using \eqref{eq:KF_pred_mean2c}.
$\Diamond$ \end{theorem}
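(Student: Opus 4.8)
The plan is to derive the continuous-time system \eqref{eq:meanGPFcont} directly from the discrete-time Gaussian projected update of Lemma \ref{lemma:KF_analysis}, written in the form \eqref{eq:KF_analysis_add} together with the prediction formulae \eqref{eq:recall}, \eqref{eq:KF_pred_mean}, \eqref{eq:KF_joint2b}. The idea is to insert the rescalings \eqref{eq:rescalings} and the reparametrization $\yd_{n+1}=\Delta\zd_{n+1}$ from \eqref{eq:reparam}, Taylor-expand every quantity in powers of $\dt$, keep the $\mathcal{O}(\dt)$ and $\mathcal{O}(\sqrt{\dt})$ contributions, and then sum over $n$ and pass to the limit $\dt\to 0$, exactly in the (formal, Euler--Maruyama-type) spirit of the derivation leading to Theorem \ref{thm:KS_equation}.

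First I would expand the prediction step. With $\hv_{n+1}=v_n+\dt f(v_n)+\xi_n$, $\xi_n\sim\Ng(0,\dt\Sigmas)$ independent of $v_n\sim\Ng(m_n,C_n)$, and $\E\xi_n=0$, one gets exactly $\pmean_{n+1}=m_n+\dt\,\E f(v_n)$; expanding the outer product of $\hv_{n+1}-\pmean_{n+1}$ with itself and using independence of $\xi_n$ gives $\pCov_{n+1}=C_n+\dt\bigl(\Cov_n^{vf}+(\Cov_n^{vf})^\top+\Sigmas\bigr)+\mathcal{O}(\dt^2)$, with $\Cov_n^{vf}$ the discrete analogue of \eqref{eq:KF_pred_mean2c}. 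For the observation-space statistics I use $h=\dt\hs$, so $\hh_{n+1}=\dt\hs(\hv_{n+1})$ and $\ho_{n+1}=\dt\,\E\hs(\hv_{n+1})=\dt\,\E\hs(v_n)+\mathcal{O}(\dt^2)$; hence $\pCov_{n+1}^{vh}=\dt\,\Cov_n^{v\hs}+\mathcal{O}(\dt^2)$ while $\pCov_{n+1}^{hh}=\mathcal{O}(\dt^2)$. Combined with $\Gamma=\dt\Gammas$ and $\Gammas\succ 0$ this yields $(\pCov_{n+1}^{hh}+\Gamma)^{-1}=\dt^{-1}\Gammas^{-1}+\mathcal{O}(1)$, so the gain $\pCov_{n+1}^{vh}(\pCov_{n+1}^{hh}+\Gamma)^{-1}=\Cov_n^{v\hs}\Gammas^{-1}+\mathcal{O}(\dt)$ is of order one; this is the key scaling which turns the discrete correction into a genuine stochastic integral in the limit.

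Substituting these expansions into \eqref{eq:KF_analysis_add} and recalling that $\Delta\zd_{n+1}=\mathcal{O}(\sqrt{\dt})$ by \eqref{eq:data_ct} (so the $\mathcal{O}(\dt)$ gain error multiplied by $\Delta\zd_{n+1}$ is $\mathcal{O}(\dt^{3/2})$), the mean update becomes
\[
m_{n+1}-m_n=\dt\,\E f(v_n)+\Cov_n^{v\hs}\Gammas^{-1}\bigl(\Delta\zd_{n+1}-\dt\,\E\hs(v_n)\bigr)+\mathcal{O}(\dt^{3/2}),
\]
and, since $\pCov_{n+1}^{vh}(\pCov_{n+1}^{hh}+\Gamma)^{-1}(\pCov_{n+1}^{vh})^\top=\dt\,\Cov_n^{v\hs}\Gammas^{-1}(\Cov_n^{v\hs})^\top+\mathcal{O}(\dt^2)$, the covariance update becomes
\[
C_{n+1}-C_n=\dt\Bigl(\Cov_n^{vf}+(\Cov_n^{vf})^\top+\Sigmas-\Cov_n^{v\hs}\Gammas^{-1}(\Cov_n^{v\hs})^\top\Bigr)+\mathcal{O}(\dt^2),
\]
with no stochastic term at leading order. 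Setting $t_n=n\dt$, summing over $n$ and letting $\dt\to 0$, the deterministic increments converge to Riemann integrals and the sums of $\Cov_n^{v\hs}\Gammas^{-1}\Delta\zd_{n+1}$ to the It\^o integral against $d\zd$, yielding \eqref{eq:meanGPFcont}.

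The main obstacle is not the algebra but the bookkeeping of which measure each statistic is computed under: $\pmean_{n+1},\pCov_{n+1},\pCov_{n+1}^{vh},\pCov_{n+1}^{hh},\ho_{n+1}$ are all expectations under the law of $\hv_{n+1}$, whereas the limiting objects in \eqref{eq:KF_pred_mean2c} are expectations under the law of $v$; one must verify that replacing $\hv_{n+1}$ by $v_n$ (they differ by $\dt f(v_n)+\xi_n$) in $\E f$, $\E\hs$, $\Cov^{vf}$ and $\Cov^{v\hs}$ perturbs each update only at $\mathcal{O}(\dt^2)$, which uses smoothness of $f$ and $\hs$ and the fact that the $\xi_n$-contributions either vanish in expectation or contribute at $\mathcal{O}(\dt)$ inside quantities already carrying a $\dt$ prefactor. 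A secondary, purely technical point is that, as elsewhere in this section, the passage from the summed stochastic increments to the It\^o differential $d\zd$ is carried out at the formal Euler--Maruyama level, without quantitative error estimates.
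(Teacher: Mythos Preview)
Your proposal is correct and follows essentially the same approach as the paper's proof: apply the rescalings \eqref{eq:rescalings}, Taylor-expand the predicted statistics to obtain $\pCov_{n+1}^{vh}=\dt\,\Cov_n^{v\hs}+\mathcal{O}(\dt^2)$ and $\pCov_{n+1}^{hh}+\Gamma=\dt\Gammas+\mathcal{O}(\dt^2)$, deduce the order-one gain, substitute into \eqref{eq:KF_analysis_add}, and pass to the limit. Your explicit remark about the bookkeeping of measures (statistics under $\law(\hv_{n+1})$ versus $\law(v_n)$) makes precise something the paper's proof handles implicitly within its $\mathcal{O}(\dt)$ expansions.
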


\begin{remark}
\label{rem:gaincts}
Note that the preceding equation implicitly defines gain matrix
\begin{equation}
\label{eq:gaincts}
K=\Cov^{v\hs}\Gammas^{-1}.
\end{equation}
This specific choice of gain will play a central role in what follows.
$\blacksquare$
\end{remark}

\begin{proof}(Theorem \ref{t:34})
The Gaussian projected filter is defined by
evolution of mean and covariance given by equations
\eqref{eq:KF_pred_mean}, \eqref{eq:KF_joint2b} and 
\eqref{eq:KF_analysis_add}, repeated and reordered here for convenience:
\begin{align*}
    \pmean_{n+1} &= \E \Psi(v_n),\\
\pCov_{n+1} &= \E \Bigl(\bigl(\Psi(v_n)-\pmean_{n+1})\otimes
\bigl(\Psi(v_n)-\pmean_{n+1}\bigr)\Bigr)+\Sigma,\\
\mean_{n+1} &= \pmean_{n+1} + \pCov_{n+1}^{vh}
(\pCov_{n+1}^{hh}+\Gamma)^{-1}
\bigl(\yd_{n+1} - \E\hh_{n+1}\bigr),\\
\Cov_{n+1} &= \pCov_{n+1} - \pCov_{n+1}^{vh}(\pCov_{n+1}^{hh}+\Gamma)^{-1} \left({\pCov_{n+1}^{vh}}\right)^\top,\\
     \pCov_{n+1}^{vh} &=\E\Bigl(\bigl(\hv_{n+1}-\E \hv_{n+1}\bigr)\otimes
\bigl(\hh_{n+1}-\E \hh_{n+1}\bigr)\Bigr),\\
    \pCov_{n+1}^{hh}& =   \E\Bigl(\bigl(\hh_{n+1}-\E \hh_{n+1}\bigr)\otimes
\bigl(\hh_{n+1}-\E \hh_{n+1}\bigr)\Bigr).
\end{align*}
Recall that $\hh_{n+1}:=h(\hv_{n+1}).$ Expectations in the prediction step
are with respect to the law of $v_n  \sim \Ng(m_n,C_n); $ and expectations in the analysis 
step are with respect to the law of $\hv_{n+1}$ given by (\ref{eq:recall}a), 
assuming that $v_n \sim \Ng(m_n,C_n);$ thus $\law (\hv_{n+1})=\op P\,\law(v_n)$.

We now impose the rescalings \eqref{eq:rescalings} on these equations.
The reader's attention is drawn to the fact that $h=\dt \hs$ when reading the next formula in order to understand the scalings with $\dt;$ similar notational shift from regular to mathsf occurs in other formulae that follow it and are crucial to understanding
orders of magnitude with respect to $\dt.$
Under the rescalings and
using that $\xi_{n+1}=\mathcal{O}(\Delta t^{\frac12})$ has mean zero, 
\begin{align*}\pCov_{n+1}^{vh}&=\dt\E \left(\bigl(v_n-\E v_n+\xi_n+\mathcal{O}(\dt)\bigr) \otimes \bigl(\hs(v_n)-\E \hs(v_n)+D\hs(v_n)\xi_n+\mathcal{O}(\dt)\bigr)\right)\\
&=\dt\E \left(\bigl(v_n-\E v_n\bigr) \otimes \bigl(\hs(v_n)-\E \hs(v_n)\bigr)\right)+\mathcal{O}(\dt^{2})\\
&=\dt \Cov^{v\hs}_n+\mathcal{O}(\dt^{2}). 
\end{align*}
Similarly 
\begin{align*}
    \pCov_{n+1}^{hh}& =   \dt^2 \E \left(\bigl(\hs(v_n)-\E \hs(v_n)+\mathcal{O}(\dt^{\frac12})\bigr)
    \otimes \bigl(\hs(v_n)-\E \hs(v_n)+\mathcal{O}(\dt^{\frac12})\bigr)\right)\\
    &=\mathcal{O}(\dt^{2}).
\end{align*}
Thus, since $\Gamma = \dt \Gammas$,
\begin{subequations}
\label{eq:labelz}
\begin{align}
\pCov_{n+1}^{vh}(\pCov_{n+1}^{hh}+\Gamma)^{-1}&=\Cov^{v\hs}_n\Gammas^{-1}+\mathcal{O}(\dt),\\
\pCov_{n+1}^{vh}(\pCov_{n+1}^{hh}+\Gamma)^{-1}\pCov_{n+1}^{vh}&=\dt \Cov^{v\hs}_n\Gammas^{-1}(\Cov^{v\hs}_n)^\top+\mathcal{O}(\dt^{2}).
\end{align}
\end{subequations}
Furthermore, since $\Psi(v_n) = v_n + \dt f(v_n)$ and $\pmean_{n+1} = \E v_n + \dt \E f(v_n)$,
\begin{align*}
    \E \Bigl(\bigl(\Psi(v_n)&-\pmean_{n+1}\bigr)\otimes
\bigl(\Psi(v_n)-\pmean_{n+1}\bigr)\Bigr)
=C_n+\dt \Cov_n^{vf}+ \dt(\Cov_n^{vf})^\top +\mathcal{O}(\dt^{2}).
\end{align*}
Using these approximations, and the fact that
$\Delta \zd_{n+1}=\mathcal{O}(\Delta t^{\frac12})$,
we obtain
\begin{align*}
\pmean_{n+1} &= m_n +\dt \mathbb{E}f(v_n),\\
\pCov_{n+1} &=  \Cov_n +\dt \Cov_n^{vf}+ \dt(\Cov_n^{vf})^\top  +\dt\Sigmas+\mathcal{O}(\dt^2),\\
m_{n+1} &= \pmean_{n+1}  + \Cov^{v\hs}_n\Gammas^{-1}\bigl(\Delta\zd_{n+1} - \dt\E \hs(\hv_{n+1})\bigr)+\mathcal{O}(\dt^{\frac32}),\\
\Cov_{n+1} &= \pCov_{n+1}  - \dt\Cov^{v\hs}_n\Gammas^{-1}\left(\Cov^{v\hs}_n\right)^\top +\mathcal{O}(\dt^2),\\
\Cov^{vf}_n &=\mathbb{E}\bigl((v_n-\mean_n)\otimes(f(v_n)-\mathbb{E}f(v_n))\bigr),\\
\Cov^{v\hs}_n &=\mathbb{E}\bigl((v_n-\mean_n)\otimes(\hs(v_n)-\mathbb{E}\hs(v_n))\bigr).
\end{align*}
In the following  $\Cov^{v\hs}, \Cov^{vf}$ are functions of time, defined by \eqref{eq:KF_pred_mean2c}, and $\Cov_n^{v\hs}, \Cov_n^{vf}$ are discrete time
analogs computed under the law of $v_n.$ 

Combining the prediction and analysis steps we find that
\begin{align*}
m_{n+1} &= m_n +\dt \mathbb{E}f(v_n) + \Cov_n^{v\hs}\Gammas^{-1}\bigl(\Delta\zd_{n+1} - \dt\E \hs(\hv_{n+1})\bigr)+\mathcal{O}(\dt^\frac32),\\
\Cov_{n+1} &= \Cov_n + \dt\Cov^{vf}_n + \dt\left(\Cov^{vf}_n\right)^\top +\dt\Sigmas - \dt\Cov^{v\hs}_n\Gammas^{-1}\left(\Cov^{v\hs}_n\right)^\top+\mathcal{O}(\dt^2).
\end{align*}
Taking the $\dt \rightarrow 0 $ limit, we deduce the continuous-time analog of the equations \eqref{eq:KF_pred_mean}, \eqref{eq:KF_joint2b}, namely \eqref{eq:meanGPFcont} as desired.
\end{proof}

\begin{example}
In the linear setting \eqref{ex:linearc}, the Gaussian projected filter 
\eqref{eq:meanGPFcont} reduces to the Kalman-Bucy filter from 
Example \ref{ex:KBF}.
\end{example}

%
\subsection{Mean Field Evolution Equations}
\label{ssec:CTM_MFD}
%

In the preceding subsection we approximated the evolution
of the filtering distribution by the evolution of a Gaussian.
Here we take a different approach, seeking a sample path
perspective, identifying mean field SDEs which (possibly only
approximately) have solutions with law given by the filtering distribution.
We develop a continuous time analog of the discrete
time mean field approach from Subsection \ref{ssec:MFM}.

In Subsection \ref{sssec:PTc} we describe work concerning the
derivation of explicit mean field SDEs equal in law to the
filtering distribution; this is a departure from our discussion
of this topic in discrete time where no explicit maps were 
identified in the general setting. Subsections \ref{sssec:T_Sc}
and \ref{sssec:T_Dc} describe a variety of explicit approximate
mean field SDEs, based on matching first and second order moment information,
and arising from rescaling of the discrete time setting.

%
\subsubsection{Perfect Transport}
\label{sssec:PTc}
%
Here we seek a mean field dynamical system with law given
by that of the Kushner-Stratonovich equation. The
analog of the discrete-time transport map $\Ts_n$, is 
to find transport evolution equations for $v$ and $\hz$
in the form a mean field SDE. We will seek to achieve 
this in the specific sample path form
\begin{subequations}
\label{eq:KS_mf}
\begin{align}
\dd v &= f(v)\dd t + \sqrt{\Sigmas} \dd W + a(v;\mmu)\dd t + K(v;\mmu)(\dd\zd-\dd\hz),\\
\dd\hz &= \hs(v)\dd t + \sqrt{\Gammas} \dd B,
\end{align}
\end{subequations}
where $\mmu$ is the time-dependent density of $v$. Note that
(\ref{eq:KS_mf}b) simply generates simulated data from the state $v$.
On the other hand (\ref{eq:KS_mf}a) combines the underlying known
model evolution with a nudged innovation term, based on the
difference between the observed and simulated data, and a correction
to the drift $f$. The nudging and correction terms are defined by
gain $K$ and drift correction $a$.
The goal is to chose drift correction $a$ and gain $K$ such that the induced time evolution of the density $\mmu$ of $v$ agrees with the density
$\mmu$ given by the Kushner-Stratonovich equation (\ref{eq:KSE}). 
We thus find a controlled SDE, similar in form to that proposed in
(\ref{eq:sd2nc}) but with an additional drift term and with mean field dependence.

\begin{remark}
We emphasize that, as in the discrete-time transport formulation,
there is a considerable degree of non-uniqueness in the choice
of transport in general, and here in the choice of $a$ and $K$
specifically. We make specific, simple, choices in the theorem that follows.
Working in continuous time enables very explicit identification of
exact mean field models; this is not possible in discrete time.
$\blacksquare$
\end{remark}

The following theorem identifies a sample path perspective that
exactly captures evolution of the filtering distribution.

\begin{theorem}
\label{thm:MF_KS_equal}
Assume that $\Gammas \succ 0$ and that there exists $K=K(v;\mmu)$ satisfying the identity
\begin{align} \label{eq:Poisson}
    -\nabla \cdot (\mmu K^\top) =  \Gammas^{-1}(\hs-\E \hs)\mmu.
\end{align}
Consider the stochastic mean field dynamics given by 
\begin{subequations}
\label{eq:KS_mf3}
\begin{empheq}[box=\fbox]{align}
\dd v & = f(v)\dd t + \sqrt{\Sigmas} \dd W +\nabla \cdot (K\Gammas K^\top)\dd t - K\Gammas \nabla \cdot K^\top \dd t + K (\dd\zd - \dd\hz),\\
\dd\hz &= \hs(v)\dd t + \sqrt{\Gammas} \dd B,
\end{empheq}
\end{subequations}
where $\zd$ given by \eqref{eq:data_ct}.
Assume that solution $v$ has law with smooth density and that
the Kushner-Stratonovich equation \eqref{eq:KSE} has smooth 
density as solution. Then the law of $v$ has density given 
by the Kushner-Stratonovich equation \eqref{eq:KSE}.
$\Diamond$ \end{theorem}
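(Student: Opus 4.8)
\textbf{Proof plan for Theorem \ref{thm:MF_KS_equal}.}

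The plan is to derive the Fokker--Planck/Kushner--Stratonovich evolution implied by the mean-field SDE \eqref{eq:KS_mf3} and check that it coincides with \eqref{eq:KSE}. First I would observe that the system \eqref{eq:KS_mf3} has exactly the form \eqref{eq:KS_mf} with the drift correction $a(v;\mmu) = \nabla\cdot(K\Gammas K^\top) - K\Gammas\nabla\cdot K^\top$. Since $d\hz = \hs(v)dt + \sqrt{\Gammas}dB$ and $d\zd = \hs(\vd)dt + \sqrt{\Gammas}dB^\dagger$ with $W, B, B^\dagger$ mutually independent, the increment $d\zd - d\hz = \bigl(\hs(\vd) - \hs(v)\bigr)dt + \sqrt{\Gammas}(dB^\dagger - dB)$ enters the $v$-equation; the contribution $K\sqrt{\Gammas}\,dB$ of the predicted-data Brownian motion appears \emph{only} through the $v$-dynamics, so from the point of view of the law of $v$ conditioned on $Z^\dagger$ it acts as an additional diffusion of covariance $K\Gammas K^\top$, while $K\sqrt{\Gammas}\,dB^\dagger$ is the genuine observation-driven term. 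I would make this precise by writing the effective SDE for $v$ in It\^o form, with drift $f(v) + a(v;\mmu) + K\bigl(\mathbb{E}\hs - \hs\bigr)$ coming from the correlation structure (the cross-variation of $K$ with the data, handled exactly as in Lemma \ref{lem:ISC}) plus the controlled term $K(d\zd - \mathbb{E}\hs\,dt)$, and diffusion matrix $\Sigmas + K\Gammas K^\top$.

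Next I would write down the corresponding Fokker--Planck equation for the density $\mmu$ of $v$ conditioned on the data, which is a stochastic PDE driven by $d\zd$. It has the form
\begin{align*}
d\mmu &= -\nabla\cdot(\mmu f)\,dt + \tfrac12\nabla\cdot(\nabla\cdot(\mmu\Sigmas))\,dt + \tfrac12\nabla\cdot(\nabla\cdot(\mmu K\Gammas K^\top))\,dt \\
&\qquad - \nabla\cdot\bigl(\mmu(a + K(\mathbb{E}\hs - \hs))\bigr)\,dt - \nabla\cdot\bigl(\mmu K(d\zd - \mathbb{E}\hs\,dt)\bigr) + (\text{It\^o correction terms}).
\end{align*}
Then I would compare this with the Kushner--Stratonovich equation \eqref{eq:KSE}, which in It\^o form reads $d\mmu = -\nabla\cdot(\mmu f)dt + \tfrac12\nabla\cdot(\nabla\cdot(\mmu\Sigmas))dt + \langle \hs - \mathbb{E}\hs, d\zd - \mathbb{E}\hs\,dt\rangle_\Gammas\,\mmu$. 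The stochastic forcing terms match precisely when $-\nabla\cdot(\mmu K^\top) = \mmu\Gammas^{-1}(\hs - \mathbb{E}\hs)$, which is the hypothesis \eqref{eq:Poisson}; and the point of the specific drift correction $a = \nabla\cdot(K\Gammas K^\top) - K\Gammas\nabla\cdot K^\top$ is that, given \eqref{eq:Poisson}, the extra diffusion term $\tfrac12\nabla\cdot(\nabla\cdot(\mmu K\Gammas K^\top))$, the drift term $\nabla\cdot(\mmu a)$, the term $\nabla\cdot(\mmu K(\mathbb{E}\hs - \hs))$, and the It\^o correction from the stochastic forcing all cancel against one another, leaving exactly \eqref{eq:KSE}. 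This cancellation is the computational heart of the argument and I would verify it by substituting \eqref{eq:Poisson} and expanding the divergences term by term.

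The main obstacle I anticipate is the bookkeeping of the It\^o correction terms: because the gain $K = K(v;\mmu)$ depends on $\mmu$, which is itself driven by $d\zd$, the SPDE for $\mmu$ acquires quadratic-variation contributions that must be treated carefully (this is the same phenomenon as in the It\^o--Stratonovich conversion of Lemma \ref{lem:ISC}, now at the level of the density rather than a scalar). I would manage this by working first with the Stratonovich form of \eqref{eq:KS_mf3} and of \eqref{eq:KSE} — where, as the excerpt notes, the Stratonovich formulation of the Kushner--Stratonovich equation follows immediately from \eqref{eq:KSD_S2} combined with \eqref{eq:FPE}, and where the chain rule applies formally — deriving the match there, and only then converting back to It\^o using Lemma \ref{lem:ISC}; the smoothness assumptions on the densities are exactly what make these manipulations rigorous. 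The existence of $K$ solving the Poisson-type equation \eqref{eq:Poisson} is taken as a hypothesis, so I would not address solvability, merely note that in the Gaussian case $K$ reduces to $C^{v\hs}\Gammas^{-1}$, consistent with Remark \ref{rem:gaincts}, which serves as a sanity check on the whole computation.
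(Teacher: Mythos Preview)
Your approach is essentially the paper's: derive the SPDE for the conditional density of $v$ and match it against the Kushner--Stratonovich equation, using \eqref{eq:Poisson} to identify the $d\zd$-terms and verifying that the specific drift $a$ makes the remaining $dt$-terms cancel. A few points of comparison and correction.

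First, the paper streamlines by invoking the split-step principle: set $f\equiv 0$ and $\Sigmas=0$, match against \eqref{eq:KSD} rather than the full \eqref{eq:KSE}, and add the Fokker--Planck part \eqref{eq:FPE} back at the end. This removes two terms from the bookkeeping.

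Second, your handling of the diffusion coefficient is roundabout. The key fact (the paper's Lemma~\ref{lm::FP_analysis}) is that the SPDE for $\mmu$ carries the second-order term $\nabla\cdot\bigl(\nabla\cdot(\mmu K\Gammas K^\top)\bigr)$ with coefficient $1$, not $\tfrac12$: although $\zd$ is a fixed trajectory, its quadratic variation contributes on equal footing with that of $B$ when one applies It\^o's formula to $\phi(v)$ and then averages over $B$. You obtain $\tfrac12$ from $dB$ and defer the other half to ``It\^o correction terms'', which is correct in the end but obscures the mechanism.

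Third, your claim that the drift contribution $K(\E\hs-\hs)$ arises from ``the cross-variation of $K$ with the data, handled exactly as in Lemma~\ref{lem:ISC}'' is wrong: that term is pure algebra, from rewriting $K(d\zd - \hs\,dt) = K(d\zd - \E\hs\,dt) + K(\E\hs-\hs)\,dt$. Lemma~\ref{lem:ISC} concerns the It\^o--Stratonovich conversion of the Kushner--Stratonovich equation itself, not the mean-field SDE. The paper never inserts $\E\hs$ into the drift; it compares $-\nabla\cdot\bigl(\mmu(a-K\hs)\bigr)\,dt - \langle\nabla\cdot(\mmu K^\top),d\zd\rangle + \nabla\cdot\bigl(\nabla\cdot(\mmu K\Gammas K^\top)\bigr)\,dt$ directly with $\mmu\langle\hs-\E\hs, d\zd-\E\hs\,dt\rangle_{\Gammas}$ and reads off both the Poisson equation and the condition on $a$.

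Finally, your proposed detour through the Stratonovich form is unnecessary: the paper works entirely in It\^o via Lemma~\ref{lm::FP_analysis}, and the $\mmu$-dependence of $K$ raises no extra correction at this stage (that subtlety is what Remark~\ref{rem:diamond} and the diamond-integration appendix address, but it is not needed for the present proof).
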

\begin{proof}
To simplify calculations we will first look at choosing $a$ and $K$
to get agreement, at the level of densities of $v$, 
with (\ref{eq:KSD}). A straightforward modification, using
the split-step principle again, then 
provides the generalization to (\ref{eq:KSE}).
This split-step approach enables us to consider only the case where $f(\cdot) \equiv 0$ and $\Sigmas=0.$

Note that, in \eqref{eq:KS_mf}, $\zd$ is a fixed, given, trajectory
and we are interested in the evolution of the probability density
induced for $v$ by the randomness over the distribution on trajectories $\hz$. Using (\ref{eq:KS_mf}b) 
in (\ref{eq:KS_mf}a), and recalling that it suffices to set $f$ and $\Sigmas$ to zero, we obtain
\begin{align} \label{eq:KS_mfn}
    \dd v &= a(v;\mmu)\dd t + K(v;\mmu)\bigl(\dd\zd-\hs(v)\dd t-\sqrt{\Gammas}\dd B\bigr).
\end{align}
Applying Fokker-Planck analysis, modified to the mean field
setting, shows that the time evolution of the density $\mmu$ of $v$ under (\ref{eq:KS_mf}) is provided by the 
nonlinear (because of dependence of $a,K$ on $\mmu$) SPDE\footnote{Here we use the standard convention from
continuum mechanics that the divergence of a matrix is to be
interpreted via computation of derivatives with respect to the second index; see
Subsection \ref{ssec:BCT} for references to relevant continuum mechanics
textbooks. Strictly speaking equation \eqref{eq:FPE_mf} is an SPDE only if we now view $\zd$ as a random variable rather than a fixed realization.} to be interpreted in the It\^o sense:
\begin{align} \label{eq:FPE_mf}
    \dd\mmu &= -\nabla \cdot (\mmu (a-K\hs))\dd t -
    \langle\nabla \cdot (\mmu K^\top), \dd\zd \rangle + 
    \nabla \cdot (\nabla \cdot (\mmu K\Gammas K^\top))\dd t.
\end{align}
Note that, although $\zd$ is a fixed trajectory, it contributes to the diffusion term in this Fokker-Planck equation, explaining the factor $1$ rather than $1/2.$ This arises as a contribution from
the quadratic variation of the path of $\zd$ to the evolution of $\mmu.$ For further details on the derivation of \eqref{eq:FPE_mf} see Lemma \ref{lm::FP_analysis} in Appendix \ref{sec:AC}. To get agreement with (\ref{eq:KSD}), $a$ and $K$ have to be chosen such that
\begin{align*}
&\mmu  \langle (\hs - \E \hs),\Gammas^{-1}(\dd\zd - \E \hs \dd t)\rangle =\\ &\qquad \qquad 
    -\nabla \cdot (\mmu (a-K\hs))\dd t -
    \langle \nabla \cdot (\mmu K^\top), \dd\zd \rangle +
    \nabla \cdot (\nabla \cdot (\mmu K\Gammas K^\top)) \dd t .
\end{align*}
Equating the two terms involving the data
$\zd$ shows immediately that $K(v;\mmu)$ has to satisfy the (vector-valued)  PDE
\eqref{eq:Poisson}.
From this, equating the terms that do not involve the data $\zd$,
it follows that $a(v;\mmu)$ has to satisfy
\begin{align*} 
\nabla \cdot (\mmu a) &= 
\nabla \cdot (\nabla \cdot (\mmu K\Gammas K^\top)) + \nabla \cdot (\mmu K(\hs-\E \hs))\\
& = \nabla \cdot (\nabla \cdot (\mmu K\Gammas K^\top)) - \nabla \cdot (K \Gammas 
\nabla \cdot (\mmu K^\top))\\
&= \nabla \cdot \left( \mmu \left( \nabla \cdot (K\Gammas K^\top) - K\Gammas \nabla \cdot K^\top \right) \right).
\end{align*}
A natural choice for $a$ is provided by asking that the term
on which the divergence acts is zero. This yields
\begin{align}\label{eq:drift_FPF}
    a = \nabla \cdot (K\Gammas K^\top) - K\Gammas \nabla \cdot K^\top,
\end{align}
a solution for $a$ with no explicit dependence on $\mmu;$ note, however, that $a$ does depend on $\mmu$ implicitly through
the dependence of $K$ on $\mmu.$

With these choices of $a,K$, and applying the split-step principle
so that the mean field dynamics are consistent with \eqref{eq:KSE}
rather than \eqref{eq:KSD}, we obtain a version of the
{\em feedback particle filter}; in particular we find
the equation in its stochastic mean field formulation given by
\eqref{eq:KS_mf3} where $K=K(v;\mmu)$ solves \eqref{eq:Poisson} 
and $\mmu$ evolves according to the Kushner-Stratonovich 
equation \eqref{eq:KSE}, an equation which also defines the law of $v.$
\end{proof}

\begin{remark}
\label{rem:diamond}
There is an interesting interpretation of the contribution $a$
to the drift term in \eqref{eq:KS_mf3}: it is simply the It\^o-to-Stratonovich-like correction with regard to the $v$-dependence in $K(v;\mmu)$. However there is a subtlety that the equation for $\mmu$ itself depends on the data $\zd$ and 
the correction does not account for the $\mmu-$dependence of
$K(v;\mmu)$ -- the Stratonovich correction is only with respect
to $v-$dependence of the drift; an It\^o interpretation is retained
with respect to the $\mmu$ dependence. We refer to Appendix \ref{subsec:diamond} for discussion of this unusual form of stochastic integration. There we also derive the full Stratonovich correction (with respect to both $v$ and $\rho$ dependence) of the exact mean field model.
$\blacksquare$
\end{remark}

\begin{remark}
\label{rem:tbd}
The mean field equations (\ref{eq:KS_mf3}) require a gain $K(v;\rho)$ which satisfies (\ref{eq:Poisson}). Let  $\E$ denote expectation with respect to random variable $v$ distributed according to probability measure with density $\rho$. Using appropriately regular test functions $\psi: \R^{d_v} \to \R^{d_v}$, chosen to have mean-zero under $\E$, equation (\ref{eq:Poisson}) can be rephrased in the weak form
\footnote{
Recall that the conventions from continuum mechanics that we use to define the divergence and gradient of vector fields are discussed in the bibliography Subsection \ref{ssec:BCT}.}
\begin{equation} \label{eq:weak Poisson}
    \E \left( K^\top \nabla \psi\right) = \Gammas^{-1} C^{\hs \psi};
\end{equation}
here $C^{\hs \psi}$ denotes the covariance between $\hs$ under $\E$
and, in what follows, we also let $C^{v \hs}$ denote covariance between $v$ and $\hs$. The particular choice $\psi (v) = v-\E v$ leads to
\begin{equation} \label{eq:constant gain approximation}
    \E K = C^{v \hs} \Gammas^{-1}.
\end{equation}
Note, in particular, that this identity can be satisfied by making the \emph{constant gain} ansatz that $K$ is independent of $v$ and is then given by
\begin{subequations} \label{eq:cga}
\begin{align}
    K &= C^{v \hs} \Gammas^{-1},\\
    \Cov^{v\hs} &=\mathbb{E}\Bigl(\bigl(v-\mathbb{E} v\bigr)\otimes
\bigl(\hs (v)-\mathbb{E} \hs(v)\bigr)\Bigr).
\end{align}
\end{subequations}
More generally speaking, we note that (\ref{eq:weak Poisson}) is amenable to numerical approximations. This will be discussed further in Subsection \ref{sssec:PPFc} below.
$\blacksquare$
\end{remark}

We close this subsection with the mean field formulation using deterministic innovations:
\begin{equation}
\label{eq:KS_mf3d}
\dd v  = f(v)\dd t + \sqrt{\Sigmas} \dd W + \frac{1}{2} \left( \nabla \cdot (K\Gammas K^\top) -  K\Gammas \nabla \cdot K^\top 
\right) \dd t + K \left(\dd\zd - \frac{1}{2} (\hs + \E \hs) \dd t \right),
\end{equation}
where $\zd$ is given by \eqref{eq:data_ct}.
The relationship between this equation and \eqref{eq:KS_mf3}, 
is analogous to the relationship between innovation terms \eqref{eq:innovation_d} and \eqref{eq:innovation_p} already encountered in the discrete time setting. See the bibliography
Subsection \ref{ssec:BCT} for discussion on the historical development
of the various exact mean field models presented here.

%
\subsubsection{Second Order Transport – Stochastic Case}
\label{sssec:T_Sc}
%

Recall that, in the discrete-time setting, there are an uncountable set of maps which effect
approximate transport, in the sense of matching the first
and second order statistics of the analysis map, using either
stochastic or deterministic models. These are elucidated in Appendix \ref{appendix:C}. We, however, concentrated in the main text on a handful of examples. In this and the next subsection we study continuous time analogs of some of these examples, starting with the Kalman transport map \eqref{eq:sd2nn_add} recalled here, and reformulated, for convenience:\footnote{The Gaussian notation
used in the first two equations is shorthand for the first two equations appearing in \eqref{eq:sd2n}, together with the 
assumptions detailed following those equations. Thus we use $\Ng(0,\Sigma)$ to denote
an i.i.d. realization from the stated Gaussian distribution, and similarly for other variables.  We use variants of this notation 
in what follows.}
\begin{subequations} \label{eq:discrete time EnKF}
\begin{align}
\hv_{n+1} &= \Psi(v_n) + \Ng(0,\Sigma), \\
\hy_{n+1} &= h(\hv_{n+1}) + \Ng(0,\Gamma), \\
v_{n+1} &= \hv_{n+1}+\hCvh_{n+1}(\hChh_{n+1}+\Gamma)^{-1}(\yd_{n+1}-\hy_{n+1}).
\end{align}
\end{subequations}
We now apply the rescaling \eqref{eq:rescalings} to obtain, using (\ref{eq:labelz}a),
\begin{align*}
\hv_{n+1} &= v_n+\dt f(v_n) + \sqrt{\dt} \Ng(0,\Sigmas), \\
\hz_{n+1} &= \hz_n+\dt \hs(\hv_{n+1}) + \sqrt{\dt} \Ng(0,\Gammas), \\
v_{n+1} &= \hv_{n+1}+C^{v\hs}_{n}\Gammas^{-1}(\Delta \zd_{n+1}-\Delta \hz_{n+1})+\mathcal{O}(\dt^{3/2}).
\end{align*}
The preceding calculation, leading to $\mathcal{O}(\dt^{3/2})$ error,
uses the fact that the noises entering the equations for $\hv_{n+1},$  
$\hz_{n+1}$  and $z^\dagger_{n+1}$ are independent. 
Taking the continuous time limit we obtain
\begin{subequations}
\label{eq:KSA_mf}
\begin{empheq}[box=\widefbox]{align}
\dd v &= f(v)\dd t + \sqrt{\Sigmas} \dd W + C^{v\hs}\Gammas^{-1} (\dd\zd-\dd\hz),\\
\dd\hz &= \hs(v)\dd t + \sqrt{\Gammas} \dd B.
\end{empheq}
\end{subequations}
Again $W,B$ are independent unit Brownian motions of appropriate dimensions
and $\zd$ is given by \eqref{eq:data_ct}. This is an instance of a sample path perspective that leads to approximation of the true
filtering evolution.

\begin{remark}
\label{rem:derive}
Note the recurrence of the continuous time gain $K$ first identified in Remark \ref{rem:gaincts}.
In this subsection we have derived the mean field equations (\ref{eq:KSA_mf}) from the associated stochastic discrete time formulation (\ref{eq:discrete time EnKF}). However there is another way to derive the gain as outlined in Remark \ref{rem:tbd}.
$\blacksquare$ \end{remark}

%
\subsubsection{Second Order Transport – Deterministic Case}
\label{sssec:T_Dc}
%

We may apply a similar analysis to that in Subsection
\ref{sssec:T_Sc} but instead working in the setting of
deterministic transport, starting from the discrete time 
transport \eqref{eq:sd2nn_add_again_add}. We may 
rewrite  and reformulate this equation here to obtain
\begin{align*}
\hv_{n+1} &= \Psi(v_n) + \Ng(0,\Sigma), \quad
\hh_{n+1} = h(\hv_{n+1}), \\
v_{n+1} &= \hv_{n+1}-\tK_{n}(\hh_{n+1}-\bbE \hh_{n+1})+K_n(\yd_{n+1}-\bbE \hh_{n+1}),
\end{align*}
where
\begin{align*}
    K_n &= \pCov_{n+1}^{vh}\left( \pCov_{n+1}^{hh} + \Gamma\right)^{-1},\quad
\tK_{n} = \pCov^{vh}_{n+1}
    \Bigl( (\pCov^{hh}_{n+1}+\Gamma) + \Gamma^{1/2} (\pCov^{hh}_{n+1}+\Gamma)^{1/2}
    \Bigr)^{-1}.
\end{align*}

Applying the rescalings \eqref{eq:rescalings} gives
\begin{align*}
\hv_{n+1} &= v_n+\dt f(v_n) + \sqrt{\dt}\Ng(0,\Sigmas),\\ 
v_{n+1} &= \hv_{n+1}+\Cov_{n}^{v\hs}\Gammas^{-1}\Bigl(\Delta \zd_{n+1}-\frac12
(\hs(\hv_{n+1})+\bbE \hs(\hv_{n+1}) \dt\Bigr)+{\mathcal O}(\dt^2),
\end{align*}
where we have used
\begin{align*}
    K_n &= \Cov_{n}^{v\hs}\Gammas^{-1}+\mathcal{O}(\dt),\quad
\tK_{n} = \frac12 \Cov_{n}^{v\hs}\Gammas^{-1}+\mathcal{O}(\dt). 
\end{align*}
Taking the continuous time limit we obtain
\begin{empheq}[box=\widefbox]{equation}
\label{eq:KSA_mf2}
\dd v = f(v)\dd t + \sqrt{\Sigmas} \dd W + C^{v\hs}\Gammas^{-1} \Bigl(\dd\zd-
\frac12\bigl(\hs(v)+\mathbb{E} \hs(v)\bigr)\dd t\Bigr),
\end{empheq}
where, once again, $\zd$ is given by \eqref{eq:data_ct}.

These mean field equations can also be derived directly from the general mean field equation (\ref{eq:KS_mf3d}), assuming a deterministic innovation,
invoking the constant gain approximation $K=K(\rho)$ and using (\ref{eq:constant gain approximation}). The reasoning is similar to that in Remark \ref{rem:derive}.

\begin{example}
\label{ex:PT_KB}
We now consider mean field formulations of the Kalman-Bucy Filter from 
Example \ref{ex:KBF}. First consider the evolution of random variable 
$v$ initialized at a Gaussian and satisfying
\begin{subequations}
\label{eq:now1}
\begin{align}
\dd v &= Fv \dd t + \sqrt{\Sigmas} \dd W + CH^\top\Gammas^{-1} (\dd\zd-\dd\hz),\\
\dd\hz &= Hv \dd t + \sqrt{\Gammas} \dd B,
\end{align}
\end{subequations}
where $C$ is the covariance of $v$ and where 
$\zd$ is given by \eqref{eq:data_ctz}.
Then direct calculation shows that the mean and covariance of $v$ satisfy equations \eqref{eq:KB_cov_evol}. 
Thus we have identified a sample path perspective leading to a
mean field SDE for $v$ with law equal to that of the Kalman-Bucy filter.

Similar considerations, starting 
at \eqref{eq:KSA_mf2}, give the following variant of the 
preceding mean field Kalman-Bucy filter:
\begin{equation}
\label{eq:now2}
\dd v = Fv \dd t + \sqrt{\Sigmas} \dd W + CH^\top\Gammas^{-1} \Bigl(\dd\zd-
\frac12 H\bigl(v+m\bigr)\dd t\Bigr),
\end{equation}
where $(m,C)$ are the mean and covariance of $v$
and where $\zd$ is given by \eqref{eq:data_ctz}.
Again direct computations verify this.
An important observation highlighted by this example is that
mean field models consistent with a given measure evolution
are not typically unique.
$\blacksquare$
\end{example}

%
\subsection{Ensemble Kalman Methods}
\label{ssec:CTM_MFD_PA}
%

We now discuss several particle approximations of the 
mean field equations derived in the preceding subsections. 
We start with the mean field equations (\ref{eq:KS_mf3}), based on perfect
transport, before considering particle approximations for time continuous 
approximate sample path, and hence transport, formulations.
Throughout this subsection $\zd$ is given by \eqref{eq:data_ctz}.

\subsubsection{Perfect Particle Filters}
\label{sssec:PPFc}

As in Subsection \ref{sssec:ppf} we define $\sJ:=\{1,\cdots, J\}.$
The desired approximation to the mean field model \eqref{eq:KS_mf3}
evolves particle ensemble $\{\vj\}_{j\in \sJ}$ and its associated 
empirical measure
\begin{equation}
\label{eq:empmc}
\mu^\sJ(t) =\frac{1}{J}\sum_{j=1}^J \delta_{\vj} 
\end{equation}
according to the interacting particle system
\begin{subequations}
\label{eq:KS_mf3j}
\begin{empheq}[box=\widefbox]{align}
\dd\vj &= f(\vj)\dd t + a^{(j)}\dd t + \sqrt{\Sigmas} \dd\Wj + K^{(j)} (\dd\zd-\dd\hzj),\\
\dd\hzj &= \hs(\vj)\dd t + \sqrt{\Gammas} \dd\Bj,
\end{empheq}
\end{subequations}
where $\zd$ is given by \eqref{eq:data_ct} and
the $\{\Wj\}_{j \in \sJ}$ and $\{\Bj\}_{j \in \sJ}$ 
are mutually independent collections of i.i.d. Brownian motions
in $\R^{d_v}$ and $\R^{d_y}$ respectively. The interaction between the particles arises from the gain matrices
\begin{equation} \label{eq:numerical_gain}
    K^{(j)} := K^\sJ (\vj;\mu^\sJ),
\end{equation}
$j\in \sJ$, where the matrix valued function $K^\sJ$ approximates the solution $K$ of (\ref{eq:Poisson}); 
the drift term $a^{(j)}$ is defined by \eqref{eq:drift_FPF} with $K(v)$ replaced
by  $K^{\sJ}(v;\mu^\sJ)$ and then evaluated at $v=v^{(j)}$. The
key analysis question underlying the particle methodology is
to show that the empirical measure (\ref{eq:empmc}) approximates the law of $v$ satisfying \eqref{eq:KS_mf3}; this type of question
is widely studied for numerous problems in the physical, biological
and social sciences -- see bibliography Subsection \ref{ssec:BCT}.

\begin{remark}
In practice this particle approximation of the perfect particle filter
is impractical except in low dimensional systems. This is because
of the challenge of numerically approximating the equation (\ref{eq:Poisson}) for $K$, or its weak formulation \eqref{eq:weak Poisson}. In this remark we discuss various approximation approaches that make this problem tractable.

We start with the numerical approximation of the weak formulation (\ref{eq:weak Poisson}). Let $\mathcal{F}$ denote a space of vector-valued functions mapping $\R^{d_v}$ into $\R^d$, and that are mean zero with respect to expectation $\E$ under density $\rho$.\footnote{We will consider different choices for
$d$ but use the same notation for the space.}
We make the ansatz that $K^{\top}=\nabla \Psi$, for some
$\Psi \in \mathcal{F}.$ Then (\ref{eq:weak Poisson}) can be rewritten as
\begin{equation*} \label{eq:weak Poisson scalar}
    \E (\nabla \Psi \nabla \psi ) =  \Gammas^{-1} C^{\hs\psi},
\end{equation*}
assumed to hold for all $\psi \in \mathcal{F}.$

In order to obtain the desired numerical approximation $K^\sJ$ let $\E^\sJ$ denote expectation with respect to the empirical measure (\ref{eq:empmc}) and consider $(K^\sJ)^\top = \nabla \Psi^\sJ$ and
\begin{equation} \label{eq:weak Poisson scalar empirical}
    \E^\sJ (\nabla \Psi^\sJ \nabla \psi)  
    = \Gammas^{-1}C^{\hs \psi} 
\end{equation}
with the correlation $C^{\hs \psi}$ approximated by
\begin{equation*}
    \Cov^{\hs \psi} =\mathbb{E}^\sJ\Bigl(
\bigl(\hs (v)-\mathbb{E}^\sJ \hs(v)\bigr) \otimes
\bigl(\psi(v)-\mathbb{E}^\sJ \psi (v) \bigr)\Bigr).
\end{equation*}
The final step in the numerical approximation is to choose an appropriate finite dimensional subspace $\mathcal{F}^L \subset \mathcal{F}$ of dimension $L \ll J$ and to determine 
$\Psi^\sJ \in \mathcal{F}^L$ such that (\ref{eq:weak Poisson scalar empirical}) holds for all $\psi \in \mathcal{F}^L$.

Alternatively we may return to the strong formulation (\ref{eq:Poisson}). We again make the ansatz $K^\top = \nabla \Psi$, giving rise to the differential equation
\begin{equation*}
\label{eq:DM approximation}
    \mathcal{L}_\rho \Psi = -\Gammas^{-1}(\hs - \E \hs)
\end{equation*}
with differential operator $\mathcal{L}_\rho$\footnote{Not to be confused
with operator $\op L_n$ defined by viewing Bayes Theorem, within filtering,
as a prior to posterior map at time $n$.}
 defined by
\begin{equation*}
    \mathcal{L}_\rho \Psi = \rho^{-1} \nabla \cdot( \rho \nabla \Psi)
    = \Delta \Psi + \nabla \Psi \nabla \log \rho.
\end{equation*}
Note that in the case of scalar observations, $\mathcal{L}_\rho$ is the infinitesimal generator of a diffusion process with invariant density $\rho$; in the vector case the same statement holds component-by-component. This fact may be used to approximate the action of $\mathcal{L}_\rho$ via its heat semigroup, and this may be used as the basis for numerical approximations. See the bibliography Subsection \ref{ssec:BCT} for references to the relevant literature.
$\blacksquare$
\end{remark}

The two general approaches to 
approximate the gain matrix in (\ref{eq:numerical_gain}), outlined
in the preceding remark, are interesting theoretically and perhaps hold promise in the future as computer power grows, but they lead to very expensive computations. To address this we now return to the constant gain approximation from Remark \ref{rem:tbd}, now in the particle setting. In the current context the identification of $K^\sJ$ arises from (\ref{eq:weak Poisson scalar empirical}), by making the  choice $\Psi(v) = (K^\sJ)^\top (v-\E^\sJ v)$ and $\psi (v) = v-\E^\sJ v$. These choices result in the following 
approximation of \eqref{eq:cga}:
\begin{subequations} \label{eq:numerical_constant_gain}
\begin{align}
    K^\sJ &= C^{v\hs}\Gammas^{-1},\\
    \Cov^{v\hs} &=\mathbb{E}^\sJ\Bigl(\bigl(v-\mathbb{E}^\sJ v\bigr)\otimes
\bigl(\hs (v)-\mathbb{E}^\sJ \hs(v)\bigr)\Bigr).
\end{align}
\end{subequations}
Note that the constant gain approximation \eqref{eq:numerical_constant_gain} also implies that the drift term $a^{(j)}$ in \eqref{eq:KS_mf3j} vanishes. We summarize numerical implementation details in the following two subsections.

\subsubsection{Stochastic Ensemble Kalman Filters}
\label{sssec:SEKFc}

We now consider the mean field model \eqref{eq:KSA_mf} and its 
numerical approximation. Since the drift correction $a$ does not
appear here the only significant difference, in comparison
with the interacting particle approximation (\ref{eq:KS_mf3j}), arises from the choice of the interaction term, that is, $K^{(j)}$. This gain is now independent of $j$ and determined by \eqref{eq:numerical_constant_gain}.
In summary, we obtain the following SDEs: for $j \in \sJ:=\{1,\dots, J\}$ we have
\begin{subequations}
\label{eq:KSA_mfj}
\begin{empheq}[box=\widefbox]{align}
\dd\vj &= f(\vj)\dd t + \sqrt{\Sigmas} \dd\Wj + C^{v\hs}\Gammas^{-1} (\dd\zd-\dd\hzj),\\
\dd\hzj &= \hs(\vj)\dd t + \sqrt{\Gammas} \dd\Bj.
\end{empheq}
\end{subequations}
Here, again, $\zd$ is given by \eqref{eq:data_ct} and
the $\{\Wj\}_{j \in \sJ}$ and $\{\Bj\}_{j \in \sJ}$
are mutually independent collections of i.i.d. Brownian motions.
The collection $\{\vj\}_{j=1}^J$ provide a time-evolving
ensemble which approximates the filtering distribution via
\eqref{eq:empmc}; the equations are derived based on
use of second order transport approximation of perfect transport. Each equation for $\vj$ evolves according to the underlying dynamics model, together with a nudging term  based on the difference between simulated data $\hzj$ and observed data $\zd.$ The gain couples the particles together.

%
\subsubsection{Deterministic Ensemble Kalman Filters}
\label{sssec:DEKF}
%

Similarly we may make an empirical approximation of the mean field model \eqref{eq:KSA_mf2} as follows:
for $j \in \sJ:=\{1,\dots, J\}$ we consider
\begin{empheq}[box=\fbox]{equation}
\label{eq:KSA_mf2j}
\dd\vj = f(\vj)\dd t + \sqrt{\Sigmas} \dd\Wj + C^{v\hs}\Gammas^{-1} \Bigl(\dd\zd-
\frac12\bigl(\hs(\vj)+\mathbb{E}^\sJ \hs(v)\bigr)\dd t\Bigr).
\end{empheq}
The notation is as in the preceding subsection and, in particular,
the formula \eqref{eq:empmc} again gives the 
particle approximation of the approximate filter; now
the relevant approximate
filter is defined by the distribution of \eqref{eq:KSA_mf2j}.

%
\subsection{Bibliographical Notes}
\label{ssec:BCT}
%

The entirety of Section \ref{sec:CT} is  
framed in the language of SDEs; see
\citet{evans2012introduction}, \citet{oksendal2013stochastic} for
background in this area. In passing from discrete to continuous
time we often invoke ideas from the numerical solution of
SDEs; see \citet{kloeden1991numerical} and
\citet{higham2001algorithmic} for introductions to this area.
The conventions from continuum mechanics, that we use to define the divergence and gradient of vector fields, are the same as those adopted, and described in detail, in \citet{gonzalez2008first}, \citet{gurtin1982introduction}.

Next we overview literature in the control theoretic approach introduced in Subsection \ref{ssec:CTCT}. For a general overview of control theory in continuous time see \citet{sontag2013mathematical}. Our presentation of control theoretical
methods for data assimilation focusses on 3DVAR.
Theoretical analysis of the continuous time 3DVAR algorithm
\eqref{eq:3DVARc} may be found in
\citet{law2012analysis}, 
\citet{blomker2013accuracy},
\citet{azouani2014continuous},
\citet{gesho2016computational},
\citet{olson2003determining},
\citet{mondaini2018uniform} and
\citet{larios2017nonlinear}.

We now overview the probabilistic approach introduced in
Subsection \ref{ssec:CTPP}. The Kalman-Bucy filter \citep{kalman1961new} contains
what is perhaps the first systematic derivation and analysis of an 
algorithm for the incorporation of continuous time data into estimation of a sample path of an SDE. Its extension to nonlinear and non-Gaussian distributions is provided by the Kushner-Stratonovich equation (\ref{eq:KSE}). An heuristic derivation of both the Kalman-Bucy filter as well as the Kushner-Stratonovich equation can be found in \citet{jazwinski2007stochastic} while \citet{bain2008fundamentals} covers the field of continuous time filtering in full detail.

The idea of Strang-splitting, which we use to derive the Kushner-Stratonovich equation, originates in \citet{strang1968construction}; for an overview of splitting methods see \citet{mclachlan2002splitting}. Furthermore, we rely on robustness results for continuous time filtering \citep{Clark2005},
which imply that smooth approximations $z^{\dagger,\delta}$ 
to stochastic observations $\zd$ are justified and that
the order of taking limits $\dt \to 0$ and $\delta \to 0$ can be accounted for by appropriate Stratonovich to It\^o correction terms. An introduction to the required covariation formulae used in proof of Lemma \ref{lem:ISC} and identity (\ref{eq:citeinbib}), which follows from covariation of stochastic integrals,  can be found in \citet{Eberle}. We note that robustness results do not carry over to associated mean field equations and filtering problems with correlated noise \citep{CNN2021}.

The numerical approximation of the Kushner-Stratonovich equation \eqref{eq:KSE} has a long history. The paper \citet{crisan1999particle} demonstrated how \eqref{eq:KSE} can be approximated by a particle method; this is a generalization of the bootstrap particle filter to continuous time \citet{crisan1998discrete}. See also \citet{bain2008fundamentals} for a detailed discussion of alternative approximation techniques. The paper
\citet{hu2002approximation} discussed solution of the unnormalized and linear version of the Kushner-Stratonovich equation known as the Zakai equation.

A mean field approach to the Kushner-Stratonovich equation (\ref{eq:KSE}) appeared first in the work of \citet{SR-CX10}, which utilizes robustness results and smoothed data $z^{\dagger,\delta}$ in the $\delta \to 0$ limit. The mean field equations (\ref{eq:KS_mf3d}) 
were proposed in \citet{SR-meyn13} (in the one dimensional setting) while the stochastic counterpart appeared first in \citet{reich2019data}. The mathematical relationship  between the various mean field formulations has been analyzed in \citet{SR-PRS20}. Numerical implementations of the mean field equations (\ref{eq:KS_mf3}) are discussed in \citet{TdWMR17} while \citet{TMM19} provides a detailed analysis of the diffusion map approximation discussed after (\ref{eq:DM approximation}). The constant gain approximation $K = C^{v\hs}\Gammas$, which corresponds to the ensemble Kalman filter, 
arises as a particular scaling limit from the diffusion map approach as discussed, for example, in \citet{TdWMR17}. 
See also the recent survey by \citet{taghvaei2023survey}.

The continuous time ensemble Kalman filter formulations (\ref{eq:KSA_mfj}) and (\ref{eq:KSA_mf2j}) appeared first in \citet{bergemann2012ensemble}. These formulations are based on earlier work on homotopy formulations of the Bayesian inference step by \citet{bergemann2010mollified} and \citet{reich2011dynamical}. See also the subsequent derivations in
\citet{law2015data} which contains a unified derivation of the Kalman-Bucy filter, continuous time 3DVAR and continuous time ensemble Kalman methods, starting from their discrete time counterparts.
 
Rigorous derivation of continuous time ensemble Kalman filter formulations from their discrete time counterparts can be found in \citet{SR-LS19b}, \citet{lange2021continuous}, \citet{blomker2018strongly} and  \citet{BSWW21}.  Derivation and analysis of the properties of continuous time limits 
in the context of solving inverse problems may be found in 
\citet{schillings2017analysis}; see bibliography Subsection
\ref{ssec:IPBIBC}. 
 Numerical time-stepping methods for continuous time ensemble Kalman filter 
formulations are analyzed in \citet{amezcuaensemble}. 

The papers \citet{ding2020ensemble,ding2021ensemble,ding2021ensembleb} undertake a systematic analysis of the link between interacting particle systems and mean field systems in continuous time, mostly focused on the solution of inverse problems; however the methods developed are more widely applicable. Similar to the stochastic ensemble Kalman filter, particle implementation (\ref{eq:KSA_mfj}) leads to undesirable correlations via the synthetic data $\widehat z^{(j)}$, which appear both in the gain $K^\sJ = C^{v\hs}\Gammas^{-1}$ and the innovation term $\dd\mathfrak{I} = \dd\zd - \dd \widehat z^{(j)}$ effectively giving rise to colored noise. These numerically induced correlations vanish in the limit $J\to \infty$.

Well-posedness, stability, and accuracy results for the ensemble Kalman filter first appeared in \citet{kelly2014well}, where the incompressible Navier-Stokes equations were studied, using
the continuous time formulations of stochastic ensemble Kalman methods first identified in \citet{reich2011dynamical}
and reviewed in \citet{law2015data}. Subsequent analyses of related issues for 
variants on the continuous time ensemble Kalman filter may be found in \citet{de2018long} and \citet{SR-dWT19}.
The paper \citet{delMoral18} initiates a line of research
related to the use of particle approximations of mean field models to understand long-time error estimates for particle systems approximating the Kalman-Bucy filter, the setting in which mean field ensemble Kalman filters exactly reproduce the true filtering distribution; see \citet{bishop2018perturbations,bishop2019one,bishop2019stability,bishop2020perturbation}
and, for an overview, \citet{bishop2020mathematical}. The control perspective deployed in \citet{law2012analysis} and \citet{azouani2014continuous} to study filter stability and accuracy has been unified and extended to ensemble Kalman filter formulations 
in \citet{BB24}, utilizing a particular form of covariance localization and additive inflation.

The classical Kalman filter can be viewed from the perspective of minimum variance estimation (discussed in discrete time in Subsection \ref{ssec:TM_MVA}) and optimal control \citep{kalman1961new}. This perspective has recently been extended to nonlinear filtering in \citet{KimMehta2022a,KimMehta2022b}, which opens up new perspectives for developing and analyzing numerical algorithms.

%
%
%
%
%
%
%
%
%
%
%
%
%
%
%
%
%
%
%
%
%
%

%
\section{Inverse Problems: Discrete Time}
\label{sec:IPDT}
%

In this section we adapt the ideas of Section \ref{sec:SE} to solve inverse problems.
We start in Section \ref{ssec:IPDT} with statement of the inverse problem,
followed in Subsections \ref{ssec:IPOP} and  \ref{ssec:IPBP} by discussion of
optimization and Bayesian approaches respectively; these are analogous to the
presentation of control and probabilistic approaches to the data assimilation problem
in Subsections \ref{ssec:CT} and \ref{ssec:PP}. 

The basic methodology we highlight is to formulate filtering problems which
(possibly only approximately) solve the inverse problem.
The Subsection \ref{ssec:IPFT}
is devoted to Bayesian probabilistic filtering methods which solve the inverse problem by morphing the
prior into the posterior in a finite time; Subsection \ref{ssec:IPIFT} discusses filtering methods
which work on infinite time horizons, exhibiting exponential convergence to approximate
solutions of the optimization or Bayesian formulations of the problem from arbitrary starting points. In both of Subsections \ref{ssec:IPFT} and 
\ref{ssec:IPIFT} we demonstrate application of Gaussian projected filtering 
and ensemble Kalman methods to solve the filtering problems arising.
In Section \ref{ssec:EKMI} we present numerical examples illustrating
ensemble Kalman methods for inverse problems.
We conclude in Section \ref{ssec:IPBIB} with bibliographic notes.

\begin{remark}
\label{rem:nopart}
In Subsections \ref{ssec:IPFT} and \ref{ssec:IPIFT}
we present only mean field statements of the Gaussian projected
filter and ensemble Kalman based methods. The reader
can generalize the ideas in Section \ref{sec:SE}, based on
interacting particle system approximations, to derive implementable algorithms from the ensemble based mean field algorithms introduced. Similarly, the unscented Kalman filter can be used to derive implementable algorithms from the Gaussian projected filters introduced here, as discussed in Subsection \ref{ssec:BSE}. Pseudo-code for the schemes implemented in the numerical examples of Subsection \ref{ssec:EKMI} may be found in Appendix \ref{sec:AA}.
$\blacksquare$
\end{remark}

%
\subsection{Set-Up}
\label{ssec:IPDT}
%

This and the next Section \ref{sec:CTI}  
are entirely devoted to solution of
the inverse problem of finding 
unknown parameter $u \in \R^{d_u}$ 
from data $w \in \R^{d_w}$, when $w$ is
related to $u$ via the equation
\begin{empheq}[box=\widefbox]{equation}
\label{eq:ip}
w=G(u)+\gamma.
\end{empheq}
Here $G:\R^{d_u} \to \R^{d_w}$ is the \emph{forward model}
and $\gamma$ represents noise polluting the data. 
We assume that $G$ is measurable with respect to the Borel algebra on input and output spaces, and is bounded on compact subsets of $\R^{d_u}.$ 
The basic methodology we highlight is to formulate filtering problems which
(possibly only approximately) solve the inverse problem.

\begin{remark}
The filtering problem from Subsection \ref{ssec:SUSE} requires solution of
an inverse problem, defined by (\ref{eq:sd}b), at each step $n;$ this inverse
problem is a specific instance of \eqref{eq:ip}. Indeed
the map $\op L_n$ in (\ref{eq:DAsimple}b) denotes Bayesian solution of this inverse problem,
a concept we will define, in the more general setting of
this section, in Subsection \ref{ssec:IPBP}.
Furthermore, the smoothing problem, referred to at the very end of the bibliography Subsection \ref{ssec:BSE}, can also be formulated as an instance of the general inverse problem \eqref{eq:ip}. 
$\blacksquare$
\end{remark}

We will work in a setting where we assume a probabilistic model
for the joint random variable $(u,w)$. We then assume that we
have available to us  $\wwd$, the observation
coordinate of a specific realization $(\ud,\wwd)$ under this probabilistic
model. This realization is itself generated by
$\gammad$, a specific realization of the observational noise.
We will consider two approaches to the inverse problem. 

\begin{itemize}
\item Objective 1: design an algorithm producing output 
$u$ from $\wwd$ so that
$u$ estimates
$\ud$, the true state underlying the data; 
\item Objective 2: design an algorithm which estimates
the distribution of random variable $u|\wwd$.
\end{itemize}

In the next subsection we define an optimization approach to
determine an approximation of $\ud$ from $\wwd,$ addressing
Objective 1. In the subsection
following that we define the Bayesian probabilistic formulation,
which also underpins the algorithms derived in the remainder of the section,
addressing Objective 2.

%
\subsection{Optimization Formulation}
\label{ssec:IPOP}
%

Given matrices $C_0 \succ 0$, $\Gammas \succ 0$, vector $m_0$ and the specific data
realization. namely $\wwd$,
we may define the nonlinear least squares loss function $\Phi$ and its Tikhonov-regularized counterpart
$\Phi_R$ as follows:
\begin{subequations}
\label{eq:phis}
\begin{empheq}[box=\widefbox]{align}
\Phi(u) &= \frac12|\wwd-G(u)|_{\Gammas}^2,\\
\Phi_R(u) &= \Phi(u)+\frac12|u-m_0|_{C_0}^2.
\end{empheq}
\end{subequations}
Minimization of $\Phi_R$ constitutes a solution to the inverse problem. 
The specific weighted norms used in the least squares loss $\Phi$, and the regularization
leading to $\Phi_R$, are best understood from the probabilistic formulation
in the following subsection; however the remainder of this subsection can be understood
without recourse to this probabilistic formulation.

The Tikhonov regularized least squares problem associated with the
inverse problem \eqref{eq:ip} may be viewed as an unregularized
least squares problem  arising from
the modified inverse problem
\begin{equation*}
\label{eq:ip2}
w_R=G_R(u)+\gamma_R,
\end{equation*}
where we write 
\begin{equation}
\label{eq:Rip}
w_R := \begin{pmatrix}
        w\\ m_0
\end{pmatrix},  \quad \quad \quad
G_R(u) := \begin{pmatrix}
        G(u)\\ u
\end{pmatrix},
\quad \quad \quad
\Gammas_R :=
\begin{pmatrix}
        \Gammas & 0 \\ 0 & C_0
\end{pmatrix},
\end{equation}
for $\gamma_R$ being the generalized observation error. In particular, the cost functional (\ref{eq:phis}b) can be rewritten as 
\begin{equation}
\label{eq:phir}
        \Phi_R(u) =  \frac12|\wwd_R-G_R(u)|^2_{\Gammas_R},
\end{equation}
for 
\begin{equation}
\label{eq:wwr}
\wwd_R := \begin{pmatrix}
        \wwd\\ m_0
\end{pmatrix}.
\end{equation}

\begin{remark}
\label{rem:oh1}
A building block in many algorithms for minimization of
$\Psi:\R^{d_u} \to \R^+$ is \emph{gradient descent.} In basic form,
this is an iteration for sequence $\{u_n\}_{n \in { \mathbb Z}^+}$
defined by
\begin{equation}
\label{eq:dgrad}
u_{n+1}=u_n-\alpha \nabla \Psi(u_n).
\end{equation}
To solve the inverse problem, we can use iteration \eqref{eq:dgrad} with $\Psi = \Phi$ or $\Psi = \Phi_R$. In Subsection \ref{sssec:IPO} we develop derivative-free \emph{affine invariant} algorithms\footnote{See
Remark \ref{rem:remcite99} for a discussion of the implications of affine invariance, in the discrete time setting. We will study affine invariance in detail in continuous time in Section \ref{sec:CTI};
see Definition \ref{d:GW} and Remark \ref{rem:remcite99_needed}.} based on Gaussian projected and
ensemble Kalman filters. These algorithms offer an alternative to \eqref{eq:dgrad}; they only
approximately minimize the least squares objective, in general.
However in the quadratic case they reproduce an exact mean-field
gradient descent algorithm, as we will show as this section unfolds.
$\blacksquare$
\end{remark}

%
\subsection{Bayesian Formulation}
\label{ssec:IPBP}
%

We now consider the \emph{Bayesian} approach to this inverse problem.
We again assume  $C_0 \succ 0$ and $\Gammas \succ 0$, as in the
previous subsection. To be concrete
we assume \emph{prior} $u \sim \Ng(m_0,C_0)$, that $\gamma \sim \Ng(0,\Gammas)$ and that $u$ and $\gamma$ are independent. It then follows that the likelihood $w|u \sim \Ng\bigl(G(u),\Gammas\bigr)$.  Application of Bayes Theorem shows that the \emph{posterior distribution} on $u|\wwd$ is measure $\mu$ given by
\begin{subequations}
\label{eq:mud}
\begin{empheq}[box=\widefbox]{align}
\mu(\dd u) &= \frac{1}{\normZ} \exp\bigl(-\Phi_R(u)\bigr)\dd u,\\
\normZ &= \int_{\R^{d_u}} \exp\bigl(-\Phi_R(u)\bigr)\dd u.
\end{empheq}
\end{subequations}
We note that the least-squares based optimization approaches to the inverse problem introduced in the preceding subsection can now be explicitly linked to the probabilistic formulation of the
inverse problem. In particular, minimizing $\Phi$ is referred to as the \emph{maximum likelihood approach}, whilst minimizing $\Phi_R$ as the \emph{maximum a posteriori approach}.

\begin{example}
\label{ex:linear}
Consider  the case of linear forward model:
\begin{equation}
\label{eq:linear}
G(\cdot)=L\cdot.
\end{equation}
Thus $\Phi_R$ is quadratic and it is straightforward to show that
the Hessian of $\Phi_R$ is greater than or equal to, in the sense of quadratic
forms, $C_0^{-1}.$ Since $C_0^{-1} \succ 0$  we deduce
that $\Phi_R$ has a unique critical point and this critical point is
a global minimizer. It is then natural to define
\begin{equation}
\label{eq:LR}
L_R := \begin{pmatrix}
        L\\ I
\end{pmatrix}
\end{equation}
and note that, with this definition, $G_R(\cdot)=L_R \cdot.$
We then have
\begin{equation*}
        \Phi_R(u) =  \frac12|\wwd_R-L_R u|^2_{\Gammas_R}.
\end{equation*}
Since $\Phi_R$ is quadratic we deduce that the posterior $\mu$ is Gaussian. We denote the mean by $\mop$ and by $\Cp$ the covariance. Matrix $\Cp$ is readily defined via its precision, the Hessian of $\Phi_R:$ 
\begin{equation}
\label{eq:pcov}
\Cp^{-1}=\Hess=L_R^\top \Gammas_R^{-1}L_R.
\end{equation}
As discussed above $\Cp^{-1} \succ 0$ so that $\Cp \succ 0$; in particular $\Cp$ is hence indeed invertible.
The minimizer of $\Phi_R$ is at the mean $\mop$ of the posterior which  solves the \emph{normal equations} 
\begin{equation}
\label{eq:pmean}
\Cp^{-1} \mop = L_R^\top \Gammas_R^{-1}\wwd_R.
\end{equation}

This representation of the posterior, which is Gaussian, is in terms of the precision matrix and the mean. There is an alternative and useful representation formula for the posterior covariance and mean, derived as follows. Consider
the Gaussian random variable $(u,w)$ defined by choosing $u \sim \Ng(m_0,C_0)$ and $w|u \sim \Ng(Lu,\Gammas).$ Then the solution of the Bayesian inverse problem \eqref{eq:mud} is given in this linear setting by the distribution of $u|\wwd.$ By using standard conditioning formulae for Gaussian random variables we obtain
\begin{subequations}
\label{eq:exactMF99}
\begin{align}
\mop &=m_0 + C_0 L^\top (L C_0 L^\top +  \Gammas)^{-1} (\wwd -Lm_0),\\
\Cp &= C_0 - C_0 L^\top (L C_0 L^\top + \Gammas)^{-1} L C_0.
\end{align}
\end{subequations}
These formulae are equivalent to $(m_1,C_1)$ found from the Kalman filter Bayesian update
step \eqref{eq:KF_analysisL} of Example \ref{ex:sssec:2}, with the choices
$M={\rm Id},\Sigma=0, H=L$ and $\Gamma=\Gammas.$
$\blacksquare$
\end{example}

\begin{remark}
\label{rem:oh2}
A commonly used methodology for sampling from target distribution
$\mu$ on $\R^{d_u}$ is MCMC. At abstract level
this defines a Markov chain for density $\rho_n$ given by transition
kernel $\op K(\alpha)$, where $\alpha$ describes hyper-parameters
that define the specific method used. Thus
\begin{equation}
\label{eq:mcmc}
\rho_{n+1}=\op K(\alpha)\rho_n.
\end{equation}
This is a linear iteration for the density $\rho_n$, designed to converge
to the target density, defined by the posterior, as $n \to \infty.$ In Subsection
\ref{sssec:IPB} we show how mean field ensemble Kalman methods, which induce
a nonlinear iteration on density $\rho_n$, may be used as an alternative to
\eqref{eq:mcmc}, defining an approximate Bayesian
posterior by iterating to infinity. Furthermore, this iteration will
be shown to be exact for Gaussian posteriors, and to benefit from
affine invariance.\footnote{Recall that discussion of the implications of affine invariance may be found in Remark \ref{rem:remcite99}, in the discrete time 
setting; and that  we will study affine invariance in detail in continuous 
time, in Section \ref{sec:CTI}.}
$\blacksquare$   
\end{remark}

%
\subsection{Finite-Time Algorithms}
\label{ssec:IPFT}
%

The basic idea used in this subsection, to address the solution of
inverse problems, is rooted in a sequential formulation of Bayesian inference. 
From this sequential formulation we derive a filtering problem whose
solution, at a particular time, gives the desired posterior.
Subsection \ref{sssec:IPFTF} describes the formulation,
Subsection \ref{sssec:gpf_ip} the use of Gaussian projected filters
and Subsection \ref{sssec:kt_sdt} the use of ensemble Kalman methods. 

%
\subsubsection{Formulation}
\label{sssec:IPFTF}
%

To understand this sequential approach we define, for integer $N > 1$,
\begin{equation} \label{eq:iterative Bayes}
    \Phi_{R,n}(u) = \frac{n}{N}\Phi(u)+\frac12|u-m_0|_{C_0}^2,
\end{equation}
noting that $\Phi_{R,N}(u)=\Phi_R(u).$
Now consider the sequence $\mu_n$ of probability measures with negative log
density given (up to an additive constant with respect to variation of $u$) 
by $\Phi_{R,n}(u).$ Then the Bayesian inference problem (\ref{eq:mud}) 
can be reformulated as a sequence of $N$ Bayesian inference steps 
where the prior $\mu_n$ is
morphed into posterior $\mu_{n+1}$  using the data likelihood 
$\exp(-\Phi(u)/N)$ in each step:
\begin{equation}
\label{eq:iterative Bayes2}
    \mu_{n+1}(\dd u) \propto \exp\Bigl(-\frac{1}{N}\Phi(u)\Bigr)\mu_n(\dd u).
\end{equation}
Hence
\begin{equation}
\label{eq:iterative Bayes3}
    \mu_{n}(\dd u) \propto \exp\Bigl(-\frac{n}{N}\Phi(u)\Bigr)\mu_0(\dd u).
\end{equation}
Thus
\begin{equation}
\label{eq:iterative Bayes4}
    \mu_{N}(\dd u) \propto \exp\Bigl(-\Phi(u)\Bigr)\mu_0(\dd u).
\end{equation}

The initial prior is set to
$\mu_0=\Ng(m_0,C_0)$ and the $N^{th}$ posterior $\mu_N$ 
delivers the desired Bayesian solution to the
inverse problem, given in \eqref{eq:mud}.

\begin{remark}
\label{rem:chopin}  
Here we have introduced an iteration index $n$ to morph from prior to posterior.
In what follows we will identify $n$ with an \emph{artificial} time and then 
import ideas from filtering to solve the inverse problem.  Notice that, in this
approach, the single inverse problem \eqref{eq:iterative Bayes4} of interest,
is replaced by $N$ inverse problems of the form \eqref{eq:iterative Bayes2}.
This can be beneficial because each of the $N$ inverse problems 
\eqref{eq:iterative Bayes2} is easier to solve than the single inverse
problem \eqref{eq:iterative Bayes4}, because the defining change of measure
is closer to the identity.

A variant on this idea is to morph from prior to posterior
by (possibly artificially) considering the data $w$ as sequentially acquired
and incrementally including components of the data at each step $n$, again leading
to a sequence of measures $\{\mu_n\}_{n=0}^N$, with $\mu_N$ equal to the 
posterior. 
$\blacksquare$
\end{remark}

Given this sequence of measures $\mu_n$, it is possible to identify a stochastic dynamical system with filtering distribution $\mu_n.$ Application of any filtering method to this filtering problem, and ensemble Kalman filters in particular, then
provides a method to approximate the posterior distribution. 
These sequential formulations of Bayesian inversion are well known and have, for example, been exploited in the use of sequential Monte Carlo methods for Bayesian inference; see Subsection \ref{ssec:IPBIB} for details. 

\begin{remark}
\label{rem:refer}
To employ sequential Monte Carlo methods based on \eqref{eq:iterative Bayes2}
it is necessary to invoke some form of approximation. The resulting outcome
of such approximations depends on the choice of positive integer $N$.
Empirically it is found that choosing $N \gg 1$ gives better approximations
of the desired posterior \eqref{eq:iterative Bayes4}; however this must be traded
against the additional cost of taking $N$ steps.

Standard particle filter-based sequential Monte Carlo methods 
do not always scale well to high dimensions, in the same way that the particle filter for state estimation scales poorly. Consequently ensemble Kalman variants of 
sequential Monte Carlo methods  have an important place in the field and we
will deploy these, and variants of them, after introducing the stochastic dynamical system  underlying sequential Monte Carlo.
$\blacksquare$
\end{remark}

Our first step is to show how to realize \eqref{eq:iterative Bayes2} via a filtering problem which we refer to as a \emph{transport problem}:\footnote{Note that we have introduced transport ideas in Section \ref{sec:SE} to underpin algorithms which
approximate the Bayesian inference that defines the analysis
step in filtering.} it transports the prior initial condition into the desired posterior, through a discrete time evolution. See Theorem \ref{t:fstep} below and note that in Subsection
\ref{sssec:IPFTFC} analogous developments are made in continuous time.
The transport problem is exact: it introduces no approximations. To
derive algorithms we proceed to discuss approximations to the transport. We  follow discussion of exact transport with study of the Gaussian projected filter, applied in this inverse problem context, in Subsection \ref{sssec:gpf_ip}, with continuous time analog in Subsection \ref{sssec:smcC}. Study of Gaussian projection is then followed by discussion of the application of mean field Kalman transport algorithms to the transport problem, see Subsection \ref{sssec:kt_sdt}; continuous time analogs are covered in Subsection \ref{sssec:smcC2}. 

In the following development we define $\Delta t$ so that
\begin{empheq}[box=\widefbox]{equation}
\label{eq:ndt}
 N\dt=1.
\end{empheq}
Now consider the combined state-observation system in the form
\begin{subequations}
\label{eq:fstep}
\begin{align}
u_{n+1} &= u_n, \\
w_{n+1} &= G(u_{n+1}) + \frac{1}{\sqrt{\dt}}\gamma_{n+1},
\end{align}
\end{subequations}
for
$n \in \{0,\cdots, N-1\}$, where $\{\gamma_{n+1}\}_{n=0}^{N-1}$ is an i.i.d. sequence with variance $\Ng(0,\Gammas).$
It is intuitive that, since $N\dt=1$, $u_0 \sim \Ng(m_0,C_0)$ and the observed data $w^\dagger_{n+1}=\wwd$ for all
$n \in \{0,\cdots, N-1\}$, then $u_N$ conditioned on $W_N^\dagger:=\{w^\dagger_{n+1}\}_{n=0}^{N-1}$
is distributed as $\mu$, defined as in \eqref{eq:mud}. Indeed, using (\ref{eq:fstep}b) for $n \in \{0,\cdots, N-1\}$ corresponds to making $N$ independent noisy observations of $G(u_0)$, all with noise variance $\dt^{-1} \Gammas;$ this is statistically equivalent to a single noisy observation of $G(u_0)$ with noise variance $\Gammas$. Since $u_0$ is initialized as $\Ng(m_0,C_0)$ (the prior) the problem reduces to the Bayesian inverse problem for $u|\wwd.$

This intuition may be substantiated by using the discussion around equations \eqref{eq:iterative Bayes}, \eqref{eq:iterative Bayes2}. In order to avail ourselves of the results from Section \ref{sec:SE}, we first rescale the observation equation in \eqref{eq:fstep} to obtain
\begin{subequations}
\label{eq:ipsd}
\begin{empheq}[box=\widefbox]{align}
u_{n+1} &= u_n, \\
y_{n+1} &= \dt G(u_{n+1}) + \eta_{n+1},
\end{empheq}
\end{subequations}
with $\eta_{n+1} \sim \Ng(0,\dt\Gammas)$ and $y_{n+1}=\dt w_{n+1}$. Denote the observed data by $\Yd_n=\{\yd_{\ell}\}_{\ell=1}^n$, where $\yd_\ell = \dt\wwd_\ell$. We may then show the following:
\begin{theorem}
\label{t:fstep}
Consider the dynamical system \eqref{eq:ipsd} and 
assume that $C_0 \succ 0$, $\Gammas \succ 0$  and  $N\dt=1$.
Assume also that $u_0 \sim \Ng(m_0,C_0)$ and $\eta_{n+1} \sim \Ng(0,\dt\Gammas);$ furthermore, assume that $\{\eta_n\}_{n=1}^N$ forms an i.i.d. sequence, independent of $u_0.$
Then $\mu_n$, the law of $u_n|\Yd_n$ defined by \eqref{eq:ipsd}, satisfies \eqref{eq:iterative Bayes2},
and in particular $\mu_N$ is equal to the posterior distribution $\mu$, if the data is chosen as $\yd_n=\dt\, \wwd,$ for $n \in \{1,\cdots, N\}$.
$\Diamond$ \end{theorem}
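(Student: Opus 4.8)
The plan is to show that the filtering distribution $\mu_N = \law(u_N \mid \Yd_N)$ coincides with $\mu^w$ by unwinding the filtering cycle \eqref{eq:DAsimple} for the specific dynamical system \eqref{eq:ipsd} and using the Gaussian observation model to compute the likelihood explicitly at each step. Since the dynamics $u_{n+1}=u_n$ are trivial, the prediction operator $P$ in \eqref{eq:DAsimple} is the identity on measures, so the filtering cycle reduces to the pure Bayesian update $\mu_{n+1} = L_n(\mu_n)$, where $L_n$ multiplies by the likelihood of $y_{n+1}$ given $u_{n+1}=u_n$ and renormalizes.

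First I would write down the likelihood associated to \eqref{eq:ipsd}. Given $u_{n+1}=u$, we have $y_{n+1} \sim \Ng(\dt\, G(u), \dt\Gammas)$, so its Lebesgue density at the observed value $y_n = \dt\, w$ is proportional (in $u$) to
\begin{equation*}
\exp\Bigl(-\frac{1}{2\dt}\bigl|\dt\, w - \dt\, G(u)\bigr|_{\Gammas}^2\Bigr)
= \exp\Bigl(-\frac{\dt}{2}\bigl|w - G(u)\bigr|_{\Gammas}^2\Bigr)
= \exp\bigl(-\dt\,\Phi(u)\bigr),
\end{equation*}
using the definition of $\Phi$ from Subsection \ref{ssec:IPSU}. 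Hence $L_n$ is exactly the map $\mu \mapsto \mu'$ with $\mu'(du) \propto \exp(-\dt\,\Phi(u))\,\mu(du)$, independent of $n$. Second, I would iterate: since $P$ is the identity and $\mu_0 = \Ng(m_0,C_0)$ has density proportional to $\exp(-\tfrac12|u-m_0|_{C_0}^2)$, a straightforward induction gives
\begin{equation*}
\mu_n(du) \propto \exp\Bigl(-n\dt\,\Phi(u) - \tfrac12|u-m_0|_{C_0}^2\Bigr)\,du.
\end{equation*}
Setting $n=N$ and using $N\dt=1$ yields $\mu_N(du) \propto \exp(-\Phi(u) - \tfrac12|u-m_0|_{C_0}^2)\,du = \exp(-\Phi_R(u))\,du$, which is exactly $\mu^w$ by \eqref{eq:mud}. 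The normalization constants match because $C_0 \succ 0$ and $\Gammas \succ 0$ and $G$ is bounded on compacts, guaranteeing $\mathcal Z<\infty$ so that every intermediate measure is a well-defined probability measure.

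The only real subtlety — and the step I expect to require the most care — is justifying that the informal computation above is legitimate measure-theoretically: one must check that each $\mu_n$ is absolutely continuous with respect to Lebesgue measure (immediate here, since $\mu_0$ is and multiplying by a strictly positive bounded-on-compacts density preserves this), that the likelihood $\exp(-\dt\Phi(u))$ is integrable against $\mu_n$ (it is bounded above by $1$), and that the conditioning operation defining $L_n$ really does coincide with multiplication by this density — this is precisely the content of the formula for $L_n(\mu)$ given in Subsection \ref{sssec:tfd}, applied with the Gaussian kernel $q(v,y)$ from \eqref{eq:ops2_c}. Once these are in place, the induction is routine. I would also remark that the result is an instance of the general principle, recalled after \eqref{eq:iterative Bayes}, that splitting one observation of variance $\Gammas$ into $N$ independent observations of variance $N\Gammas$ (equivalently $\dt^{-1}\Gammas$ with $\dt=1/N$) is statistically equivalent to the original observation; the proof above makes this equivalence explicit at the level of the filtering recursion.
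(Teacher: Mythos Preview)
Your proposal is correct and follows essentially the same approach as the paper: both observe that the trivial dynamics $u_{n+1}=u_n$ make the prediction step the identity, write the Gaussian likelihood at $y_n=\dt\,w$ as $\exp(-\dt\,\Phi(u))$, and then iterate (equivalently, sum the log-densities) from the Gaussian prior to reach $\exp(-\Phi_R(u))$ using $N\dt=1$. Your discussion of the measure-theoretic subtleties is a bit more explicit than the paper's, but the argument is the same.
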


\begin{proof}
Let $\mu_n$ be the law of $u_n|\Yd_n$.
Since (in the notation of Section \ref{sec:SE}) $\hmu_{n+1}=\mu_n$ we see that
the mapping $\mu_n$ to $\mu_{n+1}$ is simply given by Bayes theorem: $\mu_{n+1}=\op L_n(\mu_n).$ 
This observation yields the following 
identity, expressed in terms of
$\rho_n$ the Lebesgue density of measure $\mu_n:$
\begin{align*}
\log \rho_{n+1}-\log \rho_{n}&=-\frac{1}{2\dt}|\yd_n-\dt G(u)|_{\Gammas}^2+{\rm const},\\
&=-\frac{\dt}{2}|w- G(u)|_{\Gammas}^2+{\rm const}.
\end{align*}
Summing over $n \in \{0,\cdots, N-1\}$, using the fact that
\begin{align*}
\log \rho_{0}&=-\frac{1}{2}|u-m_0|_{C_0}^2+{\rm const},
\end{align*}
we deduce that 
\begin{equation*}
    \log \rho_n = -\Phi_{R,n} + {\rm const},
\end{equation*}
with $\Phi_{R,n}$ given by (\ref{eq:iterative Bayes}). Choosing $n=N$
gives the desired result concerning the posterior.
\end{proof}

Thus we may approach the problem of (approximately) sampling from $\mu$
by (approximately) solving the filtering problem defined by 
\eqref{eq:ipsd}, for $\mu_n$, until discrete time $n=N$. In particular, we may use the Gaussian projected filter or ensemble Kalman methods to approximate this filtering problem.
In the next two subsections we consider, respectively, these two approximation methods.

%
\subsubsection{Algorithms: Gaussian Projected Filter}
\label{sssec:gpf_ip}
%

We now apply the ideas from Subsection \ref{ssec:GPFD}, which concerns the Gaussian projected filter in the general setting, to the specific setting
of the stochastic dynamical system \eqref{eq:ipsd}.
Let $\E$ denote expectation under $u \sim \Ng(m_n,C_n)$ and define
\begin{subequations}
\label{eq:KF_pred_mean222}
\begin{empheq}[box=\widefbox]{align}
\Cov_{n}^{uG}      &=  \E\Bigl(\bigl(u- \E u\bigr)\otimes
\bigl(G(u)-\E G(u)\bigr)\Bigr),\\
\Cov_{n}^{GG}      &=  \E\Bigl(\bigl(G(u)- \E G(u)\bigr)\otimes
\bigl(G(u)-\E G(u)\bigr)\Bigr).
\end{empheq}
\end{subequations}
Noting that prediction under  (\ref{eq:ipsd}a) is trivial it follows
that the predicted mean and covariance satisfy
$\mh_{n+1}=m_n$ and $\pCov_{n+1}=C_n.$ Hence, using \eqref{eq:KF_analysis_add}
in the specific setting of \eqref{eq:ipsd}, yields
\begin{subequations}
\label{eq:KF_analysis_add_IP_p}
\begin{empheq}[box=\widefbox]{align}
        \mean_{n+1} &= m_{n} + \Delta t \Cov_{n}^{uG}
(\Gammas+\Delta t \Cov_{n}^{GG})^{-1}
\bigl(\wwd - \E G(u)\bigr),\\
\Cov_{n+1} &= \Cov_{n} - \Delta t \Cov_{n}^{uG}(\Gammas+
\Delta t \Cov_{n}^{GG})^{-1} \bigl(\Cov_{n}^{uG}\bigr)^\top.
    \end{empheq}
\end{subequations}
Note that the difference between the data $\wwd$ and the
mean of $G(u)$ under the Gaussian at step $n$ acts as a forcing 
term in the evolution of the mean from $n$ to $n+1$,
promoting a Gaussian which agrees with the data. This forcing
term is weighted by covariance information. The covariance of the
Gaussian projected filter is non-increasing from step to step since
$\langle u, \Cov_{n+1} u \rangle \le \langle u, \Cov_{n} u \rangle$
for all $u \in \R^{d_u};$ this reflects the fact that more information
is received at each step $n \mapsto n+1$ as the unknown
$u$ is repeatedly observed.

\begin{example}
\label{ex:linear1}
In the setting of  the linear inverse
problem \eqref{ex:linear}, where $G(u)=Lu$, the Gaussian projected filter
equations  \eqref{eq:KF_analysis_add_IP_p} reduce to
\begin{subequations}
\label{eq:KF_analysis_add_IPL}
\begin{align}
        \mean_{n+1} &= m_{n} + \Delta t \Cov_{n}L^\top
(\Gammas+\Delta t L \Cov_{n} L^\top)^{-1}
\bigl(\wwd - Lm_n\bigr),\\
\Cov_{n+1} &= \Cov_{n} - \Delta t \Cov_{n}L^\top(\Gammas+
\Delta t L \Cov_{n} L^\top)^{-1} L \Cov_{n}.
    \end{align}
\end{subequations}
These equations may be iterated to map from $(m_0,C_0)$ directly to $(m_n,C_n)$,
obtaining
\begin{subequations}
\label{eq:seblabel}
\begin{align}
        \mean_{n} &= m_0 + n\dt \Cov_{0}L^\top
\left(\Gammas+n\dt L \Cov_{0} L^\top\right)^{-1}
\bigl(\wwd - Lm_0\bigr),\\
\Cov_{n} &= \Cov_{0} - n\dt \Cov_{0}L^\top \left(\Gammas+
n\dt L \Cov_{0} L^\top \right)^{-1} L \Cov_{0},
    \end{align}
\end{subequations}
These formulae may also be obtained by applying Bayes formula, in the linear
setting, to \eqref{eq:iterative Bayes3} and using \eqref{eq:ndt}, namely $N\dt=1.$
The Gaussian posterior measure $\mu=\Ng(\mop,\Cp)$ given
by \eqref{eq:pcov}, \eqref{eq:pmean} may now
be found by choosing mean and covariance $(\mop,\Cp)=(m_N,C_N).$ 
This follows from Section \ref{sec:SE} because the Gaussian projected
filter recovers the Kalman filter, which is exact for linear Gaussian problems.
$\blacksquare$
\end{example}

%
\subsubsection{Algorithms: Ensemble Kalman Filter}
\label{sssec:kt_sdt}
%

Recall that  mean field models lead to ensemble Kalman methods through particle approximation. In this section we simply highlight use of one of the mean field models, in the context of inverse problems, namely the stochastic Kalman transport approach from Subsection \ref{sssec:ats} and its
deterministic variant from Subsection \ref{sssec:atd}. We leave details of particle
approximations of these mean field models to the reader, and to
Appendix \ref{sec:AA} for related pseudo-code. 

Employing the state-observation model \eqref{eq:ipsd} within the stochastic Kalman transport model \eqref{eq:sd2nn_add} we obtain the mean field dynamical system, for i.i.d. unit Gaussian sequence $\{\xi_n\}$ in $\R^{d_w}$,
\begin{subequations}
\label{eq:ipKT}
\begin{empheq}[box=\fbox]{align}
u_{n+1} &= u_{n}+\dt \CuG_{n}(\dt\CGG_{n}+\Gammas)^{-1}\Bigl(\wwd-G(u_n)-
\sqrt{\frac{\Gammas}{\dt}}\xi_n\Bigr),\\
\CuG_n &= \mathbb{E}\Bigl(\bigl(u_n-\mathbb{E}u_n\bigr)\otimes\bigl(G(u_n)-\mathbb{E}G(u_n)\bigr)\Bigr), \\
\CGG_n &= \mathbb{E}\Bigl(\bigl(G(u_n)-\mathbb{E}G(u_n)\bigr)\otimes\bigl(G(u_n)-\mathbb{E}G(u_n)\bigr)\Bigr).
\end{empheq}
\end{subequations}
Note that, here, expectation $\E$ is computed under the law of $u_n$ itself. As for the Gaussian projected filter \eqref{eq:KF_analysis_add_IP_p} the evolution promotes a distribution which is compatible with the data, here with a forcing term, weighted by covariance information, applied to the evolution of the state $u_n$; but
unlike the mean field evolution equation for the mean, there is additional noise for the state evolution.

Recall Theorem \ref{t:fstep}. Since the ensemble Kalman transport algorithm
used here provides an approximation of the filtering distribution for
the dynamical system \eqref{eq:ipsd}, it follows that the random variable $u_N$ provides an approximation to the posterior distribution $\mu$, 
provided that $u_0 \sim \Ng(m_0,C_0)$,  the prior distribution. 
This statement can be made exact in the linear case as the following example shows.

\begin{example}
\label{ex:linear3}
Assume $G(u)=Lu$ for $L \in \R^{d_w \times d_u}$ so that the posterior
distribution of the Bayesian inverse problem $\mu$ is given in Example \ref{ex:linear}. Then the solution of the mean field model \eqref{eq:ipKT}
satisfies $u_N \sim \mu.$  This is a specific instance of
what we observed in Example \ref{ex:mfk}, namely that the mean field model 
reproduces the Kalman filter on linear Gaussian problems.
We note also that the Gaussian projected filter is identical
to the Kalman filter in this case -- see Example \ref{ex:linear1}.

Although it is implicit in Example \ref{ex:linear}, in this
specific inverse problem context we demonstrate the equivalence with the Kalman filter explicitly. To do this we note that, in the linear setting, equation  \eqref{eq:ipKT} defines a closed evolution in the set of Gaussian probability measures. The updates for the mean $m_n$ and covariance $C_n$ of $u_n$ then coincide with the Kalman filter, and hence the Gaussian
projected filter, in this linear case given by equations
\eqref{eq:KF_analysis_add_IPL}. Indeed, by taking the expectation under the law of $u_n$ of (\ref{eq:ipKT}a), it is readily checked that in the linear setting we obtain (\ref{eq:KF_analysis_add_IPL}a)
for the mean update. To obtain the evolution equation of the covariance, recall that 
\begin{equation}
\label{eq:cov_aux}
C_{n+1} = \mathbb{E}\Bigl((u_{n+1}-m_{n+1})\otimes(u_{n+1}-m_{n+1}) \Bigr).
\end{equation}
Substituting into \eqref{eq:cov_aux} the expression for $u_{n+1}$, given by (\ref{eq:ipKT}a) in the linear setting $G(u)=Lu$, and the expression for $m_{n+1}$, given by (\ref{eq:KF_analysis_add_IPL}a),  and then computing the expectation yields (\ref{eq:KF_analysis_add_IPL}b).
It follows from the calculations of Example \ref{ex:linear1} 
that $u_N \sim \mu.$
$\blacksquare$
\end{example}

We conclude this subsection by stating the corresponding deterministic transport formulation. We employ
the approximation (\ref{eq:DEnKF}). This holds in our case provided $\dt$ is small enough. Choosing $K_n$ as
implicitly defined in (\ref{eq:ipKT}), we obtain,
with expectation $\E$ computed under the law of $u_n$ itself, the following mean field model:
\begin{subequations}
\label{eq:mfKBF0}
\begin{empheq}[box=\fbox]{align}
u_{n+1} &= u_{n}+\dt \CuG_{n}(\dt\CGG_{n}+\Gammas)^{-1}\Bigl(\wwd-\frac{1}{2}\bigl(G(u_n)+
\E G(u_n)\bigr) \Bigr),\\
\CuG_n &= \mathbb{E}\Bigl(\bigl(u_n-\mathbb{E}u_n\bigr)\otimes\bigl(G(u_n)-\mathbb{E}G(u_n)\bigr)\Bigr), \\
\CGG_n &= \mathbb{E}\Bigl(\bigl(G(u_n)-\mathbb{E}G(u_n)\bigr)\otimes\bigl(G(u_n)-\mathbb{E}G(u_n)\bigr)\Bigr).
\end{empheq}
\end{subequations}

%
\subsection{Infinite-Time Algorithms}
\label{ssec:IPIFT}
%

Algorithms which (approximately) transport prior to posterior in finite time, 
as described in the preceding subsection,
are attractive. However they can be quite rigid as they do not benefit from
strong stability to perturbations. An alternative, pursued in this section,
is to seek algorithms which converge to the desired solution on an infinite
time-horizon, from arbitrary starting points, and which exhibit exponential stability.
This is hard to achieve in general, but can be achieved exactly
for Gaussian problems. When applied beyond the Gaussian setting this
hence leads to a methodology consistent with the application of 
ensemble Kalman filter approximations, which themselves invoke a Gaussian 
ansatz and yet are used beyond the Gaussian setting.
Subsection \ref{sssec:IPFTF2} describes the infinite time-horizon
formulation.
In Subsection \ref{sssec:IPO} we consider this infinite time-horizon perspective
for the solution of optimization problems associated with the inverse problem
\eqref{eq:ip}. Subsection \ref{sssec:IPB} considers the same perspective for Bayesian inversion.

%
\subsubsection{Formulation}
\label{sssec:IPFTF2}
%

To motivate what follows we consider algorithms that solve the optimization 
problem by extending ideas from the previous section to iterate a filtering
problem over an infinite time-horizon. To explain this idea recall the identity \eqref{eq:iterative Bayes3},
restated  here for convenience: 
\begin{equation*}
    \mu_{n}(\dd u) \propto \exp\Bigl(-\frac{n}{N}\Phi(u)\Bigr)\mu_0(\dd u).
\end{equation*}
We note that if we evaluate this identity at $n=N$ then we obtain the Bayesian
posterior distribution; the resulting algorithms are based on solving the 
associated filtering problem on interval $n=0,1, \cdots, N.$  
Now we observe that if, instead,
we iterate $n \to \infty$ for fixed $N$ (and hence fixed $\Delta t$), 
then $\mu_n$ will converge to a sum of Dirac measures supported 
at global minimizers of $\Phi$ that are contained in the support of $\mu_0$. Thus we can iterate algorithms such as the Gaussian projected filter from Subsection \ref{sssec:gpf_ip} or the
ensemble Kalman filter from Subsection \ref{sssec:kt_sdt} to $n=\infty$ in order to obtain an approximate solution of the optimization problem for $\Phi,$ within the support of $\mu_0.$

In the following example we study both the Gaussian projected
filter, and the ensemble Kalman filter, in the linear Gaussian setting.
We consider their application when we iterate $n \to \infty$ for fixed $N.$
The two algorithms coincide in this linear Gaussian setting. Studying
their properties gives insight into the proposed iterative approach
to optimization. In particular it motivates the use of \emph{regularization} and
\emph{variance inflation} as introduced following the example.

\begin{example}
\label{ex:linear1b}
We consider the setting of Example \ref{ex:linear1}, where the linear 
inverse problem with $G(u)=Lu$ is considered, making the additional assumption
that $L C_0 L^\top$ has full rank. The Gaussian projected filter
equations  \eqref{eq:KF_analysis_add_IP_p}, iterated over $n$ steps give, 
the single step update \eqref{eq:seblabel}.
Since $L C_0 L^\top$ has full rank, this delivers the following 
closed form update in the image of $L$:
\begin{subequations}
\begin{align}
        L\mean_{n} &= L m_0 +  \dt L\Cov_{0}L^\top
\left(\frac{1}{n}\Gammas+ \dt L \Cov_{0} L^\top\right)^{-1}
\bigl(\wwd - Lm_0\bigr),\\
L\Cov_{n}L^\top &= L\Cov_{0}L^\top -  \dt L\Cov_{0}L^\top \left(\frac{1}{n}\Gammas+
 \dt L \Cov_{0} L^\top \right)^{-1} L \Cov_{0}L^\top.
    \end{align}
\end{subequations}
If we fix $\dt$ and let $n \to \infty$ then we see that
\begin{align*}
        L\mean_{n} &= \wwd + \mathcal{O}(1/n),\\
L\Cov_{n}L^\top &=  \mathcal{O}(1/n).
    \end{align*}
We notice from the previous example that there are two issues when 
letting $n\to \infty$ for fixed $\dt$. First, the mean converges
to a point $m_\infty$ solving $Lm_\infty=\wwd$, rather than a 
minimizer of the regularized functional $\Phi_R(\cdot)$. Secondly, 
the convergence rate is only of order $1/n$. 
$\blacksquare$
\end{example}

We now seek to address the two problems identified in this
example, to develop improved methodology.

\paragraph{Regularization}
The first problem identified in Example \ref{ex:linear1b}, 
namely that regularization disappears when taking
$n \to \infty$, can be addressed by considering the iteration defined by 
\begin{equation}
\label{eq:ohr}
    \mu_n(\dd u) \propto \exp\bigl(-n\dt \Phi_R(u)\bigr)\,
    \mu_0(\dd u)
\end{equation}
instead of the iteration defined by \eqref{eq:iterative Bayes3}.
Recalling
$G_R, \Gammas_R$ defined by \eqref{eq:Rip} and assuming that
$\Gammas_R \succ 0$ 
then sequence of measures $\mu_n$ given by \eqref{eq:ohr}  
may be generated by the filtering
distribution associated with the following
modification of \eqref{eq:ipsd}: 
\begin{subequations}
\label{eq:ipsd99}
\begin{empheq}[box=\widefbox]{align}
u_{n+1} &= u_n, \\
y_{n+1} &= \dt G_R(u_{n+1}) + \eta_{n+1};
\end{empheq}
\end{subequations}
here $\eta_{n+1} \sim \Ng(0,\dt\Gammas_R)$ and we consider
the setting where the observed data is  $y_{n+1}=\dt \wwd_{R}$.\footnote{Note 
that $\ell$ indexes an iteration in  $\wwd_{\ell}$ but that $\wwd_R$ is the fixed vector defined in \eqref{eq:wwr}.
We may apply sequential filtering techniques, such as the Gaussian projected
filter and the ensemble Kalman filter, to the filtering problem defined
by \eqref{eq:ipsd99}. Since $\mu_n$ converges to 
a Dirac delta distribution centred about the minimizer of $\Phi_R$, within 
the support of $\mu_0,$ this addresses the first problem. However, the rate 
of convergence remains of order $1/n$ so that the second problem is not
addressed; this is verified explicitly for the linear case in the 
forthcoming Example \ref{ex:lin_c1}.}

\paragraph{Variance Inflation}

The second problem identified in Example \ref{ex:linear1b} is algebraic convergence. The root cause of the algebraic convergence 
is the collapse of the covariance $C_n$ to zero. 
Therefore, in order to accelerate the convergence rate, we need 
to modify the sequential update steps to ensure that the covariance 
of (approximate) filters does not collapse to zero; at the same time 
we must ensure that the mean $m_n$ still converges, 
exactly in the linear setting and approximately in the general nonlinear case, 
to the minimizer of $\Phi_R$ as $n\to \infty$.

In order to achieve this non-collapsing covariance we modify \eqref{eq:ipsd99} 
by adding a form of variance inflation to the evolution 
of the parameter $u_n$ and
consider the stochastic dynamical system given by
\begin{subequations}
\label{eq:rescale_beta_dt}
\begin{empheq}[box=\widefbox]{align}
u_{n+1} &= u_{n} + \xi_{n},\\
y_{n+1} &= \dt G_R(u_{n+1}) + \eta_{n+1}.
\end{empheq}
\end{subequations}
Here $\xi_n \sim \Ng(0,\beta\dt\Csov_n)$, $\beta \ge 0$ and $\eta_{n+1} 
\sim \Ng(0,\dt\Gammas_R)$ for covariance inflation matrix $\Csov_n$  
to be defined. Note that if $\beta=0$ we simply recover \eqref{eq:ipsd99}.

We now discuss the choice of $\Csov_n$. Because the true covariance that we
wish to recover is that of filtering distribution it is natural $\Csov_n$
is defined in terms of the covariance of the filter. Defining $\Yd_n=\{\yd_{\ell}\}_{\ell=1}^n$ with  $\yd_{\ell} := \dt \wwd_R$ we may consider the filtering distribution defined by random variable $u_n|\Yd_n.$ We let $\Cov_n$ denote the covariance under this filtered random variable.
We then set $\Csov_n:=\Cov_n$.

\begin{remark}
\label{rem:nonstandard}
With this choice of $\Cov_n$ the equation \eqref{eq:rescale_beta_dt} defines a form of mean field
model for state-observation evolution. Previously in this paper the state-observation models we have considered have not been of mean field type; we only introduced mean field models as the basis of sample path based algorithms to (approximately) solve a filtering problem. In contrast, here, the mean field dependence of the proposed model \eqref{eq:rescale_beta_dt}, with $\Csov_n:=\Cov_n$, is through the filtering distribution associated with $u_n|\Yd_n$. Thus, even before we develop mean field
models to approximate the law of the filtering distribution via sample path based algorithms, the underlying  state-observation model is linked to filtering.
 
Although the filtering distribution depends on the history $\Yd_n$, equation \eqref{eq:rescale_beta_dt} can be rendered Markovian by coupling it to the evolution of the filtering distribution $\mu_n \mapsto \mu_{n+1}$, and noting that $\Cov_n$ is computed under $\mu_n$. 

We note that, in practice, identification of the exact covariance of the filtering distribution is not possible. Thus in the Gaussian projected filter and ensemble Kalman filter that follow we will use approximations of $\Cov_n$; however we will also denote these approximations by $\Cov_n$ to avoid proliferation of notation.
$\blacksquare$
\end{remark}

We now derive implementable algorithms to approximate the filtering
distribution defined by \eqref{eq:rescale_beta_dt}.

\paragraph{Gaussian Projected Filter}
We begin by applying the ideas from Subsection \ref{ssec:GPFD}, which concerns the Gaussian projected filter in the general setting, to the specific setting of the stochastic dynamical system \eqref{eq:rescale_beta_dt}. Using \eqref{eq:KF_analysis_add}
in the specific setting of \eqref{eq:rescale_beta_dt}, yields
\begin{subequations}
\label{eq:KF_analysis_add_IP}
\begin{empheq}[box=\widefbox]{align}
\mh_{n+1} &= \mean_n,\\
\hC_{n+1} &= (1+\beta \Delta t) \Cov_n,\\
\mean_{n+1} &= \mh_{n+1} + \dt \hCuG_{R,n+1}
(\dt \hCGG_{R,n+1}+ \Gammas_R)^{-1}
\bigl(\wwd_R - \hoo_{n+1}\bigr),\\
\Cov_{n+1} &= \hC_{n+1} - \dt \hCuG_{R,n+1}(\dt \hCGG_{R,n+1}+\Gammas_R)^{-1} \bigl(\hCuG_{R,n}\bigr)^\top.
    \end{empheq}
\end{subequations}
Here we define
\begin{subequations}
\label{eq:KF_pred_mean222beta}
\begin{empheq}[box=\fbox]{align}
\hoo_{n+1} &= \mathbb{E} G_R(\widehat{u}_{n+1}),\\
\hCuG_{R,n+1} &= \mathbb{E}\Bigl(\bigl(\widehat{u}_{n+1}-\mathbb{E}\widehat{u}_{n+1}\bigr)\otimes\bigl(G_R(\widehat{u}_{n+1})-
\mathbb{E}G_R(\widehat{u}_{n+1})\bigr)\Bigr),\\
\hCGG_{R,n+1} &= \mathbb{E}\Bigl(\bigl(G_R(\hat{u}_{n+1})-\mathbb{E}G_R(\widehat{u}_{n+1})\bigr)\otimes\bigl(G_R(\widehat{u}_{n+1})-
\mathbb{E}G_R(\widehat{u}_{n+1})\bigr)\Bigr),
\end{empheq}
\end{subequations}
where, in \eqref{eq:KF_analysis_add_IP}, all expectations are with
respect to $\widehat{u}_{n+1} \sim \Ng(\mh_{n+1},\hC_{n+1}).$
Note that we have used the covariance of the Gaussian projected
filter to define the variance inflation required to determine 
(\ref{eq:KF_analysis_add_IP}b), since we do not have the 
covariance under the true filtering distribution. 

\paragraph{Ensemble Kalman Filter}

Instead of the Gaussian projected filter, we may use
the ensemble Kalman filter. 
We use the covariance $C_n$ of the ensemble Kalman filter 
to define the variance inflation since, again, we do not have the 
covariance under the true filtering distribution.
With these considerations in hand, application
of the stochastic Kalman transport mean field model 
\eqref{eq:sd2nn_add} to \eqref{eq:rescale_beta_dt} yields 
\begin{subequations}
\label{eq:notsss1}
\begin{empheq}[box=\widefbox]{align}
\widehat{u}_{n+1} &= u_{n} +   \xi_{n},\\
\widehat{y}_{n+1} &= \dt G_R(\hu_{n+1}) + \eta_{n+1},\\
u_{n+1} &= \widehat{u}_{n+1}+\hCuG_{R,n+1}\bigl(\dt\hCGG_{R,n+1}+\Gammas_R\bigr)^{-1}
\bigl(\dt \wwd_R-\widehat{y}_{n+1}\bigr).
\end{empheq}
\end{subequations}
Here $\xi_n \sim \Ng\bigl(0,\beta \dt C_n\bigr),$
$\eta_{n+1} \sim \Ng\bigl(0,\dt\Gammas_R)$ and  
expecations appearing in \eqref{eq:KF_pred_mean222beta}, to define
$(\hCuG_{R,n+1}, \hCGG_{R,n+1})$, are computed
under the law of $\widehat{u}_{n+1}$.

\begin{remark}
\label{rem:remcite99_notneeded}
Recall the mean field dynamical
system \eqref{eq:rescale_beta_dt} and consider its filtering distribution.
In Subsection \ref{sssec:IPO} we show that in the linear case the mean of
the filtering distribution converges to the posterior mean of the 
underlying Bayesian inverse problem, and hence to a minimizer
of the Tikhonov regularized least squares function $\Phi_R.$
For $\beta=0$ convergence is algebraic,
whilst it is exponential for $\beta>0.$ 
Furthermore, in Subsection \ref{sssec:IPB}, we show that for a particular 
choice of $\beta$, in the linear case the filtering distribution converges to the Bayesian posterior
distribution defined by the inverse problem.

Recall that the Gaussian projected filter \eqref{eq:KF_analysis_add_IP}
and the mean field ensemble Kalman filter \eqref{eq:notsss1}  exactly reproduce the evolution of
the filtering distribution, for linear Gaussian problems. As a consequence
everything stated in this remark for the filtering distribution applies also to the law
defined by \eqref{eq:KF_analysis_add_IP} and by \eqref{eq:notsss1}.
$\blacksquare$
\end{remark}

We also note that it is possible to use corresponding deterministic 
transport formulations in place of \eqref{eq:notsss1}. We employ the approximation \eqref{eq:DEnKF}. 
This holds in our case provided $\dt$ is small enough. Choosing $K_n$ as
implicitly defined in \eqref{eq:notsss1}, we obtain the following mean field model:
\begin{subequations}
\label{eq:mfKBF01}
\begin{empheq}[box=\fbox]{align}
\widehat{u}_{n+1} &= u_{n} +   \xi_{n},\\
u_{n+1} &=\widehat{u}_{n+1}
+\dt K_n
\Bigl(\wwd_R-\frac{1}{2}\bigl(G_R(\hu_{n+1})+
\widehat{o}_{n+1}\bigr) \Bigr),\\
K_n &=\hCuG_{R,n+1}\bigl(\dt\hCGG_{R,n+1}+\Gammas_R\bigr)^{-1}.
\end{empheq}
\end{subequations}
Here $\xi_n \sim \Ng\bigl(0,\beta \dt C_n\bigr).$
All expectations used to define $(\hCuG_{R,n+1}, \hCGG_{R,n+1}, \widehat{o}_{n+1})$
are given by \eqref{eq:KF_pred_mean222beta}, computed under the law of $\widehat{u}_{n+1}$.
This also exactly solves the filtering problem defined by \eqref{eq:rescale_beta_dt},
in the linear Gaussian setting.

\begin{remark} 
\label{rem:det}
We note that it is possible to replace \eqref{eq:notsss1} by the 
mean field model
\begin{subequations}
\label{eq:notsss19}
\begin{empheq}[box=\widefbox]{align}
\widehat{u}_{n+1} &= u_{n} +   \frac{\gamma}{2}(u_{n}-\mathbb{E}u_n),\\
\widehat{y}_{n+1} &= \dt G_R(\hu_{n+1}) + \eta_{n+1},\\
u_{n+1} &= \widehat{u}_{n+1}+\hCuG_{R,n+1}\bigl(\dt\hCGG_{R,n+1}+\Gammas_R\bigr)^{-1}
\bigl(\dt \wwd_R-\widehat{y}_{n+1}\bigr).
\end{empheq}
\end{subequations}
Here the expectation on $u_n$ is with respect to the approximate filtering distribution
generated by this model. Now note that the predictive mean and covariance defined by
\eqref{eq:notsss1} are governed by (\ref{eq:KF_analysis_add_IP}a), (\ref{eq:KF_analysis_add_IP}b).
The same equations govern the evolution of predictive mean and covariance of \eqref{eq:notsss19}
provided that we choose $\gamma$ to be the unique positive solution of the equation
$\gamma+\frac{\gamma^2}{4}=\dt\beta.$ Thus the resulting methodology will coincide with
the Gaussian projected filter and with the ensemble Kalman filter on linear Gaussian problems.
$\blacksquare$
\end{remark}

%
\subsubsection{Algorithms for Optimization Formulation}
\label{sssec:IPO}
%

Recall that we have introduced the non-standard mean field dynamical
system \eqref{eq:rescale_beta_dt}.
We have also described how the filtering distribution of the dynamical system
may be approximated by the Gaussian projected filter \eqref{eq:KF_analysis_add_IP}
and the mean field ensemble Kalman filter \eqref{eq:notsss1}.
In this subsection we substantiate the statements made about these algorithms in
Remark \ref{rem:remcite99_notneeded}.
We initially discuss algorithms with algebraic convergence, for $\beta=0$;
and then we introduce
generalization of the analysis to $\beta>0$ which allow us to obtain exponential convergence.

\vspace{0.1in}
\paragraph{Algebraic Convergence}

Here we consider the setting of \eqref{eq:rescale_beta_dt} where $\beta=0$. 
Recall that for this choice of $\beta$, the stochastic dynamical system \eqref{eq:rescale_beta_dt} reduces to \eqref{eq:ipsd99}. The following example illustrates that in the linear case where
$G_R(\cdot)=L_R\cdot$, we recover an algebraic rate of convergence to the 
posterior distribution when fixing $\dt$ and taking $n\to \infty$. 
This is analogous to Example \ref{ex:linear1b} which considers algorithms 
based on $L$, not $L_R$.

\begin{example}
\label{ex:lin_c1}
Assume that $u_0$ is initialized at a Gaussian
$\Ng(m_0,C_0)$ and assume also that $C_0, \Gammas_R \succ 0$.
Consider the setting where
$G_R(\cdot)=L_R \cdot$ for matrix $L_R \in \R^{(d_w+d_u) \times d_u}$. 
Now consider the filtering distribution $u_n|Y^\dagger_n$ given in 
\eqref{eq:ipsd99},
with $Y^\dagger_n$ data defined $\yd_n=\dt \wwd_R,$
with $\wwd_R$ as in \eqref{eq:wwr}.

The desired filtering distribution is Gaussian $\Ng(m_n,C_n)$.
We now show that the iteration $(m_n,C_n) \mapsto (m_{n+1},C_{n+1})$
converges to the posterior distribution, and does so at an algebraic rate.
The reader should compare this with Example \ref{ex:linear1b} which,
using filtering based on $G$ rather than $G_R$, and again in the linear case, 
also results in algebraic convergence; furthermore convergence
is only in the image space under the forward map $L$.

To prove convergence to the posterior distribution we first identify the update equations for $(m_n,C_n)$. Note that the 
predictive mean $\mh_{n+1}$ and covariance $\pCov_{n+1}$
defined by (\ref{eq:ipsd99}a) trivially satisfy
\begin{align*}
\label{eq:notsp1}
\mh_{n+1} &= m_{n},\\
\pCov_{n+1} &= C_n .
\end{align*}
To find $(m_{n+1},C_{n+1})$ it is again convenient 
to derive the formulae using
precision rather than covariance matrices. To this
end we view the Gaussian $\Ng(\mh_{n+1},\pCov_{n+1})$ as prior distribution
for the linear inverse problem defined by (\ref{eq:ipsd99}b) with data realization $\yd_{n+1}=\dt \wwd_R.$ 
Note that the likelihood, since linear and Gaussian, is conjugate to the prior 
so that the posterior on $u_{n+1}|Y^\dagger_{n+1}$ is Gaussian with mean
and covariance $(m_{n+1},C_{n+1})$ which can be found by completing
the square:
\begin{align*}
         \Cov_{n+1}^{-1} &= \pCov_{n+1}^{-1}+\dt L_R^\top \Gammas_R^{-1} L_R,\\ 
\Cov_{n+1}^{-1}m_{n+1} & = \pCov_{n+1}^{-1} \mh_{n+1} + \dt L_R^\top \Gammas_R^{-1} \wwd_R.
\end{align*}
We therefore find that
\begin{equation}
\label{eq:cov_conv}
    \Cov_{n}^{-1} = \Cov_{0}^{-1}+n\dt L_R^\top \Gammas_R^{-1} L_R.
\end{equation}
Recall that the posterior covariance $\Cp=(L_R^\top \Gammas_R^{-1} L_R)^{-1}$
from \eqref{eq:pcov}
is positive-definite. Hence it follows that the covariance converges  to zero algebraically fast: $\Cov_{n}=\mathcal{O}(1/n).$

Now note that
\begin{equation*}
\Cov_{n}^{-1}m_{n}  = \Cov_{0}^{-1}m_0 +n\dt L_R^\top \Gammas_R^{-1} \wwd_R,
\end{equation*}
so that
\begin{equation*}
m_{n}  = \Bigl(\Cov_{0}^{-1}+n\dt L_R^\top \Gammas_R^{-1} L_R \Bigr)^{-1}\Bigl(\Cov_{0}^{-1}m_0 +n\dt L_R^\top \Gammas_R^{-1} \wwd_R\Bigr).
\end{equation*}
We deduce that, since the posterior mean is given by
\eqref{eq:pmean},  
\begin{equation}
\label{eq:mean_conv}
m_{n}  = \Covpost L_R^\top\Gammas_R^{-1}\wwd_R+\mathcal{O}\bigl(1/n\bigr)=\mop+\mathcal{O}\bigl(1/n\bigr),
\end{equation}
again exhibiting algebraic convergence.
Combining \eqref{eq:cov_conv} and \eqref{eq:mean_conv} yields the result. $\blacksquare$
\end{example}

\vspace{0.1in}
\paragraph{Exponential Convergence}

Example \ref{ex:lin_c1} shows that when $\beta=0$ 
filtering based on \eqref{eq:rescale_beta_dt} leads to convergence
to the posterior distribution at algebraic rate, in the linear
setting, when fixing $\dt$ and taking $n\to \infty$.
We now show that when $\beta>0$ an exponential rate of convergence to the posterior distribution is obtained.

\begin{proposition}
\label{lem:lin_c2}
Assume that $u_0$ is initialized at a Gaussian
$\Ng(m_0,C_0)$ and assume also that $C_0, \Gammas_R \succ 0$. Consider the setting where
$G_R(\cdot)=L_R \cdot$ for matrix $L_R \in \R^{(d_w+d_u) \times d_u}$. 
Now consider the filtering distribution $u_n|Y^\dagger_n$ defined by \eqref{eq:rescale_beta_dt} for $\beta >0$, with data $Y^\dagger_n$ defined by $\yd_n=\dt \wwd_R$, where $\wwd_R$ is defined in \eqref{eq:wwr}. Then the filtering distribution is Gaussian $\Ng(m_n,C_n)$ for all $n\ge 1$. For any fixed $\Delta t>0$ the mean and covariance converge at an exponential rate $(1+\dt \beta)^{-n}$, as  $n\to \infty$, to the limits $m_{\infty}=\mop$ and
$C_{\infty}=\frac{\beta}{1+\beta\dt}\Cp$, where $(\mop,\Cp)$ are the posterior mean \eqref{eq:pmean} and covariance \eqref{eq:pcov}. 
$\Diamond$ \end{proposition}

\begin{remark}
Motivated by this proposition we may use the Gaussian Projected filter \eqref{eq:KF_analysis_add_IP} or the stochastic or deterministic Kalman transport algorithms, \eqref{eq:notsss1} and \eqref{eq:mfKBF01} respectively, to approximate the filtering distribution implied by \eqref{eq:rescale_beta_dt}. In so doing we generate approximate 
solutions of the Tikhonov regularized optimization problem  defined by \eqref{eq:ip}. Furthermore
the exact solution is recovered in the linear Gaussian setting.
    $\blacksquare$
\end{remark}

\begin{remark}
\label{rem:remcite99}
    It is a remarkable fact that the rate of convergence is independent of
    the properties of the limiting Gaussian posterior distribution, and in
    particular of the conditioning of the posterior covariance. This desirable
    property is a result of the \emph{affine invariance} of the Gaussian projected filter
    and ensemble Kalman methods that we deploy in this subsection. 
    Affine invariance is a subject
we will study in more detail in the context of continuous time approaches to
inversion, developed in Section \ref{sec:CTI}. In this context we note that
the Definition \ref{d:GW} may be extended to discrete time algorithms.
$\blacksquare$
\end{remark}

\begin{proof}[Proof of Proposition \ref{lem:lin_c2}]
We first identify the update equations for $(m_n,C_n)$. Note that the 
predictive mean $\mh_{n+1}$ and covariance $\pCov_{n+1}$
defined by (\ref{eq:rescale_beta_dt}a) satisfy
\begin{subequations}
\label{eq:notsp12}
\begin{align}
\mh_{n+1} &= m_{n},\\
\pCov_{n+1} &= (1+\beta\dt)C_n .
\end{align}
\end{subequations}
To find $(m_{n+1},C_{n+1})$ it is convenient 
to derive the formulae using
precision rather than covariance matrices. To this
end we view the Gaussian $\Ng(\mh_{n+1},\pCov_{n+1})$ as prior distribution
for the linear inverse problem defined by (\ref{eq:rescale_beta_dt}b) conditioned on specific realization of the data $\wwd_{n+1}=\wwd_R.$ 
Note that the likelihood, since linear and Gaussian, is conjugate to the prior 
so that the posterior on $u_{n+1}|W^\dagger_{n+1}$ is Gaussian with mean
and covariance $(m_{n+1},C_{n+1})$ which can be found by completing
the square:
\begin{subequations}
\label{eq:combining}
\begin{align}
         \Cov_{n+1}^{-1} &= \pCov_{n+1}^{-1}+\dt L_R^\top \Gammas_R^{-1} L_R,\\ 
\Cov_{n+1}^{-1}m_{n+1} & = \pCov_{n+1}^{-1} \mh_{n+1} + \dt L_R^\top \Gammas_R^{-1} \wwd_R.
\end{align}
\end{subequations}

Combining \eqref{eq:notsp12} and \eqref{eq:combining} shows that $(m_{n},C_{n})$ update according to the formulae
\begin{align*}
         \Cov_{n+1}^{-1} &= \Bigl(\frac{1}{1+\beta\dt}\Bigr)\Cov_{n}^{-1}+\dt L_R^\top \Gammas_R^{-1} L_R,\\ 
\Cov_{n+1}^{-1}m_{n+1} & = \Bigl(\frac{1}{1+\beta\dt}\Bigr)\Cov_{n}^{-1} m_{n} + \dt L_R^\top \Gammas_R^{-1} \wwd_R.
\end{align*}
We can therefore write
\begin{equation*}
\label{eq:cov_conv1}
    \Cov_{n}^{-1} = \Bigl(\frac{1}{1+\beta\dt}\Bigr)^n\Cov_{0}^{-1}+\left(\sum_{k=0}^{n-1}\Bigl(\frac{1}{1+\beta\dt} \Bigr)^k  \right) \dt  L_R^\top \Gammas_R^{-1} L_R.
\end{equation*}
and so 
\begin{equation}
\label{eq:cov_conv2}
    \Cov_{n}^{-1} = \Bigl(\frac{1}{1+\beta\dt}\Bigr)^n\Cov_{0}^{-1}+
    \frac{1+\beta \Delta t}{\beta}
    \left( 1- \left(\frac{1}{1+\beta\dt}\right)^{n} \right) L_R^\top \Gammas_R^{-1} L_R.
\end{equation}
Recall the posterior covariance
$\Cp=(L_R^\top \Gammas_R^{-1} L_R)^{-1}$
given in \eqref{eq:pcov}.
It is clear that the precision converges exponentially fast to 
$\Cp^{-1}$, scaled by $\frac{1+\beta \Delta t}{\beta}$, and hence that the covariance converges exponentially fast to the appropriately scaled $\Cp.$

Similarly we may write the expression for the mean as 
\begin{equation*}
\Cov_{n}^{-1}m_{n}  = \Bigl(\frac{1}{1+\beta\dt}\Bigr)^n\Cov_{0}^{-1}m_0 + 
\frac{1+\beta \Delta t}{\beta}
    \left( 1- \left(\frac{1}{1+\beta\dt}\right)^{n} \right)L_R^\top \Gammas_R^{-1} \wwd_R,
\end{equation*}
so that the exponential convergence of the mean to the steady state $\mop$ given by \eqref{eq:pmean} may be deduced, using the expression \eqref{eq:cov_conv2}. 
\end{proof}

\begin{remark}
Recall the Hessian $\Hess$ of $\Phi_R$, defined in \eqref{eq:pcov}.
Using the formulae for the predictive mean and covariance it is also easy to deduce that the expression for $m_{n+1}$ is given by 
\[
m_{n+1} = \Bigl(\frac{1}{1+\beta\dt}\Bigr)\cdot m_n + \Bigl(\frac{\beta\dt}{1+\beta\dt}\Bigr)\cdot \Hess^{-1}L^\top_R\Gammas_R^{-1}\wwd_R,
\]
and hence
\begin{equation}
\label{eq:GaussNewton}
    m_{n+1} = m_n + \Bigl(\frac{\beta\dt}{1+\beta\dt}\Bigr)\cdot\Hess^{-1}\bigl(\Hess m_n+ L^\top_R\Gammas_R^{-1}\wwd_R\bigr).
\end{equation}
Since $\nabla \Phi_R(u)=\Hess u+ L^\top_R\Gammas_R^{-1}\wwd_R$ and $D^2\Phi_R(u) = \Hess$, the iteration \eqref{eq:GaussNewton} may be viewed as a Gauss-Newton scheme for minimizing $\Phi_R$.
$\blacksquare$
\end{remark}

%
\subsubsection{Algorithms for Bayesian Formulation}
\label{sssec:IPB}

Again recall that we have introduced the non-standard mean field dynamical
system \eqref{eq:rescale_beta_dt} and shown how
the filtering distribution of the dynamical system
may be approximated by the Gaussian projected filter \eqref{eq:KF_analysis_add_IP}
and the mean field ensemble Kalman filter \eqref{eq:notsss1}.
In this subsection we substantiate the statements made about these algorithms in
Remark \ref{rem:remcite99_notneeded} in relation to Bayesian sampling. In particular
we show that they exactly recover the posterior in the linear Gaussian setting
by choosing
\begin{equation}
\label{eq:betat}
\beta = \frac{1}{1-\dt}.
\end{equation}
The following is a direct corollary of Proposition \ref{lem:lin_c2}. As in 
Remark \ref{rem:remcite99} we note that the rate of convergence, in this case to
the posterior distribution, is universal across all Gaussian posteriors.

\begin{corollary}
\label{cor:lin_c2}
Assume that $u_0$ is initialized at a Gaussian
$\Ng(m_0,C_0)$ and assume also that $C_0, \Gammas_R \succ 0$. Consider the setting where
$G_R(\cdot)=L_R \cdot$ for matrix $L_R \in \R^{(d_w+d_u) \times d_u}$. 
Now consider the filtering distribution $u_n|Y^\dagger_n$ defined by \eqref{eq:rescale_beta_dt} for $\beta$ given by \eqref{eq:betat}, with data $Y^\dagger_n$ defined by $\yd_n=\dt \wwd_R$, where $\wwd_R$ is defined in \eqref{eq:wwr}. Then the filtering distribution is Gaussian $\Ng(m_n,C_n)$ for all $n\ge 1$. For any fixed $\Delta t>0$ the mean and covariance converge at an exponential rate
$(1-\dt)^{n}$, as  $n\to \infty$, to the limits $m_{\infty}=\mop$ and
$C_{\infty}=\Cp$, where $(\mop,\Cp)$ are the posterior mean \eqref{eq:pmean} and covariance \eqref{eq:pcov}. 
$\Diamond$ \end{corollary}

\begin{remark}
Motivated by this corollary, in the context of \eqref{eq:rescale_beta_dt} we may use the Gaussian Projected filter \eqref{eq:KF_analysis_add_IP} or the stochastic or deterministic Kalman transport algorithms, \eqref{eq:notsss1} and \eqref{eq:mfKBF01} respectively, to generate approximate 
solutions of the Bayesian inverse problem 
defined by \eqref{eq:ip}, in the general nonlinear setting. 
Of course, because these algorithms employ Gaussian approximations, this does not produce the exact filtering distribution.
Furthermore, to make resulting algorithms tractable we will predict in (\ref{eq:rescale_beta_dt}a)
using the covariance $C_n$ of the Gaussian projected filter or
the Kalman transport algorithm, rather than the covariance under the true filtering distribution, which is not tractable: see final paragraph in Remark \ref{rem:nonstandard}. We note, however, that in the linear Gaussian case all the algorithms are exact and hence recover the true posterior.
$\blacksquare$
\end{remark}

%
\subsection{Ensemble Kalman Methods For Inversion: Examples}
\label{ssec:EKMI}
%

In this Section \ref{sec:IPDT}  we have concentrated, so far, entirely on mean field models, leaving the details of deriving particle approximations to the reader. In this subsection, however, we make a brief foray into finite particle ensemble approximations of the mean field models introduced in our discussion of inverse problems. The algorithms we employ can be found in Appendix \ref{sec:AA} as Algorithms \ref{alg:EKTI}, \ref{alg:EKOI} and \ref{alg:EKI_post}.

 Algorithms \ref{alg:EKTI}, \ref{alg:EKOI} are based on finite particle approximations
 of the mean field model in \eqref{eq:ipKT}. Algorithm \ref{alg:EKTI} is based on iterating
 until $N$ satisfying $N\dt=1$ and, at that time-step, aims to approximate the posterior;
  Algorithm \ref{alg:EKOI} performs the same iteration but to $N_{\infty}$ assumed to satisfy 
  $N_{\infty}\dt \gg 1$ so that it approximately solves an optimization problem -- see the discussion
  at the start of Subsection \ref{sssec:IPFTF2}.
Algorithm \ref{alg:EKI_post} is based on \eqref{eq:notsss1} and aims to approximate the
posterior by iterating to $N_{\infty}: N_{\infty}\dt \gg 1.$

In Example \ref{ex:EKI_1D} we study a one-dimensional nonlinear inverse problem;
working in one dimension enables comparison of the true posterior distribution with approximations 
arising from the various ensemble Kalman inversion schemes described in preceding subsections. We also demonstrate an optimization approach to inversion, in the context of Example \ref{ex:EKI_1D}. Subsequently, in Example \ref{ex:EKI}, we return to the setting of the Lorenz `96 dynamical system,
now to estimate unknown parameters rather than the state; our focus is on studying ensemble Kalman methods from the perspective of the optimization approach to the parameter estimation problem.

\begin{example}
\label{ex:EKI_1D}
Recall Example \ref{ex:linear1} concerning the linear Gaussian inverse
problem; there we show that the exact posterior is obtained either by iterating 
\eqref{eq:KF_analysis_add_IPL} $N$ times or by evaluating \eqref{eq:seblabel} at $n=N.$ Although derived in the context of the Gaussian projected filter 
the example also applies to 
mean field ensemble Kalman methods since they, like the Gaussian projected 
filter, are exact for linear Gaussian problems.
However in the nonlinear case this equivalence does not hold exactly, 
because of approximations that are made by the Gaussian projected and 
ensemble Kalman filtering methods. In this example
we study the effect of these approximations by examining the behaviour of 
particle-based ensemble Kalman methods applied to a nonlinear inverse problem.

We consider the setting of \eqref{eq:ip} with a nonlinear forward map $G:\R \to \R$, given by
\begin{equation}
\label{eq:1D_forward}
G(u) = \frac{7}{12}u^3-\frac{7}{2}u^2+8u.
\end{equation}
The observational noise is assumed to be of the form $\eta \sim \Ng(0,1)$. Assuming observation $\wwd=2$ results in likelihood  
\begin{equation}
\label{eq:1Dlike}
\frac{1}{\sqrt{2}\pi}\exp\Bigl(-\frac12(G(u)-2)^2\Bigr).
\end{equation}
Furthermore, assuming a Gaussian prior of mean $-2$ and variance $1/2$, the posterior on $u|\wwd$ is proportional to 
\begin{equation}
\label{eq:1Dpost}
\frac{1}{\sqrt{2}\pi}\exp\Bigl(-\frac12(G(u)-2)^2-(u+2)^2\Bigr).
\end{equation}
We note that the forward map $G(u)$ is monotonic and the posterior \eqref{eq:1Dpost} is unimodal: in the first panel of Figure \ref{fig:EKI_1D} we display the true posterior distribution, computed via quadrature. We also show the Gaussian prior and the likelihood \eqref{eq:1Dlike}.

The second panel of Figure \ref{fig:EKI_1D} shows the ensemble Kalman inversion iteration \eqref{eq:ipKT} from Subsection \ref{ssec:IPFT}, which is designed to transport prior to posterior in finite time, fixing $N$ iterations and $\dt$ so that $N\dt=1$; see Algorithm \ref{alg:EKTI}. Recall that the iteration exactly recovers the posterior, in the mean field limit, when applied to linear Gaussian inverse problems (Example \ref{ex:linear1}) but that here the inverse problem is nonlinear and non-Gaussian. We employ this algorithm with $J=2 \cdot 10^3$ ensemble members. We run Algorithm \ref{alg:EKTI} with two choices of $N:$
$N=4\cdot 10^3$ and hence $\dt=2.5 \cdot 10^{-4};$ and with $N=1$ and hence $\dt=1.$ The second panel of Figure \ref{fig:EKI_1D} shows that the one-step approach, with $N=1$, leads to a very poor approximation of the posterior. In contrast, the scheme with $N=4\cdot 10^3$ yields reasonable approximation quality of the posterior; see Remark \ref{rem:chopin}. In this panel we also run Algorithm \ref{alg:EKOI} for the finite number of iterations $N_{\infty}=10^6$, with $\dt=2.5 \cdot 10^{-4}$. The result reflects the theoretical interpretation: iterating $n \to \infty$ leads to solution of the optimization formulation of ensemble Kalman inversion and, as discussed in Subsection \ref{ssec:IPIFT}, results in convergence to a Dirac centered at the minimizer of the least squares functional $\Phi,$ the maximum likelihood estimate found by maximizing \eqref{eq:1Dlike};
this simply delivers the point $G^{-1}(2).$

The third panel of  Figure \ref{fig:EKI_1D} shows the ensemble Kalman inversion iteration \eqref{eq:notsss1} from Subsection \ref{ssec:IPIFT}, namely Algorithm \ref{alg:EKI_post}. This is designed to approximate the true posterior, when $1/\beta=1-\dt$: indeed Corollary \ref{cor:lin_c2} shows that in the linear setting the mean field model \eqref{eq:notsss1} converges to the true posterior in limit $n \to \infty$, with this choice of $\beta.$ Our numerical results,
which are conducted with this choice of $\beta$, show that use of the Algorithm \ref{alg:EKI_post}, when applied to the nonlinear and non-Gaussian inverse problem, produces an excellent posterior
approximation; this demonstrates that the linear theory is indicative of 
behaviour of the algorithm beyond
the linear Gaussian setting.
$\blacksquare$

\begin{figure}[h!]
    \centering
\includegraphics[width=0.8\linewidth]{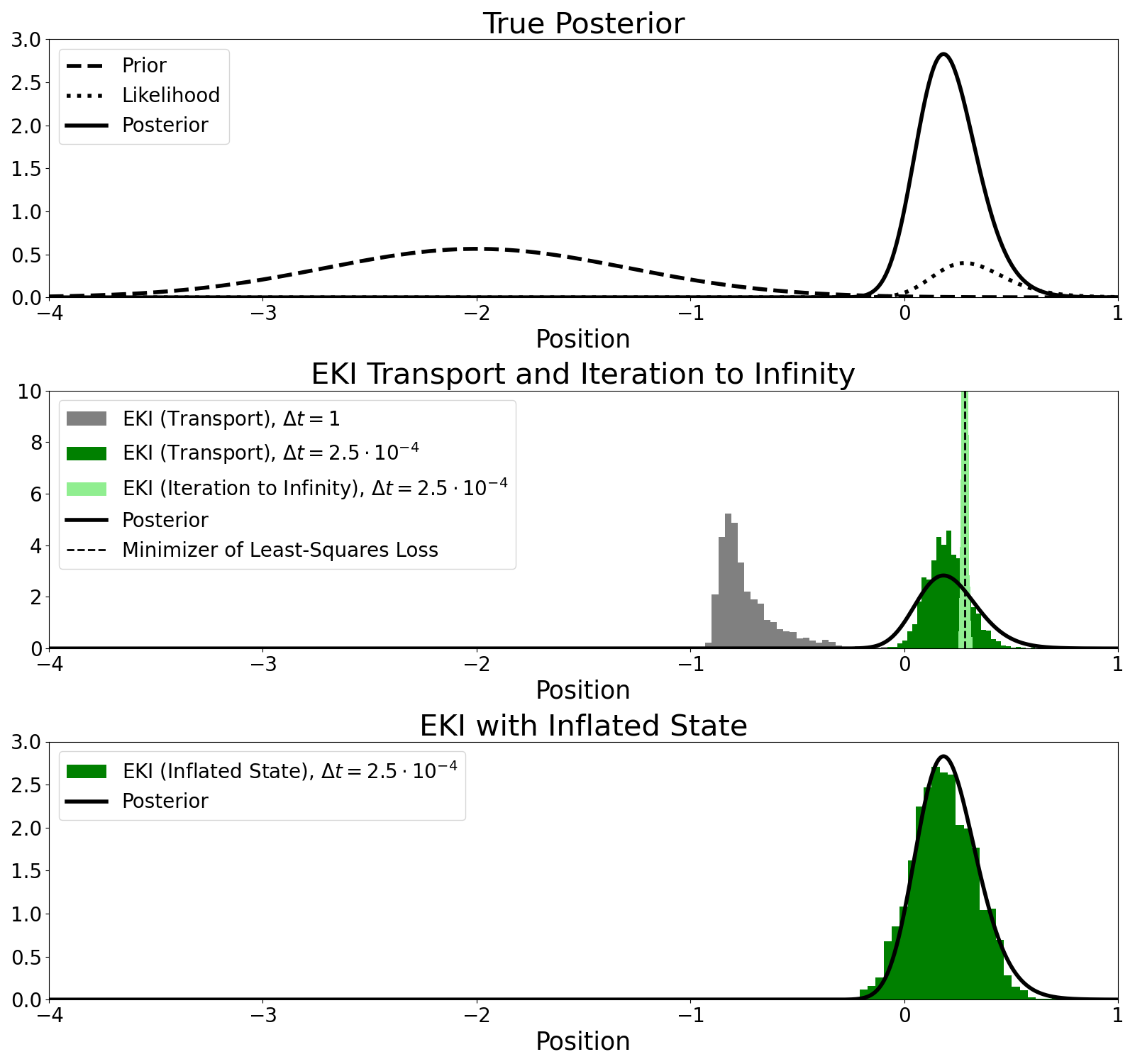}
    \caption{The plots display the results obtained for the inverse problem described by the one-dimensional nonlinear forward map \eqref{eq:1D_forward}. The first panel displays the prior, likelihood and posterior computed via quadrature. In the second panel, we compare the true posterior pdf with the approximation obtained using the finite ensemble Kalman inversion iteration \eqref{eq:ipKT}, as detailed in Algorithm \ref{alg:EKTI}, with $\dt=2.5\cdot 10^{-4}$ and $\dt=1$, 
iterated to time $n=N$ where $N\dt=1.$ The second
    panel also includes results found from applying the optimization Algorithm \ref{alg:EKOI}, iterating over $N_{\infty}=10^6$ steps with $\dt=2.5\cdot 10^{-4}$; in this case, the ensemble approaches the Dirac measure supported on the minimizer of the unregularized least-squares loss $\Phi,$ given by the peak of the likelihood.
    Similarly in the third panel we display the posterior approximation using the ensemble Kalman inversion iteration with covariance inflation as described by the mean field model \eqref{eq:notsss1}, as detailed in Algorithm \ref{alg:EKI_post}, with $\beta=\frac{1}{1-\dt}$ for 
    parameter $\dt=2.5\cdot 10^{-4}$; the algorithm is iterated over $N_{\infty}=10^5$ steps. It is clear that this scheme yields the highest quality posterior approximation.} 
    \label{fig:EKI_1D}
\end{figure}
\end{example}

\begin{example}
\label{ex:EKI}

As in the examples from Section \ref{sec:SE}, we again consider the Lorenz '96 (singlescale) model  for $v \in C(\R^+,\R^L)$ satisfying the equations 
\begin{subequations}
  \label{eq:l96_eki}
\begin{align}
\dot{v}_\ell &=  -v_{\ell-1}( v_{\ell-2} - v_{\ell+1}) - v_{\ell} + u + h_v m\bigl(v_\ell\bigr), \quad
\ell=1 \dots L\,,\\
v_{\ell + L} &= v_\ell, \quad \ell=1 \dots L\,. 
\end{align}
\end{subequations}
As before we set $L = 9, h_v=-0.8$ and $u=10$.
We recall that function $m$ is shown in Figure \ref{fig:multiscale_m}.
In Section \ref{sec:SE} we focused on recovering the state $v$ from partial and
noisy observations. Here we concentrate on recovering the 
parameter $u$.\footnote{We have used the notation $u$ instead of $F$, 
for the forcing parameter, to align with
the notation for the unknown parameter used throughout the section
concerning inverse problems.} 

Our objective is to recover parameter
$u$ from time-averaged data. We assume that the system is ergodic so that infinite time-averages
produce averages over the invariant measure. Furthermore we assume that convergence in time,
of averages, is governed by a central limit theorem. We let 
$G_T:\R \rightarrow \R^2$ denote the mean and variance, 
defined via averaging over time $T$ and
over the $L$ components of $v$, of the state of system \eqref{eq:l96_eki}. 
In principle $G_T$ depends also on intialization, but this
effect is negligible for $T$ large, and zero for $T=\infty$, by ergodicity.
In particular, with system state $\vd$  evolving according to
\begin{equation}
\label{eq:493}
    \vd_{n+1} = \Psi(\vd_{n}),
\end{equation}
where $\Psi$ is the solution operator for \eqref{eq:l96_eki} over the observation time interval $\tau$, with true parameter $u=u^\dagger$, 
the action of forward operator $G_T$ on $u$ is defined as follows:
\begin{equation*}
    G_T(u) = \begin{pmatrix} w_1 \\ w_2\end{pmatrix},
\end{equation*}
with, for $M\tau=T$,
\begin{align*}
    w_1 = \frac{1}{L}\sum_{l=1}^L \Bar{v}^\dag_l, \quad w_2 = \frac{1}{L \cdot {M}}\sum_{n=1}^{M}\sum_{l=1}^L (v^\dag_{n;l} - \Bar{v}^\dag_l)^2,\quad
    \Bar{v}^\dag_l = \frac{1}{M}\sum_{n=1}^{M}v^\dag_{n;l},
\end{align*}
where we have used  $v^\dag_{n;l}$ to denote the $l^{th}$ variable in
vector $\vd_n$.

We consider finding $u$ from an observation $w\in \R^2$ arising from the model
\begin{equation}
\label{eq:492}
    w = G_\infty(u) + \gamma.
\end{equation}
In practice the specific realization $\wwd$, from which we invert to find $u$,
is found by integrating $G_T$ to a finite time $T=100$, not $T=\infty$.
Variable $\gamma \sim \text{N}(0,\Gammas)$ accounts for the 
resulting central limit theorem correction.
To solve the parameter estimation problem for $u$ we use ensemble Kalman methods in Algorithms \ref{alg:EKTI}, \ref{alg:EKOI} and \ref{alg:EKI_post}. We do not have access to $G_\infty$ and so, instead,
the algorithms are implemented by using $G_T$ with $T=10$, initialized after a burn-in time of duration $t^*=10$.
The burn-in phase itself results from an initial condition chosen at random from a Gaussian distribution with mean $0$ and standard deviation $40$. In the experiments shown we take $\Gammas=\sigma^2 I$, with $\sigma = 10^{-1}$. All the ensemble Kalman inversion schemes are initialized 
from a prior Gaussian of mean $0$ and standard deviation $10$, and use an 
ensemble of size $J=30.$ 
 
In Figure \ref{fig:EKI_L96_opt} we display the ensemble approximations obtained via application of the EKI methodology for optimization, namely Algorithm \ref{alg:EKOI}. In practice, the scheme is run for a finite number $N_{\infty}$ of iterations. Indeed, in the first panel of Figure \ref{fig:EKI_L96_opt} the scheme is applied with $\dt =5\cdot 10^{-2}$ and run up to $N_{\infty}=40$ iterations. On the other hand, in the second panel the scheme is run for $20$ iterations with $\dt=1$. In both settings ensemble collapse occurs as the number of iterations grow. The ensemble mean, displayed as the central line in each box plot, converges to a point yielding a qualitatively good estimate of the true forcing parameter, with an error of $\mathcal{O}(10^{-1})$.

\begin{figure}[h!]
    \centering
\includegraphics[width=0.8\linewidth]{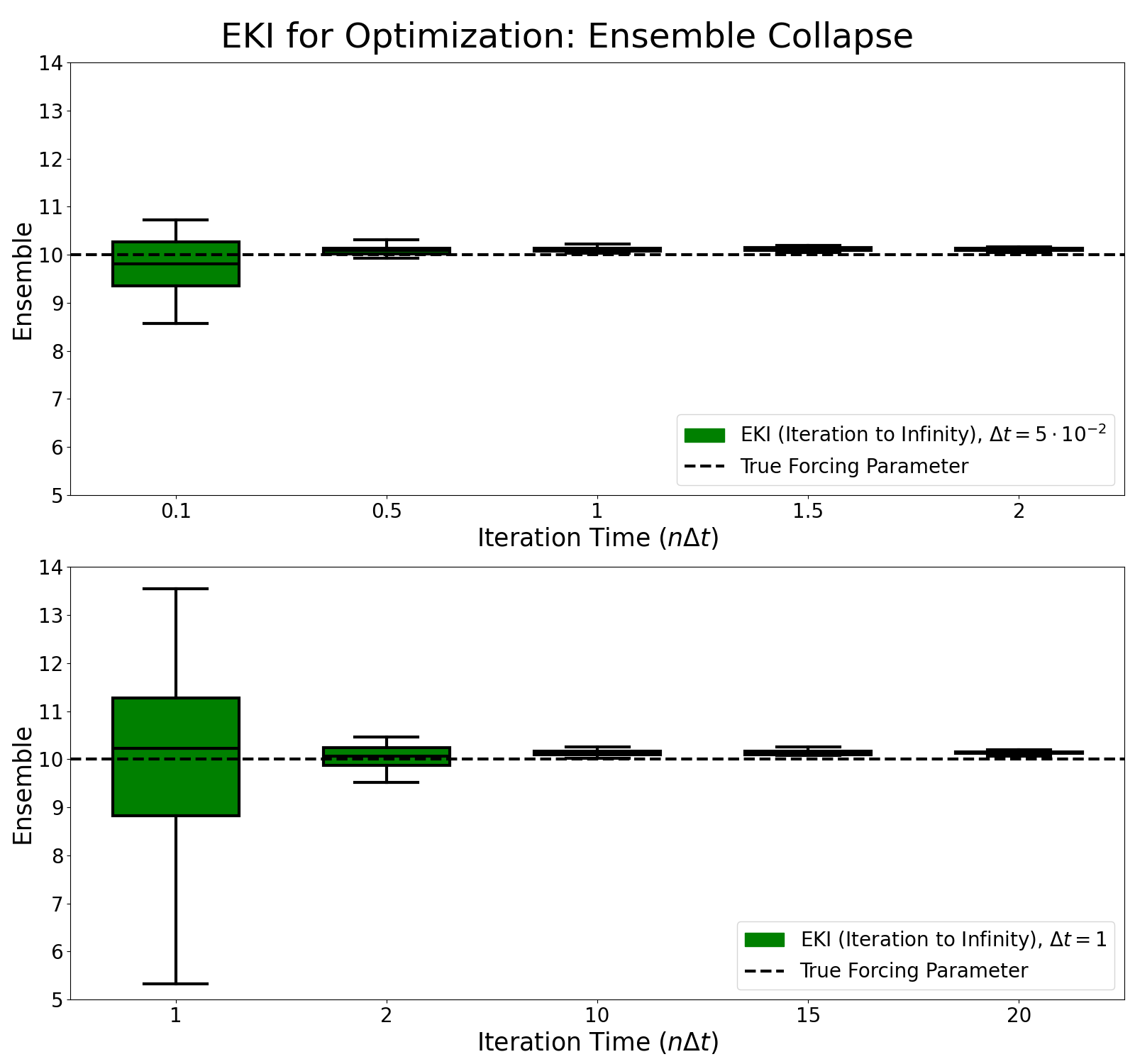}
    \caption{The figure displays box and whisker plots for the ensembles produced by Algorithm \ref{alg:EKOI} with $\dt = 5\cdot 10^{-2}$ (top panel) and $\dt = 1$ (bottom panel). The box and whisker plots represent the ensembles by depicting the ensemble mean, as a line within the shaded region. Furthermore, the edges of the boxes represent the first and third quartiles, i.e. the values of ensemble members corresponding to the median of the first half of the samples, and the median of the second half of the samples, respectively. Finally, the whiskers mark the furthest samples lying within a distance from the box of $1.5$ times the distance between the first and third quartiles (the interquartile range). In both cases we note ensemble collapse onto a value close to the truth underlying the data.}
    \label{fig:EKI_L96_opt}
\end{figure}

In Figure \ref{fig:EKI_L96_bayes} we show application of Algorithm \ref{alg:EKTI} with $\dt$ set to $5\cdot10^{-2}$, running for $N=20$ steps ($N\dt=1$) and of Algorithm \ref{alg:EKI_post} with
 $1/\beta=1-\dt$ and $\dt = 5\cdot10^{-2}$ for $N_{\infty}=40$ steps. 
 For linear inverse problems, the output of both algorithms at these specific steps
delivers the posterior distribution exactly in the mean field limit, by Example \ref{ex:linear1}
and Corollary \ref{cor:lin_c2}. As noted in the previous paragraph, such a posterior approximation should be interpreted with caution for this nonlinear inverse problem. However we show in Figure \ref{fig:EKI_L96_bayes} that the ensemble means accurately predict the true forcing up to an error of $\mathcal{O}(10^{-1})$
and that, furthermore, the two ensembles are similar. However, interpreting the posterior
distributions in this case is harder as we do not have access to the true posterior. We note that in the preceding Example \ref{ex:EKI_1D} we were able to demonstrate that Algorithm \ref{alg:EKI_post} delivered a better posterior approximation than Algorithm \ref{alg:EKTI} and it would be interesting
to determine whether such a conclusion holds more generally.
$\blacksquare$
\end{example}

\begin{figure}[h!]
    \centering
\includegraphics[width=0.8\linewidth]{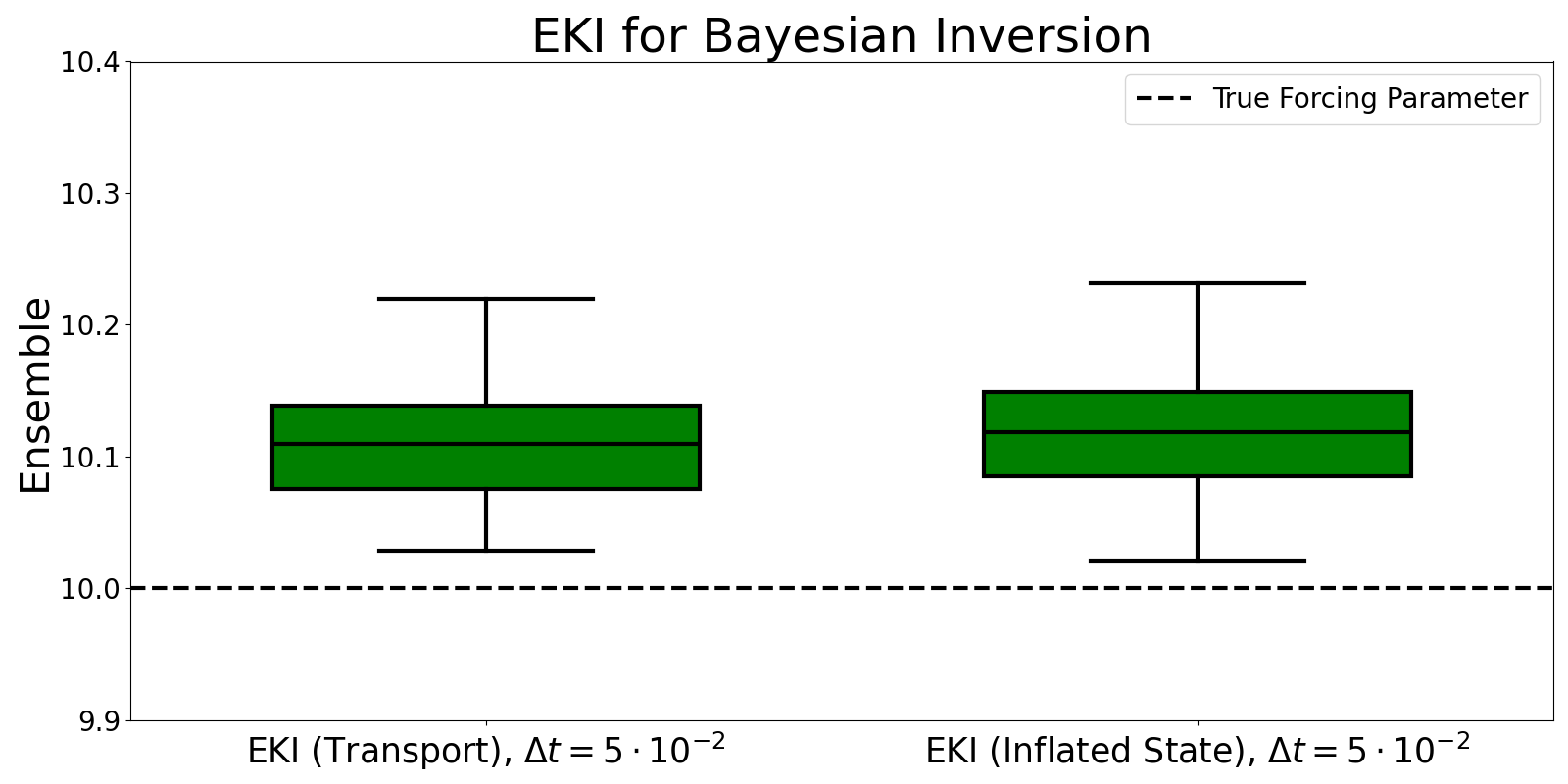}
    \caption{The figure displays box and whisker plots for the Bayesian posterior ensemble approximation produced by Algorithm \ref{alg:EKTI} with $\dt = 5\cdot 10^{-2}$ and Algorithm \ref{alg:EKI_post} with $\dt = 5\cdot 10^{-2}$. The box and whisker plots represent the ensembles by depicting the ensemble mean, as a line within the shaded region. Furthermore, the edges of the boxes represent the first and third quartiles, i.e. the values of ensemble members corresponding to the median of the first half of the samples, and the median of the second half of the samples, respectively. Finally, the whiskers mark the furthest samples lying within a distance from the box of $1.5$ times the distance between the first and third quartiles (the interquartile range). The posterior mean in both cases in close to the true value of the parameter underlying the data; the size of the posterior spread is also similar. Note, however, that in this problem we do not have a true posterior distribution against which to compare.} 
    \label{fig:EKI_L96_bayes}
\end{figure}

%
\subsection{Bibliographical Notes}
\label{ssec:IPBIB}
%

Sequential Monte Carlo methods may be used to approximately morph one probability distribution (source) into another (target), using empirical approximation and a discrete-time homotopy \citep{del2006sequential, chopin:20}. In general the methodology does not scale well to high dimensional problems \citep{beskos2014stability}. However some success has been achieved in this direction \citep{kantas2014sequential},
and a basic underlying theory is described in \citet{beskos2015sequential}. Our presentation in this paper is confined to the setting of ensemble Kalman methods
because of their empirical success and scalability to high dimensions.

The development of ensemble Kalman methods for inverse
problems was pioneered in the study of reservoir simulation,
in the context of learning subsurface properties from localized
flow measurements \citep{chen2012ensemble, gu2007iterative,li2007iterative, emerick2013ensemble, emerick2013ensembleb,evensen2018analysis}.
Subsequent work has studied parameter estimation in chaotic dynamical systems, such as those arising in weather forecasting
using ensemble methods for joint state and parameter estimation 
\citep{pulido2018,bocquet2020,gottwald2021supervised},  and by matching to
time-averaged statistics \citep{schneider2017earth, CES, dunbar2020calibration}, motivated by climate modeling.
The chaotic dynamics that underlie weather and climate models lead to complicated energy landscapes for minimization and sampling \citep{lea2000sensitivity}. The papers \citet{huang2022iterated,duncan2021} demonstrate the benefits of using ensemble methods, rather than computing exact
derivatives, for such problems: the ensemble approach effectively works in
a smoothened energy landscape.

The idea of transporting prior to posterior, as developed in 
Subsection \ref{sssec:IPFTF}, is widely used in the statistics literature; see \citet{chopin:20,del2006sequential} for
a unified perspective and for citations to earlier works which characterize the deformation of one measure to the other either through incrementally building up the available data, or through a temperature like annealing
parameter in the likelihood. In the context of data assimilation, and ensemble Kalman methods in particular, these ideas were developed by \citet{li2007iterative}, \citet{gu2007iterative}, \citet{daumetal2010}, \citet{reich2011dynamical}, and \citet{sakov2012}. 

Filtering using dynamical systems with equilibrium filtering distribution which solves the inverse problem, the approach adopted in Section \ref{ssec:IPIFT}, 
is an idea developed in \cite{iglesias2013ensemble}. That paper also highlights
an invariant subspace property of finite ensemble Kalman methods for
inverse problems: the iteration remains in the linear span of the initial
ensemble. More recent 
methodology is developed in the context of
 optimization in \citet{huang2022iterated}, although the methodology therein is
not affine invariant; it is also developed
 for sampling in \citet{huang2022efficient}s, resulting in an affine
invariant methodology. See those papers for details concerning uniqueness of, and exponential convergence to, steady state solutions in the linear Gaussian setting. Here we have adopted the approach of \citet{huang2022efficient} to both optimization and sampling, which yields exponential convergence under both settings, as discussed in Proposition \ref{lem:lin_c2}.
A geometric picture of iterative ensemble Kalman methods for inverse problems is developed in \cite{qian24}; the authors consider the fundamental observed
and unobserved subspaces defined by linear inverse problems and their 
interaction with the invariant subspace defined by ensemble Kalman iteration. 

The stochastic perturbations utilized in 
\eqref{eq:rescale_beta_dt} are closely related to multiplicative ensemble inflation methods as widely used in ensemble Kalman filter implementations \citep{asch2016data,Evensenetal2022}. The effects of additive inflation and variable step-size implementations of ensemble Kalman inversion have been studied in \citet{CT22} and in \citet{WCST22}.

The optimization and sampling approaches for inverse problems can in principle be combined with ideas from stochastic annealing \citep{kushner2003stochastic}, and stochastic gradient descent \citep{goodfellow2016deep}.
The papers \citet{haber2018never,kovachki2019ensemble,SR-PR21} demonstrate the use of ensemble Kalman methods for inversion,
when combined with stochastic gradient descent, and mini-batching in particular, as well as the application of ensemble Kalman methods beyond the setting of the $L_2$-loss functions $\Phi$ and $\Phi_R$ considered here.

However, despite the growing use of ensemble Kalman methods to solve inverse problems, it is important to appreciate that all ensemble Kalman-based methods invoke approximations which amount to matching only first and second order statistics, at some point in the algorithmic development. For this reason the methods are intuitively only useful as samplers for problems with posterior distribution close to a Gaussian. This idea is carefully explained in \citet{ernst2015analysis} where the mean field limit of ensemble Kalman methods for inverse problems is compared with the desired posterior distribution;
as is the case for state estimation, analysis is required to justify use of ensemble Kalman methods beyond the linear and Gaussian regime. We also highlight that our analysis in this paper, which focuses on the mean field limit, does not capture important aspects of the performance of ensemble Kalman methods at finite ensemble size, and the important practical
issue of covariance localization; these issues are studied in
\citet{ghattas2022non}. Furthermore, the paper \citet{tong2022localized} studies the relationship between localization, in the solution of inverse problems
using ensemble Kalman methods, and the subspace property explained in \citet{iglesias2013ensemble}[Theorem 2.1]. As an alternative to localization, the concept of dropout combined with variable step-size has been shown to lead to optimal algorithmic performance in \citet{SRT23}. 

The idea of using ensemble methods for performing the optimization step within variational data assimilation was introduced in \citet{Zupanski2005}. The connection between iterative applications of the ensemble Kalman filter and optimization were first investigated in \citet{iglesias2013ensemble} and developed to include constraints in \citet{albers2019ensemble}, \citet{chada2019incorporation} and Tikhonov regularization \citet{chada2020tikhonov}. Recall from Subsection \ref{sssec:IPO} the algebraic rates of convergence arising
in the basic optimization method arising from iterating to infinity. This undesirable feature of optimization methods based on statistical linearization of mean field gradient descent can be ameliorated to some extent by the use of adaptive time-steps, connections to the Levenberg-Marquadt algorithm, and the use of  stopping criteria; see 
\citet{iglesias2015iterative},
\citet{iglesias2016regularizing},
and \citet{iglesias2021adaptive}. 
Recent work of \citet{parzer2022convergence} has developed a systematic theory for early stopping using ensemble Kalman inversion including incorporation of Nystr\"om methodology. Other interacting particle system approaches to optimization have been proposed, including  feedback particle \citep{ZTM17}, unscented Kalman approaches \citep{huang2022iterated,huang2022efficient}, and concensus based optimization \citep{tsianos2012consensus,carrillo2018analytical,fornasier2020consensus,ha2021convergence}. 

Finally we note that Kalman methods have been related to approximate Bayesian computation (ABC) methodologies,  utilizing a linear regression ansatz
\citep{sisson2018handbook, nott2012ensemble}. 
Such methods are in turn closely related to 
Bayes linear and best linear unbiased estimators (BLUE) as discussed in \citet{goldstein2006bayes},
\citet{goldstein2007bayes},
\citet{SR-bickel11},
\citet{nott2012ensemble},
\citet{SR-snyder2014},
\citet{goldstein2014b},
\citet{reich2015probabilistic}, and
\citet{latz2016bayes}. BLUE is discussed in more detail in
Appendix \ref{ssec:TM_MVA}.

%
%
%
%
%
%
%
%
%
%
%
%
%
%
%
%
%
%
%
%
%
%

%
\section{Inverse Problems: Continuous Time}
\label{sec:CTI}
%

In this section we derive continuous time limits of the ideas developed in the preceding
Section \ref{sec:IPDT} for the solution of inverse problems. As a consequence the
ideas may also be viewed as adaptations of Section \ref{sec:CT} to the solution of inverse problems.
We start in Section \ref{ssec:IPCT} by recalling the inverse problem,
followed in Subsections \ref{ssec:IPOPCT} and  \ref{ssec:IPBPCT} by discussion of the
optimization and Bayesian approaches respectively, focusing on gradient flows; this flow
perspective provides a conceptual basis for thinking about the algorithms for inverse problems
that we will subsequently develop. Subsection \ref{ssec:IPFTC}
is devoted to Bayesian probabilistic filtering methods which solve the inverse problem by morphing the
prior into the posterior in a finite time. Subsection \ref{ssec:IPIFTC} discusses filtering methods
which work on infinite time horizons, exhibiting exponential convergence to approximate
solutions of the optimization or Bayesian formulations of the problem, from arbitrary starting points.
Analogously to Section \ref{sec:IPDT} we demonstrate application of Gaussian projected filtering and ensemble Kalman methods to solve the filtering problems defined in Subsections \ref{ssec:IPFTC} and \ref{ssec:IPIFTC}; the comments from Remark \ref{rem:nopart} apply here too.
We conclude in Section \ref{ssec:IPBIBC} with bibliographic notes.

\subsection{Set-Up}
\label{ssec:IPCT}

Recall the inverse problem \eqref{eq:ip} of recovering $u$ from $w$ where
\begin{empheq}[box=\widefbox]{equation*}
w=G(u)+\gamma,
\end{empheq}
introduced in full detail in Subsection \ref{ssec:IPDT}.
Under the assumptions laid out in Subsection \ref{ssec:IPBP}, and in particular
Gaussianity and independence of unknown parameter $u$ and noise $\eta$, and given a specific
realization $\wwd$ of data $w$, we have a posterior
distribution on the random variable $u|\wwd$ which is defined by\footnote{The normalization constant $\normZ$ is the probability of the observed data under the model, sometimes called the \emph{evidence}.}
\begin{subequations}
\label{eq:bayesa}
\begin{empheq}[box=\widefbox]{align}
\mu(\dd u) &= \frac{1}{\normZ} \exp\bigl(-\Phi_R(u)\bigr)\dd u,\\
\normZ &= \int_{\R^{d_u}} \exp\bigl(-\Phi_R(u)\bigr)\dd u.
\end{empheq}
\end{subequations}
Here
\begin{subequations}
\label{eq:phisc}
\begin{empheq}[box=\widefbox]{align}
\Phi(u) &= \frac12|\wwd-G(u)|_{\Gammas}^2,\\
\Phi_R(u) &= \Phi(u)+\frac12|u-m_0|_{C_0}^2,
\end{empheq}
\end{subequations}
for prior mean vector vector $m_0$, prior covariance matrix $C_0 \succ 0$ and noise
covariance matrix $\Gammas \succ 0$. 

Rather than solving the Bayesian inverse problem, which can be prohibitively expensive,
optimization methods may be developed to find a point estimate of $u|\wwd$ as minimizer 
of $\Phi$ over a compact set, or as minimizer of $\Phi_R$ over the whole space $\R^{d_u}.$
The next two sections show, respectively, how we may develop continuous time gradient
flows which minimize $\Phi_R$, or $\Phi$, and gradient flows which find the posterior distribution $\mu$.

\subsection{Optimization Formulation: Gradient Flows}
\label{ssec:IPOPCT}

The goal of this subsection is to study gradient flows to minimize
an objective function. For us particular focus is on the choice
of $\Phi$ or $\Phi_R$ as objective, but since some of the considerations are quite general we frame aspects of the discussion
in a general setting.

\paragraph{Deterministic Viewpoint}

Consider the standard gradient descent, applied to an energy function $\Psi: \R^{d_u} \to \R^+$,
namely
\begin{equation} \label{eq:GD1}
    \frac{\dd u}{\dd t} = -\nabla \Psi(u).
\end{equation}
Note that this may be found as the continuous time limit of the
discrete time gradient descent algorithm \eqref{eq:dgrad}, 
choosing $\alpha=\dt$, letting $u(n\dt)=u_n$ and taking the limit $\dt \to 0.$

Along solutions of (\ref{eq:GD1}),
\begin{subequations}
\label{eq:gradientstructure}
\begin{align}
    \frac{\dd}{\dd t}\Psi(u) 
    &= \Big\langle \nabla \Psi(u), \frac{\dd u}{\dd t} \Big\rangle\\
    &= -\Bigl|\frac{\dd u}{\dd t}\Bigr|^2.
   \end{align}
\end{subequations}

Equation (\ref{eq:GD1}) is said to possess a gradient flow structure in parameter space $\mathbb{R}^{d_u}$
because the vector field driving the evolution of $u$ is tangential
to the gradient of the \emph{energy} $\Psi(u)$ in the standard Euclidean 
\emph{metric}; this is the geometric reason for the non-increasing 
property of $\Psi(u)$ along trajectories. 

\paragraph{Geometric Perspective on the Space of Probability Densities}

We now define gradient flow structure from a probabilistic viewpoint, studying evolution of probability densities.
Again we need both an energy and a metric. To this end we introduce some 
notation that will be useful in the probabilistic
formulation of \eqref{eq:GD1}. It will also be used more generally in subsequent discussion of other gradient flows on the space of probability density functions. 

We denote the manifold of all smooth probability
density functions on $\R^{d_u}$ by $\mP_+=\mP_+(\R^{d_u})$. 
We may then define the tangent space $\Tp$ to $\mP_+$, at $\rho\in\mP_+$, by
\begin{align}
\label{eq:citets}
    \Tp  = \left\{ \sigma\in C^{\infty}(\R^{d_u})\,:\, \int_{\R^{d_u}} \sigma(u)\, \dd u=0 \right\}\,.
\end{align}
Given the tangent space 
$\Tp$ we define its dual\footnote{This informal definition of tangent space, and its dual space, requires careful handling for probability measures on non-compact manifolds, such as $\R^{d_u}$; see citations to the literature in the bibliography Subsection \ref{ssec:IPBIBC}. The dual is also known as the cotangent space.
} 
\begin{align}
    \label{eq:citets_dual}
\Tp^\star = \left\{ \psi\in C^{\infty}(\R^{d_u})\,:\, \int_{\R^{d_u}} \psi(u)\, \rho(u)\,\dd u=0 \right\}\,.
\end{align}

In this article, for simplicity of exposition, we will define 
underlying metric structure via the positive operator $\sM(\rho): \Tp \to \Tp^\star$. A precise mathematical treatment requires further assumptions on the considered set $\mP_+$. See the bibliography for relevant literature on this topic.
Operator  $\sM(\rho)$ may be linked to an underlying Riemannian metric tensor $g_{\rho}\,:\, \Tp\times \Tp \to\R$; however, since
this metric tensor plays no role in our presentation, we will
work directly with $\sM(\rho)$, and with its inverse
$\sM(\rho)^{-1}: \Tp^\star \to \Tp$.
Note that $\sM(\rho)^{-1}$ maps into the tangent space $\Tp$, implying that it maps into functions that integrate to zero over $\R^{d_u}$; see \eqref{eq:citets}.

\paragraph{Probabilistic Viewpoint}

We now demonstrate gradient structure  inherent in the probabilistic viewpoint on the ODE \eqref{eq:GD1}, arising from allowing the initial condition $u(0)$ to be random.
We will show that the Liouville equation governing the evolution
of the probability density function associated with the random
variable $u(t)$ also has a gradient structure. In so doing
we must exhibit an appropriate energy and metric.

We assume 
that $u(t)$ has smooth probability density $\rho(u,t)$ for
all $t\ge 0$. 
Then $\rho$ satisfies the Liouville equation
\begin{equation} \label{eq:Liouville}
    \partial_t \rho = \nabla \cdot (\rho \nabla \Psi).
\end{equation}
The energy and metric defining the gradient structure for equation \eqref{eq:Liouville} are
\begin{subequations}
\label{eq:abstractform}
\begin{align}
    \mE(\rho) &\coloneqq \int_{\R^{d_u}} \Psi (u) \rho(u) \dd u,\\
    \sM(\rho)^{-1} \psi &\coloneqq  -\nabla \cdot (\rho \nabla \psi) \in \Tp.
\end{align}
\end{subequations}
Operator $\sM(\rho)$ corresponds to an underlying 
\emph{Wasserstein-2 metric structure.}

The standard $L_2$ variational derivative\footnote{The $L_2$ variational derivative  is identified by writing  $\mE(\rho+\sigma)-
\mE(\rho)$ as a linear operator acting on $\sigma$ (plus higher order terms in $\sigma$ for energies $\mE(\rho)$ which are
not linear in $\rho$).} of $\mE$ is given by
\begin{equation*}
\frac{\delta \mE}{\delta \rho}=\Psi.
\end{equation*}
The restriction of this variational derivative to the dual space $\Tp^\star$ is provided by
\begin{align} \label{eq:VD dual}
\frac{\delta \mE}{\delta \rho}_{\vert \Tp^\ast}=\Psi - \mathbb{E}[\Psi].
\end{align}
In the context of the Wasserstein-2 metric structure as presented
here it is not necessary to distinguish between these two formulations of the variational derivative; however the second formulation allows
for unique solvability of the elliptic equation required to define $\sM(\rho).$ For the Fisher--Rao metric structure considered later in Section \ref{ssec:IPFTC} the second definition will play a more direct role. Hence we can rewrite (\ref{eq:Liouville}) as
\begin{equation} \label{eq:gradient flow1}
    \partial_t \rho = \nabla \cdot \Bigl(\rho  \nabla \frac{\delta \mE}{\delta \rho}\Bigr).
\end{equation}
This may be written abstractly as
\begin{align}
\label{eq:GGS}
   \partial_t \rho  &= -{\sM(\rho)^{-1}}\frac{\delta \mathcal{E}}{\delta {\rho}}(\rho).
\end{align}
From this it follows that
\begin{equation}
\label{eq:GGS2}
\frac{d}{dt}\mathcal{E}(\rho)=
\Bigl\langle \frac{\delta \mathcal{E}}{\delta {\rho}}(\rho), \frac{\partial \rho}{\partial t}\Bigr
\rangle= 
-\Bigl\langle  \sM(\rho) \frac{\partial \rho}{\partial t}, \frac{\partial \rho}{\partial t} \Bigr
\rangle \le 0.
\end{equation}
Hence the energy is decreasing along trajectories and the
gradient structure is apparent.

\begin{remark}
\label{rem:comp}
It is interesting to compare the 
gradient structure \eqref{eq:GGS2}
on $\mP_+$ to the gradient flow structure on $\R^{d_u}$ defined by 
(\ref{eq:gradientstructure}).  The state space
gradient flow on $\R^{d_u}$ ensures decrease of $\Psi\bigl(u(t)\bigr)$
along trajectories whilst the probability space gradient flow on 
$\mP_+$ ensures decrease of the expected value of $\Psi\bigl(u(t)\bigr)$
across a distribution of trajectories found from random initialization of the state space problem. 
$\blacksquare$
\end{remark}

%
\subsection{Bayesian Formulation: Gradient Flows}
\label{ssec:IPBPCT}
%

In this section we  study  the Langevin SDE
\begin{empheq}[box=\widefbox]{equation}
\label{eq:langevin}
\dd u=-\nabla \Phi_R(u)\dd t+\sqrt{2} \dd W.
\end{empheq}
This is a noisy version of \eqref{eq:GD1} in the case where $\Psi=\Phi_R.$ It may be also found as
a sample path instantiation of the continuous time limit of the MCMC algorithm \eqref{eq:mcmc}, typically arising when $\alpha$
is the standard deviation of the proposal, choosing $\alpha=\dt$, letting $\rho(\cdot,n\dt)=\rho_n(\cdot)$ and sending $\dt \to 0;$
see bibliography Subsection \ref{ssec:IPBIBC} for details.

The probability density function for the SDE \eqref{eq:langevin}
is governed by the Fokker-Planck equation
\begin{subequations} 
\label{eq:FP}
\begin{align}
    \partial_t \rho &= 
    \nabla \cdot (\rho \nabla \Phi_R) + \nabla \cdot (\nabla \rho)\\
    &= \nabla \cdot (\rho \nabla \Phi_R +  \rho \nabla \ln \rho),
\end{align}
\end{subequations}
This equation has the density of the Bayesian posterior distribution 
\eqref{eq:bayesa} as steady state. This can be seen by noting that the right-hand side
is divergence of a quantity which is zero if
\begin{equation*}
    \nabla (\Phi_R +  \ln\rho)=0.
\end{equation*}
This quantity can in turn be made zero by choosing
\begin{equation*}
\rho \propto \exp(-\Phi_R),
\end{equation*}
so that $\rho$ is given by the posterior distribution  \eqref{eq:bayesa}.
As a consequence of the fact that the posterior is a steady
state of the Fokker-Planck equation \eqref{eq:FP}, the Langevin SDE
\eqref{eq:langevin} plays an
important role in understanding algorithms for Bayesian inversion.

The machinery we established in the previous subsection, concerning
gradient flows in the space of probability measures, is very powerful and demonstrates that any evolution equation of type (\ref{eq:gradient flow1})
with appropriate potential $\mE$ induces a gradient flow on $\mP_+$
with respect to the Wasserstein-2 metric.
The Fokker-Planck equation \eqref{eq:FP} associated with 
the Langevin equation \eqref{eq:langevin}
may be cast in this framework by making the choice
\begin{equation} \label{eq:energy0}
    \mE(\rho) = \int \left(\Phi_R+\ln\rho\right)\rho\,\dd u
\end{equation}
with Fréchet derivative given by
\begin{equation*}
    \frac{\delta \mE}{\delta \rho}=\Phi_R+\ln \rho .
\end{equation*}

\begin{remark}
\label{rem:KLE}
We observe that, for $\mathrm{KL}[\cdot \Vert \cdot]$ denoting the 
\emph{Kullback-Leibler divergence},
\begin{align}
\label{eq:energy}
{\rm {KL}}[\rho \Vert  \ppi]  =\int\rho \log\Bigl(\frac{\rho}{\ppi}\Bigr)\,\dd u=\mathcal{E}(\rho)+\log \normZ,
\end{align}
where $\ppi$ is the posterior density associated with posterior measure $\mu$ given by \eqref{eq:mud}\footnote{This notation is used throughout Section \ref{sec:CTI} and is not to be confused with the notation used for the joint law of state and data in previous sections.}
and the normalization constant $\normZ$ is defined in (\ref{eq:bayesa}b). 
It is thus possible to choose the energy to be $\mathrm{KL}[\rho \Vert  \ppi]$,
since shifts by a constant in the energy do not change the evolution equations \eqref{eq:gradient flow1} and \eqref{eq:GGS}.

By the property of a divergence, the global minimizer of $\mathcal{E}(\rho)$ 
is attained at $\rho=\ppi$ and hence solves the Bayesian inverse problem. 
It is thus of considerable value  to have
identified a gradient flow to minimize $\mathcal{E}(\rho)$ since such
minimizers solve the Bayesian inverse problem.
Theory concerning the equation \eqref{eq:FP} as a gradient
flow is contained in the bibliography Subsection \ref{ssec:IPBIBC}.
$\blacksquare$
\end{remark}

In summary, the Fokker-Planck equation may be written in the abstract gradient form \eqref{eq:GGS}. We choose
\begin{subequations}
\label{eq:abstractform2}
\begin{align}
\mathcal{E}(\rho) &\coloneqq 
    \mE(\rho) = \int \left(\Phi_R+\ln\rho\right)\rho\,\dd u,\\
\sM(\rho)^{-1} \psi &\coloneqq  -\nabla \cdot (\rho \nabla \psi) \in \Tp.
\end{align}
\end{subequations}
This should be compared with \eqref{eq:abstractform} with the choice $\Psi=\Phi_R$: the
metric structure defined by $\sM$ is the same, but the energy $\mathcal{E}(\rho)$ has an additional
term accounting for the Brownian motion appearing in \eqref{eq:langevin}.

%
\subsection{Finite-Time Algorithms}
\label{ssec:IPFTC}
%

The idea used in this subsection, to address the solution of
inverse problems, is a continuous time analog of
Subsection \ref{ssec:IPFT}.
From a sequential formulation of Bayesian inference we derive 
a filtering problem whose solution, at a particular time, 
gives the desired posterior.
Subsection \ref{sssec:IPFTFC} is devoted to the formulation,
Subsection \ref{sssec:smcC} to the use of Gaussian projected filters
and Subsection \ref{sssec:smcC2} to the use of ensemble Kalman methods.

%
\subsubsection{Formulation}
\label{sssec:IPFTFC}
%

Employing the reparametrization (\ref{eq:reparam}a) in \eqref{eq:ipsd} and taking
the continuum limit yields the SDE
\begin{subequations}
\label{eq:ipsdc}
\begin{empheq}[box=\widefbox]{align}
\dd u &= 0, \\
\dd z &= G(u)\dd t +\sqrt{\Gammas}\dd B,
\end{empheq}
\end{subequations}
with $B$ a unit Brownian motion in $\R^{d_z}.$
Given a specific realization of the observation process $z^\dagger(\cdot)$ we define $Z^\dagger(t)=\{\zd(s)\}_{0 \le s \le t},$ and consider the
filtering distribution for the random variable $u(t)|Z^\dagger(t).$ However
there is a twist on the standard filtering setting: we are interested in
the case where the data has constant derivative: $\dd\zd(t)/\dd t=\wwd.$ Since
the path $\zd$ has zero quadratic variation the probability distribution is found by setting $\zd(t)=t\wwd$ within the Stratonovich formulation of the nonlocal evolution equation for the density. Referring to
\eqref{eq:KSD_S2} we see that this yields the following
evolution for density $\rho(u,t)$ of $u(t)|Z^\dagger(t):$
\begin{align} \label{eq:KSD_S3}
    \partial_t \rho = \left\langle G- \E G, \wwd \right\rangle_\Gammas \rho  - \frac{1}{2} \left\{ \left| G \right|^2_\Gammas - \E \left| G \right|^2_\Gammas\right\} \rho.
    \end{align}
Here $\E$ denotes integration with respect to density $\rho(\cdot,t)$ 
so that the equation is nonlocal with respect to variable $u$ and
nonlinear with respect to density $\rho.$ This is the analog of the Kushner-Stratonovich equation for the filtering problem defined by \eqref{eq:ipsdc}, since the unconditioned variable $u$ has trivial dynamics and since we are studying the case where the data $\zd(t)=t\wwd$ has zero quadratic variation 
and is in fact differentiable. We may then show the following:
\begin{theorem}
\label{thm:need}
Consider the dynamical system \eqref{eq:ipsdc}, and
assume that $C_0 \succ 0$, $\Gammas \succ 0$,
$u_0 \sim \Ng(m_0,C_0)$, and that $u_0$ is independent of Brownian motion $B$. Let $\rho(\cdot, t)$ denote the probability density function associated with the random variable $u(t)|Z^\dagger(t)$ evolving according to \eqref{eq:ipsdc},
with data chosen as $z^\dagger(t)=t\wwd,$ for $t \in (0,1)$.
Then the density $\rho(\cdot,t)$ satisfies (\ref{eq:KSD_S3}), 
or equivalently for $\Phi$ given by (\ref{eq:phis}a),
\begin{equation} 
\label{eq:betterform}
\partial_t \rho  = -(\Phi  - \E \Phi)\rho.
\end{equation}
Furthermore this equation has solution given by the formulae
\begin{subequations}
\label{eq:mut}
\begin{align}
\rho(u,t) &= \frac{1}{{\normZ}(t)} \exp\bigl(-t\Phi(u)\bigr)\rho_0(u),\\
{\normZ}(t) &= \int_{\R^{d_u}} \exp\bigl(-t\Phi(u)\bigr)\rho_0(u)\dd u;
\end{align}
\end{subequations}
in particular $\rho(\cdot, 1)$ is equal
to $\ppi$, the density of the posterior distribution $\mu$.
$\Diamond$ \end{theorem}

\begin{proof}
Let $\rho_0$ denote the probability density function of the prior $\Ng(m_0,C_0).$
Recall that $\mu(t)$ has density given by 
equation \eqref{eq:KSD_S3} with initial condition $\rho|_{t=0}=\rho_0.$
Now note that, recalling definition (\ref{eq:phisc}a) of $\Phi$,
\begin{subequations} \label{eq:FRevolution}
\begin{align}
&\left\langle G- \E G, \wwd \right\rangle_\Gammas \rho  - \frac{1}{2} \left\{ \left| G \right|^2_\Gammas - \E \left| G \right|^2_\Gammas\right\}\rho\\
&\quad\quad\quad = \bigl(\langle \wwd, G \rangle_{\Gammas}-\frac12|G|_{\Gammas}^2\bigr)\rho-\E \bigl(\langle \wwd, G \rangle_{\Gammas}-\frac12|G|_{\Gammas}^2\bigr)\rho\\
&\quad\quad\quad = -(\Phi  - \E \Phi)\rho.
\end{align}
\end{subequations}
This establishes the equivalence of \eqref{eq:betterform} with \eqref{eq:KSD_S3} .

Note now that (\ref{eq:mut}b) gives
$$\frac{\dd{\normZ}}{\dd t}=-\E \Phi \,{\normZ},$$
where expectation is under $\rho.$
Hence it follows that differentiating (\ref{eq:mut}a) gives 
\eqref{eq:betterform}.
Since this is equivalent to \eqref{eq:KSD_S3} and since \eqref{eq:KSD_S3} characterizes
the law of $u(t)|Z^\dagger(t)$  when $\frac{\dd z^\dagger}{\dd t}=\wwd$, the result is proven.
\end{proof}

The theorem establishes that the evolution equation 
\eqref{eq:KSD_S3} can be rewritten in the form \eqref{eq:betterform},
from which a gradient structure is apparent. Indeed equation  \eqref{eq:betterform} 
can be written in the abstract form \eqref{eq:GGS} with
\begin{subequations}
\label{eq:abstractform3}
\begin{align}
\mathcal{E}(\rho) &:= \int \Phi\rho \dd u,\\
   \sM(\rho)^{-1} \psi &:= \rho \,\psi \in \Tp
\end{align}
\end{subequations}
for $\psi \in \Tp^\ast$. Note that the metric structure differs from what we have seen in \eqref{eq:abstractform2}. In particular we
no longer use the Wasserstein-2 metric: 
the metric defined by the choice (\ref{eq:abstractform3}b) of $\sM(\cdot)$ is known as the \emph{Fisher-Rao metric}. The Fisher-Rao metric requires a more careful consideration of the variational derivative of $\mathcal{E}(\rho)$ as provided by (\ref{eq:VD dual}). In particular, using
\begin{equation*}
    \psi = \frac{\delta \mE}{\delta \rho}_{\vert \Tp^\ast} = \Phi - \mathbb{E}[\Phi] \in \Tp^\ast
\end{equation*}
in (\ref{eq:abstractform3}b) implies that the integral of the right hand side of (\ref{eq:abstractform3}b) over $\R^{d_u}$ is zero so that it is, indeed, an element of the tangent space $\Tp$ given by \eqref{eq:citets}. 

\begin{remark}
    \label{eq:statespaceFR}
The gradient flows defined by \eqref{eq:abstractform} and \eqref{eq:abstractform2} both arose
from considering the evolution equation for the probability density function associated
with an evolution equation for $u \in \R^{d_u},$ the state space, namely equations
\eqref{eq:GD1} and \eqref{eq:langevin} respectively. 
In contrast, while \eqref{eq:betterform} describes the evolution of the density $\rho(\cdot,t)$, we
did not derive it directly as the evolution equation for a random variable $u(t)$ in state space.
However we may seek to find such an evolution. To this end we postulate a mean field differential equation
$$
\frac{\dd u}{\dd t} = g(u,\rho).
$$
Here $\rho$ is the probability density function associated with the random variable $u$; since $u$ is governed by a deterministic ordinary differential equation the randomness in $u$ originates from
the initial condition $u(0).$ We now ask how to choose $g$ so that the evolution equation for the probability density of $u(t)$, started
from a random initialization, evolves according to \eqref{eq:betterform}. 
The evolution of this density will satisfy the associated nonlinear Liouville equation
\begin{equation} \label{eq:nonlinear Liouville}
    \partial_t u = -\nabla \cdot(\rho g(\cdot,\rho)).
\end{equation}
Equating (\ref{eq:betterform}) and (\ref{eq:nonlinear Liouville}), we obtain the condition
\begin{equation}\label{eq:conversion}
\nabla \cdot(\rho g(\cdot,\rho)) = 
\bigr(\Phi-\mathbb{E}(\Phi)\bigl) \rho
\end{equation}
on $g$. Whether or not this equation can be solved for $g$ depends on properties of $\Phi$, and hence $G$;
furthermore even if solvable, the solution may not be unique. We note that, writing $g(u,\rho)$ as
the gradient of a $\rho-$dependent potential $E$, so that $g(u,\rho)=\nabla_u E(u,\rho)$,
renders the identity \eqref{eq:conversion} as a nonlinear divergence form elliptic equation for $E.$
This elliptic equation is parameterized by probability density function $\rho$ and should be viewed as holding everywhere on $\R^{d_u}$.

We have sought a state space model for $u$ which is an ordinary differential equation, albeit of
mean field type. It is also possible to seek stochastic evolution equations,  such a birth-death
processes or mean field stochastic differential equations. $\blacksquare$
\end{remark}

%
\subsubsection{Algorithms: Gaussian Projected Filter}
\label{sssec:smcC}
%

To further elucidate the structure of Gaussian projected filtering 
for the inverse problem, we study its continuous time formulation 
from Subsection \ref{ssec:GPFDc} when applied
to the specific state-observation model \eqref{eq:ipsdc}.
To this end first recall the discrete time model \eqref{eq:ipsd} which has continuous time
limit \eqref{eq:ipsdc},
Taking the limit $\Delta t \to 0$ in \eqref{eq:KF_analysis_add_IP_p}, the Gaussian
projected filter for \eqref{eq:ipsd},
we obtain the following evolution equations for mean and covariance:
\begin{subequations}
\label{eq:meanGPFcontIP}
\begin{empheq}[box=\widefbox]{align}
\frac{\dd m}{\dd t} &=  \Cov^{uG}\Gammas^{-1}\bigl(\wwd - \E G(u)\bigr),\\
\frac{\dd\Cov}{\dd t} &=  - \Cov^{uG}\Gammas^{-1}(\Cov^{uG})^\top,\\
\CuG &= \mathbb{E}\Bigl(\bigl(u-\mathbb{E}u\bigr)\otimes\bigl(G(u)-\mathbb{E}G(u)\bigr)\Bigr).
\end{empheq}
\end{subequations}
Here all expectations are computed under 
$u(t) \sim \Ng\bigl(m(t),C(t)\bigr)$. 
This is the continuous time Gaussian projected filter for the inverse problem \eqref{eq:ip}.

As in the discrete time case, the evolution for the mean promotes a Gaussian 
which is compatible with the data, through an innovation term which is weighted by covariance information. We 
now illustrate the filter by considering equations \eqref{eq:meanGPFcontIP}
in the setting of linear $G$, when they may be solved exactly.

\begin{example}
\label{ex:linear2}
Consider the setting of Example \ref{ex:linear}
in which $G(\cdot)=L\cdot $. The Gaussian projected filter \eqref{eq:meanGPFcontIP} becomes
\begin{subequations}
\label{eq:meanGPFcontIP2}
\begin{align}
\frac{\dd m}{\dd t} &=  \Cov L^\top\Gammas^{-1}(\wwd - Lm),\\
\frac{\dd\Cov}{\dd t} &=  - \Cov L^\top\Gammas^{-1}L \Cov.
\end{align}
\end{subequations}
Note that this coincides with the Kalman-Bucy filter \eqref{eq:KB_cov_evol} for the specific filtering problem defined by \eqref{eq:ipsdc} and with observed data $\frac{\dd z^\dagger}{\dd t}=\wwd.$

Now note that \eqref{eq:KSD_S3}, the Kushner-Stratonovich equation, is solved
by the Kalman-Bucy filter in the linear setting where $G(u)=Lu$. It follows
that the solution $\rho$ is given by the Gaussian $\Ng\bigl(m(t),C(t)\bigr)$ where $m(t), C(t)$ solve the Gaussian projected filter equations \eqref{eq:meanGPFcontIP2}. In particular the posterior measure $\mu$ is Gaussian and given by $\Ng\bigl(m(1),C(1)\bigr).$
To see this explicitly note that, from Theorem \ref{thm:need}, and in particular equation \eqref{eq:mut}, in the linear case
$G(\cdot)=L\cdot$, the solution of \eqref{eq:KSD_S3} is given by
\begin{equation} \label{eq:continuous Bayes}
\rho(u,t) \propto \exp\Bigl(-\frac{t}{2}|\wwd-Lu|_\Gammas^2 -\frac12|u-m_0|_{C_0}^2\Bigr).
\end{equation}
Completing the square shows that
this density corresponds to Gaussian $\Ng\bigl(m(t),C(t)\bigr)$ with
mean and covariance satisfying
\begin{subequations}
\label{eq:mCagain}
\begin{align}
C(t)^{-1}m(t) &=tL^\top\Gammas^{-1}\wwd+C_{0}^{-1}m_0,\\
C(t)^{-1}&=C_0^{-1}+tL^\top\Gammas^{-1}L.
\end{align}
\end{subequations}
Note that since $C_0 \succ 0$ it follows that
$C_0^{-1} \succ 0$ and hence (\ref{eq:mCagain}b) shows that, for 
all $t \ge 0$,
$C(t)^{-1} \succ 0$ and hence that $C(t) \succ 0$ for 
all $t \ge 0$; hence $C(t)$ is well-defined by (\ref{eq:mCagain}b)
and $m(t)$ is well-defined by (\ref{eq:mCagain}a).
It simply remains to show that $m(t)$ and $C(t)$ given by
these formulae solve \eqref{eq:meanGPFcontIP2}
when $m(0)=m_0$ and $C(0)=C_0.$ 

We thus turn our attention to the equations \eqref{eq:meanGPFcontIP2}.
Note that $C(t)$ solving (\ref{eq:meanGPFcontIP2}b) satisfies
$C(0)=C_0 \succ 0.$ Hence $C(0)^{-1} \succ 0.$ Thus, by continuity, $C(t)$ remains invertible for some positive interval of time $t \in [0,\tau)$ and,
on this interval, direct computation with 
(\ref{eq:meanGPFcontIP2}b) shows that
\begin{equation}
\label{eq:integrate}
\frac{\dd C^{-1}}{\dd t}=L^\top \Gammas^{-1} L.
\end{equation}
From this it follows by integration
that $C^{-1}(t) \succeq C_0^{-1} \succ 0$ for
all $t$ and hence that we may take $\tau=\infty.$ Furthermore, the
integration also shows that the solution of \eqref{eq:integrate},
solving (\ref{eq:meanGPFcontIP2}b),
delivers (\ref{eq:mCagain}b) as desired.

We then notice that, from (\ref{eq:meanGPFcontIP2}a),
\begin{align*}
C^{-1}\frac{\dd m}{\dd t}&=L^\top\Gammas^{-1}\wwd-L^\top\Gammas^{-1}Lm\\
&= L^T\Gammas^{-1}\wwd-\frac{\dd C^{-1}}{\dd t}m.
\end{align*}
It follows that
$$
\frac{\dd}{\dd t}\bigl(C^{-1}m)=L^\top\Gammas^{-1}\wwd
$$
and integration, together with use of the initial conditions, shows that (\ref{eq:meanGPFcontIP2}a) delivers the desired identity (\ref{eq:mCagain}a).

It is also useful to write \eqref{eq:mCagain} using an
explicit formula for $C(t)$ rather than the precision $C(t)^{-1}.$
To this end fix any $t>0$ and consider
the Gaussian random variable $(u,w)$ defined by choosing $u \sim \Ng(m_0,C_0)$
and $w|u=\Ng(Lu,t^{-1}\Gammas).$ Then the density $\rho$ given in
\eqref{eq:continuous Bayes} is solution of the Bayesian
inverse problem defined by the distribution of $u|w.$ 
The mean and covariance may be found from \eqref{eq:exactMF99} by
replacing $\Gammas$ by $t^{-1}\Gammas$ to yield
\begin{subequations}
\label{eq:exactMF}
\begin{align}
m(t) &=m_0 + C_0 L^\top (L C_0 L^\top + t^{-1} \Gammas)^{-1} (\wwd-Lm_0),\\
C(t) &= C_0 - C_0 L^\top (L C_0 L^\top + t^{-1} \Gammas)^{-1} L C_0.
\end{align}
\end{subequations}
We also observe that the expression (\ref{eq:exactMF}b) for $C(t)$  may
be derived from (\ref{eq:mCagain}b) by use of the Woodbury matrix identity. 
$\blacksquare$
\end{example}

\begin{remark}
We obtained \eqref{eq:meanGPFcontIP} as the time continuous limit of its discrete-time formulation \eqref{eq:KF_analysis_add_IP}. However the same evolution equations can be derived from the gradient flow \eqref{eq:betterform} through a sequence of approximations. This perspective is outlined in 
Appendix \ref{sec:AGF}.
$\blacksquare$
\end{remark}

%
\subsubsection{Algorithms: Ensemble Kalman Filter}
\label{sssec:smcC2}
%

We now study the inverse problem using Kalman transport from Subsection \ref{sssec:kt_sdt}, taking the continuous time limit. We consider the specific state-observation model \eqref{eq:ipsdc}.
and recall the discrete time model \eqref{eq:ipsd} which has continuous time
limit \eqref{eq:ipsdc},
Taking the limit $\Delta t \to 0$ in \eqref{eq:ipKT}, the ensemble Kalman filter for \eqref{eq:ipsd},
we obtain the following evolution equations:
\begin{subequations}
\label{eq:ipKTc}
\begin{empheq}[box=\widefbox]{align}
\dd u &=  C^{uG}\Gammas^{-1} (\wwd\,\dd t-\dd\hz),\\
\dd\hz &= G(u)\dd t + \sqrt{\Gammas} \dd B,\\
\CuG &= \mathbb{E}\Bigl(\bigl(u-\mathbb{E}u\bigr)\otimes\bigl(G(u)-
\mathbb{E}G(u)\bigr)\Bigr).
\end{empheq}
\end{subequations}
Here $B \in \R^{d_w}$ is a unit Brownian motion and expectation
is under the law of $u$ itself. As in the discrete time case, the evolution for the state $u$ promotes a solution  which is compatible with the data, through an innovation term which is weighted by covariance information.

\begin{remark}
\label{rem:added0} 
To obtain the resulting continuous time formulation we may also start 
from the continuous time state estimation methodology from Subsection \ref{ssec:GPFDc} and apply it 
to the specific state-observation model \eqref{eq:ipsdc}. 
The SDE (\ref{eq:ipKTc}) may then be seen as a consequence of
\eqref{eq:KSA_mf} applied to this state-observation model. However, special care is required in deriving the equation this way since the observations $z^\dagger$ in Section \ref{sec:CT} were assumed to have non vanishing quadratic variation; in contrast in this section we have $\dd z^\dagger/\dd t = \wwd$, with $\wwd$ constant,
and hence zero quadratic variation.
$\blacksquare$
\end{remark}

To obtain further insight into the mean field dynamical system \eqref{eq:ipKTc} 
we once again consider the linear setting:

\begin{example}
\label{ex:linear5}
Consider the SDE \eqref{eq:ipKTc} in the setting where
$G(u)=Lu$ for matrix $L \in \R^{d_w \times d_u}$. 
Then $u(1) \sim \mu$ where $\mu$ is given in Example \ref{ex:linear}.
To see this recall that
the Gaussian projected filter is exact in the linear setting,
by Example \ref{ex:linear2}, and hence delivers the desired
posterior at time $t=1$, by Theorem \ref{thm:need}.
Thus it suffices to show that the mean and covariance
of $u$ from \eqref{eq:ipKTc} satisfy the Gaussian projected filter in the linear setting,
given by \eqref{eq:meanGPFcontIP2}.
We first note that
\begin{equation}
\label{eq:uwn}
\dd u =  CL^\top\Gammas^{-1} \bigl(\wwd\,\dd t-Lu\,\dd t-\sqrt{\Gammas}\dd B\bigr),
\end{equation}
where $C$ is the covariance of $u$.
By the It\^o formula, $m=\E u$ satisfies (\ref{eq:meanGPFcontIP2}a).
It follows that $e=u-m$ satisfies
$$\dd e=-CL^\top\Gammas^{-1}Le\,\dd t-CL^\top\Gammas^{-1/2}\dd B.$$
A second use of the It\^o formula shows that $C=\E (e \otimes e)$ satisfies
(\ref{eq:meanGPFcontIP2}b). The desired result is established.
$\blacksquare$
\end{example}

We can also derive continuous limits of deterministic transports.
Taking the $\dt \to 0$ limit in \eqref{eq:mfKBF0} results in the mean field ODE formulation
\begin{subequations}
\label{eq:mfKBF}
\begin{empheq}[box=\widefbox]{align}
\frac{\dd u}{\dd t} &=  C^{uG}\Gammas^{-1} \Bigl(\wwd- \frac{1}{2}\bigl(G(u)+\E G(u) \bigr)\Bigr),\\
\CuG &= \mathbb{E}\Bigl(\bigl(u-\mathbb{E}u\bigr)\otimes\bigl(G(u)-
\mathbb{E}G(u)\bigr)\Bigr).
\end{empheq}
\end{subequations}

Again the state evolution has a covariance weighted forcing term which promotes evolution towards the data. As
before we may study this formulation in the linear setting:
 
\begin{example}
\label{ex:linear6}
Consider the mean field model  \eqref{eq:mfKBF} in the setting where
$G(u)=Lu$ for matrix $L \in \R^{d_w \times d_u}$. 
Then $u(1) \sim \mu$ where the posterior distribution 
is given in Example \ref{ex:linear}.
To show this we note that Gaussian projected filter is exact in the linear setting,
by Example \ref{ex:linear2}, and hence delivers the desired
posterior at time $t=1$, by Theorem \ref{thm:need}.
Thus it suffices to show that the mean and covariance
of $u$ from \eqref{eq:mfKBF} satisfy the Gaussian projected filter in the linear setting; this is given by \eqref{eq:meanGPFcontIP2}.
We first note that the mean under \eqref{eq:mfKBF} satisfies
\begin{equation*} \label{eq:mfKBFm}
\frac{\dd m}{\dd t} =  CL^T\Gammas^{-1} (\wwd-Lm),
\end{equation*}
which is (\ref{eq:meanGPFcontIP2}a). Using this it also follows that $e=u-m$ satisfies
$$\frac{\dd e}{\dd t}=-\frac12 CL^T\Gammas^{-1}Le,$$
from which it follows that the variance satisfies (\ref{eq:meanGPFcontIP2}b).
$\blacksquare$
\end{example}

\begin{remark}
    \label{eq:statespaceFR2}
Note that the nonlinear Liouville equation associated with the
mean field model \eqref{eq:mfKBF} has the form
\begin{subequations} \label{eq:mfKBF_Liouville}
\begin{align}   
\partial_t \rho &= -\nabla \cdot (\rho g_{\rm KF}),\\
g_{\rm KF}(u,\rho) &= C^{uG}\Gammas^{-1} \Bigl(\wwd- \frac{1}{2}\bigl(G(u)+\E G(u) \bigr)\Bigr).
\end{align}
\end{subequations}
This evolution equation \emph{approximates} the evolution of the filtering distribution, except in the linear Gaussian setting when it is exact. On the other hand, as in Remark \ref{eq:statespaceFR}, 
we may seek a  mean field differential equation of the form
\begin{equation}
\label{eq:exactagain}
\frac{\dd u}{\dd t} = g(u,\rho)
\end{equation}
which \emph{exactly} replicates the filtering distribution in
general. To do this requires that we choose $g$ to solve \eqref{eq:conversion}:
\begin{equation*}
\nabla \cdot(\rho g(\cdot,\rho)) = 
\bigr(\Phi-\mathbb{E}(\Phi)\bigl) \rho.
\end{equation*}
In the linear Gaussian setting we can identify a solution of 
this equation by asking that \eqref{eq:exactagain}
replicates \eqref{eq:mfKBF} since we know the latter is exact in the linear
and Gaussian setting.

In order to derive this result, we note that (\ref{eq:mfKBF_Liouville}b) takes 
the form 
\begin{equation}
g_{\rm KF}(u,\rho) = C L^\top \Gammas^{-1} \bigl(\wwd- \frac{1}{2}L(u+m)\bigr)
\end{equation}
in the linear setting and $\rho$ is Gaussian with mean $m$ and covariance $C$.
Hence the right hand side of (\ref{eq:mfKBF_Liouville}a) can now be evaluated explicitly giving rise to
\begin{align*}
\nabla \cdot (\rho g_{\rm KF}) &=
- \rho (u-m)^\top C^{-1}
C L^\top \Gammas^{-1}\bigl(\wwd - \frac{1}{2}
L(u+m)\bigr)+ c_1 \rho\\
&=\frac{1}{2} |Lu - \wwd|_\Gammas^2 \rho + 
c_2\rho
\end{align*}
with normalization constants
$$
c_1 = -\frac{1}{2} \mathbb{E}\left(
(u-m)^\top L^\top \Gammas^{-1}L(u+m)\right)
$$
and
$$
c_2 = -\frac{1}{2}\mathbb{E}\left(|Lu - \wwd|_\Gammas^2\right).
$$
Hence we have shown that $g_{\rm KF}$ satisfies (\ref{eq:conversion}) for $\Phi(u) = \frac{1}{2}
|Lu-\wwd|_\Gammas^2$ in the linear Gaussian setting.
See the bibliography Subsection \ref{ssec:IPBIBC} for more details.
$\blacksquare$
\end{remark}

%
\subsection{Infinite-Time Algorithms}
\label{ssec:IPIFTC}
%

We now develop the ideas in Subsection \ref{ssec:IPIFT}
in the continuous time setting. In particular we study
algorithms posed on the infinite time-horizon to solve the optimization problem of minimizing
$\Phi_R$ given by \eqref{eq:phisc}, or to find the Bayesian posterior
distribution given by \eqref{eq:bayesa}. 
In Subsection \ref{sssec:IPOC} we consider this infinite time-horizon perspective
for the solution of optimization problems associated with the inverse problem
\eqref{eq:ip}. Subsection \ref{sssec:IPBC} considers the same perspective for Bayesian inversion.

%
\subsubsection{Algorithms for Optimization Formulation}
\label{sssec:IPOC}
%

This rather lengthy subsection is broken into
paragraphs concerning \emph{preconditioned gradient flow},
\emph{statistical linearization}, \emph{gradient descent and statistical
linearization}, \emph{algebraic convergence} and \emph{exponential convergence}.
The initial development on preconditioning enables us to introduce
\emph{affine invariance} and the discussion of statistical linearization
enables us to connect preconditioned gradient descent with ensemble Kalman
methods.  Then, as in Subsection \ref{sssec:IPO} where similar issues are 
discussed in discrete time, we initially discuss algorithms 
with algebraic convergence. We then introduce
generalizations which allow us to obtain exponential convergence.

\vspace{0.1in}

\paragraph{Preconditioned Gradient Flow}
We start by generalizing the standard gradient descent introduced in
Subsection \ref{ssec:IPOPCT}. Given objective function $\Psi: \R^{d_u} \to \R^+$ 
and given symmetric positive-definite
preconditioner $B \in \R^{d_u \times d_u}$ we introduce the equation
\begin{equation} 
\label{eq:GD122}
    \frac{\dd u}{\dd t} = -B\nabla \Psi(u).
\end{equation}
Note that, along solutions of \eqref{eq:GD122},
\begin{subequations}
\label{eq:gradientstructure22}
\begin{align}
    \frac{\dd}{\dd t}\Psi(u) 
    &= \Big\langle \nabla \Psi(u), \frac{\dd u}{\dd t} \Big\rangle\\
    &=  -\Bigl|\frac{\dd u}{\dd t}\Bigr|_{B}^2.
\end{align}
\end{subequations}
Equation (\ref{eq:GD122}) possesses a gradient flow structure in parameter space $\mathbb{R}^{d_u}$
with respect to a Euclidean metric
weighted by $B$; this weighted metric changes the underlying
geometry of the gradient flow, in comparison to the standard setting of
equation \eqref{eq:GD1}; but it once again leads to the non-increasing 
property of $\Psi(u)$ along trajectories.

Consider now the affine transformation $u \mapsto \tilde u$
given by
\begin{equation} \label{eq:at1}
    \tu = Au + b,
\end{equation}
where $A$ is an invertible matrix and $b$ a vector.
An important issue in all vector space optimization problems 
is the relative scaling of the components of the vector.
A highly desirable feature of an algorithm is that it be
insensitive to such scaling issues. This can be addressed
by looking at differences between: (i) the algorithm for $u$
rewritten in terms of $\tu$ given by \eqref{eq:at1}; (ii)
the same algorithm applied directly to variable $\tu$
optimizing $\tPsi(\tu)$, with the latter defined by
\begin{equation}
\label{eq:tpsi}
 \tPsi(\tu)=\Psi\Bigl(A^{-1}(\tu-b) \Bigr).   
\end{equation}

\begin{definition}
\label{d:GW}
When the two ways (i) and (ii) of using reparametrization \eqref{eq:at1}
lead to the same algorithm, 
for all choices of $A,b$, we say that the algorithm is
\emph{affine invariant}. 
$\blacksquare$ \end{definition}

\begin{remark}
\label{rem:remcite99_needed}
Affine invariant algorithms are highly
desirable as they are not sensitive to the scaling of
the variables. At the optimum, which is unknown \emph{a priori}, this scaling is
not known and hence cannot be used to improve algorithms. Hence algorithms which are
blind to such scaling are highly desirable.
\end{remark}

Applying the transformation \eqref{eq:at1} to (\ref{eq:GD122}) leads to
\begin{equation} \label{eq:GD2}
\frac{\dd\tilde u}{\dd t}=-AB \nabla \Psi\Bigl(A^{-1}(\tu-b) \Bigr),
\end{equation}
the descent approach underlying algorithm viewed as in (i). 
In contrast, applying the same gradient descent to $\tPsi$ given by \eqref{eq:tpsi}, 
leads to the descent approach underlying  algorithms viewed
as in (ii):
\begin{equation} \label{eq:GD202}
\frac{\dd\tilde u}{\dd t}=-BA^{-\top} \nabla \Psi\Bigl(A^{-1}(\tu-b) \Bigr).
\end{equation}
The two equations \eqref{eq:GD2}, \eqref{eq:GD202} only agree if
\begin{equation*}
AB=BA^{-\top}   
\end{equation*}
and such an identity cannot hold for all $A$, for a fixed $B$.
Thus the basic gradient descent \eqref{eq:GD122} is not affine invariant.
However it is a remarkable fact that, by generalizing \eqref{eq:GD122}
to allow for mean field dependence, we can achieve affine invariance.

To this end consider the mean field generalization of \eqref{eq:GD122}
\begin{equation} \label{eq:GD1MF}
    \frac{\dd u}{\dd t} = -B(\rho)\nabla \Psi(u),
\end{equation}
where $\rho(\cdot,t)$ is the probability density function
associated with the law of $u$, assuming that $u_0=u(0)$ is drawn at random
from  probability density function $\rho_0(\cdot).$ If we assume
that $B(\cdot)$ is positive-definite symmetric for all possible
input densities, then similar arguments to before show
that $\Psi(u)$ is non-increasing along trajectories of
\eqref{eq:GD1MF}. Furthermore similar arguments to before
show that the algorithm is affine invariant if, for all invertible
$A \in \R^{d_u \times d_u}$,
\begin{equation*}
AB(\rho)=B(\trho)A^{-\top},   
\end{equation*}
where $\trho$ is the density of $\tu$ related to $u$, with
density $\rho$, by \eqref{eq:at1}. This identity holds for all invertible
$A \in \R^{d_u \times d_u}$ if $B(\cdot)$ is chosen to be the covariance associated with its argument. We have thus discovered the affine invariant mean field gradient descent
\begin{empheq}[box=\widefbox]{equation}
\label{eq:gd}
\frac{\dd u}{\dd t}=-C\nabla \Psi(u),
\end{empheq}
where $C$ is the covariance operator under the law of $u(t)$
and $u(0)$ is chosen at random from probability measure
with density $\rho_0.$

Algorithms based on solving \eqref{eq:gd} are not themselves
ensemble Kalman methods, although we have drawn inspiration
from the power of mean field methods to motivate the approach.
In the next paragraph we introduce the idea of statistical
linearization, leading to a variety of ensemble Kalman methods
for optimization; and in the paragraph following it we use
this idea to approximate \eqref{eq:gd} by an ensemble Kalman 
version of gradient descent which obviates the need for
computing adjoints of the forward model $G(\cdot)$ in the setting where $\Psi(\cdot)=\Phi(\cdot)$ given by \eqref{eq:phisc}.

\vspace{0.1in}

\paragraph{Statistical Linearization}
\label{sssec:stat lin}

A basic building block in the Gaussian projected filter
and mean field ensemble Kalman models for inverse problems 
that we have presented in Subsections \ref{sssec:gpf_ip} and \ref{sssec:kt_sdt}, and their
continuous time-analogs in Subsections \ref{sssec:smcC} and \ref{sssec:smcC2}, is the object
\begin{equation}
\label{eq:cross}
\CuG = \mathbb{E}\Bigl(\bigl(u-\mathbb{E}u\bigr)\otimes\bigl(G(u)-\mathbb{E}G(u)\bigr)\Bigr), 
\end{equation}
here viewed as evolving in continuous time. Also of interest is the regularized
analog of $\CuG$ arising when $G_R$, as defined in (\ref{eq:Rip}), is used in place of $G$. 
In a methodology based on exact properties only for
first and second order statistics, it is natural that $\CuG$ should appear
when solving the inverse problem: the correlation between the 
parameter $u$, which we wish to estimate, and $G(u)$ which we observe, albeit 
polluted by additive noise. One way of understanding the role of $\CuG$ in algorithms
for inversion is through the idea of statistical linearization, providing
a link between ensemble methods and derivatives of the objective
function. The underlying principle is that the differences used in
ensemble methods, and covariances in particular,
act as a surrogate for derivatives.

The expectation defining \eqref{eq:cross}  is computed, for the algorithms we
consider, under the distribution of a Gaussian (for the
Gaussian projected filter) or a more general distribution
(for the mean field ensemble Kalman model). To get some insight into the connection
between ensemble methods and derivatives, we first consider
the setting where $u \sim \Ng\bigl(m,C\bigr).$
Note that such $u$ can be written as $u=m+\sqrt{C}\xi$ where
$\xi \sim \Ng(0,I).$ With this assumption on $u$, \eqref{eq:cross} may be reformulated as
\begin{equation}
\label{eq:cross2}
\CuG = \mathbb{E}\Bigl(\bigl(\sqrt{C}\xi\bigr)\otimes\bigl(G(m+\sqrt{C}\xi)-\mathbb{E}G(m+\sqrt{C}\xi)\bigr)\Bigr), \quad \xi \sim  \Ng(0,I).
\end{equation}
Using this we obtain the following connection 
between $\CuG$ and derivatives of $G:$

\begin{lemma} \label{lem:stats lin}
Assume that the second derivative of $G$ is small: there 
is $\epsilon \ll 1$ such that  
$$\sup_{u \in \R^{d_u}} |D^2G(u)[\zeta,\zeta]| \le \epsilon |\zeta|^2.$$
Then $\CuG$ given by \eqref{eq:cross2} satisfies
$$\CuG=C DG(m)^\top+{\mathcal O}(\epsilon).$$
Thus, when $C^{-1} \succ \lambda I$, for some 
$\lambda>0$ independent of $\epsilon$,
\begin{equation}
\label{eq:cross3}
DG(m)=(\CuG)^\top C^{-1}+{\mathcal O}(\epsilon).
\end{equation}
$\Diamond$ \end{lemma}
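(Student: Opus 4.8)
The plan is to Taylor-expand $G$ about the mean $m$ inside the expectation in \eqref{eq:cross2} and control the remainder using the stated bound on $D^2 G$. Write $u = m + \sqrt{C}\xi$ with $\xi \sim \Ng(0,I)$, and expand
\[
G(m + \sqrt{C}\xi) = G(m) + DG(m)\sqrt{C}\xi + \frac12 D^2G(\cdot)[\sqrt{C}\xi, \sqrt{C}\xi],
\]
where the last term is the integral-form Lagrange remainder evaluated at some point on the segment between $m$ and $u$. By the hypothesis, this remainder is bounded in norm by $\frac{\epsilon}{2}|\sqrt{C}\xi|^2$. Taking expectations, $\E G(m+\sqrt{C}\xi) = G(m) + \frac12 \E\bigl(D^2G(\cdot)[\sqrt{C}\xi,\sqrt{C}\xi]\bigr)$, since the linear term has mean zero. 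Subtracting, the centered quantity $G(m+\sqrt{C}\xi) - \E G(m+\sqrt{C}\xi)$ equals $DG(m)\sqrt{C}\xi$ plus a remainder of size $\mathcal{O}(\epsilon|\sqrt{C}\xi|^2)$ in an $L^\infty$-in-$\xi$ sense (uniformly over the Gaussian, since the $D^2G$ bound is global).

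Next I would substitute this into \eqref{eq:cross2}. The leading contribution is
\[
\E\Bigl(\bigl(\sqrt{C}\xi\bigr) \otimes \bigl(DG(m)\sqrt{C}\xi\bigr)\Bigr) = \sqrt{C}\,\E(\xi\otimes\xi)\,\sqrt{C}\,DG(m)^\top = \sqrt{C}\sqrt{C}\,DG(m)^\top = C\,DG(m)^\top,
\]
using $\E(\xi\otimes\xi)=I$ and symmetry of $\sqrt{C}$. The remaining cross term is $\E\bigl((\sqrt{C}\xi)\otimes \mathcal{R}(\xi)\bigr)$ where $\|\mathcal{R}(\xi)\| \le \frac{\epsilon}{2}|\sqrt{C}\xi|^2$; by Cauchy--Schwarz (or simply bounding the tensor product entrywise) its norm is at most $\tfrac{\epsilon}{2}\E\bigl(|\sqrt{C}\xi|^3\bigr)$, a finite Gaussian moment depending only on $C$, hence $\mathcal{O}(\epsilon)$. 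This establishes $\CuG = C\,DG(m)^\top + \mathcal{O}(\epsilon)$, the first claim.

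For the second claim, assume $C^{-1} \succ \lambda I$ for some $\lambda > 0$ independent of $\epsilon$; equivalently $\|C\| \le \lambda^{-1}$ is bounded uniformly (so the Gaussian moments appearing above are uniformly bounded, keeping the $\mathcal{O}(\epsilon)$ honest), and $C^{-1}$ is well-defined and bounded. Transposing the first identity gives $(\CuG)^\top = DG(m)\,C + \mathcal{O}(\epsilon)$ (transposition of an $\mathcal{O}(\epsilon)$ matrix is still $\mathcal{O}(\epsilon)$). Right-multiplying by $C^{-1}$ yields $(\CuG)^\top C^{-1} = DG(m) + \mathcal{O}(\epsilon)\cdot C^{-1}$, and since $\|C^{-1}\|$ is controlled (it need not be bounded above by a constant independent of $\epsilon$, but it is independent of $\epsilon$, which suffices for the $\mathcal{O}(\epsilon)$ notation here), we obtain \eqref{eq:cross3}.

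The main obstacle is bookkeeping rather than depth: one must be careful that the Gaussian moments multiplying $\epsilon$ are genuinely $\epsilon$-independent, which is where the hypothesis $C^{-1} \succ \lambda I$ (giving a uniform bound on $\|C\|$) is used to ensure the $\mathcal{O}(\epsilon)$ constants do not secretly blow up; the algebraic identity $\sqrt{C}\,I\,\sqrt{C} = C$ and the vanishing of the odd moment of the linear term are the only other ingredients, both routine.
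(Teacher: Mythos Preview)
Your proof is correct and follows essentially the same approach as the paper: Taylor-expand $G$ about $m$, observe that the linear term vanishes in expectation, and compute $\E\bigl((\sqrt{C}\xi)\otimes(DG(m)\sqrt{C}\xi)\bigr)=C\,DG(m)^\top$. The paper's version is considerably terser---it writes the remainder simply as $\mathcal{O}(\epsilon)$ and concludes immediately---whereas you track the $\xi$-dependence of the remainder and the role of $C^{-1}\succ\lambda I$ in controlling the Gaussian moment constants, which is more careful but not a different route.
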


\begin{proof}
Note that
\begin{align*}
G(m+\sqrt{C}\xi)&=G(m)+DG(m)\sqrt{C}\xi+{\mathcal O}(\epsilon),\\
\E G(m+\sqrt{C}\xi)&=G(m)+{\mathcal O}(\epsilon).
\end{align*}
From \eqref{eq:cross2}
$$\CuG = \mathbb{E}\Bigl(\bigl(\sqrt{C}\xi\bigr)\otimes\bigl(DG(m)\sqrt{C}\xi+{\mathcal O}(\epsilon)\bigr)\Bigr), \quad \xi \sim  \Ng(0,I),$$
and the desired result follows.
\end{proof}

\begin{remark}
\label{rem:stein}
Another perspective on the preceding lemma is via Stein's identity. 
This states that
\begin{equation*}
    \CuG = C (\E DG)^\top
\end{equation*}
for (\ref{eq:cross2}) when expectation is computed under a Gaussian measure $\Ng(m,C)$. 
The identity can be verified via integration by parts.
Given the assumption stated in Lemma \ref{lem:stats lin}, 
$\E DG(u) = DG(m) + \mathcal{O}(\epsilon)$ 
and the approximation result (\ref{eq:cross3}) also follows. $\blacksquare$
\end{remark}

When $D^2G$ is indeed small it is reasonable to
use \eqref{eq:cross3} as the basis for an approximation to $DG(\cdot)$
at points in an ${\mathcal O}(1)$ ball around the mean. We now take
this idea further and consider random variable $u \in \R^{d_u}$
(not necessarily Gaussian) and compute $C$ and $\CuG$ as covariance 
of $u$ and cross-covariance of $u$ with $G(u)$ respectively. 
We refer to use of the approximation 
\begin{equation}
\label{eq:SLD}
DG(u) \approx (\CuG)^\top C^{-1}, \quad \text{for all}\, u \in \R^{d_u},
\end{equation}
as \emph{statistical linearization.}
The approximation can be invoked to replace the derivative within
any standard optimization or sampling algorithm to solve the inverse
problem \eqref{eq:ip}. Doing so results in a mean field algorithm; that algorithm
in turn can be approximated by particle methods. Statistical linearization allows
the conversion of standard single particle optimization and sampling
algorithms for inverse problems, with dependence on the derivative of
the forward model, into derivative-free interacting particle system
optimizers and samplers. An important application of this methodological
approach is in the development of ensemble Kalman approximations of
Gauss-Newton and Levenberg-Marquadt algorithms; pointers to the literature
will be given in the bibliography Subsection \ref{ssec:IPBIBC}.
We now turn to the use of statistical linearization in gradient
descent, perhaps the most basic setting in which it can be used
for optimization.

\vspace{0.1in}

\paragraph{Gradient Descent and Statistical Linearization}
We provide further insight into the statistical linearization approach from the previous subsection by applying it in the context of gradient descent. Recall $\Phi$ and $\Phi_R$ defined in \eqref{eq:phisc}. We consider the regularized least squares function so that $\Psi(\cdot)=\Phi_R(\cdot)$ in \eqref{eq:GD1}; but similar ideas may be developed in the unregularized setting where $\Psi(\cdot)=\Phi(\cdot).$ Recall that in the regularized setting
$$\Phi_R(u)=\Phi(u)+\frac12 |u-m_0|_{C_0}^2.$$
Note that
\begin{subequations}
\label{eq:combine}
\begin{align}
    \nabla \Phi(u) &= DG(u)^\top \Gammas^{-1}\bigl(G(u)-\wwd\big),\\
    \nabla \Phi_R(u) &= DG(u)^\top \Gammas^{-1}\bigl(G(u)-\wwd\big)-
    C_0^{-1}(m_0-u).
    \end{align}
\end{subequations}
Thus we obtain, from \eqref{eq:GD1},
\begin{equation*} \label{eq:GD12}
    \frac{\dd u}{\dd t} = -\nabla \Phi_R(u)= DG(u)^\top \Gammas^{-1}\bigl(\wwd-G(u)\big)+C_0^{-1}(m_0-u).
\end{equation*}
Similarly, the covariance preconditioned gradient 
flow \eqref{eq:gd} becomes
\begin{equation}
\label{eq:gds}
\frac{\dd u}{\dd t}= -C\nabla \Phi_R(u)=C DG(u)^\top \Gammas^{-1} \bigl(\wwd-G(u)\bigr)+CC_0^{-1}(m_0-u).
\end{equation}
We now approximate this equation using statistical linearization.

From \eqref{eq:SLD} we deduce the equivalent
(assuming $C$ is invertible) approximation
\begin{equation*}
CDG(u)^\top \approx \CuG, \quad \text{for all}\, u \in \R^{d_u}.
\end{equation*}
Combining this with \eqref{eq:combine} we obtain
\begin{equation} \label{eq:correlation approx}
C \nabla \Phi(u) \approx \CuG \Gammas^{-1}\bigl(G(u)-\wwd\bigr).
\end{equation}
Making this approximation in \eqref{eq:gds} 
gives the following ensemble Kalman
approximation of mean field gradient descent for $\Phi_R$: 
\begin{empheq}[box=\widefbox]{equation}
\label{eq:gdek}
\frac{\dd u}{\dd t}=\CuG \Gammas^{-1}\bigl(\wwd-G(u)\bigr)+CC_0^{-1}(m_0-u).
\end{empheq}

\begin{remark}
    \label{rem:affine}
It may be verified that the affine invariance of (\ref{eq:gd}) is preserved under the statistical linearization ansatz. To see this,
note that if $G(\cdot)=L\cdot$, then the covariance matrix $\CuG=\Cov L^\top$ so that
\begin{equation*}
    \CuG \Gammas^{-1}\bigl(\wwd-G(u)\bigr)=\Cov L^\top \Gammas^{-1}(\wwd-Lu).
\end{equation*}
From this it follows that, in this linear setting,
\begin{align*}
 \CuG \Gammas^{-1}\bigl(\wwd-G(u)\bigr)   &=-C \nabla \Phi(u),\\
\Phi(u)&=\frac12|Lu - \wwd|_\Gammas^2.
\end{align*}
Hence, \eqref{eq:ipKTc} is also 
affine invariant. Similar arguments also show that
\eqref{eq:mfKBF} and (\ref{eq:gdek}) are affine invariant. In fact, this property holds for all the ensemble Kalman approaches to inverse
problems developed in the previous (discrete time) and current (continuous time) section. $\blacksquare$
\end{remark}

\vspace{0.1in}
\paragraph{Algebraic Convergence}
We now observe that statistical linearization is exact for linear problems, and provide explicit calculations in this linear case. Recall $G_R, \Gammas_R$ defined by \eqref{eq:Rip} and assume that $\Gammas_R \succ 0$. 

\begin{example}
\label{ex:collapse}
Statistical linearization is exact in the linear setting: if $G(u)=Lu$ then
$DG(u) = (\CuG)^\top C^{-1}.$
Thus, in the setting of Example \ref{ex:linear}, the preconditioned gradient flow
\eqref{eq:gd} with $\Psi=\Phi_R$ reduces to \eqref{eq:gdek} with $G(\cdot)=L\cdot$:
\begin{equation}
\label{eq:pcgf}
\frac{\dd u}{\dd t}=CL^T \Gammas^{-1}\bigl(\wwd-Lu\bigr)+CC_0^{-1}(m_0-u).
\end{equation}
This equation leads to the following closed equations for evolution of the mean and covariance:
\begin{subequations}
\label{eq:meanGPFcontIP2023}
\begin{align}
\frac{\dd m}{\dd t} &=  \Cov L^\top\Gammas^{-1}(\wwd - Lm)+CC_0^{-1}(m_0-m),\\
\frac{\dd\Cov}{\dd t} &=  - 2\Cov L^\top\Gammas^{-1}L \Cov-CC_0^{-1}C=
- 2\Cov L_R^\top\Gammas_R^{-1}L_R \Cov.
\end{align}
\end{subequations}
It is readily verified that the precision  satisfies equation
\begin{equation}
\label{eq:precision}
\frac{\dd C^{-1}}{\dd t} =   2L_R^\top\Gammas_R^{-1}L_R
\end{equation}
and hence the precision grows linearly in time to infinity. 
As a consequence the covariance decays to zero at algebraic rate $\mathcal{O}(1/t).$
$\blacksquare$
\end{example}

\vspace{0.1in}
\paragraph{Exponential Convergence}
To obtain exponential convergence we must overcome covariance collapse; 
to this end we study the idea of covariance
inflation, introduced in discrete time in \eqref{eq:rescale_beta_dt},
in the continuous time setting. 
Again recall $G_R, \Gammas_R$ defined by \eqref{eq:Rip} and assume that 
$\Gammas_R \succ 0$. We then consider the continuous time limit of \eqref{eq:rescale_beta_dt}
to obtain
\begin{subequations}
\label{eq:rescale_beta_dt2}
\begin{empheq}[box=\widefbox]{align}
\dd u &= \sqrt{\beta \Sigma} \dd W,\\
\dd z &=  G_R(u)\dd t + \sqrt{\Gammas_R} \dd B,
\end{empheq}
\end{subequations}
where $W$ and $B$ are independent unit Brownian motions on $\R^{d_u}$ and $R^{d_w}$ 
respectively. To determine $\Sigmas$ set $\Zd(t)=\{\zd(s)\}_{s \in [0,T]}$  and
define $\Sigmas=C$, where $C$ is the covariance of random variable $u(t)|\Zd(t).$
When we apply the SDE \eqref{eq:rescale_beta_dt2}
to solve the inverse problem we take $\zd(s) := s \wwd_R$ for all $s \ge 0$.

\begin{remark}
As for the discrete-time model \eqref{eq:rescale_beta_dt}, equation 
\eqref{eq:rescale_beta_dt2} defines an unusual  form of mean field
model through dependence on the filtering distribution. As a
consequence, the filtering distribution is determined by a non-standard variant of the Kushner-Stratonovich equation which takes the form
\begin{align*}
\partial_t \mmu = \frac{1}{2}
\nabla \cdot \Bigl(\nabla \cdot \bigl(\mmu \CC(\rho)\bigl)\Bigr) 
 - \frac{1}{2} \left\{ \left| G_R \right|^2_{\Gammas_R} - \E \left| G_R \right|^2_{\Gammas_R} \right\} \mmu
+  
\left\langle G_R- \E G_R, \wwd_R \right\rangle_{\Gammas_R}\mmu.
\end{align*}
This may be derived from \eqref{eq:KSEadded} with $f \equiv 0$,
$\Sigmas=\CC(\rho)$ (the covariance under $\rho$), $h=G_R$
and $\dd\zd(t) := \wwd_R \dd t.$
Note that appearance of the covariance matrix $\CC(\rho)$
adds a further nonlocal nonlinearity which is not present in the density
evolution (\ref{eq:KSD_S3}) that arises without covariance inflation.
$\blacksquare$
\end{remark}

We now derive continuous time limits of the Gaussian projected filter 
\eqref{eq:KF_analysis_add_IP} and the ensemble Kalman filters
\eqref{eq:notsss1} and \eqref{eq:mfKBF01}, derived in Subsection \ref{ssec:IPIFT}
for solution of the inverse problem \eqref{eq:ip}.
Starting with the Gaussian projected filter and taking the limit $\dt \to 0$ we obtain
\begin{subequations}
\label{eq:KF_analysis_add_IP2}
\begin{empheq}[box=\widefbox]{align}
\frac{\dd m}{\dd t} &= \CuG_{R}\Gammas_R^{-1}\bigl(\wwd_R -\mathbb{E} G_R(u)\bigr),\\
\frac{\dd C}{\dd t} &=  \beta C- \CuG_{R}\Gammas_R^{-1}(\CuG_R)^\top,\\
\CuG_{R} &= \mathbb{E}\Bigl(\bigl({u}-\mathbb{E}{u}\bigr)\otimes\bigl(G_R({u})-
\mathbb{E}G_R({u})\bigr)\Bigr),
\end{empheq}
\end{subequations}
where all expectations are with respect to ${u}(t) \sim \Ng\bigl(m(t),C(t)\bigr)$.
Using the explicit form of $\Phi_R$, this set of equations may be shown to be equivalent to
\begin{subequations} \label{eq:cool}
    \begin{align}
        \frac{\dd m}{\dd t} &= \CuG \Gammas^{-1}\bigl(\wwd-\mathbb{E} G(u)\bigr)+CC_0^{-1}(m_0-m),\\
        \frac{\dd C}{\dd t} &=  \beta C- \CuG \Gammas^{-1} (\CuG)^\top - C C_0^{-1}C.
    \end{align}
\end{subequations}
As in discrete time we are using the approximation of the
covariance of the filtering distribution implied by the Gaussian
projected filter; this is since we do not have access to the
exact covariance.

\begin{example}
To highlight links with preconditioned gradient descent we now investigate these equations in the linear Gaussian setting. With $G(\cdot) = L\cdot$, we obtain from (\ref{eq:cool}) the following equations for the mean and covariance matrix evolution:
\begin{subequations} \label{eq:coolG}
    \begin{align}
        \frac{\dd m}{\dd t} &= C L^\top \Gammas^{-1}\bigl(\wwd-Lm\bigr)+CC_0^{-1}(m_0-m),\\
        \frac{\dd C}{\dd t} &=  \beta C- C L^\top \Gammas^{-1} L C - C C_0^{-1}C = \beta C - C L_R^\top \Gammas_R^{-1}
        L_R C.
    \end{align}
\end{subequations}
These evolution equations for mean and covariance are now compared to the corresponding equations derived in Example \ref{ex:collapse}, concerning
statistical linearization. Equations (\ref{eq:meanGPFcontIP2023}) in that example arise
from statistical linearization of preconditioned
gradient descent, and exactly recover preconditioned gradient descent
in the linear setting.  
We note that the evolution equations (\ref{eq:meanGPFcontIP2023}a) 
and (\ref{eq:coolG}a) for the mean agree, while the covariance
matrices $C$ defined by (\ref{eq:meanGPFcontIP2023}b) 
and (\ref{eq:coolG}b) undergo different evolutions.
$\blacksquare$
\end{example}

In the proposition which follows we now show that the different 
evolution equation for the covariance
(\ref{eq:coolG}b) leads to exponential convergence; this should be
contrasted with the algebraic convergence resulting 
from (\ref{eq:meanGPFcontIP2023}b).

\begin{proposition}
\label{lem:lin_c22}
Assume that $u_0$ is initialized at a Gaussian
$\Ng(m_0,C_0)$ and assume also that $C_0, \Gammas_R \succ 0$. Consider the setting where
$G_R(\cdot)=L_R \cdot$ for matrix $L_R \in \R^{(d_w+d_u) \times d_u}$. 
Now consider the filtering distribution $u(t)|Z^\dagger(t)$ defined by \eqref{eq:rescale_beta_dt2} for $\beta >0$, with data $Z^\dagger(t)$ defined by $z^\dagger(s)=s \wwd_R$, where $\wwd_R$ is defined in \eqref{eq:wwr}. Then the filtering distribution is Gaussian $\Ng(m(t),C_(t))$ for all $t \ge 0$. The mean and covariance converge at an exponential rate $\exp(-\beta t)$, as  $t \to \infty$, to the limits $m_{\infty}=\mop$ and
$C_{\infty}=\beta \Cp$, where $(\mop,\Cp)$ are the posterior mean \eqref{eq:pmean} and covariance \eqref{eq:pcov}. 
$\Diamond$ \end{proposition}

\begin{remark}
    It is a remarkable fact that the rate of convergence is independent of
    the properties of the limiting Gaussian posterior distribution, and in
    particular of the conditioning of the posterior covariance. This property is the continuous
    time analog of what we observed in Remark \ref{rem:remcite99}. The desirable
    universal convergence rate property is a result of the affine invariance of the Gaussian projected filter and ensemble Kalman methods studied in this subsection.
$\blacksquare$
\end{remark}

\begin{proof}[Proof of Proposition \ref{lem:lin_c22}]
We first note that $\beta >0$ implies that equation (\ref{eq:coolG}b) for the covariance has two equilibria: an unstable one at $C=0$ and a stable one satisfying
\begin{equation} \label{eq:equilibrium_beta}
C_\infty = \beta \left(L_R^\top \Gammas_R^{-1} L_R\right)^{-1}=\beta \Cp.
\end{equation}
The exponential convergence to $C_\infty$ for $\beta >0$ can be best seen by considering the evolution for the precision matrix
$C^{-1}$:
\begin{equation} \label{eq:precision_beta}
\frac{\dd C^{-1}}{\dd t} = -\beta C^{-1} + L_R^\top \Gammas_R^{-1}L_R.
\end{equation}
Direct calculation of the time-derivative of $C^{-1}(t)m(t)$ shows that
$$
\frac{\dd}{\dd t}(C^{-1}m) = -\beta C^{-1}m + L_R^\top \Gammas_R^{-1} \wwd_R.
$$
Thus $C^{-1}(t)m(t)$ converges exponentially fast to limit 
$\frac{1}{\beta}L_R^\top \Gammas_R^{-1} \wwd_R.$ Since $C(t)$ itself
converges exponentially fast to $\beta \Cp$ it follows that $m(t)$ converges
exponentially fast to posterior mean $\mop$ given by \eqref{eq:pmean}.
\end{proof}

\begin{remark}
For $\beta = 0$, we find algebraic convergence which we also found in equation (\ref{eq:precision}), from Example \ref{ex:collapse}, for the preconditioned gradient descent formulation. It should be noted that (\ref{eq:precision}) contains a pre-factor of two which is not present in 
(\ref{eq:precision_beta}) so, even when $\beta=0$, the evolution for mean
and covariance does not coincide with that arising from preconditioned gradient descent.
$\blacksquare$
\end{remark}

We now turn to ensemble Kalman methods, starting with the stochastic version.
We take continuous time limits in \eqref{eq:notsss1} to obtain
\begin{subequations}
\label{eq:notsss12}
\begin{empheq}[box=\widefbox]{align}
\dd u &= \sqrt{\beta C} \dd W + \CuG_{R}\Gammas_R^{-1}\bigl(\wwd_R \dd t - \dd\hz\bigr) ,\\
\dd\hz &= G_R(u)\dd t + \sqrt{\Gammas_R}\dd B,
\end{empheq}
\end{subequations}
where $\CuG_R$ is computed using (\ref{eq:KF_analysis_add_IP2}c), and $C$ is
the regular covariance, both under the law of $u.$
As in discrete time we are using the approximation of the
covariance of the filtering distribution implied by the Gaussian
projected filter; this is since we do not have access to the
exact covariance.
Here $W$ and $B$ are independent unit Brownian motions on $\R^{d_u}$ and $R^{d_w}$ 
respectively. The corresponding deterministic transport formulation is found by taking the continuous time limit in \eqref{eq:mfKBF01} to obtain
\begin{empheq}[box=\widefbox]{equation}
\label{eq:mfKBF012}
\dd u = \sqrt{\beta C} \dd W + \CuG_{R}\Gammas_R^{-1}\Bigl(\wwd_R - \frac12\bigl(G_R(u)-\mathbb{E} G_R(u)\bigr)\Bigr)\dd t,
\end{empheq}
where $W$ is again a unit Brownian motion  on $\R^{d_u}$ 
and where $\CuG_R$ is computed using (\ref{eq:KF_analysis_add_IP2}c), and $C$ is
the regular covariance, both under the law of $u.$

\begin{remark} 
\label{rem:det2}
We note that it is possible to replace \eqref{eq:rescale_beta_dt2} by the 
filtering problem associated with the model
\begin{subequations}
\label{eq:rescale_beta_dt22}
\begin{empheq}[box=\widefbox]{align}
\dd u &= \frac12 \beta(u-\mathbb{E}u)\dd t,\\
\dd z &=  G_R(u)\dd t + \sqrt{\Gammas_R} \dd B.
\end{empheq}
\end{subequations}
This results in the same Gaussian projected filter and ensemble Kalman
methods as before, in the linear Gaussian setting.
We note that using the filtering problem associated with
\eqref{eq:rescale_beta_dt22} leads to analogous ensemble Kalman methods to  (\ref{eq:mfKBF012}) or (\ref{eq:notsss12}); these can be obtained by directly replacing the term $\sqrt{\beta C} \dd W$ with $\frac12 \beta(u-\mathbb{E}u)\dd t$ in (\ref{eq:mfKBF012}) and (\ref{eq:notsss12}).
$\blacksquare$
\end{remark}

%
\subsubsection{Algorithms for Bayesian Formulation}
\label{sssec:IPBC}
%

In Subsection \ref{sssec:IPB} we derived Gaussian projected filter
and ensemble Kalman methods that converge exponentially fast to
the exact posterior distribution of the inverse problem \eqref{eq:ip} in
the linear, Gaussian setting. This is achieved by choosing
$\beta=(1-\dt)^{-1}$ as stated in \eqref{eq:betat}. A similar argument
in the continuous time setting thus leads to the choice $\beta=1.$ We have the
following corollary of Proposition \ref{lem:lin_c22}:

\begin{corollary}
\label{cor:lin_c23}
Assume that $u_0$ is initialized at a Gaussian
$\Ng(m_0,C_0)$ and assume also that $C_0, \Gammas_R \succ 0$. Consider the setting where
$G_R(\cdot)=L_R \cdot$ for matrix $L_R \in \R^{(d_w+d_u) \times d_u}$. 
Now consider the filtering distribution $u(t)|Z^\dagger(t)$ defined by \eqref{eq:rescale_beta_dt2} for $\beta=1$, with data $Z^\dagger(t)$ defined by $z^\dagger(s)=s \wwd_R$, where $\wwd_R$ is defined in \eqref{eq:wwr}. Then the filtering distribution is Gaussian $\Ng(m(t),C(t))$ for all $t \ge 0$. The mean and covariance converge at an exponential rate, as  $t \to \infty$, to the limits $m_{\infty}=\mop$ and
$C_{\infty}=\Cp$, where $(\mop,\Cp)$ are the posterior mean \eqref{eq:pmean} and covariance \eqref{eq:pcov}. 
$\Diamond$ \end{corollary}

\paragraph{Preconditioned Gradient Flows}

While the above extension to Bayesian inference problems is straightforward and leads to an exact recovery of the posterior distribution in the linear Gaussian setting, the resulting methods deliver only approximations in the general nonlinear setting. This, of course, is a theme throughout this article.
In the remainder of this subsection we take a different approach to deriving algorithms for sampling which are exact in the linear Gaussian setting. We start by considering preconditioned gradient descent, and its Langevin analog; we then note that application of statistical linearization gives approximations of these evolutions which are  exact in the linear Gaussian setting, and hence also exhibit desirable convergence properties, in the linear Gaussian
setting.

First recall the preconditioned gradient descent
(\ref{eq:gd}) for $\Psi = \Phi_R$ resulting in
\begin{empheq}[box=\widefbox]{equation}
\label{eq:gd0}
\frac{\dd u}{\dd t}=-C\nabla \Phi_R(u).
\end{empheq} 
This methodology for optimization may be extended to a sampling methodology
by considering the preconditioned mean field Langevin SDE defined by
\begin{empheq}[box=\widefbox]{equation}
\label{eq:langevin0}
\dd u=-C\nabla \Phi_R(u)\dd t+\sqrt{2C} \dd W.
\end{empheq}

\begin{remark}
\label{rem:mer}
We note here that it may be verified that (\ref{eq:langevin0}) is invariant under affine transformations of type (\ref{eq:at1}).
Thus (\ref{eq:langevin0}) provides an attractive generalization of standard Langevin dynamics (\ref{eq:langevin}) for sampling from the posterior distribution, because of the
properties of affine invariant algorithms highlighted in Remark \ref{rem:remcite99_needed}. Although the desired posterior distribution is approached only in the limit $t\to \infty$, the fact that the convergence is exponential, with universal rate across all linear
Gaussian inverse problems, makes the approach potentially competitive. Theory concerning this equation is discussed in the bibliography Subsection \ref{ssec:IPBIBC}.

Use of the statistical linearization approximation \eqref{eq:SLD}, which we discuss next, converts both (\ref{eq:gd0}) and \eqref{eq:langevin0} into mean field ensemble Kalman methods which, through particle approximations, lead to implementable derivative-free methods.
$\blacksquare$
\end{remark}

\paragraph{Inexact Gradients}

It is possible to apply statistical linearization
\eqref{eq:correlation approx} to the preconditioned Langevin equation \eqref{eq:langevin0}, 
resulting in the evolution equation
\begin{empheq}[box=\widefbox]{equation}
\label{eq:langevin02}
\dd u=\CuG_R \Gammas_R^{-1}\bigl(\wwd_R-G_R(u)\bigr) \dd t+\sqrt{2C} \dd W.
\end{empheq}
Here $\CuG_R$ is computed using (\ref{eq:KF_analysis_add_IP2}c), and $C$ is
the regular covariance, both under the law of $u.$ Like \eqref{eq:notsss12}
and \eqref{eq:mfKBF012} this equation converges exponentially fast to the posterior
distribution in the linear Gaussian case. However the form of the mean field
stochastic differential equation is fundamentally different: here the Brownian
noise arises in state space $\R^{d_u}$, whereas in the ensemble Kalman methods
it appears in data space $\R^{d_y}.$ 
The  Kalman-Wasserstein gradient flow structure for the
Liouville and Fokker-Planck equations considered here
is not maintained under statistical linearization: they
are not of gradient descent type in $\mP_+$, except in the
case where $G(u)$ is linear.

\paragraph{Exact Gradients}

This paragraph concerns analysis of \eqref{eq:gd0} and \eqref{eq:langevin0} when the exact gradients of $\Phi$, and hence $\Phi_R$, are available: we consider the geometric properties of (\ref{eq:gd0}). We assume 
that $u(t)$ has smooth probability density $\rho(u,t)$ for
all $t\ge 0$ and recall that we denote the manifold of all smooth probability
density functions on $\R^{d_u}$ by $\mP_+$. Then $\rho$ satisfies the Liouville equation
\begin{equation} \label{eq:liouville}
    \partial_t \rho = \nabla \cdot (\rho C \nabla \Phi_R).
\end{equation}
Again the appearance of the covariance matrix $C = \CC(\rho)$ renders (\ref{eq:liouville}) a nonlinear and nonlocal partial differential equation on $\mP_+$. 
We will show that the evolution of $\rho$ on $\mP_+$ has gradient flow structure of the form given in \eqref{eq:GGS}.

In order to see this gradient structure we need to identify
the energy functional which is being minimized and then introduce a metric structure in which \eqref{eq:liouville} is a gradient flow. The relevant energy functional
 $\mE(\rho)$ on $\mP_+$ is simply
\begin{equation*}
    \mE(\rho) = \int_{\R^{d_u}} \Phi_R (u) \rho(u) \dd u,
\end{equation*}
namely the expected value of $\Psi(u).$ Note that the Fr\'echet derivative\footnote{Identified by writing  $\mE(\rho+\sigma)-
\mE(\rho)$ as a linear operator acting on $\sigma \in \Tp$ plus higher order terms.} of $\mE$ is given by
\begin{equation*}
\frac{\delta \mE}{\delta \rho}=\Phi_R.
\end{equation*}
Hence we can rewrite (\ref{eq:liouville}) as
\begin{equation} \label{eq:Gradient flow}
    \partial_t \rho = \nabla \cdot \Bigl(\rho \CC(\rho) \nabla \frac{\delta \mE}{\delta \rho}\Bigr).
\end{equation}

Recalling the tangent space $\Tp$ to $\mP_+$, at $\rho\in\mP_+$, 
the appropriate choice of Riemannian metric tensor 
$g_{\rho,\CC}\,:\, \Tp\times \Tp \to\R$ 
for \eqref{eq:liouville} is then
\begin{equation*}
    g_{\rho,\CC}(\sigma_1,\sigma_2):=\int_{\R^{d_u}} \left\langle\nabla \phhi_1\,,\,\CC(\rho)\nabla \phhi_2\right\rangle \,\rho\, \dd u,
\end{equation*}
where $\sigma_i=-\nabla\cdot\left(\rho\CC(\rho)\nabla \phhi_i\right)\in T_\rho\mathfrak{P}_+$ for $i=1,2$.\footnote{A precise mathematical treatment requires further assumptions on the considered set $\mP_+$ of probability functions ensuring that the required potentials $\phhi$ indeed exists for all $\sigma \in \Tp$. See the bibliography for references and discussion of open questions in this area.} We refer to the \emph{Kalman-Wasserstein metric} as the metric induced by this metric tensor, generalizing the Wasserstein-2 metric introduced in Subsection \ref{ssec:IPBPCT}.

The equation \eqref{eq:Gradient flow} takes gradient flow structure in $(\mathfrak{P}_+, g_{\rho,\mathcal{C}})$: 
\begin{equation} \label{eq:gradient structure 22}
\frac{\dd}{\dd t} \mE(\rho)=-g_{\rho,\CC}(\partial_t\rho,\partial_t\rho).
\end{equation}
To establish \eqref{eq:gradient structure 22} note that
\begin{align*}
    \frac{\dd}{\dd t} \mE(\rho)
&=\int_{\R^{d_u}} \frac{\delta \mE}{\delta \rho}\partial_t \rho\,\dd u\,\\
&=-\int_{\R^{d_u}} \rho \, \Bigl| \CC(\rho)^{\frac12} \nabla \frac{\delta \mE}{\delta \rho} \Bigr|^2\,\dd u\,\\
&=-\int_{\R^{d_u}} \left\langle\nabla \frac{\delta \mE}{\delta \rho}\,,\,\CC(\rho)\nabla \frac{\delta \mE}{\delta \rho}\right\rangle \,\rho\,\dd u\,\\
&=-g_{\rho,\CC}(\sigma,\sigma)\,,
\end{align*}
where
$$\sigma=-\nabla\cdot\left(\rho\CC(\rho)\nabla \frac{\delta \mE}{\delta \rho}\right)=-\partial_t \rho.$$ 
Thus the gradient flow identity \eqref{eq:gradient structure 22}
is established.  It is interesting to compare the 
gradient structure (\ref{eq:gradient structure 22}) 
on $\mP_+$ to the gradient flow structure on $\R^{d_u}$ defined by 
\eqref{eq:GD122} with $B=\CC(\rho).$ The state space
gradient flow on $\R^{d_u}$ ensures decrease of $\Phi_R\bigl(u(t)\bigr)$
along trajectories whilst the probability space gradient flow on 
$\mP_+$ ensures decrease of the expected value of $\Phi_R\bigl(u(t)\bigr)$
across a distribution of trajectories found from random initialization of the state space problem. 

The preceding calculations demonstrates that \emph{any} evolution equation of 
type (\ref{eq:Gradient flow}) with appropriate energy functional $\mE$ induces a gradient flow on $\mP_+.$  In particular, using the energy functional
\begin{equation*} 
    \mE(\rho) = \int \left(\Phi_R+\ln\rho\right)\rho\,\dd u
\end{equation*}
defined in (\ref{eq:energy0}), we observe that
the associated evolution equation (\ref{eq:Gradient flow}) becomes
\begin{equation} \label{eq:EKS}
    \partial_t \rho = \nabla \cdot (\rho \CC(\rho) \nabla \Phi_R) + \nabla \cdot (\CC(\rho) \nabla \rho).
\end{equation}
This nonlinear and
nonlocal Fokker-Planck equation governs evolution of the probability
density funtion for the mean field SDE \eqref{eq:langevin0}. 
Now recall Remark \ref{rem:KLE} in which we note that $\mathrm{KL}[\rho \Vert \ppi]$ and $\mE(\rho)$
differ by a constant, and where $\ppi$ is the posterior density associated with posterior measure $\mu$ given by \eqref{eq:mud}. Thus the global minimizer of the gradient flow associated with (\ref{eq:energy0}) is attained at $\rho=\ppi$ and hence solves the Bayesian inverse problem.

Finally, it is also useful to see the nonlinear and nonlocal
Fokker-Planck equation \eqref{eq:EKS}
written in the abstract gradient form \eqref{eq:GGS}. In this case we
may choose
\begin{subequations}
\label{eq:abstractform2_2} 
\begin{align} 
\mathcal{E}(\rho) &\coloneqq 
    \mE(\rho) = \int \left(\Phi_R+\ln\rho\right)\rho\,\dd u,\\
\sM(\rho)^{-1} \psi &\coloneqq  -\nabla \cdot (\rho \CC(\rho) \nabla \psi) \in \Tp.
\end{align}
\end{subequations}

%
\subsection{Bibliographical Notes}
\label{ssec:IPBIBC}
%

The notion of gradient flow plays a central role in this
paper. The subject is enormous and we cannot do justice to it
here. We point the reader to the text \citet{hirsch2013differential} for the study of gradient flows
in Euclidean space and to the text \citet{ambrosio2008gradient}
for gradient flows in metric spaces, including spaces
of probability measures. The paper \citet{chen2023sampling}
contains an overview of gradient flows for probability measures,
focused on applications to Bayesian inversion.

Continuous time limits of ensemble Kalman filters were first derived in \citet{bergemann2010localization} and \citet{bergemann2010mollified} and further explored in the context of continuous time transport in \citet{reich2011dynamical}. A connection between
the non-stochastic Kalman transport equations (\ref{eq:mfKBF}) and preconditioned gradient descent was first identified in \citet{bergemann2010mollified} for finite ensemble sizes and in \citet{reich2015probabilistic} for the mean field limit. The paper \citet{SR-PR21} adopts the time continuous setting. 
See also \citet{YBM14} for related formulations based on the feedback particle filter approach to continuous time filtering.

The papers \citet{schillings2017analysis} and
\citet{schillings2018convergence} studied the use of ensemble Kalman methods for optimization problems,
taking a continuous time limit, making a connection to preconditioned gradient descent and exploiting an invariant subspace property (see, for example, \citet{iglesias2013ensemble}[Theorem 2.1]) inherent in the basic form of the ensemble Kalman methodology.
This led to work on approximate sampling from the preconditioned Langevin equation in
\citet{garbuno2020interacting},
\citet{garbuno2020affine},
\citet{nusken2019note} and
\citet{zliu22}; in particular the papers \citet{nusken2019note,garbuno2020affine} demonstrated a finite ensemble size correction to the mean field limit introduced in \citet{garbuno2020interacting}.
The paper \citet{garbuno2020interacting} used the
non-standard Kalman-Wasserstein metric, first introduced in \citet{reich2015probabilistic}, to provide a framework to analyze the preconditioned Langevin equation. It remains open to fully develop the mathematical foundations of gradient flows using this metric. These papers demonstrate the role that the ensemble plays in preconditioning the dynamics. This makes a link to the important paper \citet{goodman2010ensemble} which introduced the concept of affine invariant ensemble samplers, an idea developed further in \citet{leimkuhler2018ensemble}.

The paper \citet{chada2020iterative} overviews the optimization perspective and provides a unifying framework, going beyond gradient descent-based methods. In particular, framing ensemble Kalman based optimization methods in terms of statistical linearization, as we do in Subsection \ref{sssec:stat lin}, originates in that paper.
The papers \citet{ReiWei21} and \citet{pavliotis2021derivative} show how to construct a derivative free Langevin sampler using localized ensembles; the use of localized ensembles to
train neural networks may be found in \citet{haber2018never}. An alternative interacting particle system approach to solving inverse problems and optimization tasks is the use of consensus based methods
\citep{tsianos2012consensus,
ha2021convergence,
fornasier2021consensus,
carrillo2018analytical,
carrillo2021consensus,
chen2020consensus,
pinnau2017consensus,
fornasier2020consensus}.
The papers \citet{ding2020ensemble}, \citet{ding2021ensemble,ding2021ensembleb}
undertake a systematic analysis of the link between interacting particle systems and mean field systems, mostly focused on the solution of inverse problems; however the methods developed are more widely applicable.

Recall that \eqref{eq:meanGPFcontIP}
is a derivative-free approach to approximately solving the Bayesian inverse problem by application of the Gaussian projected filter.  
In appendix Section \ref{sec:AGF} we show that these equations may
may also be derived from the Fisher-Rao gradient flow \eqref{eq:betterform}, deriving equations for the mean and covariance
evolution from it, and then by invoking a number of approximations. 
In appendix Section \ref{sec:AGF}, as a step in this derivation, we 
obtain the equations
\begin{subequations} \label{eq:closure_second_order}
\begin{align}
    \frac{\dd m}{\dd t} &= - \mathbb{E}\bigl(\Phi(u)(u-m)\bigr),\\
    \frac{\dd C}{\dd t} &= -\mathbb{E}\bigl(\Phi(u)(u-m)(u-m)^\top\bigr)+C\mathbb{E}\bigl(\Phi(u)\bigr),
\end{align}
\end{subequations}
which may be viewed
as a closed evolution when expectation is computed under the Gaussian defined by $(m,C).$ These closed equations also define a 
derivative-free approach to approximate the Bayesian inverse problem, different from  \eqref{eq:meanGPFcontIP}. It is natural to ask which of \eqref{eq:closure_second_order} and \eqref{eq:meanGPFcontIP} is preferable; computational
experiments underpinning the work in \citet{chen2023sampling} indicate that \eqref{eq:meanGPFcontIP} is more robust
in various settings.
Discussion of  the Fisher-Rao gradient flow projected into the manifold of Gaussians is contained in \citet{chen2023sampling}.

Theory showing that the Fokker-Planck equation \eqref{eq:FP} arises as a continuous time limit of the MCMC method \eqref{eq:mcmc} was initiated in \citet{gelman1997weak}, for the random walk Metropolis algorithm, and followed up for the Metropolis adjusted Langevin equation in \citet{roberts1998optimal}. See \citet{roberts2001optimal} for an overview of this field.
Most of the analysis is done at the level of weak convergence
of sample paths and hence works directly with \eqref{eq:langevin},
rather than with its density, which is governed by \eqref{eq:FP}.

\section{Conclusions and Open Problems}
\label{sec:C}

This paper presents a unifying perspective on the
derivation, interpretation and analysis of ensemble
Kalman methods through use of the ideas of mean 
field models, second order approximate transport
and particle approximation. Both state estimation
and parameter estimation (inverse problems)
are studied; similar ideas may be developed for joint parameter-state estimation problems but are not discussed in this paper. The ideas have been presented in discrete time and, through specific parametric scalings, continuous time limits have been identified. Our unifying approach constitutes a novel presentation of the subject, and creates a framework for the mathematical development
of the subject area.

Ensemble Kalman methods have been enormously impactful
in the geosciences, where they originated, and are starting
to be used in numerous other application domains.
However, if they are to realize their potential for
widespread adoption and application, many research
challenges remain. These challenges are both in
mathematical analysis and in the development of methodology.
One of the biggest challenges is the following: some theory,
and abundant numerical evidence, show that ensemble Kalman
methods perform well at state estimation and at parameter
estimation; however, there is very little theory, or empirical evidence, which identifies situations in which the statistical 
information in the ensemble constitutes valid approximate Bayesian inference. The recent papers \citet{carrillo2022ensemble} and \citet{calvello2024accuracy} make first steps
in this direction. In mathematical analysis a number of other substantial challenges
are presented by ensemble Kalman methods, which we list here.

\begin{itemize}

\item For state estimation, determine conditions, related to small noise scenarios,
under which the true state is
well-approximated by the mean or sample path of mean field
models based on second order transport; find sharp error estimates.

\item For state estimation, determine conditions under which the filtering distribution is well-approximated by mean field models based on second order transport; find sharp error estimates and appropriate metrics for
the analysis; furthermore, identify which problems satisfy these conditions.

\item For inverse problems, determine conditions under which the optimizer of a (to-be-identified) loss function is well-approximated by the mean or sample path of mean field models based on second order transport, in both the transport and iterative approaches to inversion; find sharp error estimates.

\item For inverse problems, determine conditions under which the Bayesian posterior is well-approximated by mean field models based on second order transport, in both the transport and iterative approaches to inversion;  find sharp error estimates and appropriate metrics for the analysis; furthermore, identify which inverse problems satisfy these conditions.

\item For all of the preceding four scenarios derive error bounds for
particle approximations of the mean field models; when low-rank
structure is present in covariances prove that ensemble Kalman methods
can correctly identify it, and exploit the low-rank structure in the
analysis of particle approximations.

\item In all of the particle methods arising above, compare the
cost/error trade-off with that arising for other methods,
to determine when ensemble Kalman methods are competitive.

\item All of the algorithms in this paper are studied in 
idealized scenarios, in the absence of
widely employed techniques such as covariance inflation and localization; 
developing analyses which account for 
covariance inflation and localization will be highly desirable.

\end{itemize}

On the methodology side there are also a number of significant
challenges, which we also list here.

\begin{itemize}

\item Given ability to compute an ensemble of evaluations of the combined state-observation dynamical system, determine the optimal (in terms of cost/error trade-off) way to combine the ensemble to either estimate the state given an observation sequence, or the filtering distribution.

\item Given ability to compute an ensemble of evaluations of the forward model, what is the optimal way to combine them to either
estimate the parameter given an observation, or the posterior distribution, for the corresponding inverse problem.

\item What role might be played by machine learning in addressing
the design of algorithms, and in particular in addressing the
preceding questions.

\item Develop an overarching interacting particle and mean field framework that subsumes ensemble Kalman and alternative particle filters as well as derivative-free and consensus based optimization methods and use the framework to create new methods.

\item Develop principles for the deployment of covariance inflation,
and  generalizations of localization, so that the resulting
methodology is widely applicable and does not need application-specific 
principles to be applied. Different ideas may be needed in the inverse problem setting.

\item Expand the preceding scenarios beyond the additive error models 
discussed in the survey, to include classification and other machine 
learning tasks.

\end{itemize}

\vspace{0.1in}
\noindent{\bf Acknowledgments} EC is grateful to the Kortschak Scholars Program within the CMS Department at Caltech for financial support. The work of SR is
supported by Deutsche Forschungsgemeinschaft (DFG) -- Project-ID 318763901 -- SFB1294. The work of AMS is supported by NSF award AGS1835860, and  by a Department of Defense Vannevar Bush Faculty
Fellowship, which also supports EC. In addition,  AMS and EC are supported by the SciAI Center, 
funded by the Office of Naval Research  (ONR), under Grant Number N00014-23-1-2729.

The authors are grateful to Dmitry Burov for helpful advice 
regarding the numerical experiments; they are also grateful to Arnaud Vadeboncoeur, Eviatar Bach, Ricardo Baptista 
and Daniel Sanz-Alonso for helpful discussions which improved this paper. The authors thank
Mark Asch for careful reading of the paper, and useful feedback.
Finally AMS is grateful for hospitality
offered at the University of Chicago, 
by Guillaume Bal, Peter McCullagh, Daniel Sanz-Alonso and Rebecca Willett, where he
delivered lectures based on a preliminary version of this paper in May 2022.
\vspace{0.1in}

\addcontentsline{toc}{section}{References}
\bibliography{bib-database}
%
%
%

%
%
%
%
%
%
%
%
%
%
%
%
%
%
%
%
%
%
%
%
%
%

%
\section{Appendix A (Pseudo-Code)}
\label{sec:AA}
%

In this appendix we provide pseudo-code describing several of the algorithms that we present
and deploy in this paper. Algorithms \ref{alg:3DVAR} and \ref{alg:EnKF}, 3DVAR and the ensemble Kalman filter (EnKF) respectively, are applied in the context of the problem of state estimation for discrete time dynamical systems presented in Subsections \ref{ssec:CT} and \ref{ssec:EKM}. The scheme 3DVAR is employed in Examples \ref{ex:3dvar}, \ref{ex:s3dvar}, \ref{ex:enkf}
and \ref{ex:3dvar_ms}. The ensemble Kalman filter is applied in Example \ref{ex:enkf}. Ensemble Kalman methods for inversion, as shown in Algorithms \ref{alg:EKTI}, \ref{alg:EKOI}, \ref{alg:EKI_post}, are presented in 
Subsections \ref{ssec:IPFT} and \ref{ssec:IPIFT},
and applied within Examples \ref{ex:EKI_1D} and \ref{ex:EKI}. We refer to
Algorithm \ref{alg:EKTI} as Ensemble Kalman Inversion (Transport),
as it arises from the approach to inversion described in Subsection \ref{sssec:kt_sdt};
we refer to Algorithm \ref{alg:EKOI} as Ensemble Kalman Inversion (Iteration to Infinity), as it arises from the approach to inversion described in Subsection \ref{ssec:IPIFT}; Algorithm \ref{alg:EKI_post} corresponds to an ensemble Kalman inversion method employing inflation by the covariance computed under the filtering distribution, as outlined in Subsection \ref{sssec:IPB}. We refer to Algorithm \ref{alg:EKI_post} as Ensemble Kalman Inversion (Inflated State).

\begin{algorithm}
\caption{3DVAR}\label{alg:3DVAR}
\begin{algorithmic}

\STATE \textbf{Input:} Initial $v_0 \in \R^{d_v}$ and fixed gain matrix $K \in \R^{d_v\times d_y}$.

\FOR{$n=0$ to $N-1$}

\STATE\textbf{Prediction:}
\[\hv_{n+1} = \Psi(v_n).
\]

\STATE\textbf{Analysis:}
\[v_{n+1} = \hv_{n+1}+K\Bigl(\yd_{n+1}-h\bigr(\hv_{n+1}\bigr)\Bigr).
\]

\ENDFOR

\STATE\textbf{Output:} Estimates $\{v_n\}_{n=0}^N$.

\end{algorithmic}
\end{algorithm}
\begin{algorithm}
\caption{EnKF}\label{alg:EnKF}
\begin{algorithmic}
\STATE \textbf{Input:} Ensemble size $J$, initial ensemble $\{v_0^{(j)}\}_{j=1}^J$.

\FOR{$n=0$ to $N-1$}

\STATE\textbf{Prediction:} for $j=1,\dots,J$ do
\begin{equation*}
    \hv_{n+1}^{(j)} = \Psi(v_n^{(j)}) + \xi_n^{(j)},\qquad \xi_n^{(j)} \sim \text{N}(0,\Sigma).
\end{equation*}

\STATE Compute \begin{align*}\pmean_{n+1} &= \frac{1}{J}\sum_{j=1}^J\hv_{n+1}^{(j)}, \quad \ho_{n+1} = \frac{1}{J}\sum_{j=1}^Jh(\hv_{n+1}^{(j)}),\\
\pCov^{vh}_{n+1} &= \frac{1}{J}\sum_{j=1}^J(\hv_{n+1}^{(j)} - \pmean_{n+1})\otimes (\hv_{n+1}^{(j)} - \ho_{n+1}),\\
\pCov^{hh}_{n+1} &= \frac{1}{J}\sum_{j=1}^J(\hv_{n+1}^{(j)} - \ho_{n+1})\otimes (\hv_{n+1}^{(j)} - \ho_{n+1}),\\
K_{n+1} &= \pCov_{n+1}^{vh}\left( \pCov_{n+1}^{hh} + \Gamma\right)^{-1}.
\end{align*}

\STATE\textbf{Analysis:} for $j=1,\dots,J$ do
\begin{align*}
\hy_{n+1}^{(j)} &= h(\hv_{n+1}^{(j)}) + \eta_n^{(j)}, \qquad \eta_n^{(j)} \sim \text{N}(0,\Gamma),\\
     v_{n+1}^{(j)} &= \hv_{n+1}^{(j)}+K_{n+1}\Bigl(y_{n+1}^{(j)}-h\bigr(\hv_{n+1}^{(j)}\bigr)\Bigr).
\end{align*}

\ENDFOR
\STATE\textbf{Output:} Ensembles $\{v_n^{(j)}\}_{j=1}^J$ for $n=0,\dots,N$.
\end{algorithmic}
\end{algorithm}
\begin{algorithm}
\caption{Ensemble Kalman Inversion (Transport to Finite Time)}\label{alg:EKTI}
\begin{algorithmic}
\STATE \textbf{Input:} Data $\wwd$, $N$ and $\dt$ such that $N\dt =1$, ensemble size $J$, initial ensemble $\{u_0^{(j)}\}_{j=1}^J$.

\FOR{$n=0$ to $N-1$}

\STATE\textbf{Prediction:} for $j=1,\dots,J$ do
\begin{equation*}
    \hu_{n+1}^{(j)} = u_{n}^{(j)}.
\end{equation*}

\STATE Compute \begin{align*}\pmean_{n+1} &= \frac{1}{J}\sum_{j=1}^J\hu_{n+1}^{(j)}, \quad \ho_{n+1} = \frac{1}{J}\sum_{j=1}^JG(\hu_{n+1}^{(j)}),\\
\pCov^{uG}_{n+1} &= \frac{1}{J}\sum_{j=1}^J\bigl(\hu_{n+1}^{(j)} - \pmean_{n+1}\bigr)\otimes \bigl(G(\hu_{n+1}^{(j)}) - \ho_{n+1}\bigr),\\
\pCov^{GG}_{n+1} &= \frac{1}{J}\sum_{j=1}^J\bigl(G(\hu_{n+1}^{(j)}) - \ho_{n+1}\bigr)\otimes \bigl(G(\hu_{n+1}^{(j)}) - \ho_{n+1}\bigr).\\
\end{align*}

\STATE\textbf{Analysis:} for $j=1,\dots,J$ do
\begin{align*}
\hw_{n+1}^{(j)} &= G\bigl(\hu_{n+1}^{(j)}\bigr) + \eta_n^{(j)}, \qquad \eta_n^{(j)} \sim \text{N}\Bigl(0,\frac{\Gammas}{\dt}\Bigr),\\
     u_{n+1}^{(j)} &= \hu_{n+1}^{(j)}+\dt\pCov_{n+1}^{uG}\left( \dt\pCov_{n+1}^{GG} + \Gammas\right)^{-1}\bigl(\wwd - \hw_{n+1}^{(j)}\bigr).
\end{align*}

\ENDFOR
\STATE\textbf{Output:} Ensemble $\{u_N^{(j)}\}_{j=1}^J$.
\end{algorithmic}
\end{algorithm}
\begin{algorithm}
\caption{Ensemble Kalman Inversion (Iteration to Infinity)}\label{alg:EKOI}
\begin{algorithmic}
\STATE \textbf{Input:} Data $\wwd$, $N_{\infty}$, $\dt$, ensemble size $J$, initial ensemble $\{u_0^{(j)}\}_{j=1}^J$.

\WHILE{$n < N_{\infty}$}

\STATE\textbf{Prediction:} for $j=1,\dots,J$ do
\begin{equation*}
    \hu_{n+1}^{(j)} = u_{n}^{(j)}.
\end{equation*}

\STATE Compute \begin{align*}\pmean_{n+1} &= \frac{1}{J}\sum_{j=1}^J\hu_{n+1}^{(j)}, \quad \ho_{n+1} = \frac{1}{J}\sum_{j=1}^JG(\hu_{n+1}^{(j)}),\\
\pCov^{uG}_{n+1} &= \frac{1}{J}\sum_{j=1}^J\bigl(\hu_{n+1}^{(j)} - \pmean_{n+1}\bigr)\otimes \bigl(G(\hu_{n+1}^{(j)}) - \ho_{n+1}\bigr),\\
\pCov^{GG}_{n+1} &= \frac{1}{J}\sum_{j=1}^J\bigl(G(\hu_{n+1}^{(j)}) - \ho_{n+1}\bigr)\otimes \bigl(G(\hu_{n+1}^{(j)}) - \ho_{n+1}\bigr).\\
\end{align*}

\STATE\textbf{Analysis:} for $j=1,\dots,J$ do
\begin{align*}
\hw_{n+1}^{(j)} &= G\bigl(\hu_{n+1}^{(j)}\bigr) + \eta_n^{(j)}, \qquad \eta_n^{(j)} \sim \text{N}\Bigl(0,\frac{\Gammas}{\dt}\Bigr),\\
     u_{n+1}^{(j)} &= \hu_{n+1}^{(j)}+\dt\pCov_{n+1}^{uG}\left( \dt\pCov_{n+1}^{GG} + \Gammas\right)^{-1}\bigl(\wwd - \hw_{n+1}^{(j)}\bigr).
\end{align*}

\ENDWHILE
\STATE\textbf{Output:} Ensemble $\{u_{N_{\infty}}^{(j)}\}_{j=1}^J$.
\end{algorithmic}
\end{algorithm}






\begin{algorithm}
\caption{Ensemble Kalman Inversion (Inflated State)}
\label{alg:EKI_post}
\begin{algorithmic}
\STATE \textbf{Input:} Data $\wwd_R$, $N_{\infty}$, $\dt$, ensemble size $J$, initial ensemble $\{u_0^{(j)}\}_{j=1}^J$.

\WHILE{$n<N_{\infty}$}

\STATE\textbf{Prediction:} Compute
\begin{equation*}
    m_{n} = \frac1J\sum_{j=1}^Ju_n^{(j)}, \qquad
    C_n =\frac1J\sum_{j=1}^J \bigl( u_n^{(j)}-m_n\bigr)\otimes\bigl( u_n^{(j)}-m_n\bigr)
\end{equation*}

\qquad for $j=1,\dots,J$ do

\begin{equation*}
    \hu_{n+1}^{(j)} = u_{n}^{(j)} + \xij_n,  \qquad \xij_n \sim \text{N}\Bigl(0,\frac{\dt}{1-\dt}C_n\Bigr).
\end{equation*}

\vspace{1mm}
\STATE Compute \begin{align*}\pmean_{n+1} &= \frac{1}{J}\sum_{j=1}^J\hu_{n+1}^{(j)}, \qquad \ho_{n+1} = \frac{1}{J}\sum_{j=1}^JG_R(\hu_{n+1}^{(j)}),\\
\pCov^{uG}_{R,n+1} &= \frac{1}{J}\sum_{j=1}^J\bigl(\hu_{n+1}^{(j)} - \pmean_{n+1}\bigr)\otimes \bigl(G_R(\hu_{n+1}^{(j)}) - \ho_{n+1}\bigr),\\
\pCov^{GG}_{R,n+1} &= \frac{1}{J}\sum_{j=1}^J\bigl(G_R(\hu_{n+1}^{(j)}) - \ho_{n+1}\bigr)\otimes \bigl(G_R(\hu_{n+1}^{(j)}) - \ho_{n+1}\bigr).\\
\end{align*}

\STATE\textbf{Analysis:} for $j=1,\dots,J$ do
\[
\begin{aligned}
\hy_{n+1}^{(j)} &= \dt G_R\bigl(\hu^{(j)}_{n+1}\bigr) + \eta_n^{(j)}, \qquad \eta_n^{(j)} \sim \text{N}(0,\dt\Gammas_R),\\
     u_{n+1}^{(j)} &= \hu_{n+1}^{(j)}+\pCov_{R,n+1}^{uG}\left( \dt\pCov_{R,n+1}^{GG} + \Gammas_R\right)^{-1}\bigl(\dt\wwd_R -\hy^{(j)}_{n+1}\bigr).
\end{aligned}
\]
\ENDWHILE
\STATE\textbf{Output:} Ensemble $\{u_{N_{\infty}} ^{(j)}\}_{j=1}^J$.
\end{algorithmic}
\end{algorithm}

%
\section{Appendix B (Lorenz '96 Multiscale)}
\label{sec:AB}
%

To illustrate the problems of both state estimation and parameter
estimation we use, throughout this paper, variants on the Lorenz '96
model. In particular we use both the Lorenz '96 multiscale system, introduced in
subsection \ref{ssec:l96M}, and a singlescale closure derived from it in the scale-separated case, described in Subsections  \ref{ssec:l96S} and \ref{ssec:BSE}.
If we (i) generate data with the singlescale model,
and assimilate using the same  model, we are able to test algorithms in their basic (perfect model) form; on the other hand if we (ii) generate data with the multiscale model,
and assimilate using the singlescale model, this allows us to study 
the effect of model misspecification on data assimilation.
Subsection \ref{ssec:83} contains an example of the type (ii), whilst 
Examples \ref{ex:3dvar}, \ref{ex:s3dvar} and \ref{ex:enkf} are of type (i). 

%
\subsection{Lorenz '96 Multiscale Model}
\label{ssec:l96M}
%

Let $v \in C(\R^+,\R^L)$ and $w \in C(\R^+,\R^{L \times J})$.
We write down an ODE for $(v,w)$ in which each variable $v_\ell \in \R$ is coupled to a 
subgroup of fast variables $w_\ell=\{w_{\ell,j}\}_{j=1}^J \in \R^J$; further (discrete) boundary
condition couplings impose periodicity in $L$ and link
$\{w_{\ell,j}\}_{j=1}^J$ to $\{w_{\ell+1,j}\}_{j=1}^J.$ 
The particular form of the system of ODEs is as given in \citet{fatkullin2004computational}.
For  $\ell=1 \dots L$ and $j=1 \dots J$, the ODEs are
\begin{subequations}
  \label{eq:l96ms}
\begin{align}
\dot{v}_\ell &= f_\ell(v) + h_v \overline{w}_\ell, \quad \overline{w}_\ell = \frac{1}{J} \sum_{j=1}^J w_{\ell,j},\\
\dot{w}_{\ell,j} &= \frac{1}{\eps} r_{j}(v_\ell, w_\ell), 
\end{align}
\end{subequations}
where
\begin{subequations}
  \label{eq:l96ms_define}
\begin{align}
f_\ell(v) &:= -v_{\ell-1}( v_{\ell-2} - v_{\ell+1}) - v_{\ell} + F, \\
r_{j}(v_\ell, w_\ell) &:= -w_{\ell,j+1}(w_{\ell,j+2} - w_{\ell,j-1}) - w_{\ell,j}  +  h_w v_{\ell},
\end{align}
\end{subequations}
and we impose the boundary conditions
\begin{equation}
\label{eq:bc}
v_{\ell + L} = v_\ell, \quad w_{\ell+L,j} = w_{\ell,j}, \quad w_{\ell,j+J} = w_{\ell+1,j}.
\end{equation}
Here $\eps > 0$ is a scale-separation parameter, $h_v, h_w \in \R$ 
govern the couplings between the fast and slow systems, 
and $F > 0$ provides a constant forcing. 

%
\subsection{Lorenz '96 Singlescale Model}
\label{ssec:l96S}
%

Let $v=\{v_\ell\}_{\ell=1}^L, w=\{w_\ell\}_{\ell=1}^L$ and $\overline{w}=\{\overline{w}_\ell\}_{\ell=1}^L.$ Then we may write \eqref{eq:l96ms} in the form
\begin{subequations}
  \label{eq:l96ms2}
\begin{align}
\dot{v} &= F(v) + h_v \overline{w},\\
\dot{w} &= \frac{1}{\eps} R(v, w), 
\end{align}
\end{subequations}
for suitable definitions of $F,R.$ If $\eps \ll 1$ then the dynamics for the $w$ governed 
(\ref{eq:l96ms2}b) evolve on a much faster timescale than the
dynamics for the $v$ governed by (\ref{eq:l96ms2}a).
Thus it is a reasonable approximation to think of $v$ as
frozen in (\ref{eq:l96ms}b). 
If we assume that the dynamics of $w$ with
$v$ frozen are ergodic with invariant measure $\mu^v(dw)$ (a measure in $w$,
parameterized by $v$) then the averaging principle 
\citep{vanden2003fast,abdulle2012heterogeneous,pavliotis2008multiscale}
suggests that  we may make the approximation
$$\overline{w} \approx M(v):=\int \overline{w}\,\mu^v(\dd w).$$
If we also invoke the approximation $M_\ell(v) \approx m(v_\ell)$,
which is shown to be valid for large $J$ in \citet{fatkullin2004computational}, 
then we arrive at the singlescale Lorenz '96 model \eqref{eq:l96}. This program of analysis for the Lorenz '96 model, and studies of the validity of the resulting singlescale approximation, was established in the paper \citet{fatkullin2004computational}.
The function $m(\cdot)$ is not given explicitly, but may be fit to data in
various different ways, as explained in \citet{fatkullin2004computational}.
Figure \ref{fig:multiscale_m} shows such an $m$, fit using Gaussian process regression 
methodology as detailed in Subsection 4.3 of \citet{burov2021kernel}.

\subsection{Example}
\label{ssec:83}

The following example relates to the discussion in Remark \ref{rem:what}.

\begin{example}
\label{ex:3dvar_ms}

Throughout this example we set $L = 9, \ J = 8,\  h_v = -0.8,\ h_w = 1,\ F = 10$ and $\eps=2^{-7}$. Note that the equation
(\ref{eq:l96ms}a) has the form of the singlescale 
Lorenz model \eqref{eq:l96} for $v$ with a coupling to the fast variables $w$ governed by (\ref{eq:l96ms}b) replacing the function $m(\cdot).$
In Figure \ref{fig:true_multiscale} we display the dynamics of a slow variable and one associated fast variable of the Lorenz '96 multiscale model \eqref{eq:l96ms}; at the parameter values chosen the system is chaotic.

We consider observations $\{\yd_n\}_{n \in \Z^+}$ arising from the model
\begin{align*}
\label{eq:multiscale_experiment_dynamics}
(\vd_{n+1},\wwd_{n+1}) &= \Psi_{\rm{mult}}(\vd_n,\wwd_n), \\
\yd_{n+1} &= h(\vd_{n+1}),
\end{align*}
where $\Psi_{\rm{mult}}$ is the solution operator to the multiscale model \eqref{eq:l96ms}--\eqref{eq:bc} over the observation time interval $\tau$. 
We then take the data from the multiscale model and assimilate it into
the singlescale model \eqref{eq:l96}, recalling the specific
function $m$ shown in Figure \ref{fig:multiscale_m}, and discussion in the preceding subsection
concerning elimination of the fast variables in favour of a simple closure, using
the averaging principle. 

We now discuss data assimilation using this multiscale data. As in Example \ref{ex:3dvar}, we assume that the observation function is linear: $h(v)=Hv$ for matrix $H:\R^9 \to \R^6$ defined by \eqref{eq:L96H}. As before we choose the gain $K:\R^6 \to \R^9$ to be defined by \eqref{eq:L96K} and employ the 3DVAR algorithm \eqref{eq:3DVARL96} with $\Psi$ the solution operator over time-interval $\tau$ for the singlescale model. Thus model misspecification is present, because data $\Yd$
comes from the multiscale model.

\begin{figure}[h!]
\centering
\includegraphics[width=\linewidth]{./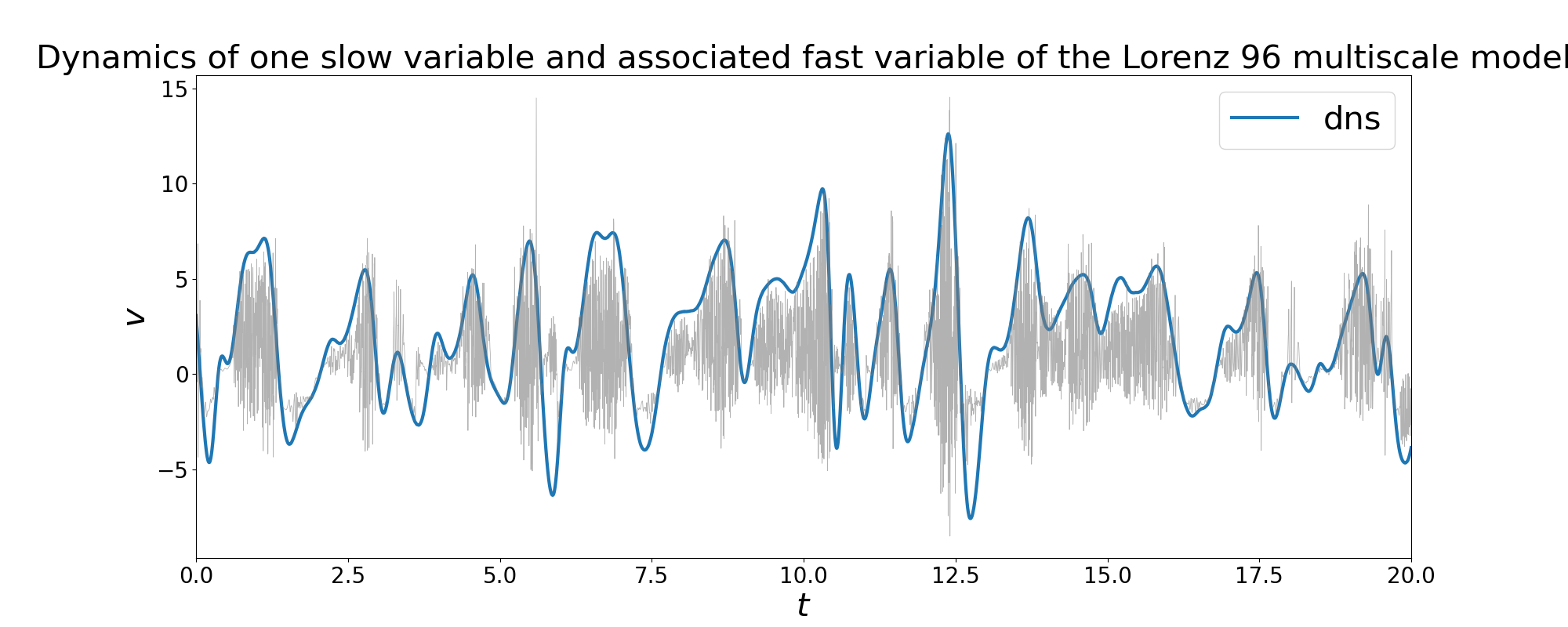}
\caption{Dynamics of a slow variable and an associated fast variable.
Here ``dns'', in blue, labels the slow variable computed by direct 
numerical simulation; in grey is the related fast variable.}
\label{fig:true_multiscale}
\end{figure}

As in Example \ref{ex:3dvar} we display the results of 3DVAR on the unobserved component $v_3.$ 
Figure \ref{fig:multi_dt} shows the behavior of the algorithm for $\tau =10^{-3}.$ Despite the fact that the data is generated from the
multiscale model, whilst assimilation is conducted using the singlescale
model, 3DVAR produces an accurate estimate of the true state with, after
synchronization, the only discernible errors being slight under and overshoots.
Figure \ref{fig:multi_tau1} shows the effect of varying $\tau$, the time between observations. As in Example \ref{ex:3dvar} the assimilation is significantly worse
when $\tau$ is larger.
$\blacksquare$\end{example}

\begin{figure}[h!]
\centering
\begin{subfigure}{\textwidth}
  \centering
  \includegraphics[width=1\linewidth]{./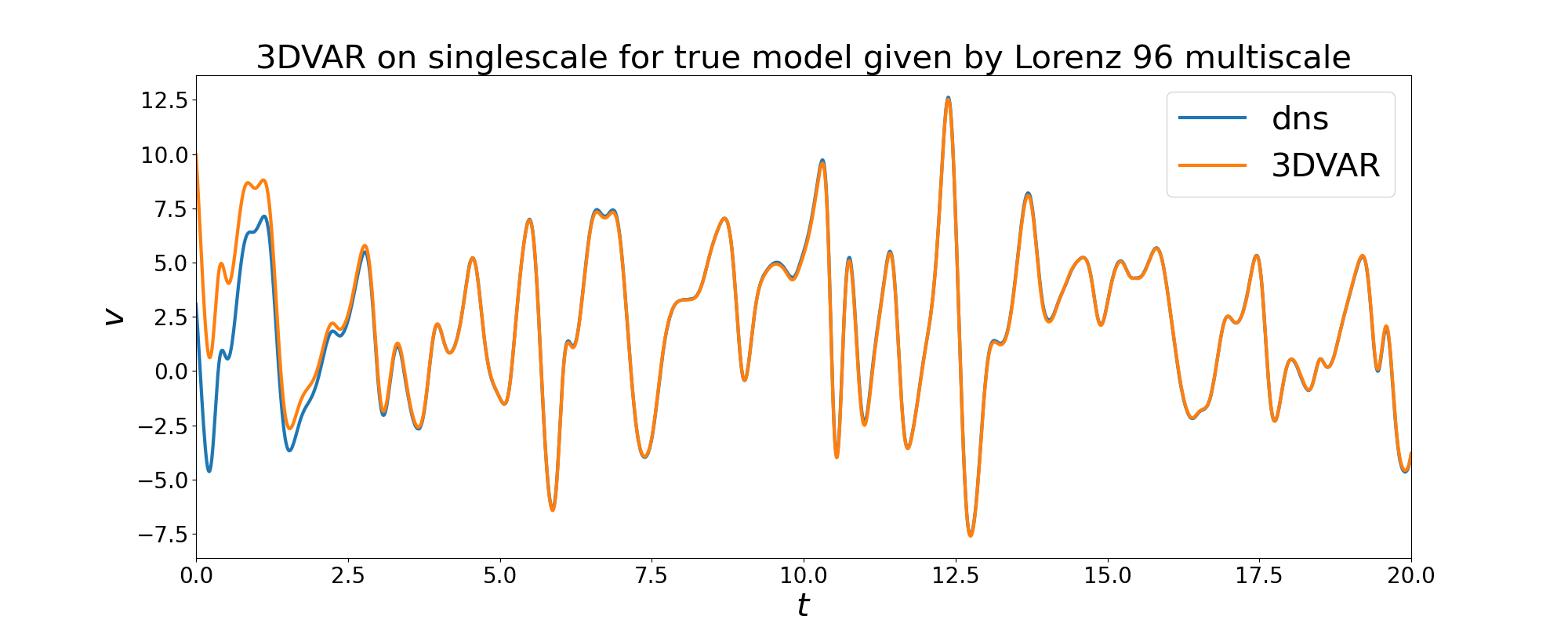}
  \caption{}
  \label{fig:multi_dt}
\end{subfigure}
\begin{subfigure}{\textwidth}
  \centering
  \includegraphics[width=1\linewidth]{./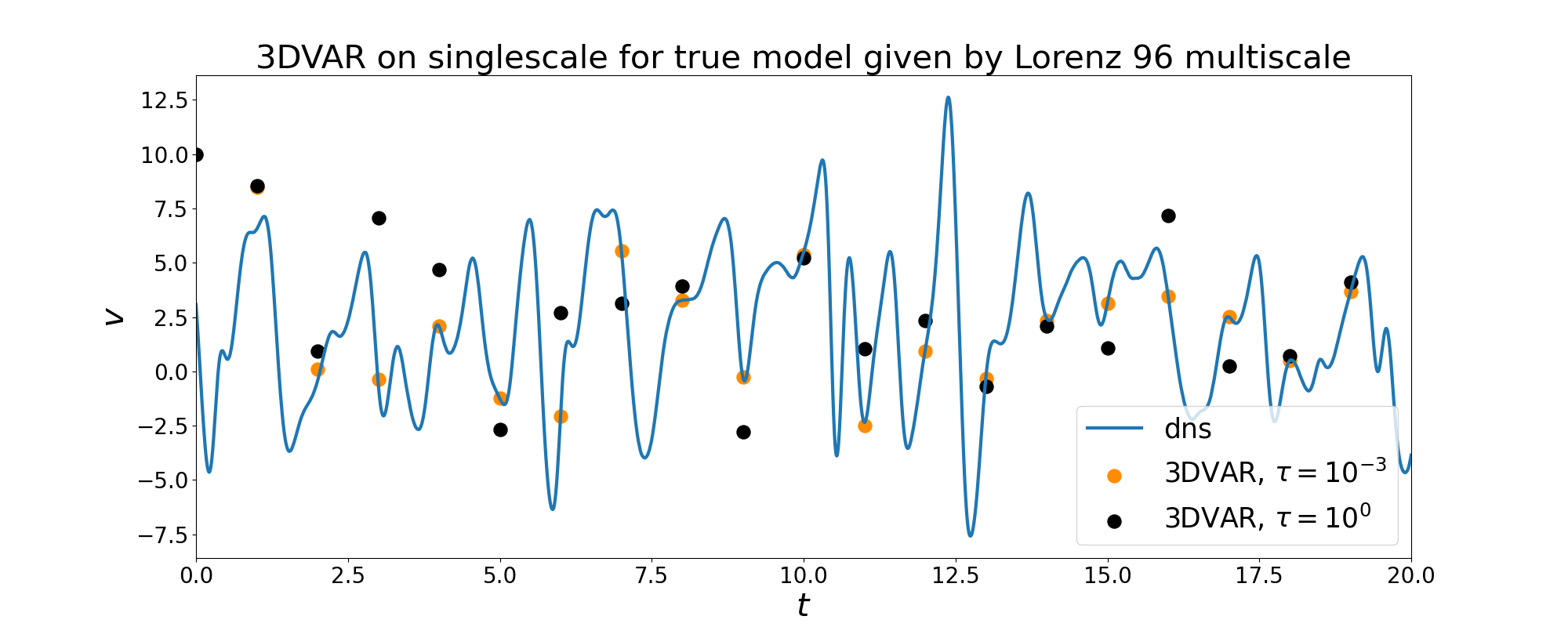}
  \caption{}
  \label{fig:multi_tau1}
\end{subfigure}
\caption{In (a) the estimates of $v_3$ in time produced by 3DVAR using $\tau =10^{-3}$ are displayed against the true dynamics. In (b) the estimates at each unit time obtained using 3DVAR with assimilation at $\tau = 10^{0}$ and $\tau = 10^{-3}$ are shown. Again the acronym ``dns'' refers to direct numerical simulation. 3DVAR successfully synchronizes with the direct numerical simulation at the smaller value of $\tau$ but fails to do so as well when $\tau$ is larger. It is noteworthy that the synchronization takes place here in the context of model misspecification: the data is generated by the multiscale model, but 3DVAR is applied using the singlescale model.}
\label{fig:multi_tau}
\end{figure}

%
\section{Appendix C (Mean Field Maps)}
\label{appendix:C}
%

In Subsection \ref{ssec:mfmsd} we discuss the existence, form and properties of mean field maps
which carry out the program of approximate transport described in Subsection \ref{sssec:ats}; these
maps require access to simulated data and are hence referred as stochastic second order transport maps.
In Subsection \ref{ssec:mfmnsd} we discuss the existence, form and properties of mean field maps
which carry out the program of approximate transport described in Subsection \ref{sssec:atd}; these
maps do not require access to simulated noisy data and are hence referred to
as deterministic second order transport maps. The two approaches provide fundamental underpinnings of ensemble Kalman
methods as we develop them in this paper, but were not adopted in its historical development. Subsection \ref{ssec:TM_MVA} is devoted to the minimum variance approximation, a way of deriving the Kalman transport map \eqref{eq:sd2nn_add}, which is part  of the historical development of the subject, but which does not play a central role in our presentation and analysis of the subject.

%
\subsection{Mean Field Maps -- Simulated Data}
\label{ssec:mfmsd}
%

We note that the maps of interest in this subsection take $\law(\hv_{n+1},\hy_{n+1})$
into $\law(v_{n+1})$. Since the analysis is independent of any specific value of discrete time $n$, we drop the subscript $n+1$ throughout the analysis; this streamlines the notation. Similar considerations apply in Subsection \ref{ssec:mfmnsd} and in Subsection \ref{ssec:TM_MVA}.

Let $(\hv,\hy) \sim \nu$, where 
\begin{equation*}
\label{eq:nu}
    \op G\nu=\Ng\left(
    \begin{bmatrix}
    \pmean\\
    \ho
    \end{bmatrix}, 
    \begin{bmatrix}
   \pCov & \pCov^{vy}\\
    (\pCov^{vy})^\top & \pCov^{yy},
    \end{bmatrix}
    \right).
\end{equation*}
Define
\begin{subequations}
\label{eq:KF_analysisTM}
\begin{align}
        \mean &= \pmean + \pCov^{vy} (\pCov^{yy})^{-1} \bigl(\yd - \ho \bigr),\\
         \Cov &= \pCov - \pCov^{vy}(\pCov^{yy})^{-1} (\pCov^{vy})^\top.
    \end{align}
\end{subequations}
Note that these are the mean and covariance of the Gaussian random variable
$\hv$ conditioned on $\hy=\yd$ on the assumption that $(\hv,\hy)$ is distributed
according to the Gaussian measure $\op G\nu;$ see \eqref{eq:KF_analysis}.
The quantities in the above identities are as defined in Subsection \ref{sssec:IN},
with the caveat that the subscripts $n+1$ have been dropped for clarity of exposition.

Our goal is to identify maps of the form
\begin{equation}
\label{eq:themap}
v=A\hv+B\hy+a    
\end{equation}
so that, if $(\hv,\hy) \sim \nu$, then $v$ has mean $\mean$ and covariance $\Cov$
given by \eqref{eq:KF_analysisTM}. 
We make the following assumptions on the covariance under $\op G\nu$ and on the matrices $A,B$ and vector $a:$

\begin{assumptions}
\label{a:inv}
The covariance under $\op G\nu$ is invertible.
The matrices $A,B$ and vector $a$ may depend on $\yd$ and measure
$\nu$ but not on the random variables $(\hv,\hy)$; thus 
\eqref{eq:themap} takes the explicit form
\[
v=A(\nu,\yd)\hv+B(\nu,\yd)\hy+a(\nu,\yd). \qquad \blacksquare
\]
\end{assumptions}
\vspace{0.1in}

Recall the discussion of pushforward of measures in the introduction to
Subsection \ref{ssec:MFM}.
Under Assumptions \ref{a:inv} 
pushforward under the map \eqref{eq:themap}, when chosen to match the desired first
and second order statistics, defines a nonlinear
map on the space of measures, and in particular on $\nu$ itself.
In what follows all expectations are computed under $\nu.$ 
Since the covariance under $\op G\nu$ is strictly positive-definite so are the
marginal covariances $\hC$ and $\hCyy$ (see
\citet{stu}[Lemma 6.21]). Thus we may define the conditional
mean and covariance by \eqref{eq:KF_analysisTM},
as well as Kalman gain $K$, and conditional covariance $\tC$, given by
\begin{align}
\label{eq:theccovs}
K&=\hCvy(\hCyy)^{-1},\\
\tC&=\hCyy-(\hCvy)^\top(\hC)^{-1}\hCvy;
\end{align}
we note that the conditional covariances $C$ and $\tC$
are also strictly positive-definite
(see \citet{stu}[Lemma 6.21]). From equations (\ref{eq:KF_analysisTM}a) and
\eqref{eq:themap} the following is immediate:

\begin{lemma}
\label{lem:mean}
Let Assumptions \ref{a:inv} hold and let $(\hv,\hy) \sim \nu.$ If $v$ given by \eqref{eq:themap} has mean given by (\ref{eq:KF_analysisTM}a) then
$$a=(I-A)\bbE \hv+K\yd-(B+K)\bbE \hy.$$
$\Diamond$ \end{lemma}

As a consequence it follows that
\begin{equation}
\label{eq:newv}
v=\bbE \hv+A(\hv-\bbE \hv)+B(\hy-\bbE \hy)+K(\yd-\bbE \hy).
\end{equation}
and that
\begin{equation*}
\label{eq:newv2}
\bbE\Bigl((v-\bbE v)\otimes(v-\bbE v)\Bigr)=A\hC A^\top+B\hCyy B^\top+
A\hCvy B^\top+B(\hCvy)^\top A^\top.
\end{equation*}
Thus, to match the covariance of the conditioned random
variable, we obtain
\begin{equation}
\label{eq:newv3}
C=A\hC A^\top+B\hCyy B^\top+
A\hCvy B^\top+B(\hCvy)^\top A^\top.
\end{equation}

\begin{theorem}
\label{t:thet}
Let Assumptions \ref{a:inv} hold and let $(\hv,\hy) \sim \nu.$ If 
$a$ is given by Lemma \ref{lem:mean}
then $v$ defined by \eqref{eq:themap} has covariance  (\ref{eq:KF_analysisTM}b) if and only if real-valued matrices $A$ and $B$ are related by the identity
\begin{equation}
\label{eq:id}
F\hC^{-1}F^\top=C-B\tC B^\top,
\end{equation}
where
\begin{equation}
\label{eq:F}
F=A\hC+B(\hCvy)^\top.
\end{equation}
$\Diamond$ \end{theorem}

\begin{proof}
We complete the square on the right hand side of \eqref{eq:newv3} to obtain
$$(A\hC+B(\hCvy)^\top)\hC^{-1}(A\hC+B(\hCvy)^\top)^\top=C'.$$
where
$$C'=C-B\tC B^\top.$$
Rearranging gives the desired result.
\end{proof}

Define
$$\cB=\{B \in \bbR^{d_v \times d_y}: C' \succ 0\},$$
noting that this set is non-empty, and contains an open (and hence uncountable) set of $B$,
since $C \succ 0$. For $B \in \cB$ consider the eigenvalue problem
\begin{equation*}
C'\varphi^{(i)}=(s^{(i)})^2 \varphi^{(i)},\quad
\langle \varphi^{(i)}, \varphi^{(j)} \rangle=\delta_{ij}.
\end{equation*}
Note that this has $d_v$ real solutions, up to sign changes in
the eigenvectors and assuming the $s^{(i)}$ to be non-negative.
We now seek to express $F$ in terms of $B \in \cB.$ Writing
the SVD  $F\hC^{-\frac12}=U\Xi V^\top$, where $U,V \in \bbR^{d_v \times d_v}$
are orthogonal matrices and $\Xi \in \bbR^{d_v \times d_v}$ is a
diagonal matrix, we see from \eqref{eq:id} that
$$U\Xi^2U^\top=C'$$
so that $U$ has columns given by the $\{\varphi^{(i)}\}_{i=1}^{d_v}$
and corresponding diagonal entries of
$\Xi$, $\pm s^{(i)}$. We define
\begin{equation}
\label{eq:US}
U=\bigl(\varphi^{(1)}, \cdots, \varphi^{(d_v)}\bigr),\quad \Xi={\rm diag}\bigl(\pm s^{(1)},\cdots,\pm s^{(d_v)}\bigr).
\end{equation}

\begin{corollary}
\label{cor:deen}
For every $B \in \cB$ the choices of $A$ such that
the pair $(A,B)$ satisfies the criterion of Theorem \ref{t:thet}
are defined as follows. For $U, \Xi$ as given in \eqref{eq:US}, set
$$F=U\Xi V^\top \hC^{\frac12},$$
where $V$
is an arbitrary orthogonal matrix in $\bbR^{d_v \times d_v}.$
Then
$$A=\Bigl(F-B(\hCvy)^\top\Bigr)\hC^{-1}.$$
$\Diamond$ \end{corollary}

\begin{example}
\label{ex:twoe}
Among the uncountably many possible solutions for pairs $(A,B)$
we highlight two. The choice $B=0$ is interesting because it does not require
the data variable $\hy$ in the definition of $v$. The choice $A=I$ is interesting
because it does not require action of an operator acting on $\hv.$ 

The first, with $B=0$, allows the choice
$F=C^{\frac12}\hC^{\frac12}$ and then $A=C^{\frac12}\hC^{-\frac12}.$
Thus the map \eqref{eq:newv} becomes
\begin{align}
\label{eq:newv22}
v&=\bbE \hv+C^{\frac12}\hC^{-\frac12}(\hv-\bbE \hv)+\hCvy(\hCyy)^{-1}(\yd-\bbE \hy)\\
&=m+C^{\frac12}\hC^{-\frac12}(\hv-\bbE \hv).
\end{align}

The second comes from setting $B=-K$ which
leads to the possible choice $F=C$ and $A=I$ under which
the map \eqref{eq:newv} becomes
\begin{equation*}
\label{eq:newv11}
v=\hv+\hCvy(\hCyy)^{-1}(\yd-\hy).
\end{equation*}
We refer to this as the {\em Kalman transport} solution.
$\blacksquare$
\end{example}

Given the plethora of solutions for matrices $(A,B)$, all of which
effect the desired measure transport from $\nu$ into the
conditional, it is natural to ask how to choose a specific pair
$(A,B)$. One possibility is to use optimal transport. To this end
we define, for positive definite $W \in \bbR^{d_v \times d_v}$,
\begin{equation*}
\label{eq:II}
I_{W}(A,B)=\frac12 \bbE\langle (v-\hv), W(v-\hv) \rangle.
\end{equation*}

\begin{theorem}
\label{t:OTT}
Let $v$ be given by \eqref{eq:newv}. Consider the problem of 
finding minimizers of $I_{W}(A,B)$ over pairs $(A,B)$ satisfying 
\eqref{eq:id}, \eqref{eq:F}; we refer to the resulting map
evaluated at such an $(A,B)$ as an {\em optimal transport in the $W-$weighted Euclidean distance.}
For any positive definite $W$, such minimizers satisfy $B=0.$
In particular the Kalman transport solution is not an optimal transport solution.
$\Diamond$ \end{theorem}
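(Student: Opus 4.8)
The plan is to substitute the explicit form \eqref{eq:newv} for $v$ into the objective $I_W(A,B)$, reduce the constrained minimization to an unconstrained problem over the matrix $B$ alone (using the parametrization of admissible pairs $(A,B)$ furnished by the Corollary), and then show the resulting objective is minimized only at $B=0$. Since the Kalman transport solution from Example \ref{ex:twoe} has $B=-K\neq 0$ in general, the final sentence follows immediately from the main claim.

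First I would compute $v-\hv$ using \eqref{eq:newv}: writing $\bar v = \bbE\hv$, $\bar y=\bbE\hy$, we have $v-\hv = (A-I)(\hv-\bar v)+B(\hy-\bar y)+K(\yd-\bar y)$. The term $K(\yd-\bar y)$ is deterministic (it does not depend on the random variables $(\hv,\hy)$, only on $\yd$ and $\nu$), so taking the expectation $\bbE\langle v-\hv, W(v-\hv)\rangle$ splits into a deterministic piece plus a quadratic form in the covariances. Concretely, the random part contributes
\begin{equation*}
\bbE\langle (A-I)(\hv-\bar v)+B(\hy-\bar y), W\bigl((A-I)(\hv-\bar v)+B(\hy-\bar y)\bigr)\rangle,
\end{equation*}
which expands in terms of $\hC$, $\hCyy$, $\hCvy$ and the matrices $A-I$, $B$; the cross term with $K(\yd-\bar y)$ vanishes in expectation because $\hv-\bar v$ and $\hy-\bar y$ have zero mean. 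Thus $I_W(A,B) = \tfrac12\operatorname{tr}\bigl(W M(A,B)\bigr) + (\text{const})$, where $M(A,B)$ is the covariance of $v-\hv$; the constant is $\tfrac12\langle K(\yd-\bar y), WK(\yd-\bar y)\rangle$ and is independent of $(A,B)$, so it plays no role in the minimization.

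Next I would use the constraint. By Theorem \ref{t:thet} and its Corollary, for each $B\in\cB$ the admissible $A$ are exactly $A=(F-B(\hCvy)^\top)\hC^{-1}$ with $F=U\Sigma V^\top\hC^{1/2}$ for arbitrary orthogonal $V$ and the fixed $U,\Sigma$ determined by $C'=C-B\tC B^\top$. So $A-I = (F-B(\hCvy)^\top-\hC)\hC^{-1}$, and one can rewrite the covariance of $v-\hv$ as a function of $(B,V)$. I expect the key algebraic simplification to be that, after completing the square in the covariance expression (mirroring the computation in the proof of Theorem \ref{t:thet}), the objective decomposes into a part controlled by $F$ — hence by $\Sigma$, whose singular values are the square roots of eigenvalues of $C'=C-B\tC B^\top$ — plus a part that is manifestly nonnegative and vanishes only when $B\tC B^\top=0$, i.e.\ when $B=0$ since $\tC\succ0$. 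The orthogonal freedom in $V$ would be handled by noting that the trace $\operatorname{tr}(W M)$ is minimized over $V$ by a von Neumann--type trace inequality, and that the optimal value is an increasing function of the eigenvalues of $C'$; since $C' = C - B\tC B^\top \preceq C$ with equality iff $B=0$, pushing $B$ toward $0$ strictly decreases the objective unless already at $B=0$.

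\textbf{Main obstacle.} The hard part will be making the dependence on $B$ monotone in the right sense: one must show that enlarging $C'$ in the Loewner order (which happens as $B\to 0$) genuinely decreases $I_W$, and simultaneously that the "coupling" terms involving $W$, $\hCvy$ and the deterministic shift do not reintroduce a competing incentive to take $B\neq0$. I anticipate the cleanest route is to fix $B$, optimize over $A$ (equivalently over $V$) in closed form to get a reduced objective $\widetilde I_W(B)$, show $\widetilde I_W(B) \ge \widetilde I_W(0)$ by a trace/eigenvalue comparison using $C-B\tC B^\top \preceq C$ and $\tC \succ 0$, and verify the inequality is strict for $B\neq0$. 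The statement that the Kalman transport map is \emph{not} optimal is then a one-line corollary: its matrix $B=-K=-\hCvy(\hCyy)^{-1}$ is nonzero whenever $\hCvy\neq0$, which holds in any nondegenerate filtering situation.
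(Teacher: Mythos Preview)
You miss the central algebraic simplification, and this sends the rest of the argument down an unnecessarily difficult path. Expand $M(A,B)$, the covariance of $v-\hv$, and use the constraint \eqref{eq:newv3} itself: the quadratic-in-$(A,B)$ block $A\hC A^\top + B\hCyy B^\top + A\hCvy B^\top + B(\hCvy)^\top A$ is \emph{exactly} $C$, a fixed constant. What remains is
\[
M(A,B) = \hC - A\hC - \hC A^\top - B(\hCvy)^\top - \hCvy B^\top + C + \text{const},
\]
and since $F = A\hC + B(\hCvy)^\top$ the linear terms collapse to $-F-F^\top$. Because $W$ is symmetric, $I_W(A,B) = -F:W + \text{const}$. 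So the objective is \emph{linear in $F$} and has no separate $B$-dependence at all: your anticipated ``part that is manifestly nonnegative and vanishes only when $B\tC B^\top=0$'' does not exist. All of the $B$-dependence enters through the constraint $F\hC^{-1}F^\top = C - B\tC B^\top$.

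With this reduction in hand the paper dispatches the problem by Lagrange multipliers: introduce a symmetric multiplier $L$ for the constraint and differentiate $-F:W + L:(F\hC^{-1}F^\top + B\tC B^\top - C)$ in $F$ and $B$. The $F$-equation gives $W = 2LF\hC^{-1}$, which forces $L$ invertible (since $W$, $F$, $\hC$ are), and then the $B$-equation $2LB\tC = 0$ gives $B=0$ because $\tC\succ 0$. That is the whole argument.

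Your fallback plan---optimize over the orthogonal freedom $V$ via a von Neumann trace inequality, then compare the resulting values using $C - B\tC B^\top \preceq C$---can be made to work, since $\sum_i\sigma_i\bigl((C')^{1/2}W\hC^{1/2}\bigr)$ is indeed Loewner-monotone in $C'$. But it is substantially more laborious than the two-line Lagrange computation, and it is motivated by a structural picture of the objective that turns out to be wrong.
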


\begin{proof}
We formulate the optimization problem over the pair $(F,B)$ since,
because $\hC$ is invertible, there is a bijection between $(A,B)$
and $(F,B).$ Let $:$ denote the Frobenius inner-product. Then, under constraint \eqref{eq:F},
\begin{subequations}
\label{eq:JJ}
\begin{align}
I_{W}(A,B)&=J_{W}(F)+{\rm const},\\ 
J_{W}(F)&=-F:W,
\end{align}
\end{subequations}
where ${\rm const}$ denotes a matrix independent of $A,B,F$ (with exact value changing from
instance to instance). To see this note that
$$v-\hv=-(\hv-\bbE \hv)+A(\hv-\bbE \hv)+B(\hy-\bbE \hy)+K(\yd-\bbE \hy).$$
Now, using \eqref{eq:newv3},
$$\bbE \Bigl((v-\hv)\otimes(v-\hv)\Bigr)=\hC-\hC A^\top-A\hC-\hCvy B^\top-B(\hCvy)^\top+C+{\rm const.}$$
Noting that
$$I_{W}(A,B)=\frac12 \bbE \Bigl((v-\hv)\otimes(v-\hv)\Bigr):W,$$
and that $D:W=D^\top:W$ for all $D$ since $W$ is symmetric, identity \eqref{eq:JJ} follows
from \eqref{eq:F}. It then also follows
that minimization of $I_{W}(A,B)$ subject to the constraints given by 
\eqref{eq:id}, \eqref{eq:F}
is equivalent to minimization of $J_{W}(F)$ subject to the
constraint \eqref{eq:id}.

To effect this latter minimization we introduce the Lagrange
multiplier $L \in \bbR^{d_v \times d_v}$, symmetric because the
constraint is symmetric, and define 
\begin{equation*}
\label{eq:JJt}
\widetilde{J}_{W}(F,B,L)=-F:W+L:\bigl(F\hC^{-1}F^\top+B\tC B^\top-C\bigr).
\end{equation*}
Differentiating with respect to $F,B$ and $L$ respectively gives
\begin{subequations}
\label{eq:lm}
\begin{align}
-W+2LF\hC^{-1}&=0,\\
2LB\tC &=0,\\
F\hC^{-1}F^\top+B\tC B^\top&=C.
\end{align}
\end{subequations}
Since $F$, $W$, and $\hC$ in (\ref{eq:lm}a) are all invertible, the Lagrange multiplier $L$ is necessarily invertible. Furthermore, since $\tC$ is invertible because $\Sigma$ is, it follows from (\ref{eq:lm}b) that $B=0$ as required.
\end{proof}

\begin{example}
\label{ex:cite}
Define symmetric matrix $P$, and from it symmetric $A$, by
$$P=\Bigl(C^{\frac12}\hC C^{\frac12}\Bigr)^{-\frac12},\quad A=C^{\frac12}P C^{\frac12}.$$

We notice that if $B=0$, as required for an optimal transport solution, then with this
choice of  the pair $(A,B)$, equation (\ref{eq:lm}c)
has as a solution $F=A\hC$ and (\ref{eq:lm}b) is satisfied.
Furthermore (\ref{eq:lm}a) delivers the Lagrange multiplier $L$.  Note that the solution is
independent of the specific choice of positive-definite $W$.
$\blacksquare$
\end{example}

%
\subsection{Mean Field Maps -- No Simulated Data}
\label{ssec:mfmnsd}
%
Let $\hv \sim \hmu$ and assume that
$\hy=h(\hv)+\eta$ where $\eta \sim \Ng(0,\Gamma)$ is independent of $\hv.$ 
Implicitly we have defined the joint distribution $\nu$ of $(\hv,\hy).$
We note that then, expressed in terms
of $\hh:= h(\hv)$ and quantities defined in \eqref{eq:nu}, 
\begin{align*}
\label{eq:nu2}
\pCov^{vh}:=&\bbE (\hv-\bbE \hv) \otimes (\hh-\bbE \hh) = \pCov^{vy},\\
\pCov^{hh}:=&\bbE (\hh-\bbE \hh) \otimes (\hh-\bbE \hh) = \pCov^{yy}-\Gamma.
\end{align*}
Thus we may rewrite \eqref{eq:KF_analysisTM} as
\begin{subequations}
\label{eq:KF_analysisTM2}
\begin{align}
        \mean &= \pmean + \pCov^{vh} (\pCov^{hh}+\Gamma)^{-1} \bigl(\yd - \bbE \hh \bigr),\\
         \Cov &= \pCov - \pCov^{vh}(\pCov^{hh}+\Gamma)^{-1} (\pCov^{vh})^\top,
    \end{align}
\end{subequations}
In so doing we have eliminated reference to $\hy$
and our goal becomes the idenitification of maps of the form
\begin{equation}
\label{eq:themap2}
v=R\hv+S\hh+r    
\end{equation}
so that, if $\hv \sim \hmu$, then $v$ has mean $\mean$ and covariance $\Cov$
given by \eqref{eq:KF_analysisTM2}. 
We make the following assumptions on the covariance under $\op G\nu$ and on
the matrices $R, S$ and vector $r:$

\begin{assumptions}
\label{a:inv2}
The covariance under $\op G\nu$ is invertible.
The matrices $R,S$ and vector $r$ may depend on $\yd$ and measure
$\hmu$ but not on the random variable $(\hv,\hh)$; thus \eqref{eq:themap2} takes the explicit form
\[
v=R(\nu,\yd)\hv+S(\nu,\yd)\hh+r(\nu,\yd). \qquad \blacksquare
\]
\end{assumptions}
\vspace{0.1in}

With these assumptions the pushforward under map \eqref{eq:themap}, when constrained to match the desired first and second order statistics, defines a nonlinear
map on the space of measures, and in particular on measure $\hmu$.
In what follows all expectations are computed under $\hmu.$ 
We note that matrix $\hC$ is invertible and that $K$ in \eqref{eq:theccovs} may be rewritten as 
\begin{equation*}
\label{eq:theccovs2}
K=\hCvh(\hChh+\Gamma)^{-1}.\\
\end{equation*}
From equations (\ref{eq:KF_analysisTM2}a) and
\eqref{eq:themap2} the following is immediate:

\begin{lemma}
\label{lem:mean2}
Let Assumptions \ref{a:inv2} hold and let $\hv \sim \hmu.$ If $v$ given by \eqref{eq:themap2} 
has mean given by (\ref{eq:KF_analysisTM2}a) then
$$r=(I-R)\bbE \hv+K\yd-(S+K)\bbE \hh.$$
$\Diamond$ \end{lemma}

As a consequence it follows that
\begin{equation}
\label{eq:newv2i}
v=\bbE \hv+R(\hv-\bbE \hv)+S(\hh-\bbE \hh)+K(\yd-\bbE \hh).
\end{equation}
and that
\begin{equation}
\label{eq:newv22i}
\bbE\Bigl((v-\bbE v)\otimes(v-\bbE v)\Bigr)=R\hC R^\top+S\hChh S^\top+
R\hCvh S^\top+S(\hCvh)^\top R.
\end{equation}
Thus, to match the covariance of the conditioned random
variable, we obtain
\begin{equation}
\label{eq:newv32}
C=R\hC R^\top+S\hChh S^\top+
R\hCvh S^\top+S(\hCvh)^\top R.
\end{equation}
Define
$$\ttC=\hChh-(\hCvh)^\top(\hC)^{-1}\hCvh.$$

\begin{theorem}
\label{t:thet1}
Let Assumptions \ref{a:inv2} hold and let $\hv \sim \hmu.$ If 
$s$ is given by Lemma \ref{lem:mean2}
then $v$ defined by \eqref{eq:themap2} has covariance  (\ref{eq:KF_analysisTM2}b) if and only if real-valued matrices $R$ and $S$ are related by the identity
\begin{equation*}
\label{eq:id2}
E\hC^{-1}E^\top=C-S\ttC S^\top,
\end{equation*}
where
\begin{equation*}
\label{eq:F2}
E=R\hC+S(\hCvh)^\top.
\end{equation*}
$\Diamond$ \end{theorem}

\begin{proof}
We complete the square on the right hand side of \eqref{eq:newv32} to obtain
$$(R\hC+S(\hCvh)^\top)\hC^{-1}(R\hC+S(\hCvh)^\top)^\top+S\ttC S^\top=C.$$
Rearranging gives the desired result.
\end{proof}

Define
$$\ccB=\{S \in \bbR^{d_v \times d_y}: C-S\ttC S^\top \succ 0\}$$
and, for $S \in \ccB$ consider the eigenvalue problem
\begin{equation*}
(C-S\ttC S^\top)\varpsi^{(i)}=(o^{(i)})^2 \varpsi^{(i)},\quad
\langle \varpsi^{(i)}, \varpsi^{(j)} \rangle=\delta_{ij},
\end{equation*}
noting that this has $d_v$ real solutions, upto sign changes in
the eigenvectors and assuming the $o^{(i)}$ to be non-negative.
We now seek to express $E$ in terms of $S \in \ccB.$ Writing
the SVD  $E\hC^{-\frac12}=W\ssS Z^\top$, where $W,Z \in \bbR^{d_v \times d_v}$
are orthogonal matrices and $\ssS \in \bbR^{d_v \times d_v}$ is a
diagonal matrix, we see from \eqref{eq:id} that
$$W\ssS^2W^\top=C-S\ttC S^\top$$
so that $W$ has columns given by the $\{\varpsi^{(i)}\}_{i=1}^{d_v}$
and corresponding diagonal entries of
$\ssS$, $\pm o^{(i)}$. We define
\begin{equation}
    \label{eq:divine}
W=\bigl(\varpsi^{(1)}, \cdots, \varpsi^{(d_v)}\bigr),\quad \ssS={\rm diag}\bigl(\pm o^{(1)}, \dots, \pm o^{(d_v)}\bigr).
\end{equation}

\begin{corollary}
For every $S \in \ccB$ the choices of $R$ such that
the pair $(R,S)$ satisfies the criterion of Theorem \ref{t:thet1}
are defined as follows. For $W, \ssS$ as given in \eqref{eq:divine}, set
$$E=W\ssS Z^\top \hC^{\frac12},$$
where $Z$
is an arbitrary orthogonal matrix in $\bbR^{d_v \times d_v}.$
Then
$$R=\Bigl(E-S(\hCvh)^\top\Bigr)\hC^{-1}.$$
$\Diamond$ \end{corollary}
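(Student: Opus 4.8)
The plan is to prove the Corollary by direct manipulation of the characterization established in Theorem \ref{t:thet1}, mirroring exactly the argument that delivered the Corollary following Theorem \ref{t:thet} in the simulated-data case. The statement is: fixing $S \in \ccB$, the matrices $R$ completing $(R,S)$ to a valid second-order transport pair are precisely those of the form $R = (E - S(\hCvh)^\top)\hC^{-1}$ where $E = W\ssS Z^\top \hC^{1/2}$ for arbitrary orthogonal $Z \in \bbR^{d_v \times d_v}$, with $W,\ssS$ as in \eqref{eq:divine}.

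First I would recall from Theorem \ref{t:thet1} that, once $r$ is pinned down by Lemma \ref{lem:mean2} so that the mean constraint (\ref{eq:KF_analysisTM2}a) holds, the covariance constraint (\ref{eq:KF_analysisTM2}b) is equivalent to the pair of identities $E = R\hC + S(\hCvh)^\top$ and $E\hC^{-1}E^\top = C - S\ttC S^\top$. Since $\hC \succ 0$ is invertible, the first identity is a bijection $R \leftrightarrow E$ (for fixed $S$): given any admissible $E$, the unique corresponding $R$ is $R = (E - S(\hCvh)^\top)\hC^{-1}$, which is exactly the formula asserted. So the task reduces to: describe all $E$ satisfying $E\hC^{-1}E^\top = C - S\ttC S^\top =: C''$. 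The hypothesis $S \in \ccB$ guarantees $C'' \succ 0$, so the right-hand side is a genuine positive-definite matrix. Setting $\widetilde E := E\hC^{-1/2}$ (again a bijection since $\hC^{1/2}$ is invertible), the equation becomes $\widetilde E \widetilde E^\top = C''$.

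The key step is then the standard fact that the solutions $\widetilde E \in \bbR^{d_v\times d_v}$ of $\widetilde E\widetilde E^\top = C''$ are exactly $\widetilde E = (C'')^{1/2} Z^\top$ with $Z$ orthogonal — equivalently, taking the eigendecomposition $C'' = W\ssS^2 W^\top$ from \eqref{eq:divine}, one has $(C'')^{1/2} = W\ssS W^\top$ so $\widetilde E = W\ssS W^\top Z^\top$; absorbing $W^\top Z^\top$ into a renamed arbitrary orthogonal factor, $\widetilde E = W\ssS Z^\top$. I would prove the ``only if'' direction via polar decomposition: any $\widetilde E$ with $\widetilde E\widetilde E^\top = C''$ is invertible (since $C''$ is), write $\widetilde E = P Q$ with $P$ symmetric positive-definite and $Q$ orthogonal; then $P^2 = \widetilde E\widetilde E^\top = C''$ forces $P = (C'')^{1/2}$ by uniqueness of the positive-definite square root. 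The ``if'' direction is immediate: $(W\ssS Z^\top)(W\ssS Z^\top)^\top = W\ssS^2 W^\top = C''$. Unwinding the substitutions, $E = \widetilde E\hC^{1/2} = W\ssS Z^\top \hC^{1/2}$, which is the claimed form, and the sign ambiguities in $\ssS$ recorded in \eqref{eq:divine} are harmless since they can be absorbed into $Z$.

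I do not expect a serious obstacle here; the proof is essentially a transcription of the simulated-data Corollary with $(A,B,F,\tC)$ replaced by $(R,S,E,\ttC)$ and $C'$ replaced by $C''$. The only point requiring a line of care is confirming that $S \in \ccB$ really does make $C - S\ttC S^\top \succ 0$ — but this is precisely the definition of $\ccB$, so nothing is needed beyond citing it. A second minor point is noting nonemptiness of $\ccB$ (it contains an open set, since $S = 0$ gives $C \succ 0$ and positivity is an open condition), which parallels the remark about $\cB$ but is not strictly required for the Corollary's statement. Thus the proof is short: reduce to $\widetilde E\widetilde E^\top = C''$ via the two invertible changes of variable, invoke polar decomposition and uniqueness of the positive square root to enumerate solutions, and substitute back.
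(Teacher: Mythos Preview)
Your proposal is correct and follows essentially the same approach as the paper: reduce via the bijection $E \leftrightarrow R$ to characterizing all $E$ with $E\hC^{-1}E^\top = C - S\ttC S^\top$, then parametrize solutions via the eigendecomposition of the right-hand side with an arbitrary orthogonal factor. The only cosmetic difference is that the paper reaches the parametrization by writing the SVD of $E\hC^{-1/2}$ directly, whereas you invoke polar decomposition of $\widetilde E$; these are equivalent formulations of the same linear-algebraic fact.
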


\begin{example}
\label{ex:twoee}
As in Example \ref{ex:twoe} we highlight two examples, here corresponding to
$S=0$ and to $R=I.$ The first, with $S=0$, allows the choice
$E=C^{\frac12}\hC^{\frac12}$ and hence $R=C^{\frac12}\hC^{-\frac12}.$
We thus obtain \eqref{eq:newv22} again.

The second comes from setting $R=I.$ 
This leads from \eqref{eq:newv22i} to the following equation for $S:$
$$S\hChh S^\top+\hCvh S^\top+S(\hCvh)^\top=-\hCvh(\hChh+\Gamma)^{-1}(\hCvh)^\top.$$
We seek a solution for $S$ in the form
$$S=-\hCvh Y^{-1}$$
and determine $Y$. 
We obtain the equation
$$Y^{-1}\hChh Y^{-T} -Y^{-T} -Y^{-1} =-(\hChh+\Gamma)^{-1}.$$
Thus, premultiplying by $Y$ and post multiplying by $Y^\top$, we obtain
$$Y(\hChh+\Gamma)^{-1} Y^\top -Y-Y^\top+\hChh=0$$
which factorizes to give
$$\Bigl(Y(\hChh+\Gamma)^{-1}-I\Bigr)(\hChh+\Gamma)\Bigl(Y(\hChh+\Gamma)^{-1}-I\Bigr)^\top=\Gamma.$$
Thus, taking the symmetric square root, we have
$$\Bigl(Y(\hChh+\Gamma)^{-1}-I\Bigr)(\hChh+\Gamma)^{\frac12}=\Gamma^{\frac12}.$$
Hence
$$\Bigl(Y(\hChh+\Gamma)^{-1}-I\Bigr)=\Gamma^{\frac12}(\hChh+\Gamma)^{-\frac12}.$$
Rearranging gives
$$Y(\hChh+\Gamma)^{-1}=\Gamma^{\frac12}(\hChh+\Gamma)^{-\frac12}+(\hChh+\Gamma)^{\frac12}(\hChh+\Gamma)^{-\frac12}.$$
Thus
$$Y=\Bigl(\Gamma^{\frac12}+(\hChh+\Gamma)^{\frac12}\Bigr)(\hChh+\Gamma)^{\frac12}.$$
We obtain $S=-\tK$ where
$$\tK= \pCov^{vh}
    \Bigl( (\pCov^{hh}+\Gamma) + \Gamma^{1/2} (\pCov^{hh}+\Gamma)^{1/2}
    \Bigr)^{-1}.$$
The map \eqref{eq:newv2i} becomes  
\begin{align*}
\label{eq:newv2ii}
v&=\hv-\tK(\hh-\bbE \hh)+\hCvh(\hChh+\Gamma)^{-1}(\yd-\bbE \hh)\\
&=\hv-\tK(\hh-\bbE \hh)+K(\yd-\bbE \hh)\\
    &=m+(\hv-\pmean)-\tK(\hh-\bbE \hh).
\end{align*}
$\blacksquare$
\end{example}

%
\subsection{Minimum Variance Approximation}
\label{ssec:TM_MVA}
%

In the two preceding subsections we have identified an uncountable set of transport maps that
match the second order statistics of true transport. Among all these,
the \emph{Kalman transport} \eqref{eq:sd2nn_add} has a particular appeal because it is
constructed around the \emph{Kalman gain} familiar from filtering in the
linear Gaussian setting. In this subsection we show how the
principle of minimizing the variance within
a class of \emph{linear} estimators of the state, 
given observation, leads to this choice of transport map.  
We believe that the second order transport approach, which we highlight in the
main text, provides a more fundamental viewpoint on the mean field models which
underpin ensemble Kalman methods; however the minimum variance perspective is 
widely adopted in the statistics and geophysics communities (see Subsection
\ref{ssec:BSE}) and hence
has an important place in the subject of ensemble Kalman methods.

Recall that the Kalman transport approach leads back to the 
map (\ref{eq:sd2n}c), motivated by control theoretic considerations, and identifies 
a specific choice of Kalman gain $K_n$, a choice which depends on the law of
the predicted state and data. In the Gaussian case the Kalman transport map
\eqref{eq:sd2nn_add} exactly generates the desired transport, 
a fact which we discuss in Example \ref{ex:mfk}.
The general form of approximate stochastic second order transport
maps which we study using the second order transport approach in the
main text is \eqref{eq:sd2nn-seek}. Our goal here is to motivate a specific choice of
$\tT$ in (\ref{eq:sd2nn-seek}c), in particular to derive (\ref{eq:sd2nn_add}c). 
In this subsection we achieve this by first defining,
and identifying, the \emph{best linear unbiased estimator}, BLUE for short. From this we 
will derive Kalman transport.

\begin{lemma}
\label{l:t1} Assume that $\Gamma \succ 0.$ Let all expectations be computed under  $\law(\hv_{n+1},\hy_{n+1})$ and
consider $\mb_{n+1}$ in the form
\begin{equation*}\label{eq:BLUE}
\mb_{n+1}=a'+B\hy_{n+1}.
\end{equation*}
Define $\hC_{n+1}, \hCvy_{n+1}$ and $\hCyy_{n+1}$ 
by \eqref{eq:KF_pred_mean}, \eqref{eq:KF_pred_mean2} and define
$$\sI(a',B) := \E|\hv_{n+1}-\mb_{n+1}|^2.$$
Then $\mb_{n+1}$ is said to be the BLUE of $\hv_{n+1}$ given $\hy_{n+1}$,
if vector $a'$ and matrix $B$ are independent of $(\hv_{n+1},\hy_{n+1})$, but
may depend on $\law(\hv_{n+1},\hy_{n+1})$, and are chosen to minimize
$\sI(a',B).$ Then
\begin{equation}
\label{eq:mean0}
\mb_{n+1}=\E\hv_{n+1}+\hCvy_{n+1}(\hCyy_{n+1})^{-1}(\hy_{n+1}-\E \hy_{n+1}).
\end{equation}
Furthermore, the estimator (\ref{eq:mean0}) is unbiased, that is,
$$
\mathbb{E}(\hv_{n+1}-\mb_{n+1}) = 0,
$$
and its covariance with respect to $\law(\hv_{n+1},\hy_{n+1})$,
$$\Cblue_{n+1}:=\E\bigl((\hv_{n+1}-\mb_{n+1})(\hv_{n+1}-\mb_{n+1})^\top\bigr),$$
is given by
\begin{align}
\label{eq:cov0}
\Cblue_{n+1} &= \pCov_{n+1} - \pCov_{n+1}^{vy}(\pCov_{n+1}^{yy})^{-1} ({\pCov_{n+1}^{vy}})^\top.
\end{align}
$\Diamond$ \end{lemma}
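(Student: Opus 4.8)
The plan is to recognize this as a completely standard derivation of the best linear unbiased estimator (BLUE), essentially a finite-dimensional least-squares calculation, so the proof should be a routine variational argument. First I would reduce to the zero-mean case: write $a' = \E\hv_{n+1} - B\,\E\hy_{n+1} + c$ for a free vector $c$, so that $\hv_{n+1} - \mb_{n+1} = (\hv_{n+1} - \E\hv_{n+1}) - B(\hy_{n+1} - \E\hy_{n+1}) - c$. Expanding $\sI(a',B) = \E|\hv_{n+1}-\mb_{n+1}|^2$ and using that the cross terms with $c$ vanish (because the centered quantities have zero mean) shows the $c$-dependent part is just $|c|^2$ plus a constant, so any minimizer must have $c=0$; this immediately yields the claimed unbiasedness $\E(\hv_{n+1}-\mb_{n+1})=0$.

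Next, with $c=0$ fixed, I would minimize over $B$. Writing $e := \hv_{n+1}-\E\hv_{n+1}$ and $g := \hy_{n+1}-\E\hy_{n+1}$, one has $\sI = \E|e - Bg|^2 = \mathrm{tr}\big(\E(e\otimes e)\big) - 2\,\mathrm{tr}\big(B\,\E(g\otimes e)\big) + \mathrm{tr}\big(B\,\E(g\otimes g)\,B^\top\big)$, i.e. in the notation of the excerpt, $\sI(B) = \mathrm{tr}(\pCov_{n+1}) - 2\,\mathrm{tr}(B(\pCov_{n+1}^{vy})^\top) + \mathrm{tr}(B\,\pCov_{n+1}^{yy}B^\top)$. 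Since $\Gamma\succ0$ implies $\pCov_{n+1}^{yy}=\pCov_{n+1}^{hh}+\Gamma\succ0$ (this is exactly the argument already used in the proof of Lemma \ref{lemma:KF_analysis}), this quadratic in $B$ is strictly convex; setting the gradient to zero gives $B\,\pCov_{n+1}^{yy} = \pCov_{n+1}^{vy}$, hence $B = \pCov_{n+1}^{vy}(\pCov_{n+1}^{yy})^{-1} = K_n$, which is \eqref{eq:mean0}. Equivalently, and perhaps cleaner to present, I would invoke the orthogonality characterization: the minimizer is the unique $B$ for which the residual is uncorrelated with $g$, i.e. $\E\big((e - Bg)\otimes g\big)=0$, which is again $B\pCov_{n+1}^{yy}=\pCov_{n+1}^{vy}$.

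Finally, for the covariance formula \eqref{eq:cov0}, I would substitute the optimal $B=K_n$ into $\Cblue_{n+1} = \E\big((e-K_ng)\otimes(e-K_ng)\big)$ and expand, using $\E(e\otimes e)=\pCov_{n+1}$, $\E(e\otimes g)=\pCov_{n+1}^{vy}$, $\E(g\otimes g)=\pCov_{n+1}^{yy}$:
\begin{align*}
\Cblue_{n+1} &= \pCov_{n+1} - K_n(\pCov_{n+1}^{vy})^\top - \pCov_{n+1}^{vy}K_n^\top + K_n\pCov_{n+1}^{yy}K_n^\top\\
&= \pCov_{n+1} - \pCov_{n+1}^{vy}(\pCov_{n+1}^{yy})^{-1}(\pCov_{n+1}^{vy})^\top,
\end{align*}
where the last two terms combine using $K_n\pCov_{n+1}^{yy}K_n^\top = \pCov_{n+1}^{vy}(\pCov_{n+1}^{yy})^{-1}(\pCov_{n+1}^{vy})^\top$ and the symmetry of $\pCov_{n+1}^{yy}$. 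This matches (\ref{eq:KF_analysis}b) / (\ref{eq:KF_analysisTM}b), which is presumably the point — the BLUE covariance is the Gaussian-conditional covariance.

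There is no real obstacle here; the only point requiring care is the bookkeeping that separates the minimization over the intercept $a'$ (giving unbiasedness) from the minimization over $B$, and being careful that $a'$ is allowed to depend on the law $\law(\hv_{n+1},\hy_{n+1})$ so that the substitution $a' \mapsto (\E\hv_{n+1} - B\,\E\hy_{n+1} + c)$ is legitimate. If one wants to be fully rigorous about "minimizer" one should note strict convexity in $(c,B)$ jointly (using $\pCov_{n+1}^{yy}\succ0$), giving a unique minimizer, which is then necessarily the one computed; I would state this in a sentence rather than belabor it.
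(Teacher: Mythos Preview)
Your proposal is correct and follows essentially the same route as the paper: reparametrize the intercept to center the variables, observe that the centered intercept must vanish (giving unbiasedness), then minimize the resulting quadratic in $B$ using $\pCov_{n+1}^{yy}\succ 0$ to obtain $B=K_n$. The paper uses Frobenius inner-product notation for the expansion and leaves the covariance computation as ``a straightforward calculation'', whereas you spell it out explicitly and mention the orthogonality characterization as an alternative; these are cosmetic differences only.
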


\begin{proof}
Noting that we may, without
loss of generality, reparametrize the proposed form
of $\mb_{n+1}$ as
$$\mb_{n+1}=\E\hv_{n+1}+a+B(\hy_{n+1}-\E \hy_{n+1}),$$
we see that the desired objective to be minimized
over vector-matrix pair $(a,B)$ is
\begin{align*}
\sJ(a,B)&:=\frac12\E|\hv_{n+1}-\E\hv_{n+1}-a-B(\hy_{n+1}-\E \hy_{n+1})|^2\\
&=\frac12\E|\hv_{n+1}-\E\hv_{n+1}|^2+\frac12 |a|^2\\
&\quad\quad\quad+\frac12\E|B(\hy_{n+1}-\E\hy_{n+1})|^2
-\E\langle \hv_{n+1}-\E\hv_{n+1}, B(\hy_{n+1}-\E\hy_{n+1})\rangle\\
&=\frac12\E|\hv_{n+1}-\E\hv_{n+1}|^2+\frac12 |a|^2
+\frac12(BB^\top):\pCov_{n+1}^{yy}- B:\pCov_{n+1}^{vy}.
\end{align*}
Clearly the minimizer with respect to $a$ is achieved
by setting $a=0$.
Differentiating with respect to $B$, noting that
$\pCov_{n+1}^{yy}$ is symmetric, shows that $B=K_n$
given by \eqref{eq:Kalman_gain}. That the resulting pair is indeed a minimizer, and not another
critical point, follows from the fact that $\pCov_{n+1}^{yy}$ is positive-definite; this is implied by the assumption $\Gamma \succ 0.$ Thus we obtain the  estimator \eqref{eq:mean0}.
A straightforward calculation shows that the estimator is unbiased and that its covariance is given by \eqref{eq:cov0}.
\end{proof}

We now connect the BLUE to an approximate (second order) transport map.
To this end, note that \eqref{eq:mean0} may be viewed as mapping
$\hy_{n+1}$ into $\mb_{n+1}=\mb(\hy_{n+1}).$
With this notation we define $m_{n+1}^\dagger$ by
\begin{subequations}
\label{eq:dda1}
\begin{align}
m_{n+1}^\dagger&=\mb(\yd_{n+1})\\ &=\E\hv_{n+1}+\hCvy_{n+1}(\hCyy_{n+1})^{-1}(\yd_{n+1}-\E \hy_{n+1}).
\end{align}
\end{subequations}
We may now make the following connection between BLUE and Kalman transport:
\begin{theorem}
Let $(\hv_{n+1},\hy_{n+1})$ be distributed according to measure $\nu_{n+1}$. Then
the transport map (\ref{eq:sd2nn_add}c) has the properties:
\begin{align*}
   \E v_{n+1}&=m_{n+1}^\dagger\\ 
    \E\bigl((v_{n+1}-m_{n+1}^\dagger)(v_{n+1}-m_{n+1}^\dagger)^\top\bigr)&=\Cblue_{n+1},
\end{align*}
where $m_{n+1}^\dagger$ and $\Cblue_{n+1}$ are given by \eqref{eq:dda1} and \eqref{eq:cov0} respectively.
$\Diamond$ \end{theorem}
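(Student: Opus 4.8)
The plan is to verify the two claimed identities by a direct computation with the affine transport map
\[
v_{n+1} = \hv_{n+1} + \hCvy_{n+1}(\hCyy_{n+1})^{-1}(\yd_{n+1}-\hy_{n+1}),
\]
taking all expectations under $\nu_{n+1} = \law(\hv_{n+1},\hy_{n+1})$. Write $K_n = \hCvy_{n+1}(\hCyy_{n+1})^{-1}$ for brevity; note that $K_n$ depends only on $\nu_{n+1}$, not on the realization $(\hv_{n+1},\hy_{n+1})$, so it can be pulled out of expectations. The positivity assumption $\Gamma \succ 0$ guarantees $\hCyy_{n+1} = \hChh_{n+1}+\Gamma \succ 0$ (as in the proof of Lemma \ref{lemma:KF_analysis}), so $K_n$ is well defined.

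First I would compute the mean. Taking expectation of the display above gives
\[
\E v_{n+1} = \E \hv_{n+1} + K_n\bigl(\yd_{n+1} - \E \hy_{n+1}\bigr)
= \E\hv_{n+1} + \hCvy_{n+1}(\hCyy_{n+1})^{-1}\bigl(\yd_{n+1}-\E\hy_{n+1}\bigr),
\]
which is exactly $m_{n+1}^\dagger$ by \eqref{eq:dda1}. Second, for the covariance, I would subtract the two displays to get
\[
v_{n+1} - m_{n+1}^\dagger = (\hv_{n+1}-\E\hv_{n+1}) - K_n(\hy_{n+1}-\E\hy_{n+1}),
\]
since the deterministic data term $K_n \yd_{n+1}$ cancels. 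Then expand
\[
\E\bigl((v_{n+1}-m_{n+1}^\dagger)(v_{n+1}-m_{n+1}^\dagger)^\top\bigr)
= \pCov_{n+1} - K_n (\pCov_{n+1}^{vy})^\top - \pCov_{n+1}^{vy} K_n^\top + K_n \pCov_{n+1}^{yy} K_n^\top,
\]
using the definitions of $\pCov_{n+1}$, $\pCov_{n+1}^{vy}$, $\pCov_{n+1}^{yy}$ from \eqref{eq:KF_pred_mean}, \eqref{eq:KF_pred_mean2}. Substituting $K_n = \pCov_{n+1}^{vy}(\pCov_{n+1}^{yy})^{-1}$, the last term becomes $\pCov_{n+1}^{vy}(\pCov_{n+1}^{yy})^{-1}(\pCov_{n+1}^{vy})^\top$, which coincides with the middle two terms; hence the whole expression collapses to $\pCov_{n+1} - \pCov_{n+1}^{vy}(\pCov_{n+1}^{yy})^{-1}(\pCov_{n+1}^{vy})^\top$, which is $\Cblue_{n+1}$ by \eqref{eq:cov0}. (Here one uses that $\pCov_{n+1}^{yy}$ is symmetric, so $K_n^\top = (\pCov_{n+1}^{yy})^{-1}(\pCov_{n+1}^{vy})^\top$.)

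This argument is essentially bookkeeping, so I do not anticipate a genuine obstacle; the only point requiring minor care is the observation that $K_n$ is $\nu_{n+1}$-measurable but independent of the random pair, which is precisely the structural assumption built into the affine transport form and already invoked in Lemma \ref{lem:mot1}. One could alternatively phrase the proof as an immediate corollary of Lemma \ref{lem:mot1} together with Lemma \ref{l:t1}: the map (\ref{eq:sd2nn_add}c) is of the form \eqref{eq:sd2cnadd2} with $A=I$, $B=-K_n$, $a = (I-A)\E\hv_{n+1} + K_n\yd_{n+1} - (B+K_n)\E\hy_{n+1} = K_n(\yd_{n+1}-\E\hy_{n+1})$, so by Theorem \ref{t:thet} (or direct inspection) its pushforward has first and second moments matching those of the Gaussian projected conditional, which by \eqref{eq:dda1} and \eqref{eq:cov0} are exactly $m_{n+1}^\dagger$ and $\Cblue_{n+1}$. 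I would present the direct computation as the primary proof since it is short and self-contained.
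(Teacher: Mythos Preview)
Your proof is correct and follows essentially the same route as the paper. The only cosmetic difference is that the paper packages the computation by first rewriting $v_{n+1}=m_{n+1}^\dagger+(\hv_{n+1}-\mb_{n+1})$ and then invoking Lemma~\ref{l:t1} (unbiasedness and the covariance formula for $\hv_{n+1}-\mb_{n+1}$), whereas you expand the covariance directly; your centered expression $(\hv_{n+1}-\E\hv_{n+1})-K_n(\hy_{n+1}-\E\hy_{n+1})$ is exactly $\hv_{n+1}-\mb_{n+1}$, so the two arguments coincide.
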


\begin{proof}
Note that the transport map (\ref{eq:sd2nn_add}c) can now be reformulated as
\begin{equation*} \label{eq:EnKF_BLUE}
    v_{n+1} = \hv_{n+1} + (m_{n+1}^\dagger-\mb_{n+1}).
\end{equation*}
Thus $$v_{n+1}=m_{n+1}^\dagger + \hv_{n+1} - \mb_{n+1} .$$
The desired properties of the mean and covariance follow from
Lemma \ref{l:t1}.
\end{proof}

\begin{remark}

We now have two derivations of the Kalman gain, the approximate
transport derivation from Subsection \ref{sssec:ats}, and the minimum variance derivation given here. We include a third, dimensional, argument that motivates its form. Let {\tt state} denote the physical units associated with the state variable and {\tt data} those associated with the observation. Then the physical units of the gain matrix $K$ should equal {\tt state}/{\tt data}. We note that $\hCyy$ has units of {\tt data} squared whilst the units of $\hCvy$ are the product of {\tt state} and {\tt data}. If we then constrain the gain to be determined by covariance matrices involving the state and the data it is natural to choose it to be formed as $\hCvy (\hCyy)^{-1}$ where $\hCyy$ is an estimate of covariance in the data, and $\hCvy$ an estimate of covariance between state and data. Making a choice of this type leads to the right units for the gain, since the innovation has units {\tt data}, and relies only on use of first and second order statistics. This dimensional argument motivates the form \eqref{eq:Kalman_gain} as derived in both Subsections \ref{ssec:TM_MVA} and \ref{ssec:GPFD}. $\blacksquare$
\end{remark}

%
\section{Appendix D (Stochastic Calculus Considerations)}
\label{sec:AC}
%

In Subsection \ref{subsec:AuxiliaryLemmas} we demonstrate how to change between
It\^o and Stratonovich integration in the Kushner-Stratonovich Equation.
Subsection \ref{subsec:AK} contains statement and proof of a lemma needed
in proof of Theorem \ref{thm:MF_KS_equal}. In Subsection \ref{subsec:diamond}
we study the diamond form of stochastic integral discussed in Remark \ref{rem:diamond}.

%
\subsection{Derivation of the Kushner-Stratonovich Equation}
\label{subsec:AuxiliaryLemmas}
%

In Section \ref{sec:CT}, where we  derived continuous time limits from discrete models,
we used the two small parameters $\delta$ and $\dt$. The first characterized the data frequency and the second a time-increment. In this appendix we make the
choice $\delta = \dt$ and study the limit $\Delta t\to 0$ to derive continuum models. Studying this limit enables us to convert between different modes of stochastic 
integration in an explicit fashion; this may be helpful to some readers as it avoids the
need to invoke abstract results on covariation. In particular we now restate, and then prove from
first principles, Lemma \ref{lem:ISC}:

\begin{lemma}
\label{lem:ISadd}
Assume that $\Gammas \succ 0$ and that $\zd$ is given by \eqref{eq:data_ct}. The It\^o and Stratonovich interpretations of the stochastic forcing term in (\ref{eq:KSD_S2}) are related through
\begin{subequations}
\label{eq:addback}
\begin{align}
\dd r &= \left\langle \hs- \E \hs, \circ \dd\zd \right\rangle_\Gammas  r - \frac{1}{2} \left\{ \left| \hs \right|^2_\Gammas - \E \left| \hs \right|^2_\Gammas\right\} r \dd t\\
    &= \left\langle \hs - \E \hs, \dd\zd - \E \hs \dd t\right\rangle_\Gammas r.
\end{align}
\end{subequations}
$\Diamond$ \end{lemma}

\begin{proof}
Consider (\ref{eq:addback}a) and the term 
$r\langle \hs- \E \hs, \circ \dd\zd \rangle_\Gammas$ in
particular. Define the corresponding It\^o and Stratonovich integrals 
$$I:= \int_0^T r\langle \hs- \E \hs,\dd\zd \rangle_\Gammas, \quad S:=\int_0^T r\langle \hs- \E \hs, \circ \dd\zd \rangle_\Gammas.$$ 
We aim to write $S$ as the sum of $I$ and an added correction. Consider the space $$\mathcal{X} := \{{\varrho}\in L^1(\R^{d_v};\R^+): \|\varrho\|_{L^1} =1\}$$ 
of probability density functions. We first choose an increasing sequence $(t_j)_{j=1,\dots,N}$ with $\dt := t_{j+1}-t_j$ such that $N\dt = T$. We next define 
\begin{equation*}
    \Delta r_{t_j} := r_{t_{j+1}} - r_{t_j} \quad \Delta \hs_{t_j} := \hs_{t_{j+1}} - \hs_{t_j} \quad \Delta \zd_{t_j} :=\zd_{t_{j+1}}-\zd_{t_j} .
\end{equation*}
Now recall the driving evolution equation (\ref{eq:data_ct}b) for $\zd$, namely
\begin{equation}
\label{eq:recalled}
    \dd\zd = \hs(\vd)\dd t + \sqrt{\Gammas}\dd B^\dagger.
\end{equation}
From this, recalling the properties of Brownian motion $B^\dagger$, we deduce the discretization 
\begin{equation}
\label{delta_z_proof}
    \Delta\zd_{t_j} = \hs(\vd_{t_j})\Delta t+ \sqrt{\Gammas\Delta t}\xi_{t_j}+\mathcal{O}(\dt^{3/2}),
\end{equation}
where $\xi_{t_j}$ are independent mean-zero normal random variables with variance $I_{d_y}$ for each $j$. The fact that this
discretization is accurate up to terms of $\mathcal{O}(\dt^{3/2})$
follows from It\^o-Taylor expansion \citep{kloeden1991numerical}.
Recalling the definition of the Stratonovich stochastic integral as a limit,
we consider the following finite sum approximation of $S$:
\begin{equation}
   S_{\dt} := \sum_{j=1}^{N} \frac{r_{t_{j+1}}+r_{t_j}}{2}\left\langle \hs - \int \frac{r_{t_{j+1}}+r_{t_j}}{2} \hs \dd v, \Delta\zd_{t_j}\right\rangle_\Gammas;
\label{bigsum}
\end{equation}
this converges in the $L^2_{\mathbb{P}}\left(\Omega; {C}([0,T];\mathcal{X})\right)$ sense to $S$ as $\dt \rightarrow 0$ (and thus as $N\rightarrow \infty$). Expanding now the sum in \eqref{bigsum}, we obtain
\begin{subequations}
\label{expansion}
\begin{align}
    S_{\dt} 
    &= \sum_{j=1}^{N} \left(r_{t_j} + \frac{\Delta r_{t_j}}{2} \right)\left\langle \hs - \int \left(r_{t_j} + \frac{\Delta r_{t_j}}{2} \right)\hs \dd v, \Delta\zd_{t_j}\right\rangle_\Gammas\\
    &= \sum_{j=1}^{N}r_{t_j}\left\langle \hs -\int r_{t_j}\hs \dd v,\Delta \zd_{t_j}\right\rangle_\Gammas +\frac{1}{2}\sum_{j=1}^{N}\Delta r_{t_j}\left\langle \hs - \int r_{t_j}\hs \dd v, \Delta \zd_{t_j} \right\rangle_\Gammas\\
    &-\frac{1}{2}\sum_{j=1}^{N}r_{t_j}\left\langle \int \Delta r_{t_j}\hs \dd v, \Delta \zd_{t_j} \right\rangle_\Gammas - \frac{1}{4}\sum_{j=1}^{N} \Delta r_{t_j}\left\langle \int \Delta r_{t_j}\hs \dd v, \Delta \zd_{t_j} \right\rangle_\Gammas.
\end{align}
\end{subequations}
Now, we note that by  discretizing  (\ref{eq:addback}a) or (\ref{eq:addback}b) we can write
$$\Delta r_{t_j} = r_{t_j} \left\langle \hs - \int r_{t_j}\hs \dd v,\Delta \zd_{t_j}\right\rangle_\Gammas + \mathcal{O}(\dt).$$ 
By substituting this expression in the expanded sum in (\ref{expansion}) we notice that the last term in (\ref{expansion}c) is of order $\mathcal{O}(N\dt^{3/2})$.  We can thus write \eqref{expansion} as   
\begin{equation}\label{shorthand}
    S_{\dt} = I_{\dt} + J_{1,\dt} + J_{2,\dt} + \mathcal{O}(N\dt^{3/2}),
\end{equation}
where we have defined 
\begin{align*}
    I_{\dt}&:=\sum_{j=1}^{N}r_{t_j}\left\langle \hs -\int r_{t_j}\hs \dd v,\Delta \zd_{t_j}\right\rangle_\Gammas,\\ 
    J_{1,\dt} &:= \frac{1}{2}\sum_{j=1}^{N}r_{t_j}\left|\left\langle \hs - \int r_{t_j}\hs \dd v, \Delta \zd_{t_j} \right\rangle_\Gammas\right|^2,\\
    J_{2,\dt}&:= - \frac{1}{2}\sum_{j=1}^{N}r_{t_j}\left\langle\int r_{t_j} \left\langle \hs - \int r_{t_j}\hs \dd v,\Delta \zd_{t_j}\right\rangle_\Gammas \hs \dd v, \Delta \zd_{t_j} \right\rangle_\Gammas.
\end{align*}
The first term in the sum \eqref{shorthand}, $I_{\dt}$, converges in the $L^2_{\mathbb{P}}$ sense to $I$ as $\dt \rightarrow 0$. Considering now the term $J_{1,\dt}$ and using the discretization \eqref{delta_z_proof} we have that
\begin{align*}
J_{1,\dt}& = \frac{1}{2}\sum_{j=1}^{N}r_{t_j}\left|\left\langle \hs - \int r_{t_j}\hs \dd v, \Delta \zd_{t_j} \right\rangle_\Gammas\right|^2\\
&= \frac{1}{2}\sum_{j=1}^{N}r_{t_j}\dt\left( \hs - \int r_{t_j}\hs \dd v\right)^\top\Gammas^{-1/2}\xi_{t_j}\xi_{t_j}^\top\Gammas^{-1/2}\left( \hs - \int r_{t_j}\hs \dd v\right)\\&\qquad + \sum_{j=1}^{N} \mathcal{O}(\dt^{3/2}).
\end{align*}
Since $\sum_{j=1}^N \mathcal{O}(\dt^{3/2}) =\mathcal{O}(\dt^{1/2})$, taking the $\dt \rightarrow 0$ limit and using the independence of the $\xi_{t_j}$ for each $j$, we see that
\begin{equation*}
    J_{1,\dt} \rightarrow \; \frac{1}{2}\int_0^T\left[r\left( \hs - \mathbb{E}\hs\right)^\top\Gammas^{-1/2}\mathbb{E}\xi\xi^\top\Gammas^{-1/2}\left( \hs - \mathbb{E}\hs\right) \right]\dd t= \int_0^\top\frac{r}{2}\left|\hs-\mathbb{E}\hs\right|^2_\Gammas \dd t,
\end{equation*}
in the $L^2_{\mathbb{P}}$ sense. Similarly for $J_{2,\dt}$ we have that,
\begin{align*}
J_{2,\dt}&= - \frac{1}{2}\sum_{j=1}^{N}r_{t_j}\left\langle \int r_{t_j} \left\langle \hs - \int r_{t_j}\hs \dd v,\Delta \zd_{t_j}\right\rangle_\Gammas \hs \dd v, \Delta \zd_{t_j} \right\rangle_\Gammas\\
&=- \frac{1}{2}\sum_{j=1}^{N}r_{t_j}\int r_{t_j}\left\langle  \left\langle \hs - \int r_{t_j}\hs \dd v,\Delta \zd_{t_j}\right\rangle_\Gammas \hs, \Delta \zd_{t_j} \right\rangle_\Gammas \dd v\\
&=- \frac{1}{2}\sum_{j=1}^{N}r_{t_j}\dt \int r_{t_j} \hs^\top\Gammas^{-1/2}\xi_{t_j}\xi_{t_j}^\top\Gammas^{-1/2}\left( \hs - \int r_{t_j}\hs \dd v\right)\dd v \\ &\qquad+ \sum_{j=1}^N \mathcal{O}(\dt^{3/2}).
\end{align*}
Since $\sum_{j=1}^N \mathcal{O}(\dt^{3/2}) =\mathcal{O}(\dt^{1/2})$, taking the $\dt \rightarrow 0$ limit and using the independence of the $\xi_{t_j}$ for each $j$, we see that, in the $L^2_{\mathbb{P}}$ sense,
\begin{align*}
    J_{2,\dt} \rightarrow &\; -\frac{1}{2}\int_0^Tr\int r \hs^\top\Gammas^{-1/2}\mathbb{E}\left(\xi\xi^\top\right)\Gammas^{-1/2}\left( \hs - \mathbb{E}\hs\right) \dd v\dd t \\&= - \int_0^T\frac{r}{2}\mathbb{E}\left|\hs-\mathbb{E}\hs\right|^2_\Gammas \dd t.
\end{align*}
Finally, by taking the $\dt \rightarrow 0$ limit on both sides of \eqref{shorthand}, we conclude
\begin{equation*}
    S = I + \int_0^T\frac{r}{2}\left|\hs-\mathbb{E}\hs\right|^2_\Gammas \dd t - \int_0^T\frac{r}{2}\mathbb{E}\left|\hs-\mathbb{E}\hs\right|^2_\Gammas \dd t,
\end{equation*}
in the $L^2_{\mathbb{P}}$ sense. Hence by using the above Stratonovich-to-\^Ito correction in (\ref{eq:addback}a), we obtain
\begin{align*}
    \dd r &= r\left\langle \hs- \E \hs, \circ \dd\zd \right\rangle_\Gammas  - \frac{r}{2} \left\{ \left| \hs \right|^2_\Gammas - \E \left| \hs \right|^2_\Gammas\right\} \dd t\\
    &=r\left\langle \hs- \E \hs, \dd\zd \right\rangle_\Gammas + \frac{r}{2}\left|\hs-\mathbb{E}\hs\right|^2_\Gammas \dd t - \frac{r}{2}\mathbb{E}\left|\hs-\mathbb{E}\hs\right|^2_\Gammas \dd t - \frac{r}{2} \left\{ \left| \hs \right|^2_\Gammas - \E \left| \hs \right|^2_\Gammas\right\} \dd t\\
    &=r \left\langle \hs - \E \hs, \dd\zd - \E \hs \dd t\right\rangle_\Gammas.
\end{align*}
\end{proof}

%
\subsection{Lemma for Proof of Theorem \ref{thm:MF_KS_equal}}
\label{subsec:AK}
%

We now  establish the following lemma, the conclusions of which are used in proof of 
Theorem \ref{thm:MF_KS_equal}.

\begin{lemma}
\label{lm::FP_analysis}
The probability density for the solution $v$ of  \eqref{eq:KS_mfn},
with respect to randomness induced by the law of $\hz$ and
the initial condition $v(0)$, with $\zd$ a fixed data sample path,
is given by \eqref{eq:FPE_mf}.
$\Diamond$ \end{lemma}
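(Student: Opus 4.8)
The plan is to derive the Fokker--Planck equation \eqref{eq:FPE_mf} for the mean-field SDE \eqref{eq:KS_mfn} by a combination of the standard It\^o-calculus derivation of a Fokker--Planck equation and the observation that, because $\zd$ is a \emph{fixed} path (rather than a realization of a diffusion), the term $K(v;\rho)\,d\zd$ contributes to the drift but \emph{not} to the diffusion, whereas the term $-K(v;\rho)\sqrt{\Gammas}\,dB$ contributes the diffusion. First I would rewrite \eqref{eq:KS_mfn} in the form $dv = b(v;\rho)\,dt - K(v;\rho)\sqrt{\Gammas}\,dB$ where $b(v;\rho) = a(v;\rho) + K(v;\rho)\bigl(\tfrac{d\zd}{dt} - \hs(v)\bigr)$, temporarily treating $d\zd/dt$ as a given (smooth, after the robustness regularization) forcing. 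Here the subtlety is that $b$ depends on $\zd$ and so the resulting evolution of $\rho$ will be driven by $d\zd$; but the only genuinely stochastic input to the SDE is the Brownian motion $B$, which is independent of the fixed path $\zd$.

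Next I would apply It\^o's formula to $\phi(v(t))$ for an arbitrary test function $\phi \in C_c^\infty(\R^{d_v})$, take expectation over the randomness coming from $B$ and the initial condition $v(0)$ (with $\zd$ held fixed), and use the mean-field self-consistency $\rho(\cdot,t) = \law(v(t))$ to write $\frac{d}{dt}\int \phi\,\rho\,dv$ in terms of $\rho$. This gives, after integrating by parts, the weak form of
\[
\partial_t \rho = -\nabla\cdot\bigl(\rho\,b(v;\rho)\bigr) + \tfrac12 \nabla\cdot\bigl(\nabla\cdot(\rho\, K\Gammas K^\top)\bigr)\cdot 2,
\]
where the factor $2$ appears because: writing the diffusion coefficient as $K\sqrt{\Gammas}$, the second-order It\^o term produces $\tfrac12 K\Gammas K^\top$, but the contribution from the $d\zd$ forcing — via the quadratic variation identity $\langle \zd,\zd\rangle_t$ being nonzero when $\zd$ is still viewed as a semimartingale during the limiting procedure, or equivalently via the coupling between the synthetic data path $\hz$ and $\zd$ — doubles the diffusion coefficient, matching the factor $1$ (rather than $1/2$) in front of $\nabla\cdot(\nabla\cdot(\rho K\Gammas K^\top))$ in \eqref{eq:FPE_mf}, as flagged in the main-text remark following that equation. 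Substituting $b = a - K\hs + K\,d\zd/dt$ and separating the $d\zd$ contribution from the $dt$ contribution yields precisely
\[
d\rho = -\nabla\cdot\bigl(\rho(a - K\hs)\bigr)dt - \langle \nabla\cdot(\rho K^\top), d\zd\rangle + \nabla\cdot\bigl(\nabla\cdot(\rho K\Gammas K^\top)\bigr)dt,
\]
which is \eqref{eq:FPE_mf}.

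I would make the bookkeeping of the doubled diffusion rigorous by reintroducing the time-discretization $\dt$ from Section~\ref{sec:CT}, writing $\Delta\zd_{t_j} = \hs(\vd_{t_j})\Delta t + \sqrt{\Gammas\Delta t}\,\zeta_{t_j} + \mathcal{O}(\dt^{3/2})$ exactly as in \eqref{delta_z_proof}, and $\Delta v_{t_j} = b\,\Delta t - K\sqrt{\Gammas\Delta t}\,\xi_{t_j} + \mathcal{O}(\dt^{3/2})$ with $\xi_{t_j}$ the Brownian increment independent of $\zeta_{t_j}$. Expanding $\phi(v_{t_{j+1}})-\phi(v_{t_j})$ to second order, taking expectation, and carefully tracking the term $\E\bigl[K\sqrt{\Gammas}\,\xi_{t_j}\otimes K\sqrt{\Gammas}\,\xi_{t_j}\bigr]$ from the SDE's own noise together with the cross-term and the $\zeta$-term arising when $d\zd$ is retained as a semimartingale increment (before the $\zd(t)=$ deterministic specialization), one sees the diffusion matrix assemble as $K\Gammas K^\top$ with unit (not one-half) coefficient. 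The main obstacle, and the one I would spend the most care on, is precisely this factor-of-one-versus-one-half issue: one must justify why the quadratic variation of the fixed observation path $\zd$ nonetheless enters the Fokker--Planck equation. The cleanest resolution is to interpret \eqref{eq:FPE_mf} as an SPDE in which $\zd$ is momentarily promoted back to the solution of \eqref{eq:data_ct} (as the main-text footnote after \eqref{eq:FPE_mf} states), derive the equation in that setting via the It\^o calculus above, and then specialize; alternatively one invokes the covariation calculus of Lemma~\ref{lem:ISC} and Lemma~\ref{lem:ISadd}. Everything else — the integration by parts, the use of mean-field self-consistency, and the identification of the drift $a - K\hs$ — is routine once this point is settled.
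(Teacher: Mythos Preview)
Your overall strategy---apply It\^o's formula to a test function $\phi(v)$, take expectation over $B$ and $v(0)$ with $\zd$ held fixed, integrate by parts---is exactly the paper's approach, and you correctly identify the factor-of-one-versus-one-half in the diffusion term as the only nontrivial point.

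However, your explanation of \emph{why} the factor doubles is muddled. You attribute it to ``the coupling between the synthetic data path $\hz$ and $\zd$'' and to a ``cross-term'' in the discretization; neither is correct, since $B$ and $\Bd$ are independent and there is no cross-term. The paper's argument is cleaner and avoids your detour through a smooth $d\zd/dt$: it substitutes $d\zd = \hs(\vd)\,dt + \sqrt{\Gammas}\,d\Bd$ from \eqref{eq:data_ct} directly into \eqref{eq:KS_mfn} at the outset, so that the SDE for $v$ reads
\[
dv = a\,dt + K\bigl((\hs(\vd)-\hs(v))\,dt + \sqrt{\Gammas}\,d\Bd - \sqrt{\Gammas}\,dB\bigr).
\]
Now the diffusion is driven by $\sqrt{\Gammas}(d\Bd - dB)$, the difference of two \emph{independent} Brownian motions, whose quadratic covariation is $2\Gammas\,dt$. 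It\^o's formula then gives the second-order term $\tfrac12 \cdot 2\, K\Gammas K^\top : \nabla\nabla\phi = K\Gammas K^\top : \nabla\nabla\phi$ directly, with no need to first pretend $\zd$ is smooth and then repair. After taking expectation over $B$ and $v(0)$ (with $\Bd$ fixed), the $dB$ martingale term vanishes while the $d\Bd$ term is reassembled with $\hs(\vd)\,dt$ back into $d\zd$, yielding \eqref{eq:FPE_mf}. Your eventual resolution (``promote $\zd$ back to the solution of \eqref{eq:data_ct}'') is precisely this, so the proof is salvageable; but you should lead with it rather than arrive at it after a false start.
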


\begin{proof}
Recall that in equation \eqref{eq:KS_mfn} the
evolution of $\zd$ is given by \eqref{eq:recalled}, 
with $\Bd$ and $B$ independent draws from unit
Brownian motion in $\R^{d_y}.$ Thus
\begin{align*} \label{eq:KS_mfna}
    \dd v &= a(v;\mmu)\dd t + K(v;\mmu)\Bigl(\bigl(\hs(\vd)-\hs(v)\bigr)\dd t+
    \sqrt{\Gammas}\dd\Bd-\sqrt{\Gammas}\dd B\Bigr).
\end{align*}
We wish to find the probability density function $\rho(v,t)$
for $v$, with respect to randomness induced by the law of $B$ and
the initial condition $v(0)$, but with $\vd,\Bd$ fixed signal
and observational noise sample paths. Let $\phi:\R^{d_v} \to \R$
be smooth and note that the It\^o formula shows that
$$\dd\phi\bigl(v(t)\bigr)=\bigl(\cL \phi\bigr)\bigl(v(t)\bigr)\dd t
+\big \langle K\bigl(v(t);\rho(\cdot,t)\bigr)\sqrt{\Gammas}(\dd\Bd-\dd B),\nabla \phi \bigl(v(t)\bigr)\big\rangle,$$
where\footnote{Here we denote by $\cL$ the infinitesimal generator of the diffusion process.}
$$\cL \psi(v)=\Big \langle\Bigl(a(v;\rho)+K(v;\rho)\bigl(\hs(\vd)-\hs(v)\bigr)\Bigr),\nabla \psi\Big\rangle+K(v;\rho)\Gammas K(v;\rho)^\top: \nabla \nabla \psi,$$
and the integrals are to be interpreted in the It\^o sense.
Note the factor $1$ in the second order term, arising because
of the independent quadratic variation contributions 
from both $B$ and $\Bd.$
Taking expectation $\E$ with respect to $B$ and initial condition
$v(0)$, with $\Bd$ fixed, for $a$ and $K$ functions of $v$ and $\rho$, yields,
using again that $\zd$ is given by \eqref{eq:data_ct},
\begin{align*}
\dd\E\phi(v) 
&=\E\Big\langle\Bigl(a+K\bigl(\hs(\vd)-\hs\bigr)\Bigr),\nabla \phi\Bigr\rangle \dd t+\E\langle K\sqrt{\Gammas}\dd\Bd, \nabla \phi \rangle \rangle+\E K\Gamma K^\top: \nabla \nabla \phi \dd t\\
&=\E\big\langle\bigl(a-K\hs\bigr),\nabla \phi\bigr\rangle \dd t+\E\langle K\dd\zd, \nabla \phi \rangle \rangle+\E K\Gammas K^\top: \nabla \nabla \phi \dd t.
\end{align*}
Noting that the expectation operation corresponds to
multiplication by $\rho(v,t)$ and integration over 
$v \in \R^{d_v},$
integrating by parts shows that $\rho$ satisfies
the desired equation \eqref{eq:FPE_mf}, in a weak sense.
\end{proof}

%
\subsection{Diamond Integration}
\label{subsec:diamond}
%

In this section we use notation analogous to that established in
the proof of Lemma \ref{lem:ISadd}.
Consider the equation \eqref{eq:KS_mf3}, repeated here for convenience:
\begin{subequations}
\label{eq:KS_mf3R}
\begin{empheq}[box=\fbox]{align}
\dd v & = f(v)\dd t + \sqrt{\Sigmas} \dd W +\nabla \cdot (K\Gammas K^\top)\dd t - K\Gammas \nabla \cdot K^\top \dd t + K (\dd\zd - \dd\hz),\\
\dd\hz &= \hs(v)\dd t + \sqrt{\Gammas} \dd B.
\end{empheq}
\end{subequations}
Our focus here is on the contribution $K (\dd\zd - \dd\hz)$,
recalling that $\zd$ is governed by \eqref{eq:recalled}.
We first choose an increasing sequence $(t_j)_{j=0,\dots,N-1}$ with $\dt := t_{j+1}-t_j$ such that $N\dt = T$. 
The It\^o integral interpretation of this contribution, on time interval $(0,T)$,
is as the $L^2_{\mathbb{P}}$ limit of
\begin{equation*}
\label{eq:ito}
     I_{\dt}:=\sum_{j=0}^{N-1}K(v_{t_j}; \rho_{t_j})
     (\Delta \zd_{t_j}-\Delta \hz_{t_j}).
\end{equation*}
We define the Stratonovich integral interpretation as 
the $L^2_{\mathbb{P}}$ limit of
\begin{equation*}
\label{eq:strat}
    S_{\dt}:= \sum_{j=0}^{N-1}K\left(\frac{v_{t_{j+1}}+v_{t_j}}{2}; \frac{\rho_{t_{j+1}}+\rho_{t_j}}{2}\right)
     (\Delta \zd_{t_j}-\Delta \hz_{t_j});
\end{equation*}
as is standard, we use $\circ$ to denote
Stratonovich stochastic integration. Finally we define the 
diamond integral interpretation as the $L^2_{\mathbb{P}}$ limit of
\begin{equation*}
\label{eq:diamond}
    R_{\dt}:= \sum_{j=0}^{N-1}K\left(\frac{v_{t_{j+1}}+v_{t_j}}{2}; \rho_{t_j}\right)
     (\Delta \zd_{t_j}-\Delta \hz_{t_j}).
\end{equation*}
Note that this is akin to a Stratonovich integral, but only with respect to
variation of $K$ with respect to $v$, not $\rho.$
We use $\diamond$ to denote
this unusual form of stochastic integration.

Throughout this subsection we inter-convert between these three forms of stochastic integration. To shorten the presentation we will sometimes
use the $=$ symbol when in fact we mean equality up to an additive constant
which disappears in the $\Delta t \to 0$ limit.
Note that the evolution equation  \eqref{eq:KS_mf3R} for $v$ is driven by 
$\zd$ and $\hz$, whereas the Kushner-Stratonovich equation \eqref{eq:KSE} 
for $\rho$ driven only by $\zd.$ This difference will have implications
for the calculations that follow, in which we compute inter-conversions
between the different stochastic integrals.
The first result highlights an interesting interpretation of the contribution 
$$a=\nabla \cdot (K\Gammas K^\top) - K\Gammas \nabla \cdot K^\top$$
to the drift in \eqref{eq:KS_mf3R}, namely that it is
simply the It\^o-to-diamond correction, giving a compact re-interpretation
of the mean field model:

\begin{lemma}
Let $\rho$ solve the Kushner-Stratonovich equation \eqref{eq:KSE} and let $\zd$
be given by \eqref{eq:data_ct}.
The It\^o interpretation of equation \eqref{eq:KS_mf3} and its interpretation with respect to the $\diamond$ form of stochastic integration are related through the following equivalence. The system
\begin{align*}
\dd v & = f(v)\dd t + \sqrt{\Sigmas} \dd W +\nabla \cdot (K\Gammas K^\top)\dd t - K\Gammas \nabla \cdot K^\top \dd t + K (\dd\zd - \dd\hz),\\
\dd\hz &= \hs(v)\dd t + \sqrt{\Gammas} \dd B,
\end{align*}
where $K=K(v,\mmu)$, is equivalent to
\begin{align*} 
    \dd v &=f(v)\dd t + \sqrt{\Sigmas} \dd W + K \diamond (\dd\zd - \dd\hz),\\
    \dd\hz &= \hs(v)\dd t + \sqrt{\Gammas} \dd B.
\end{align*}
$\Diamond$ \end{lemma}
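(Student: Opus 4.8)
## Proof Plan

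The plan is to compute the correction term that converts the $\diamond$-integral $K \diamond (d\zd - d\hz)$ into its It\^o form, and to verify that this correction equals exactly $\nabla \cdot (K\Gammas K^\top)\,dt - K\Gammas \nabla \cdot K^\top\,dt$. Since the $\diamond$-integral is, by definition, a Stratonovich-type midpoint rule \emph{only} with respect to the $v$-dependence of $K$ (the $\rho$-argument being frozen at the left endpoint $t_j$), the conversion is a standard It\^o--Stratonovich-type calculation, but carried out one variable at a time. First I would write, using the notation from the proof of Lemma~\ref{lem:ISadd},
\begin{equation*}
R_{\dt} - I_{\dt} = \sum_{j=0}^{N-1} \left[ K\!\left(\tfrac{v_{t_{j+1}}+v_{t_j}}{2}; \rho_{t_j}\right) - K(v_{t_j};\rho_{t_j})\right] (\Delta \zd_{t_j} - \Delta \hz_{t_j}),
\end{equation*}
Taylor-expand $K$ in its first argument about $v_{t_j}$ to first order, giving a contribution $\tfrac12 DK(v_{t_j};\rho_{t_j})[\Delta v_{t_j}]\,(\Delta\zd_{t_j}-\Delta\hz_{t_j})$ plus higher-order terms. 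Then I would substitute the increment $\Delta v_{t_j}$ from the evolution equation itself, keeping only the martingale parts that carry nonzero quadratic variation against $(\Delta\zd_{t_j}-\Delta\hz_{t_j})$.

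The second step is the covariation bookkeeping. Using $d\zd = \hs(\vd)dt + \sqrt{\Gammas}d\Bd$ and $d\hz = \hs(v)dt + \sqrt{\Gammas}dB$ with $B, \Bd$ independent unit Brownian motions, the relevant quadratic variation of the $v$-increment against $(\Delta\zd - \Delta\hz)$ comes from the terms $\sqrt{\Sigmas}dW$ (which is independent of both $B$ and $\Bd$, hence contributes nothing) and from $K(d\zd - d\hz)$ inside $\Delta v$. Here the crucial point is that $\zd$ is a \emph{fixed} sample path in the driving equation for $v$ but contributes its quadratic variation anyway (exactly the phenomenon already noted after \eqref{eq:FPE_mf}), so $d[\zd - \hz, \zd - \hz] = 2\Gammas\,dt$ — the factor $2$ being the sum of the independent contributions from $\Bd$ and $B$. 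Consequently the correction works out, in the $\dt \to 0$ limit and componentwise, to $\tfrac12 \sum_{k,\ell} \partial_{v_\ell} K_{ik} \cdot 2 (\Gammas)_{k\ell}\,dt$, which I would then rearrange into divergence notation. Tracking the index conventions from continuum mechanics used throughout Section~\ref{sec:CT}, this sum decomposes into precisely $\bigl(\nabla \cdot (K\Gammas K^\top)\bigr)_i - \bigl(K\Gammas \nabla \cdot K^\top\bigr)_i$; the identity $\nabla \cdot (K\Gammas K^\top) = (\nabla \cdot K)\Gammas K^\top + K\Gammas(\nabla \cdot K^\top) + (\text{the } \partial K \text{-against-}K \text{ cross term})$ is the algebraic fact to verify here, mirroring the manipulation already performed in the displayed computation of $\nabla \cdot (\mmu a)$ in the proof of Theorem~\ref{thm:MF_KS_equal}.

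The main obstacle I anticipate is not analytic but combinatorial: getting the divergence conventions and the placement of transposes right, since the paper uses the continuum-mechanics convention (divergence of a matrix via the second index) and $K$ is generally non-square ($d_v \times d_y$). I would pin this down by checking the scalar-observation case $d_y = 1$ first, where everything reduces to ordinary vector calculus and the identity $\nabla \cdot (K\Gammas K^\top) - K\Gammas \nabla \cdot K^\top$ is manifestly the It\^o-to-Stratonovich correction for the SDE $dv = K\,\gamma\,(d\zd - d\hz)$ with $K = K(v)$, and then promote to the vector case by linearity in the observation components. A secondary subtlety worth flagging explicitly in the write-up: because the $\diamond$-integral freezes $\rho$ at the left endpoint, \emph{no} correction arises from the $\rho$-dependence of $K$, which is why the equation for $\rho$ itself (the Kushner--Stratonovich equation \eqref{eq:KSE}, driven only by $\zd$) plays no role in this particular conversion — it only matters that $\rho_{t_j}$ is adapted, so that $DK[\cdot;\rho_{t_j}]$ is independent of the forward increment $\Delta\zd_{t_j} - \Delta\hz_{t_j}$. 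Finally I would note that the $d\hz$ equation is unchanged between the two formulations, so the equivalence is established once the drift correction in the $v$-equation is matched, and conclude.
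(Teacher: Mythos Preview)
Your plan is essentially identical to the paper's proof: write $R_{\dt}-I_{\dt}$, Taylor expand $K$ in $v$ with $\rho$ frozen, substitute $\Delta v \approx K(\Delta\zd-\Delta\hz)$, use $d[\zd-\hz,\zd-\hz]=2\Gammas\,dt$, and rearrange the resulting index expression into divergence form. The paper carries this out with Einstein summation, obtaining the $k$-th component of the correction as $(\partial_{v_i}K_{kn})\Gammas_{ln}K_{il}$ and then verifying $(\partial_{v_i}K_{kn})\Gammas_{ln}K_{il} = [\nabla\cdot(K\Gammas K^\top)]_k - [K\Gammas\nabla\cdot K^\top]_k$ via the product rule.

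One concrete slip to fix: your displayed correction $\tfrac12\sum_{k,\ell}\partial_{v_\ell}K_{ik}\cdot 2(\Gammas)_{k\ell}$ has dropped the second factor of $K$ coming from the $\Delta v$ substitution (and mixes a $v$-index $\ell$ with a $y$-index). You correctly said just above that the relevant piece of $\Delta v$ is $K(d\zd-d\hz)$, so the summand should read $\partial_{v_\ell}K_{ik}\,K_{\ell m}\,\Gammas_{mk}$; without that extra $K$ the expression neither has the right index structure nor reduces to the claimed divergence form. This is exactly the ``combinatorial obstacle'' you anticipated, so just be sure to carry the $K$ through when you write it up.
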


\begin{proof}
We first recall the evolution equations for $\zd$ and $\hz$:
\begin{align*}
\label{eq:zzs}
    \dd\zd &= \hs(\vd)\dd t + \sqrt{\Gammas} \dd\Bd,\\
    \dd\hz &= \hs(v)\dd t + \sqrt{\Gammas} \dd B.
\end{align*}
Noting that $v$ is driven by $(\zd-\hz)$ and that we are inter-converting
between It\^o and diamond integration, so that the noisy driving of the
$\rho$ equation does not play a role, we see that it suffices to
consider the  $L^2_{\mathbb{P}}$ limits of the two quantities
\begin{align*}
     I_{\dt}&=\sum_{j=0}^{N-1}K(v_{t_j}; \rho_{t_j})\sqrt{2\Gammas\dt}\xi_{t_j},\\
R_{\dt}&= \sum_{j=0}^{N-1}K\left(\frac{v_{t_{j+1}}+v_{t_j}}{2}; \rho_{t_j}\right)\sqrt{2\Gammas\dt}\xi_{t_j},
\end{align*}
where the $\xi_{t_j}$ are i.i.d. draws from a unit Gaussian.
By adding and subtracting the It\^o contribution, we obtain
\begin{align*}
    &R_{\dt}= \sum_{j=0}^{N-1}K\left(\frac{v_{t_{j+1}}+v_{t_j}}{2}; \rho_{t_j}\right)\sqrt{2\Gammas\dt}\xi_{t_j}\\
    &= \sum_{j=0}^{N-1}K(v_{t_j}; \rho_{t_j})\sqrt{2\Gammas\dt}\xi_{t_j} + \sum_{j=0}^{N-1} \left(K\left(\frac{v_{t_{j+1}}+v_{t_j}}{2}; \rho_{t_j}\right) - K(v_{t_j}; \rho_{t_j}) \right)\sqrt{2\Gammas\dt}\xi_{t_j}\\
    &= I_{\dt} + \sum_{j=0}^{N-1} \left( \frac{1}{2}D_v K(v_{t_j};\mmu_{t_j})(v_{t_{j+1}}-v_{t_j})+\mathcal{O}\left(|v_{t_{j+1}}-v_{t_j}|^2\right)\right)\sqrt{2\Gammas\dt}\xi_{t_j}, 
\end{align*}
where the last line follows from a first order Taylor expansion. Using a discretization of the evolution for $v$, and neglecting the terms that do not contribute to the quadratic variation when computing the  $L^2_{\mathbb{P}}$ limit, we substitute for $v_{t_{j+1}}-v_{t_j}$ to obtain
\begin{equation}
\label{eq:usefulforquadvariation}
    I_{\dt} + \sum_{j=0}^{N-1}\frac{1}{2}D_v K(v_{t_j};\mmu_{t_j})\left(K(v_{t_j}; \rho_{t_j})\sqrt{2\Gammas\dt}\xi_{t_j}\right)\sqrt{2\Gammas\dt}\xi_{t_j}.
\end{equation}
We now consider the correction term resulting from \eqref{eq:usefulforquadvariation}, which is determined by expectation of
the summand with respect to the random increments $\xi$, scaled
by $\Delta t^{-1}.$ Dropping the $v$ and $\mmu$ dependence of $K$ and the $t_j$ dependence for notational convenience, its $k$'th component is given by\footnote{Using Einstein summation convention, as described for example in
\citet{gonzalez2008first}, so that index repeated twice is summed over.}
\begin{align*}
    \left[\mathbb{E}\left(D_{v}K\right) \left(K\Gammas^{1/2}\xi\right)\Gammas^{1/2}\xi\right]_k&=\left[\mathbb{E} \left(\partial_{v_i}K\right)\left[K\Gammas^{1/2}\xi\right]_i\Gammas^{1/2}\xi\right]_k\\
    &=\left[\mathbb{E} \left(\partial_{v_i}K\right)\left(K_{il}(\Gammas^{1/2})_{lj}\xi_j\right)\Gammas^{1/2}\xi\right]_k\\
    &=\mathbb{E}\left(K_{il}(\Gammas^{1/2})_{lj}\xi_j\right) \left(\partial_{v_i}K\right)_{kn}(\Gammas^{1/2})_{nm}\xi_m\;\;\;\;\\
    &=\left(\partial_{v_i}K_{kn}\right)\Gammas_{ln}K_{il}.
\end{align*}
But,
\begin{align*}
    \left(\partial_{v_i}K_{kn}\right)\Gammas_{ln}K_{il} &=\partial_{v_i}\left(K_{kn}\Gammas_{ln}K_{il}\right) - K_{kn}\Gammas_{ln}\left(\partial_{v_i}K_{il}\right)\\
    &= \left[\nabla\cdot\left(K\Gammas K^\top \right) \right]_k - \left[K\Gammas\nabla\cdot\left(K^\top \right) \right]_k,
\end{align*}
Recalling that the only contributions in $R_{\dt}$ which do not vanish under the $\dt \rightarrow 0$ limit are the ones given by \eqref{eq:usefulforquadvariation}, taking the $\dt\rightarrow 0$ limit of $R_{\dt}$ yields
\begin{equation*}
\label{eq:afterlimit}
    R= I + \int_0^T\left( \nabla\cdot\left(K\Gammas K^\top \right) - K\Gammas\nabla\cdot\left(K^\top \right)\right)\dd t,
\end{equation*}
where the convergence is in the $L^2_{\mathbb{P}}\left(\Omega; {C}([0,T];\R^{d_v})\right)$ sense; this concludes the proof.
\end{proof}

The preceding lemma relates diamond integration to It\^o integration; the following lemma
relates it to Stratonovich integration.
\begin{lemma}
Let $\rho$ solve the Kushner-Stratonovich equation \eqref{eq:KSE} and let $\zd$
be given by \eqref{eq:data_ct}.
The interpretation with respect to the $\diamond$ form of stochastic integration of equation \eqref{eq:KS_mf3} and its Stratonovich interpretation are related through the following equivalence. The system
\begin{align*}
\dd v &=f(v)\dd t + \sqrt{\Sigmas} \dd W + K \diamond (\dd\zd - \dd\hz),\\
    \dd\hz &= \hs(v)\dd t + \sqrt{\Gammas} \dd B,
\end{align*}
where $K=K(v,\mmu)$, is equivalent to
\begin{align*} 
    \dd v &=f(v)\dd t + b\dd t + \sqrt{\Sigmas} \dd W + K \circ (\dd\zd - \dd\hz),\\
    \dd\hz &= \hs(v)\dd t + \sqrt{\Gammas} \dd B,
\end{align*}
where the term $b=b(v,\mmu)$ satisfies
\begin{equation*}
    b(v,\mmu) = \frac{\mmu}{2}D_\rho K(\hs -\mathbb{E}\hs).
\end{equation*}
$\Diamond$ \end{lemma}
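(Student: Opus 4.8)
The plan is to mirror the Riemann-sum arguments used in the proof of Lemma \ref{lem:ISadd} and of the preceding It\^o-to-$\diamond$ conversion lemma, using notation analogous to that established there. Recall that, on a partition $(t_j)_{j=0}^{N-1}$ with $\dt=t_{j+1}-t_j$ and $N\dt=T$, the diamond integral $\int_0^T K\diamond(d\zd-d\hz)$ is the $L^2_{\mathbb{P}}$ limit of
\[
R_{\dt}=\sum_{j=0}^{N-1}K\!\left(\tfrac{v_{t_{j+1}}+v_{t_j}}{2};\,\rho_{t_j}\right)(\Delta\zd_{t_j}-\Delta\hz_{t_j}),
\]
whereas the Stratonovich integral is the $L^2_{\mathbb{P}}$ limit of the same sum with the argument $\rho_{t_j}$ replaced by the midpoint $(\rho_{t_{j+1}}+\rho_{t_j})/2$. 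First I would form the difference $S_{\dt}-R_{\dt}$: the $v$-argument of $K$ is identical term by term, so only the $\rho$-argument changes, and a first-order Taylor expansion of $K$ in its functional second argument gives
\[
S_{\dt}-R_{\dt}=\sum_{j=0}^{N-1}\Bigl(\tfrac12 D_\rho K(v;\rho_{t_j})[\Delta\rho_{t_j}]+O(\|\Delta\rho_{t_j}\|^2)\Bigr)(\Delta\zd_{t_j}-\Delta\hz_{t_j}),
\]
where $D_\rho K$ is the Fr\'echet derivative and $\Delta\rho_{t_j}=\rho_{t_{j+1}}-\rho_{t_j}$.

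Next I would substitute the discretization of the Kushner--Stratonovich equation \eqref{eq:KSE}. The crucial structural observation is that, in contrast to $v$, the density $\rho$ is driven only by $\zd$ and not by $\hz$; hence, using the It\^o form of \eqref{eq:KSE} (from Lemma \ref{lem:ISC}) together with the representation \eqref{eq:data_ct} of $\zd$, the increment $\Delta\rho_{t_j}$ has martingale part $\rho_{t_j}(\hs-\E\hs)^\top\Gammas^{-1/2}\sqrt{\Gammas\dt}\,\xi^\dagger_{t_j}$, where the $\xi^\dagger_{t_j}$ are the i.i.d. unit Gaussian increments driving $\Bd$, the remaining drift part being $O(\dt)$. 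On the other hand $\Delta\zd_{t_j}-\Delta\hz_{t_j}=\sqrt{\Gammas\dt}\,(\xi^\dagger_{t_j}-\xi_{t_j})+O(\dt)$, with $\xi_{t_j}$ driving $B$ and independent of $\xi^\dagger_{t_j}$. Multiplying out and passing to the $L^2_{\mathbb{P}}$ limit: products of two drift-order terms, and of a single noise increment with a drift, are $O(\dt^{1/2})$ and vanish; the cross term in $\xi^\dagger_{t_j}\otimes\xi_{t_j}$ averages to zero because $\Bd$ and $B$ are independent; and the diagonal term $\xi^\dagger_{t_j}\otimes\xi^\dagger_{t_j}$ has expectation $\dt\,I$, which, after the cancellation $\Gammas^{-1/2}\sqrt{\Gammas}=I$, produces the limit
\[
S-R=\int_0^T\tfrac{\rho}{2}\,D_\rho K\,(\hs-\E\hs)\,dt .
\]
This is exactly $\int_0^T b(v,\mmu)\,dt$ with $b=\tfrac{\mmu}{2}D_\rho K(\hs-\E\hs)$; adding it to the diamond form yields the claimed full Stratonovich representation, and the $d\hz$ equation is unaffected throughout.

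The main obstacle I anticipate is making rigorous the interplay between the Fr\'echet derivative $D_\rho K$ and the discretization of the infinite-dimensional stochastic PDE \eqref{eq:KSE} — in particular, bounding the summed-over-$j$ Taylor remainder $O(\|\Delta\rho_{t_j}\|^2)$ in an appropriate norm, and justifying that the covariation between $\rho$ and $\zd-\hz$ is captured entirely by the leading martingale term of $\Delta\rho_{t_j}$. The bookkeeping of the covariance-weighted inner products and the cancellation of the $\Gammas$ prefactors is routine, exactly as in the proof of the preceding lemma; the conceptual content is the asymmetry that $\rho$ carries covariation only with $\zd$, so the correction $b$ involves $\hs-\E\hs$ but receives no contribution from the synthetic observation path $\hz$.
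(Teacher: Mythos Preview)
Your proposal is correct and follows essentially the same approach as the paper's proof: form $S_{\dt}-R_{\dt}$, Taylor expand $K$ in its $\rho$-argument only (since the $v$-midpoint is common to both), substitute the martingale part of $\Delta\rho$ from the Kushner--Stratonovich equation, and identify the surviving covariation. The only cosmetic difference is that the paper immediately restricts the increment $\Delta\zd-\Delta\hz$ to its $\zd$-component $\sqrt{\Gammas\dt}\,\xi^\dagger$ (arguing \emph{a priori} that $\rho$ carries no covariation with $\hz$), whereas you retain both $\xi^\dagger$ and $\xi$ and then kill the cross term by independence; the paper also inserts an explicit Taylor step in $v$ to replace the midpoint argument of $D_\rho K$ by $v_{t_j}$, which you leave implicit in your $O$-terms.
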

\begin{proof}
To make the inter-conversion between diamond and Stratonovich integration
we need only consider the quadratic variation contribution induced by
$\zd$ since the equation for $\rho$ is driven only by $\zd$ and not by $\hz.$
Recalling \eqref{eq:recalled} and its discrete form \eqref{delta_z_proof} we see that
it suffices to compare the $L^2_{\mathbb{P}}$ limits of
\begin{align*}
     R_{\dt}&= \sum_{j=0}^{N-1}K\left(\frac{v_{t_{j+1}}+v_{t_j}}{2}; \rho_{t_j}\right)\sqrt{\Gammas\dt}\xi^\dag_{t_j},\\
S_{\dt}&= \sum_{j=0}^{N-1}K\left(\frac{v_{t_{j+1}}+v_{t_j}}{2}; \frac{\mmu_{t_{j+1}}+\mmu_{t_j}}{2}\right)\sqrt{\Gammas\dt}\xi^\dag_{t_j};
\end{align*}
Here $\xi^\dag_{t_j}$ are i.i.d.~draws from a unit Gaussian.
By adding and subtracting the diamond contribution, we obtain
\begin{align*}
    S_{\dt}&= \sum_{j=0}^{N-1}K\left(\frac{v_{t_{j+1}}+v_{t_j}}{2}; \frac{\mmu_{t_{j+1}}+\mmu_{t_j}}{2}\right)\sqrt{\Gammas\dt}\xi^\dag_{t_j}\\
    &= R_{\dt} + \sum_{j=0}^{N-1} \left(K\left(\frac{v_{t_{j+1}}+v_{t_j}}{2}; \frac{\mmu_{t_{j+1}}+\mmu_{t_j}}{2}\right) - K\left(\frac{v_{t_{j+1}}+v_{t_j}}{2}; \rho_{t_j}\right) \right)\sqrt{\Gammas\dt}\xi^\dag_{t_j}.
    \end{align*}
Thus
\begin{align*}
    &S_{\dt}-R_{\dt}=\\
    &\sum_{j=0}^{N-1} \left( \frac{1}{2}D_\mmu K\left(\frac{v_{t_{j+1}}+v_{t_j}}{2};\mmu_{t_j}\right)(\mmu_{t_{j+1}}-\mmu_{t_j})+\mathcal{O}\left(|\mmu_{t_{j+1}}-\mmu_{t_j}|^2\right)\right)\sqrt{\Gammas\dt}\xi^\dag_{t_j}, 
\end{align*}
where the last line follows from a first order Taylor expansion in $\rho$. In the following
we will use a discretization of the Kushner-Stratonovich equation \eqref{eq:KSE} given in Theorem \ref{thm:KS_equation},  and repeated here:
\begin{equation}\label{eq:KSER}
\dd\mmu = -\nabla \cdot (\mmu f) \dd t + \frac{1}{2}
\nabla \cdot (\nabla \cdot (\mmu \Sigmas))  \dd t +  \left\langle \hs - \E \hs, \dd\zd - \E \hs \dd t\right\rangle_\Gammas \mmu.
\end{equation}
Substituting an increment for $\rho$, in time, and summarizing the 
terms that do not contribute to the quadratic variation in 
the $L^2_{\mathbb{P}}$ limit as $\mathcal{O}(\dt)$, we obtain
\begin{align*}
    &S_{\dt} -R_{\dt}=\\
    &\sum_{j=0}^{N-1}\left(\frac{1}{2}D_\mmu K\left(\frac{v_{t_{j+1}}+v_{t_j}}{2};\mmu_{t_j}\right)\langle \mmu_{t_j}\Gammas^{-1}(\hs_{t_j}-\mathbb{E}\hs_{t_j}),\sqrt{\Gammas\dt}\xi^\dag_{t_j} \rangle+\mathcal{O}(\dt)\right)\sqrt{\Gammas\dt}\xi^\dag_{t_j}.
\end{align*}
We now perform a first order Taylor expansion in $v$ of $D_\mmu K$. Disregarding $\mathcal{O}(\dt^{3/2})$ terms which will vanish in the $\dt \rightarrow 0$ limit, we obtain
\begin{equation}
\label{eq:Sdt_sum}
    S_{\dt} = R_{\dt} + \sum_{j=0}^{N-1}\frac{1}{2}D_\mmu K(v_{t_j};\mmu_{t_j})\langle \mmu_{t_j}\Gammas^{-1}(\hs_{t_j}-\mathbb{E}\hs_{t_j}),\sqrt{\Gammas\dt}\xi^\dag_{t_j} \rangle\sqrt{\Gammas\dt}\xi^\dag_{t_j}.
\end{equation}
We now consider the correction term resulting from \eqref{eq:Sdt_sum}, which is determined by expectation of the summand with respect to the random increments $\xi^\dag$, scaled by $\Delta t^{-1}.$ Indeed, dropping the $v$ and $\mmu$ dependence of $K$, for each $j=0,\dots,N-1$, its $l$'th component is given by
\footnote{Again using Einstein summation convention, but not with respect to index $j$ which simply denotes a fixed time.}
\begin{align*}
    \mathbb{E}\left[\frac{1}{2}D_\mmu\right. & K(v_{t_j};\mmu_{t_j})\langle \mmu_{t_j}\Gammas^{-1}(\hs_{t_j}-\mathbb{E}\hs_{t_j}),\sqrt{\Gammas}\xi^\dag_{t_j} \rangle\sqrt{\Gammas}\xi^\dag_{t_j}\bigg]_l =\\
    &= \frac{1}{2}\mmu_{t_j}  \mathbb{E}\left( \left[\Gammas^{-1/2}(\hs_{t_j}-\mathbb{E}\hs_{t_j}) \right]_k\left[\xi^\dag_{t_j}\right]_k \left[D_\mmu K\right]_{lm}\left[\Gammas^{1/2}\right]_{mn}\left[\xi^\dag_{t_j}\right]_n\right)\\
    &= \frac{1}{2}\mmu_{t_j} \left[D_\mmu K\right]_{lm}[\hs_{t_j}-\mathbb{E}\hs_{t_j}]_m.
\end{align*}
Thus taking the $\dt\rightarrow 0$ limit of $S_{\dt}$ yields
\begin{equation*}
\label{eq:afterlimit_second}
    S= R + \int_0^T \frac{1}{2}\mmu D_\mmu K(v;\mmu)(\hs - \mathbb{E}\hs)\dd t,
\end{equation*}
where the convergence is in the $L^2_{\mathbb{P}}\left(\Omega; {C}([0,T];\R^{d_v})\right)$ sense; this concludes the proof.
\end{proof}

%
\section{Appendix E (Flows In The Gaussian Manifold)}
\label{sec:AGF}
%

Recall that we obtained \eqref{eq:meanGPFcontIP} as the time continuous limit of its discrete-time formulation \eqref{eq:KF_analysis_add_IP}. Here we demonstrate that the same evolution equations can be derived from the gradient flow \eqref{eq:betterform} through a sequence of approximations. 
To this end we first define, for any function $g$ defined on state space $\R^{d_u}$,
$$C^{g \Phi}=\mathbb{E}\bigl(g(u) \Phi(u)\bigr)-\mathbb{E}\bigl(g(u)\bigr) \mathbb{E}\bigl(\Phi(u)\bigr).$$
Now consider random variable $u$ with probability
density function $\rho$ evolving according to \eqref{eq:betterform}. Then
$$
\frac{\dd}{\dd t}\bigl(\mathbb{E}g(u)\bigr) = -C^{g \Phi}.
$$
By making linear and quadratic choices for $g$ this identity leads to the  equations \eqref{eq:closure_second_order} for the mean $m$ and the covariance matrix $C$ under $\rho$, repeated here for convenience:
\begin{align*}
    \frac{\dd m}{\dd t} &= - \mathbb{E}\bigl(\Phi(u)(u-m)\bigr),\\
    \frac{\dd C}{\dd t} &= -\mathbb{E}\bigl(\Phi(u)(u-m)(u-m)^\top\bigr)+C\mathbb{E}\bigl(\Phi(u)\bigr).
\end{align*}
These equations do not define, in general, a closed evolution for the pair $(m,C)$ because, in general,
$\rho$ is not Gaussian. In order to find closed evolution equations, we take the expectation
on the right-hand side not with respect to $\rho$ but with respect to the Gaussian 
$\Ng(m(t),C(t))$. The resulting closed evolution equation for the pair 
$\bigl(m(t),C(t)\bigr)$ is different from the continuous time Gaussian projected filter \eqref{eq:meanGPFcontIP}.

We now show that the two closed evolution equations \eqref{eq:closure_second_order} and \eqref{eq:meanGPFcontIP} can be connected through a sequence of steps involving an integration
by parts, which is exact under conditions regarding the tail behaviour of $\Phi(u)$, followed by a number of approximations. 

The integration by parts\footnote{Related to Stein's identity} step
in \eqref{eq:closure_second_order} results in
\begin{align*}
    \frac{\dd m}{\dd t} &= -C \,\mathbb{E}\bigl(\nabla \Phi(u)\bigr),\\
    \frac{\dd C}{\dd t} &= -C\,\mathbb{E}\bigl(
    D^2 \Phi(u)\bigr) \,C.
\end{align*}
Now  we use the explicit expression \eqref{eq:phisc} for $\Phi(u)$ in terms of $G(u)$ to derive various approximations.
First, using this expression, we utilise the Gauss--Newton approximation of the Hessian 
$D^2 \Phi(u)$ to obtain
$$
D^2 \Phi(u) \approx DG(u)^\top \Gammas^{-1} DG(u).
$$
We also replace $\nabla \Phi(u)$ by its explicit expression, resulting from \eqref{eq:phisc}, to obtain the modified evolution equations
\begin{align*}
    \frac{\dd m}{\dd t} &= -C \,\mathbb{E}\bigl(
    DG(u)^\top \Gammas^{-1}(G(u)-w^\dagger)
    \bigr),\\
    \frac{\dd C}{\dd t} &= -C\,\mathbb{E}
    \bigl(DG(u)^\top \Gammas^{-1} DG(u)\bigr)  \,C.
\end{align*}
Secondly these equations are further modified by replacing expectations of product terms by
products of expectations to yield
\begin{align*}
    \frac{\dd m}{\dd t} &= -C \,\mathbb{E}
    \bigl(DG(u)\bigr)^\top \Gammas^{-1} \mathbb{E}\left(G(u)-w^\dagger\right),\\
    \frac{\dd C}{\dd t} &= -C\,\mathbb{E}
    \bigl(DG(u)\bigr)^\top\Gammas^{-1} \mathbb{E}\bigl(DG(u)\bigr) \,C.
\end{align*}
The final step consists in eliminating the Jacobian $DG(u)$ using
$$
C\,\mathbb{E}(DG(u))^\top = C^{uG},
$$
which is obtained by yet another integration by parts under the Gaussian $\Ng(m(t),C(t))$. Thus we
have recovered the Gaussian projected filter \eqref{eq:meanGPFcontIP}.

\label{lastpage}
\end{document}